\documentclass{article}
\usepackage[utf8]{inputenc}
\usepackage{graphicx}
\usepackage{amsthm,amssymb,amsmath}
\usepackage[colorlinks=true]{hyperref}
\usepackage{asymptote}
\usepackage{tabularx}
\usepackage{xcolor}
\newcommand\setrow[1]{\gdef\rowmac{#1}#1\ignorespaces}
\newcommand\clearrow{\global\let\rowmac\relax}
\clearrow

\renewcommand{\SS}{\mathcal{S}}

\newcommand{\PP}{\mathcal{P}}
\newcommand{\QQ}{\mathcal{Q}}
\newcommand{\id}{\mathrm{id}}
\newcommand{\dom}{\mathrm{dom}}
\newcommand{\roots}[2]{#1_{\downarrow #2}}
\newcommand{\rootsrem}[3]{\roots{(#1-#3)}{#2\setminus #3}}

\newtheorem{theorem}{Theorem}
\newtheorem{lemma}[theorem]{Lemma}
\newtheorem{corollary}[theorem]{Corollary}

\newtheorem{observation}[theorem]{Observation}

\newtheorem{claim}{Claim}
\newenvironment{subproof}{%
  \begin{proof}[Proof of claim]%
}{%
  \end{proof}%
}

\begin{asydef}
usepackage("amsmath");
usepackage("amssymb");
usepackage("xcolor");
unitsize(9mm);

DefaultHead.size=new real(pen p=currentpen) {return 2mm;};

void vertex(pair a, pen barva=white, real pol = 0.1)
{
  filldraw(circle (a, pol), fillpen=barva);
}

pair v[];
int i,j;
\end{asydef}

\title{Extremal function for rooted $K_5$ minors}
\author{Zdeněk Dvořák\thanks{Charles University, Prague, Czech Republic,
	\url{rakdver@iuuk.mff.cuni.cz}.
	Supported by ERC-CZ project LL2328 (Beyond the Four Color Theorem) of the Ministry of Education of Czech Republic.}
}

\begin{document}

\maketitle

\begin{abstract}
We show that if an $n$-vertex 5-connected graph has at least $4n-10$ edges, then for any choice
of five of its vertices, we can contract disjoint connected subgraphs containing these vertices
to obtain $K_5$ as a minor.  The bound on the number of edges is the best possible.
\end{abstract}

\section{Introduction}

A famous conjecture of Hadwiger~\cite{hadwiger} states that for every positive integer $k$,
if a graph does not contain $K_{k+1}$ as a minor\footnote{For graphs $H$ and $G$, a \emph{model} of $H$ in $G$ is a function $\mu$ that
\begin{itemize}
\item maps vertices of $H$ to pairwise disjoint non-empty subsets of vertices of $G$ inducing connected subgraphs of $G$, and
\item maps each edge $e=uv$ of $H$ to an edge $\mu(e)$ of $G$ with one end in the set $\mu(u)$ and the other end in the set $\mu(v)$.
\end{itemize}
If there exists a model of $H$ in $G$, we say that $H$ is a \emph{minor} of $G$.
Equivalently, $H$ is a minor of $G$ if a graph isomorphic to $H$ can be obtained from a subgraph of $G$ by contracting edges.},
then it is $k$-colorable.
Despite a lot of attention, this conjecture is wide open, with the only resolved
cases being $k\le 3$ (simple, proved already by Hadwiger~\cite{hadwiger}),
$k=4$ (equivalent to the Four Color Theorem by Wagner's characterization of $K_5$-minor-free graphs~\cite{wagner}),
and $k=5$ (equivalent to the Four Color Theorem by a tour de force argument of Robertson, Seymour, and Thomas~\cite{robertsonseymourthomas}).
For the next open case $k=6$, even the question whether all $K_7$-minor-free graphs are $7$-colorable (rather than $6$-colorable,
as postulated by Hadwiger's conjecture) is open!

On the other hand, it is at least known that Hadwiger's conjecture is approximately true,
in the sense that every $K_k$-minor-free graph has chromatic number bounded by a function of $k$.
Indeed, more is true: Every $K_k$-minor-free graph has average degree $O(k\sqrt{\log k})$, as shown by Kostochka~\cite{kostochka1984lower}
and Thomason~\cite{thomason1984lower}, and thus it also has chromatic number $O(k\sqrt{\log k})$.  For a long time, this
density-based bound was the best general partial result towards Hadwiger's conjecture. This barrier was eventually broken
by Norin, Postle, and Song~\cite{norin2023breaking}.  Subsequently, Delcourt and Postle~\cite{delcourt2025reducing}
refined the approach to improve the bound on the chromatic number of $K_k$-minor-free graphs to $O(k\log\log k)$,
and very recently Liu and Luo~\cite{lllhadwiger} further improved the bound to $O(k\log \log\log k)$,

Moreover, the density bounds form the natural starting point for approaching specific cases of Hadwiger's conjecture as well as its variations
with other forbidden minors.  The exact density bounds on $K_k$-minor-free graphs for small values of $k$ were first
systematically investigated by Mader~\cite{maderdens}, who proved that for $k\in\{2,\ldots,7\}$, every $n$-vertex $K_k$-minor-free
graph (where $n\ge k-1$) has at most
\begin{equation}\label{eq-mader}
(k-2)n-\binom{k-1}{2}
\end{equation}
edges.  This bound does not hold for $k=8$, but J{\o}rgensen~\cite{jorgensen1994contractions} exactly characterized the counterexamples
(an infinite family of $K_8$-minor-free graphs with exactly one more edge, starting with $K_{2,2,2,2,2}$).
Song and Thomas~\cite{SONG2006240} further extended the result to $K_9$-minor-free graphs by exactly characterizing
those with more edges than predicted by (\ref{eq-mader}).

In this paper, we consider an analogous question for rooted minors.  Let us start with the definitions.
A \emph{rooted graph} is an undirected simple graph $G$ with a specified
set $X_G\subseteq V(G)$ of \emph{roots}.  If $|X_G|=k$, then we say that $G$
is a \emph{$k$-rooted graph}.  
For a graph $F$ without roots and a set $Z\subseteq V(F)$,
let $\roots{F}{Z}$ denote the rooted graph with the underlying graph $F$ and with the root set $Z$.

Consider a graph $H$ (without roots), a set $Y\subseteq V(H)$ of size $|X_G|$, and a bijection $\pi:Y\to X_G$.
A \emph{$\pi$-rooted model} of $H$ in $G$ is a model $\mu$ of $H$ in $G$ such that $\pi(v)\in \mu(v)$ holds for every $v\in \dom(\pi)=Y$.
If such a model exists, then we say that $H$ is a \emph{$\pi$-rooted minor} of $G$.
Equivalently, a graph isomorphic to $H$ via an isomorphism extending $\pi$ is obtained from $G$ by deleting edges,
deleting non-root vertices, and contracting edges with at least one non-root end.
We also use these definitions in the case that $H$ is a rooted graph, in which case we require that $\dom(\pi)=X_H$
(and in particular $|X_H|=|X_G|$).
When $H=K_{|X_G|}$ is a clique of size $|X_G|$, then we say that $H$ is a \emph{rooted minor} of $G$ if $H$ is a $\pi$-rooted minor
of $G$ for any bijection $\pi:V(H)\to X_G$ (here, the exact bijection of course does not matter).
In case that $X_G\subseteq V(H)$, we usually map the roots to themselves; more precisely, we let $\id$ denote the partial
function $V(H)\to X_G$ such that $\dom(\id)=X_G$ and $\id(x)=x$ for every $x\in X_G$, and we refer to $H$ being an \emph{$\id$-rooted minor} of $G$.
Let us remark that in the case we apply this definition for a rooted graph $H$, we require that $X_H=X_G$.

Rooted minors naturally arise in the context of Hadwiger's conjecture when we consider separations\footnote{A \emph{separation} of a (rooted) graph
$G$ is a pair $(A,B)$ of subsets of $V(G)$ such that $V(G)=A\cup B$ and
$G=G[A]\cup G[B]$ (and thus $G$ does not have any edges between $A\setminus B$ and $B\setminus A$).
The \emph{order} of the separation $(A,B)$ is $|A\cap B|$, and
the separation is \emph{proper} if $A\not\subseteq B$ and $B\not\subseteq A$.}
in $K_k$-minor-free graphs.  For instance, one of the steps in the proof of Hadwiger's conjecture for $K_6$-minor-free graphs~\cite{robertsonseymourthomas}
was to restrict separations $(A,B)$ of order six in a (hypothetical) smallest counterexample $G$.
To do so, the authors needed several statements of form ``under certain conditions, $\roots{(G[A])}{(A\cap B)}$ contains $H$ as an $\id$-rooted minor''
for suitable choices of graphs $H$ with vertex set $A\cap B$.  This then leads to a contradiction
by showing that the rooted models in $\roots{(G[A])}{(A\cap B)}$ and in $\roots{(G[B])}{(A\cap B)}$ combine to a model of $K_6$ in $G$.

The theory of rooted minors is substantially less developed (and even more challenging) than for the unrooted case.
It is fairly easy to characterize $k$-rooted graphs avoiding $K_k$ as a rooted minor for $k\le 3$, but beyond that
the complexity quickly rises.  As part of their graph minors series, Robertson and Seymour~\cite{rs9}
characterized 4-rooted graphs $G$ avoiding $2K_2$ as a $\pi$-rooted minor for a fixed bijection $\pi:V(2K_2)\to X_G$.
Building on this result, Robertson, Seymour, and Thomas~\cite{robertsonseymourthomas} then developed structural descriptions
for all other 4-vertex graphs (for an exact characterization in the case of rooted $K_4$ minor,
see also Fabila-Monroy and Wood~\cite{fabila2013rooted}).  However, obtaining exact characterizations for
larger forbidden rooted minors (and in particular, for 5-rooted graphs avoiding $K_5$ as a rooted minor)
seems far beyond our means.

However, let us note that in the unrooted setting, we are similarly far from being able to give an exact characterization for $K_6$-minor-free
graphs, yet we know the extremal density all the way up to $K_9$-minor-free graphs.  By analogy, could the density
problems for graphs avoiding rooted minors be more feasible?  Before we discuss the previous results on this topic,
let us mention one caveat: For every $k\ge 2$, we can easily construct $k$-rooted graphs of arbitrarily large density which
avoid $K_k$ as a rooted minor.  Indeed, we can increase the density arbitrarily by adding a large dense component not containing
any of the roots.  To avoid this technicality, we need to restrict ourselves to graphs of sufficiently large connectivity.

In general, Wollan~\cite{wollan2008extremal} proved that the extremal functions for the rooted and unrooted case differ
at most by a constant factor.
\begin{theorem}[Wollan~\cite{wollan2008extremal}]
Let $H$ be a graph and let $c\ge 1$ be a real number such that every $n$-vertex graph with at least $cn$ edges contains $H$
as a minor, and let $G$ be a rooted graph with $X_G=V(H)$.  If $G$ is $|V(H)|$-connected and has at least
$(9c+26833|V(H)|)\cdot|V(G)|$ edges, then $H$ is an $\id$-rooted minor of $G$.
\end{theorem}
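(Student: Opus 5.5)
The plan is a ``dense core plus linkage'' argument. Inside $G$ we find a highly connected, dense piece $D$ far from the roots. We use the hypothesis on $c$ to find an (unrooted) model of $H$ inside $D$. Then we use the $|V(H)|$-connectivity of $G$ together with the linkage properties of $D$ to route each root $x\in X_G$ into the branch set of the vertex $x$ of $H$. Write $k=|V(H)|$ and $n=|V(G)|$.

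\textbf{Step 1: a dense, highly connected piece.} By Mader's theorem, every graph of average degree at least $4t$ contains a $t$-connected subgraph. Applying this to $G$, whose average degree is at least $2(9c+26833k)$, yields a subgraph $D$ with two properties. First, $D$ has connectivity linear in $k$, say at least $Ck$ for a large absolute constant $C$. Second, $D$ is still dense: we take a minimal minor or subgraph with at least $(c+\Theta(k))|V(D)|$ edges, so its minimum degree and density are both controlled. Minimality is the key point here. It gives that every edge of $D$ lies in many triangles, or else contracting that edge keeps the density. This is the standard source of the ``$9c$'' type term. The $k$ roots may or may not lie in $D$; either way, deleting them costs at most $k\cdot|V(D)|$ edges, which is absorbed by the $26833k$ slack.

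\textbf{Step 2: an unrooted model and the linkage.} Since $D$ (after removing at most $k$ vertices) still has at least $c|V(D)|$ edges, it contains a model $\nu$ of $H$. We then need $k$ disjoint paths from the roots $X_G$ to $D$. Menger's theorem supplies them by the $k$-connectivity of $G$, provided $|V(D)|\ge k$, which holds by density. These paths end at some set $Y\subseteq V(D)$ of $k$ attachment vertices. The remaining task is to reconnect the vertices of $Y$ to the branch sets $\nu(v)$ in the right bijection. For this we use the Bollobás--Thomason / Thomas--Wollan result that a $2k'$-connected graph with at least $5k'|V|$ edges is $k'$-linked. We apply it with $k'=O(k)$ in $D$ minus the paths' interiors. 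To make room, we first shrink the model $\nu$ so that each branch set has a small ``terminal'' vertex. We then ask for a linkage from $Y$ to one chosen terminal vertex in each branch set, disjoint from the rest of the model. The trick is to choose $\nu$ inside a subgraph of $D$ of bounded size so that deleting it leaves high connectivity; alternatively, we use the linkage in a graph obtained by contracting each branch set to a single vertex.

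\textbf{Main obstacle.} The hard step is making the linkage and the model disjoint simultaneously. A $k'$-linkage in $D$ may pass through the branch sets of $\nu$, and $\nu$ may use vertices needed by the Menger paths. I would handle this as follows. We pick the model inside a small dense subgraph, which exists because $D$ has bounded-size dense pieces via the triangle-rich structure from minimality. We then argue that removing this piece reduces connectivity by a controlled amount, and that the Menger paths can be rerouted to hit $D$ outside it using the connectivity margin. The large constant $26833$ is exactly the budget for losing connectivity and density in these successive deletions. Bookkeeping that budget carefully is where most of the technical work goes.
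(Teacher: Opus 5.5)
The paper does not prove this theorem. It quotes it from Wollan and only remarks, in Section~2, that Wollan's argument allows small cuts provided the part they cut off is sparse. Judged on its own, your proposal has a genuine gap, exactly where you place the main obstacle: nothing makes the paths from $Y$ to the branch sets avoid the other branch sets of $\nu$, and both remedies you offer fail.

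\emph{Contracting the branch sets} does not preserve connectivity. A separator of $D/\nu$ containing the vertex obtained from $\nu(v)$ lifts to a separator of $D$ that is larger by $|\nu(v)|-1$. So the linkage theorems apply to $D/\nu$ only if the branch sets are already small, which is the original problem.

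\emph{The small dense piece} conflates subgraphs with minors. ``Every edge lies in many triangles'' holds for a graph that is minimal under contraction (contracting $e$ destroys $1+t(e)$ edges), not for the Mader subgraph $D$. A highly connected dense graph need not have any small dense subgraph at all. With high probability a random $d$-regular graph is $d$-connected, yet for some $\varepsilon=\varepsilon(d)>0$ every set $S$ of at most $\varepsilon n$ vertices spans fewer than $2|S|$ edges. For $H=K_5$, every admissible $c$ is at least $3$, so $D$ has no subgraph of size $o(n)$ on which the density hypothesis can be used.

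If you instead pass to a contraction-minimal minor of $D$, its vertices are connected sets of unbounded size in $G$. Neither the connectivity of $D$ nor your Menger paths from $X_G$ survive: a contraction can turn a $k$-separator into a $(k-1)$-separator between the roots and the dense piece. So your density argument lives in a minor, your linkage argument lives in $G$, and the outline never makes them meet.

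The missing idea is to make the contraction-minimal object itself carry the connection to the roots. Contraction does not preserve $k$-connectivity, so you replace it by a hypothesis that contraction (nearly) preserves. Root separations of order less than $k$ are allowed, but only if the side away from the roots is sparse, as with the $t$-density and $t$-lightness of Section~2. You then take a minimal counterexample in that class. Contracting a non-root edge has two possible outcomes:
\begin{itemize}
\item It stays in the class. This gives the triangle bound, and hence small dense neighbourhoods inside a graph that is still well connected to the roots in this relaxed sense.
\item It creates a low-order root separation whose far side is dense. This must be handled separately, for instance by applying the statement inductively to that side.
\end{itemize}
Only once this framework is in place does the bookkeeping of constants you describe have something to act on.
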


Of course, we can determine the extremal density exactly in the cases when an exact characterization is known for a given rooted
minor.  For instance, the result of Fabila-Monroy and Wood~\cite{fabila2013rooted} easily implies that if an $n$-vertex 4-connected
4-rooted graph avoids $K_4$ as a rooted minor, then it has at most $3n-7$ edges; see also~\cite{norin2025every} for a
derivation of this bound from the weaker description by Robertson, Seymour, and Thomas~\cite{robertsonseymourthomas}.

Another setting where the density function is well understood is for complete bipartite graphs with one side of size two,
where the two vertices in this side are not rooted.  More precisely, Wollan~\cite{wollan2008extremal} proved
the following: Let $G$ be an $n$-vertex $t$-connected $t$-rooted graph, and let $\pi$ be a bijection
between the vertices of $K_{t,2}$ in the part of size $t$ and $X_G$.  If $G$ does not contain $K_{t,2}$ as
a $\pi$-rooted minor, then $|E(G)|\le tn-\binom{t+1}{2}$; and this bound is the best possible.  Norin and Totschnig~\cite{norin2025every} proved
that for $t=4$, the same bound holds for the graph obtained from $K_{4,2}$ by adding an edge between the two vertices of degree four.
J{\o}rgensen and Kawarabayashi~\cite{jorgensen2007extremal} studied the analogous problem for $K_{t,3}$
and obtained bounds for the cases $t\in\{2,3,4\}$ (tight for $t=2$ and likely not tight for $t\in\{3,4\}$).

Finally, let us remark on a connection to another well studied subject.
A graph $G$ with at least $2k$ vertices is \emph{$k$-linked} if for every choice of distinct vertices $s_1,\ldots,s_k,t_1,\ldots,t_k\in V(G)$,
there exist pairwise disjoint paths in $G$ joining $s_1$ to $t_1$, $s_2$ to $t_2$, \ldots, and $s_k$ to $t_k$.
In other words, this is the case if for every set $X\subseteq V(G)$ of size $2k$ and for every bijection $\pi:V(kK_2)\to X$,
the $2k$-rooted graph $\roots{G}{X}$ contains $kK_2$ as a $\pi$-rooted minor.  Thus, the result of Thomas and Wollan~\cite{tomlinear}
on linkedness of graphs can be restated as saying that every $2k$-connected $2k$-rooted $n$-vertex graph with at least
$5kn$ edges contains $kK_2$ as a $\pi$-rooted minor for every bijection $\pi$ between $V(kK_2)$ and the roots.

In this paper, we push our understanding of the extremal functions for rooted clique minors one step further,
by determining it exactly for $K_5$.  Let us first remark that at least $4n-10$ edges are needed to force $K_5$
as a rooted minor in a 6-connected 5-rooted $n$-vertex graph.  Indeed, let $G_0$ be any 5-connected plane graph
with $n-1$ vertices and with all faces of length three except for a single face bounded by a 4-cycle $x_1x_2x_3x_4$.
Thus, $|E(G_0)|=3|V(G_0)|-7=3n-10$.  Moreover, note that $\roots{(G_0)}{\{x_1,\ldots,x_4\}}$ does not contain $K_4$
as a rooted minor.  Let $G$ be the 6-connected $n$-vertex graph with $4n-11$ edges obtained from $G_0$ by adding a universal vertex $x_5$,
and let $X=\{x_1,\ldots,x_5\}$.  Then $\roots{G}{X}$ does not contain $K_5$ as a rooted minor.
As our main result, we show that one extra edge forces $K_5$ as a rooted minor, even in $5$-connected graphs.

\begin{theorem}\label{thm-allK5}
Let $G$ be a 5-rooted graph.  If $G$ is 5-connected and has at least $4|V(G)|-10$ edges, then $K_5$ is a rooted minor of $G$.
\end{theorem}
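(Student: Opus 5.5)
We argue by a minimal counterexample.  Since 5-connectivity is awkward under contraction, we first prove a stronger, more flexible statement and then specialize.  We work with a 5-rooted graph $G$ (roots $X_G=\{x_1,\ldots,x_5\}$) satisfying a weaker connectivity hypothesis: no separation $(A,B)$ of order at most $4$ with $X_G\subseteq A$ and $B\setminus A\neq\emptyset$.  We allow edges among roots freely, since adding them never hurts.  We then show that if $|E(G)|\ge 4|V(G)|-10$, then $K_5$ is a rooted minor.  To make the induction close, the plan is to strengthen the density bound at small separations: we count edges of $G$ plus all edges of the clique on $X_G$, i.e.\ we assume $G[X_G]$ is complete.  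This costs at most $10$ edges and matches the extremal example, in which $x_5$ is universal and the $4$-cycle is completed.

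Take a counterexample $G$ minimizing $|V(G)|+|E(G)|$.  The standard reductions come first.

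\textbf{Edge deletion.} Deleting an edge not inside $X_G$ would keep the counterexample property, so $|E(G)|=4|V(G)|-10$ exactly.

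\textbf{Edge contraction.} Contracting an edge $uv$ with $u\notin X_G$ preserves the rooted hypothesis.  By minimality, the contracted graph has fewer than $4(|V|-1)-10$ edges.  Hence $uv$ lies in at least $4$ triangles, i.e.\ $|N(u)\cap N(v)|\ge 4$ for every edge with a non-root end.  This is the analogue of Mader's argument.

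\textbf{Small separations.} Suppose there is a separation $(A,B)$ of order $5$ with $X_G\subseteq A$ and $B\setminus A\ne\emptyset$.  We replace $G[B]$ by a clique on $S=A\cap B$, using 5-connectivity of $B$-side and Menger's theorem to route a rooted $K_5$ model on $S$ inside $G[B]$ (a $K_5$ on $S$, or at least the needed pattern, via the induction applied to $G[B]$ rooted at $S$).  The density of one side then exceeds the threshold, and we apply minimality.  Separations of order $5$ with roots split between both sides are handled similarly by routing roots through the separator along $5$ disjoint paths.  These linkages exist by 5-connectivity.

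After these reductions, $G$ is essentially "internally 6-connected" relative to $X_G$, every non-root vertex has degree at least $5$, every edge lies in at least $4$ triangles, and the average degree is just under $8$.

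Next pick a non-root vertex $v$ of minimum degree.  A counting argument shows that some non-root $v$ has $\deg(v)\le 7$.  This uses $|E|=4n-10$ and the fact that roots may have larger degree, though their contribution is bounded by the clique count.  In $G[N(v)]$ every vertex has degree at least $4$ by the triangle condition, on at most $7$ vertices.  Such small graphs are dense: a graph on $\le 7$ vertices with minimum degree $4$ contains $K_5$ minus few edges, or is one of a short list such as the octahedron $K_{2,2,2}$, $C_7^2$-type graphs, or $K_{3,3,1}$-like graphs.  We then find $5$ disjoint paths from $X_G$ to $N(v)\cup\{v\}$, by 5-connectivity, Menger, and the internal 6-connectivity.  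Then $N[v]$ gives a $K_5$ model that these paths attach to the roots, or we contract to the root case.

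\textbf{Main obstacle.} The hardest step is this last one: the paths from the roots to $N[v]$ land on specific vertices of $N(v)$, and the $K_5$ model must be rooted at exactly those landing vertices.  That requires the local graph $G[N[v]]$ to be "rooted $K_5$-rich" with respect to arbitrary $5$ terminals, which fails for sparse configurations such as the octahedron.  We would first try to exploit freedom in rerouting the linkage (choosing which $5$ neighbours are hit).  Failing that, we would fall back on Robertson--Seymour-type $2$-linkage and planarity descriptions for the obstructions (the Fabila-Monroy--Wood characterization of rooted $K_4$ minors), applied to $G-x_5$ after contracting $v$ into a root.  This is where a planar structure would be forced, yielding at most $4n-11$ edges and hence a contradiction.
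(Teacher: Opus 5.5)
\textbf{There is a genuine gap.} It sits in the reduction step. You claim that contracting an edge $uv$ with $u\notin X_G$, or deleting an edge, preserves the connectivity hypothesis. This is false. If $u$ and $v$ both lie in the separator of a root separation of order five, then $G/uv$ has a root separation of order four. Deleting an edge can likewise turn a 5-separation into a 4-separation. So neither $|E(G)|=4|V(G)|-10$ nor the four-triangle bound follows, and the paper names exactly this as the central difficulty.

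Your plan to first eliminate 5-separations does not repair it. You may replace $G[B]$ by a clique on $S=A\cap B$ only if $G[B]$ contains $K_5$ rooted at $S$. Your induction provides that only when $G[B]$ has at least $4|B|-10$ edges. But in that case five disjoint paths from $X_G$ to $S$ already give a rooted $K_5$ in $G$. So in a counterexample every such right-hand side is too sparse for your induction. Nothing in your setup says which partial cliques on $S$ it does contain, so you can neither contract it nor combine it with the left side.

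Your strengthened statement is also not well posed. If $G[X_G]$ is literally complete, $K_5$ is trivially a rooted minor. If you only count the missing root edges, the statement is false: the paper's extremal example ($G_0$ plus a universal root $x_5$) is 6-connected, has $4n-9$ edges once the two missing root edges are counted, and has no rooted $K_5$.

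\textbf{What is missing} is a connectivity notion that survives these operations, together with a \emph{graded} statement describing what a sparse side is guaranteed to contain. The paper relaxes 5-connectivity to 4-lightness: separations of order at most four are allowed if the cut-off side has $\rho_4\le 0$. It then proves Theorem~\ref{thm-mainplus}: a 4-light 5-rooted graph contains, under every labelling, every graph of its target. The target is $\SS_{5,0},\SS_{5,1},\SS_{5,3},\SS_{5,4}^-,\SS_{5,6},\SS_{5,8},\SS_{5,9},\SS_{5,10}$ for $\rho_4=0,\ldots,7$ respectively. The theorem you are proving follows in a few lines, because $|E(G)|\ge 4|V(G)|-10$ says exactly that $\rho_4(G)$ is at least the number of non-edges of $G[X_G]$. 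Even with this statement, showing that $G-e$ and $G/e$ stay 4-light in a minimal counterexample (Corollary~\ref{cor-cancontr}) takes most of the paper. Reducible, saturated, nearly saturated, nova-universal and profitable-split separations must all be excluded first.

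\textbf{The local step is also open.} You call it the main obstacle, and your proposal leaves it unresolved. Your counting also does not produce a non-root vertex of degree at most $7$: degree-8 vertices, and degree-9 vertices with no root neighbour, must be handled too. The paper aims at a flaw $F$ rather than at $K_5$. Via a root linkage into $N(v)$ and Lemma~\ref{lemma-nosatur}, Lemma~\ref{lemma-cantcreateclique} yields a 5-set $Z\subseteq N(v)$ that contains all root neighbours of $v$ and must be ``poor'' with respect to $F$. A case analysis then shows that non-root vertices of degree $6$, $7$ and $8$ satisfy $\deg^+ v\ge 10$. Global counting gives internal 6-connectivity and $n(G)\le 7$. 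Further reductions bring this to $n(G)\le 3$, followed by a short final case check.
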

Let us remark that Theorem~\ref{thm-allK5} strengthens a recent result of Du, Li, and Yu~\cite{du2025rooted}, who gave a worse bound for
rooted $C_5$ minors.  As an illustration of the usefulness of Theorem~\ref{thm-allK5}, let us note that using it together with further tools
inspired by this paper, we were able to prove that for $n\ge 6$, every $n$-vertex $5$-connected graph with at least $4n-7$ edges contains $K_7^{=}$ as a minor~\cite{k7mm},
where $K_7^{=}$ is the graph obtained from $K_7$ by removing two edges not incident with the same vertex.  This then easily
implies that $K_7^{=}$-minor-free graphs are $6$-colorable, resolving an open problem by Norin and Totschnig~\cite{norin2025every}.

\subsection{AI usage declaration}

AI was only used for proofreading.

\section{Generalization and proof ideas}

Let us for contradiction consider a smallest counterexample $G$ to Theorem~\ref{thm-allK5} (say one with fewest vertices).
We aim to use the following standard idea: Suppose we contract an edge $e$ between vertices $u,v\in V(G)\setminus X_G$.
The resulting 5-rooted graph $G'$ does not contain $K_5$ as a rooted minor.  Since $G'$ is smaller than $G$,
it is not a counterexample, and thus if it is 5-connected, it must have at most $4|V(G')|-11$ edges.  This means
that the edge $e$ is contained in at least 4 triangles in $G$, or equivalently, the vertex $u$ has degree at least four
in the subgraph of $G$ induced by the neighbors of $v$.  If this happened to hold for all edges $e$ of $G$,
it is fairly straightforward to check that we can obtain $K_5$ as a rooted minor of $G$ by contracting paths between $X_G$
and some five neighbors of $v$ together with a suitably chosen subgraph of the closed neighborhood of $v$
(some complications arise when the neighborhood of $v$ intersects $X_G$, but they are manageable).

Thus, the main issue with this argument is that contracting an edge of $G$ might result in a graph which is not 5-connected.
To deal with this problem, we are going to relax the assumption that $G$ is 5-connected: We will allow $(\le\!4)$-cuts,
but only if the subgraph that they cut off is sparse (this idea was inspired by a similar approach of 
Wollan~\cite{wollan2008extremal}).  Let us give a few additional definitions to be able to state this more
general statement exactly.

For a rooted graph $G$, let $\widetilde{G}$ denote the rooted graph obtained from $G$ by removing
all edges between the roots.  We say that a separation $(A,B)$ of the underlying graph of $G$ is
a \emph{root separation} of $G$ if $X_G\subseteq A$.
A rooted graph is \emph{internally $m$-connected} if it has no proper root separation of order less than $m$.
The \emph{left-hand side} $L_{A,B}$ of a root separation $(A,B)$ of $G$ is the rooted graph $\roots{G[A]}{X_G}$,
and the \emph{right-hand side} $R_{A,B}$ is the rooted graph $\roots{G[B]}{(A\cap B)}$
(we also refer to these rooted graphs as $L^G_{A,B}$ and $R^G_{A,B}$
when the rooted graph $G$ is not clear from the context).
We also often consider just the \emph{strictly right-hand side} of the separation, defined as $\widetilde{R}_{A,B}$.
For a rooted graph $G$,
\begin{itemize}
\item let $\rho(G)=|E(\widetilde{G})|$ be the number of edges incident with at least one non-root vertex, and
\item let $n(G)=|V(G)|-|X_G|$ be the number of non-root vertices of $G$.
\end{itemize}
For a positive real number $t$, we define the \emph{$t$-density} of $G$ as
$$\rho_t(G)=\rho(G)-t\cdot n(G);$$
note that $\rho_t(\widetilde{G})=\rho_t(G)$.
We say that a root separation $(A,B)$ is \emph{$t$-light} if $\rho_t(R_{A,B})\le 0$.
A $k$-rooted graph $G$ is \emph{$t$-light} if every root separation of $G$ of order
less than $k$ is $t$-light.  Our goal will now be to prove the following stronger version of Theorem~\ref{thm-allK5} where
we relax the 5-connectedness assumption.
\begin{theorem}\label{thm-allK5-better}
Let $G$ be a 5-rooted graph.  If $G$ is 4-light and has at least $4|V(G)|-10$ edges, then $K_5$ is a rooted minor of $G$.
\end{theorem}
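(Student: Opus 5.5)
We argue by a minimal counterexample: choose a 5-rooted graph $G$ that is 4-light, has at least $4|V(G)|-10$ edges, and does not contain $K_5$ as a rooted minor, with $n(G)$ minimum and, subject to that, $|E(G)|$ minimum. The edge condition is equivalent to $\rho_4(G)\ge 10-e$, where $e$ is the number of edges among the roots; since those edges do not matter for rooted minors, we may assume all root pairs are adjacent and work with $\rho_4(G)\ge 0$. Minimality then gives two facts. If an edge is deleted, the result is still 4-light but falls below the bound, so we may assume equality $\rho_4(G)=0$ up to the root edges. More importantly, contracting an edge only helps us if the contracted graph is again 4-light.

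The first structural step is to show that every non-trivial root separation $(A,B)$ of order at most $4$ with $n(R_{A,B})\ge1$ is in fact \emph{strongly} light, in the sense that $\rho_4(R_{A,B})\le -c$ for some positive slack $c$. To prove this, take such a separation with $\rho_4(R_{A,B})$ close to $0$. Replace $B\setminus A$ by a small gadget on $A\cap B$, for example the clique on $A\cap B$ plus at most one extra vertex, so that $\rho_4$ is preserved or increased. By the 4-light hypothesis there are enough disjoint paths from $X_G$ to $A\cap B$ (by Menger). So if the reduced graph contains $K_5$ as a rooted minor, that minor lifts back to $G$, provided the gadget edges can be realized inside $G[B]$ as connectivity. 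This holds when $R_{A,B}$ is well connected, and otherwise we pass to a smaller separation. The reduced graph is smaller and stays 4-light, which contradicts minimality. The outcome is that, for separations cutting off non-root vertices, the sides are uniformly sparse.

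Next we contract an edge $uv$ with $u,v\notin X_G$. The contracted graph $G/uv$ has no rooted $K_5$ minor, so by minimality it either is not 4-light or has fewer than $4|V|-10$ edges. We show that the first case is impossible. A light-violating separation in $G/uv$ lifts to a separation of $G$ of order one larger, and its $\rho_4$ changes by the number of lost edges, at most the number of common neighbours. The slack from the previous step then rules this out, except in small configurations that we handle by hand, such as $v$ of degree $\le 5$ or $A\cap B$ containing $u$. In the second case every such edge lies in at least $4$ triangles. We also treat edges with one end a root: contracting the edge keeps the root, so the same argument applies.

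Finally, the density $\rho_4=0$ gives a non-root vertex $v$ of small degree, at most about $8$ (ignoring root edges). Every edge $vw$ lies in at least $4$ triangles, so $G[N(v)]$ has minimum degree at least $4$. Using Mader's bound \eqref{eq-mader} for $K_4$ together with this degree condition, $G[N(v)]$ contains a rich structure, for instance a $K_4$ minor with branch sets touching many neighbours, or a wheel. Together with $v$ this gives a $K_5$ minor near $v$. We then link five branch sets to $X_G$ by five disjoint paths, whose existence follows by Menger from 4-lightness, since a separation of order $\le4$ between $X_G$ and $N[v]$ would be heavy because $v$'s neighbourhood is dense. The main obstacle is this last linking step: we must choose the branch sets flexibly enough that any five disjoint $X_G$–$N(v)$ paths can be rerouted onto them. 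The cases where $N(v)$ meets $X_G$, or the paths pass through $N(v)$, need a careful case analysis, which I expect to be the longest part of the proof.
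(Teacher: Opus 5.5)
There is a genuine gap, and it begins with your opening reduction. If all pairs of roots are made adjacent, $K_5$ is trivially a rooted minor: each root serves as its own branch set, so nothing remains to prove. The root edges are exactly what make the problem graded. After discarding them, you must find $F=\overline{G[X_G]}$ as an $\id$-rooted minor of $\widetilde{G}$, under the hypothesis $\rho_4(G)\ge |E(F)|$, where $|E(F)|$ can be anything from $0$ to $10$. A minimal counterexample therefore has to be taken for this graded statement, and $\rho_4(G)$ can be as large as $10$. This also undercuts your final step: when $n(G)$ is small there need not be a non-root vertex of degree about $8$. The paper handles this regime by the counting in Corollary~\ref{cor-size} and Lemmas~\ref{lemma-s5}--\ref{lemma-s3}, followed by a final case analysis.

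The central missing idea is control of root separations of order five. Contracting $uv$ turns a root separation $(A,B)$ of $G$ of order five with $u,v\in A\cap B$ into one of order four. Deleting $uv$ can likewise turn an order-five separation of $G$ with $u$ in the cut into an order-four one. Four-lightness of $G$ says nothing about order-five separations, so $\rho_4(R_{A,B})>0$ is possible, and then $G/uv$ or $G-uv$ is not 4-light.

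Your slack for separations of order at most four is irrelevant to this. It is also unsupported: the clique gadget lifts back to $G$ only if $R_{A,B}$ contains the clique on $A\cap B$ as a rooted minor. Corollary~\ref{cor-k4} provides that (up to one edge) only when $\rho_4(R_{A,B})>0$, which is exactly the case 4-lightness excludes.

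To rule out dense order-five separations, you must know which rooted minors the 5-rooted graph $R_{A,B}$ contains when $1\le \rho_4(R_{A,B})\le 4$. That is far below the density at which Theorem~\ref{thm-allK5-better} says anything, so the theorem, even in its graded form (equivalent to Corollary~\ref{cor-univ}), cannot serve as its own induction hypothesis. This is why the paper proves the stronger Theorem~\ref{thm-mainplus}, namely universality for the $\rho_4$-target, with sharper targets such as $\SS_{5,6}$ at $\rho_4=4$ that are used in Observations~\ref{obs-trade}--\ref{obs-trade3}. It applies this to $R_{A,B}$ via Observation~\ref{obs-univsep}. It then develops the machinery of universal, nearly saturated, nova and split separations to show that proper order-five separations satisfy $\rho_4(R_{A,B})\le 1$, with a vertex complete to $A\cap B$ in case of equality (Corollary~\ref{cor-nonear-bistar}, Lemma~\ref{lemma-kill5}). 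Only then does Corollary~\ref{cor-cancontr} show that $G-e$ and $G/e$ remain 4-light.

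Your final linking step relies on the same machinery, since excluding small isolators uses Lemma~\ref{lemma-nosatur}. Once all this is in place, Theorem~\ref{thm-allK5-better} follows immediately from Corollary~\ref{cor-univ} applied with $F=\overline{G[X_G]}$.
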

Let us now come back to the proof idea.  The relaxed assumption of 4-lightness is easier to maintain than 5-connectivity,
but we still need to overcome significant issues.  Contracting an edge can now violate the 4-lightness in two ways:
We could increase the 4-density of the right side of a previously existing root separation of order at most four,
or we could turn a root separation $(A,B)$ of order five with too dense right side into a separation of order four.
The basic idea for dealing with the latter issue is that we apply Theorem~\ref{thm-allK5-better}
inductively to $R_{A,B}$, showing that we can contract it to a clique and eliminate the offending root separation.
However, it turns out that Theorem~\ref{thm-allK5-better} is not detailed enough for this purpose;
we are going to need to understand rooted minors even when $\rho_4(R_{A,B})$ is not large enough to force the appearance of $K_5$
as a rooted minor.  Hence, we are going to prove a further technical refinement of Theorem~\ref{thm-allK5-better}.

Let $k$ be a positive integer and suppose that $\SS$ is a class of graphs on $k$ vertices.
We say that the $k$-rooted graph $G$ is \emph{$\SS$-universal}
if for every graph $S\in \SS$ and for every bijection $\pi:V(S)\to X_G$, the graph $S$ is a $\pi$-rooted minor of $G$.
When $\SS$ consists just of a single graph $S$, then we say that $G$ is $S$-universal instead of $\{S\}$-universal.
The following classes of graphs will be important in formulating the main result: For integers $k$ and $t$, let $\SS_{k,t}$
denote the class of all graphs on $k$ vertices with at most $t$ edges.  Moreover, let $\SS_{5,4}^-\subset\SS_{5,4}$ denote the class of
all graphs on five vertices with at most four edges except for those isomorphic to $K_2+K_3$ (the disjoint union of $K_2$ and $K_3$).
For an integer $m$, the \emph{$m$-target} is the class
\begin{itemize}
\item $\SS_{5,10}$ if $m\ge 7$,
\item $\SS_{5,9}$ if $m=6$,
\item $\SS_{5,8}$ if $m=5$,
\item $\SS_{5,6}$ if $m=4$,
\item $\SS_{5,4}^-$ if $m=3$,
\item $\SS_{5,3}$ if $m=2$,
\item $\SS_{5,1}$ if $m=1$, and
\item $\SS_{5,0}$ if $m\le 0$.
\end{itemize}
For a $5$-rooted graph $G$, the \emph{target} of $G$ is defined as the $\rho_4(G)$-target.
We say that a $5$-rooted graph $G$ is \emph{universal} if it is $\SS$-universal for the target $\SS$ of $G$.
Our main result can now be stated as follows.
\begin{theorem}\label{thm-mainplus}
Every $4$-light $5$-rooted graph is universal.
\end{theorem}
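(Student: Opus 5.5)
We argue by contradiction via a minimal counterexample: let $G$ be a 4-light 5-rooted graph that is not universal, chosen with $n(G)$ minimal and, subject to that, with $\rho(G)$ minimal. Since the target depends only on $\rho_4(G)$ and edges between roots do not affect $\rho$, we may assume $G=\widetilde{G}$. Whenever a rooted model of $S\in\SS$ uses the root $x_i$ alone as its branch set, an edge $x_ix_j$ of $S$ must be realised by an edge of $G$. So universality of $\widetilde{G}$ is the right notion once we allow branch sets to absorb non-root vertices. We first settle the small cases ($n(G)\le 1$, or $\rho_4(G)\le 0$, where the target is $\SS_{5,0}$ and there is nothing to prove) directly. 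We then derive structural properties of $G$ in order:
\begin{itemize}
\item no vertex outside the roots has degree at most 4;
\item no edge can be deleted;
\item no proper root separation of order at most 5 has a dense right side.
\end{itemize}
For the first two, deleting such a vertex or edge decreases $\rho_4$ by at most a controlled amount, keeps 4-lightness, and the resulting target is still large enough, by the monotonicity of the list $\SS_{5,0}\subset\SS_{5,1}\subset\SS_{5,3}\subset\cdots$. Minimality then contradicts the assumption.

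The key reduction handles root separations $(A,B)$ of order $\le 5$ with $A\cap B$ of size exactly 5. We apply the induction hypothesis to $R_{A,B}$, which is 4-light because any small separation of it extends to one of $G$. This shows that $R_{A,B}$ is universal for its own target $\SS'$, determined by $\rho_4(R_{A,B})$. We replace $G[B\setminus A]$ by a small gadget $Q$ on the separator: a graph on $A\cap B$ together with few new vertices, with $\rho_4(Q)\ge\rho_4(R_{A,B})$ capped at the value where the $\SS'$-universality matches, chosen so that every $\pi$-rooted minor of $Q$ lifts to $R_{A,B}$. Then the smaller graph $G'=G[A]\cup Q$ is still 4-light and has $\rho_4(G')\ge\rho_4(G)$ up to this cap. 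Rooted models in $G'$ translate back to $G$ by composing a model in $L_{A,B}$ with a universal model in $R_{A,B}$. The exact thresholds in the definition of $m$-target (the jumps $0,1,3,4^-,6,8,9,10$ and the exclusion of $K_2+K_3$ at $m=3$) should be exactly what makes this composition close up. So one step is a finite check that for each $m$, a graph $S$ with the given number of edges on the separator, composed with a graph from the $m$-target, covers the required graphs.

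For the separator of order 4 we also need the 4-light condition, $\rho_4(R_{A,B})\le 0$. This lets us delete $B\setminus A$ and add a vertex or some edges inside the separator without decreasing $\rho_4$.

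After these reductions, $G$ is internally 6-connected in the relevant sense, apart from sparse pieces. We then contract an edge $uv$ with $u,v\notin X_G$. The contracted graph remains 4-light by the reduction above, since every newly created 4-separation came from a 5-separation whose right side we already showed is sparse. Minimality forces $uv$ to lie in many triangles: roughly, its contribution to $\rho_4$ cannot drop by more than the target allows, so $uv$ lies in at least 4 triangles, or in fewer where the target is lower. Hence each non-root vertex $v$ has a neighbourhood of minimum degree $\ge 4$ on its non-root part. Using 5 disjoint paths from $X_G$ to $N(v)$ (Menger, available because of the internal connectivity) and the dense graph $G[N[v]]$, we build each required $S$ in the target, for every bijection $\pi$, contradicting that $G$ is a counterexample.

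\textbf{Main obstacle.} I expect two places to be hardest. The first is designing the gadgets replacing dense 5-separations so that both 4-lightness and all required universality transfer for every value of $m$; this is where the peculiar target list must be justified case by case. The second is the final neighbourhood argument when $N(v)$ meets the roots or when the target is only $\SS_{5,6}$ or $\SS_{5,4}^-$. I would tackle the gadget step first, by treating each $m$ separately with explicit small gadgets such as $K_5$ minus edges plus an apex.
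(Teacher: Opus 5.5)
Your overall framework (minimal counterexample, keep 4-lightness under contraction, deduce that edges lie in at least four triangles) matches the paper's. The two steps that carry the proof are missing, though, and the mechanisms you propose for them would fail.

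\textbf{The gadget step for order-five separations.} Universality of $R_{A,B}$ only bounds its census from below, and 4-light graphs of the same 4-density have incomparable censuses. Take $\rho_4=2$: let $v_1v_2$ be an edge, with $v_1$ complete to the five separator vertices and $v_2$ complete to all of them except $y$. This graph lacks the full $\{a,b\}$-star for every pair $a,b\neq y$. Any admissible gadget $Q$ must itself be 4-light, since a heavy separation of $Q$ is one of $G[A]\cup Q$. So if $\rho_4(Q)\ge 2$, Corollary~\ref{cor-nok4} makes $Q$ either $(K_4+K_1)$-universal, or gives it a full $\{y_Q,a\}$-star for every $a$; the example defeats both once $y\notin\{y_Q,a\}$. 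Hence $Q$ must be tailored to the fine structure of $R_{A,B}$, not to its density level, and no ``finite check on the thresholds'' can close up.

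The general way to guarantee census containment is to contract $R_{A,B}$ to one of its own minors, as the paper does with $K^G_{A\cap B}$, novas, splits and twin reductions. The whole difficulty is then the step you assert without argument: that the result is 4-light. Once the new graph on $A\cap B$ is not a clique, a $(\le 4)$-separation of the reduced graph can split $A\cap B$ and corresponds to no $(\le 4)$-separation of $G$. Handling this is the paper's long middle part: isolators and strong linkage, the bound $\rho_4(R_{A,B})\le 4$ (Lemma~\ref{lemma-ubdel}), Lemma~\ref{lemma-2twin}, nearly saturated separations, profitable novas, $T$-partitions, vampires. What comes out is not ``order-five right sides are sparse''. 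It is the finer statement that $\rho_4(R_{A,B})\le 1$, with a vertex complete to $A\cap B$ when equality holds (Lemma~\ref{lemma-kill5}). That is exactly what Corollary~\ref{cor-cancontr} needs to show $G/e$ is 4-light.

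\textbf{The preliminary reductions.} Vertex deletion does not preserve 4-lightness for free. If $\deg v\le 3$ and $v$ lies strictly right of a 4-separation, deleting $v$ raises that right side's 4-density by $4-\deg v$. A 4-separation of $G-v$ with neighbours of $v$ on both sides corresponds only to a 5-separation of $G$. Edge deletion lowers $\rho_4$ by one and can shrink the target (e.g.\ from $\SS_{5,8}$ to $\SS_{5,6}$), so monotonicity works against you. Once $G-e$ is known to be 4-light (which again needs the order-five results), what you actually get is that the flaw lies outside the $(\rho_4(G)-1)$-target, so $\rho_4(G)\le 7$.

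\textbf{The closing neighbourhood argument.} Minimum degree $4$ in $N^+_v$ plus a 5-linkage into $N(v)$ does not force the required minors at an arbitrary $v$. Suppose $N(v)$ induces the pentagonal antiprism and the linkage ends on one pentagon. Then $G[N[v]]$ has no $K_5$ rooted there: removing the branch set containing $v$ leaves a $K_4$ rooted on outer-face vertices of a plane graph, which is impossible. The paper proves the opposite kind of statement (Lemma~\ref{lemma-cantcreateclique}): some terminal set always fails to yield the clique, so local analysis gives only constraints. Those constraints are obtained only at degrees $6,7,8$ (Lemmas~\ref{lemma-bo6}--\ref{lemma-bo8}, via $(F,v)$-poor sets), and only in the form $\deg^+v\ge 10$. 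The contradiction comes from the global count $\rho_4(G)=\sum_v(\tfrac12\deg^+v-4)\le 7$. This yields internal 6-connectivity and $n(G)\le 7$, then reductions to $n(G)\le 3$, and finally a finite case check.
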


\begin{figure}
\begin{asy}
real vsz=0.7, rt = 6;

struct loc
{
  pair at;
  bool left;
};

void k5(pair reds[])
{
  for (int a = 0; a < 4; ++a)
    for (int b = a + 1; b < 5; ++b)
      {
        pair d1, d2;
	pen col;
	
	if (find(reds == (a,b)) >= 0)
	  {
	    d1 = dir(-135);
	    d2 = dir(-45);
	    col = red;
	  }
	else
	  {
	    d1 = dir(-45);
	    d2 = dir(-135);
	    col = black;
	  }

	if (b == a + 1)
	  d1 = d2 = dir (-90);

	draw (v[5+a]{d1} .. {d2}v[5+b], col);
      }
}

void mkvs ()
{
  for (int k = 0; k < 10; ++k)
    vertex (v[k], black);
}

void ram(loc where, int rho, string ex)
{
  for (int k = 0; k < 5; ++k)
    {
      v[k] = where.at + (0,-k * vsz);
      v[5 + k] = where.at + (rt,-k * vsz);
      label ("\scriptsize$" + string(k+1) + "$", v[k], W);
      label ("\scriptsize$" + string(k+1) + "$", v[5+k], W);
    }

  label ("$\rho_4=" + string(rho) + "$", where.at + (rt - 2.5,-1.5vsz));
  label ("from $" + ex + "$ excludes", where.at + (rt - 2.5,-2.5vsz));

  if (where.left)
    {
      where.at += (rt+1.5,0);
      where.left = false;
    }
  else
    {
      where.at += (-(rt+1.5),-(4*vsz + 1));
      where.left = true;
    }
}

loc w;
w.at = (0,0);
w.left = true;

ram (w, 0, "\mathcal{S}_{5,1}");
draw (v[5] -- v[6]);
mkvs ();

ram (w, 1, "\mathcal{S}_{5,2}");
v[10] = v[2] + (1,0);
for (i = 0; i < 5; ++i)
  draw (v[i] -- v[10]);
vertex(v[10]);
draw(v[5]--v[6]);
draw(v[7]--v[8]);
mkvs ();

ram (w, 2, "\mathcal{S}_{5,4}^-");
v[10] = v[2] + (1,-0.5);
v[11] = v[2] + (1,0.5);
for (i = 0; i < 5; ++i)
  {
    draw (v[i] -- v[11]);
    draw (v[i] -- v[10], i == 2 ? red : black);
  }
draw(v[10]--v[11]);
vertex(v[10]);
vertex(v[11]);
draw(v[5]--v[6]--v[7]--v[8]--v[9]);
mkvs ();

ram (w, 3, "\mathcal{S}_{5,4}");
v[10] = v[2] + (1,-0.5);
v[11] = v[2] + (1,0.5);
for (i = 0; i < 5; ++i)
  draw (v[11] -- v[i] -- v[10]);
draw (v[10] -- v[11]);
vertex(v[10]);
vertex(v[11]);
draw(v[5]--v[6]--v[7]);
draw(v[5]{dir(-45)}..v[7]{dir(-135)});
draw(v[8]--v[9]);
mkvs ();

ram (w, 4, "\mathcal{S}_{5,7}");
v[10] = v[2] + (1,-1);
v[11] = v[2] + (1,0);
v[12] = v[2] + (1,1);
for (i = 0; i < 5; ++i)
  {
    draw (v[i] -- v[10]);
    draw (v[i] -- v[11], (i == 0 || i == 4) ? red : black);
    draw (v[i] -- v[12]);
  }
draw (v[10] -- v[11]--v[12]{dir(-45)}..{dir(-135)}cycle);
vertex(v[10]);
vertex(v[11]);
vertex(v[12]);
k5(new pair[]{(0,1),(0,4),(3,4)});
mkvs ();

ram (w, 5, "\mathcal{S}_{5,9}");
v[10] = v[2] + (1,-1);
v[11] = v[2] + (1,0);
v[12] = v[2] + (1,1);
for (i = 0; i < 5; ++i)
  {
    draw (v[i] -- v[10]);
    draw (v[i] -- v[11], (i == 0) ? red : black);
    draw (v[i] -- v[12]);
  }
draw (v[10] -- v[11]--v[12]{dir(-45)}..{dir(-135)}cycle);
vertex(v[10]);
vertex(v[11]);
vertex(v[12]);
k5(new pair[]{(0,1)});
mkvs ();

ram (w, 6, "\mathcal{S}_{5,10}");
v[10] = v[2] + (1,-1);
v[11] = v[2] + (1,0);
v[12] = v[2] + (1,1);
for (i = 0; i < 5; ++i)
  {
    draw (v[i] -- v[10]);
    draw (v[i] -- v[11]);
    draw (v[i] -- v[12]);
  }
draw (v[10] -- v[11]--v[12]{dir(-45)}..{dir(-135)}cycle);
vertex(v[10]);
vertex(v[11]);
vertex(v[12]);
k5(new pair[]{});
mkvs ();

\end{asy}
\caption{Tight graphs for Theorem~\ref{thm-mainplus}; red lines are used to emphasise important non-edges.}\label{fig-addittight}
\end{figure}

Let us remark that Theorem~\ref{thm-mainplus} is tight in the sense that we cannot increase the number of the edges of the rooted minors
in any of the cases in the definition of universality, as shown by the graphs depicted in Figure~\ref{fig-addittight}.
To see that Theorem~\ref{thm-mainplus} implies Theorem~\ref{thm-allK5-better}, let us note the following fact
shown by an inspection of the definition of the $m$-target.
\begin{observation}\label{obs-canaddm}
For every non-negative integer $m\le 10$, the $m$-target is a superset of $\SS_{5,m}$.
\end{observation}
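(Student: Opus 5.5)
This is a direct case check against the definition of the $m$-target. The key fact is that the classes $\SS_{5,t}$ are nested: $\SS_{5,s}\subseteq\SS_{5,t}$ whenever $s\le t$, since a graph with at most $s$ edges has at most $t$ edges. So for each non-negative integer $m\le 10$ it suffices to find, in the definition of the $m$-target, a class $\SS_{5,t}$ with $t\ge m$ that equals or is contained in the target. The only case where the target is not of the form $\SS_{5,t}$ is $m=3$, which I treat separately.

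I go through the values of $m$ in order.
\begin{itemize}
\item For $m=0$, the target is $\SS_{5,0}$ itself.
\item For $m=1$, the target is $\SS_{5,1}$.
\item For $m=2$, the target is $\SS_{5,3}\supseteq\SS_{5,2}$.
\item For $m=3$, the target is $\SS_{5,4}^-$, which is $\SS_{5,4}$ with the graphs isomorphic to $K_2+K_3$ removed. Every graph in $\SS_{5,3}$ has at most $3$ edges, while $K_2+K_3$ has exactly $4$ edges. So no member of $\SS_{5,3}$ is among the removed graphs, and $\SS_{5,3}\subseteq\SS_{5,4}^-$.
\item For $m=4$, the target is $\SS_{5,6}\supseteq\SS_{5,4}$.
\item For $m=5$, the target is $\SS_{5,8}\supseteq\SS_{5,5}$.
\item For $m=6$, the target is $\SS_{5,9}\supseteq\SS_{5,6}$.
\item For $7\le m\le 10$, the target is $\SS_{5,10}\supseteq\SS_{5,m}$.
\end{itemize}

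In fact, since a graph on five vertices has at most $\binom{5}{2}=10$ edges, $\SS_{5,10}$ is the class of all graphs on five vertices. This makes the cases $7\le m\le 10$ immediate.

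No step is a real obstacle. The only point that needs care is $m=3$, where the target is not one of the nested classes $\SS_{5,t}$. There the check is just the edge count of $K_2+K_3$, namely $1+3=4>3$.
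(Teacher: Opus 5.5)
Your case-by-case check is correct and is exactly the inspection of the definition of the $m$-target that the paper relies on. The only non-nested case, $m=3$, is handled correctly: $K_2+K_3$ has four edges, so no member of $\SS_{5,3}$ is excluded from $\SS_{5,4}^-$.
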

Hence, Theorem~\ref{thm-mainplus} has the following weakening.
\begin{corollary}\label{cor-univ}
For every non-negative integer $t\le 10$, every $4$-light $5$-rooted graph $G$ such that $\rho_4(G)\ge t$ is $\SS_{5,t}$-universal.
\end{corollary}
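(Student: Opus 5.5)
The corollary follows from Theorem~\ref{thm-mainplus} by two monotonicity observations. Let $G$ be a $4$-light $5$-rooted graph with $\rho_4(G)\ge t$, where $t$ is an integer with $0\le t\le 10$. By Theorem~\ref{thm-mainplus}, $G$ is universal, that is, $\SS$-universal for $\SS$ the $\rho_4(G)$-target. Since $\SS$-universality is inherited by subclasses (the requirement is quantified over all $S\in\SS$), it suffices to show that $\SS_{5,t}$ is contained in the $\rho_4(G)$-target.

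First, the $m$-target is monotone non-decreasing in $m$ with respect to inclusion. Reading the list from $m\le 0$ upward gives
$$\SS_{5,0}\subseteq\SS_{5,1}\subseteq\SS_{5,3}\subseteq\SS_{5,4}^-\subseteq\SS_{5,6}\subseteq\SS_{5,8}\subseteq\SS_{5,9}\subseteq\SS_{5,10}.$$
Each inclusion is immediate from the definitions, because $\SS_{5,a}\subseteq\SS_{5,b}$ whenever $a\le b$. The only step needing a word is $\SS_{5,3}\subseteq\SS_{5,4}^-$: the excluded graph $K_2+K_3$ has four edges, so no graph with at most three edges is removed.

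Second, $\rho_4(G)$ is an integer, since $\rho(G)$ and $n(G)$ are integers, and $\rho_4(G)\ge t$. Monotonicity therefore gives that the $t$-target is contained in the $\rho_4(G)$-target. If $\rho_4(G)$ exceeds $10$, the target is $\SS_{5,10}$ in every case, so nothing changes. By Observation~\ref{obs-canaddm}, $\SS_{5,t}$ is contained in the $t$-target. Chaining the two inclusions gives $\SS_{5,t}\subseteq$ the $\rho_4(G)$-target, and hence $G$ is $\SS_{5,t}$-universal.

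There is no genuine obstacle here. The only care needed is with the non-nested class $\SS_{5,4}^-$: at $m=3$ we need $\SS_{5,3}\subseteq\SS_{5,4}^-$, which holds, while at $m=4$ we need $\SS_{5,4}\subseteq\SS_{5,6}$, which also holds, and this is exactly why Observation~\ref{obs-canaddm} is stated as it is.
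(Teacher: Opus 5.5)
Your proposal is correct and follows the paper's own derivation: you apply Theorem~\ref{thm-mainplus} and Observation~\ref{obs-canaddm}, and you spell out the monotonicity of the $m$-target in $m$ (together with integrality of $\rho_4(G)$), which the paper uses without stating. The chain of inclusions you list, including $\SS_{5,3}\subseteq\SS_{5,4}^-$, is exactly what that implicit step requires.
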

With this, it is easy to obtain the desired conclusion about the number of edges forcing $K_5$ as a rooted minor.
\begin{proof}[Proof of Theorem~\ref{thm-allK5}]
Note that since $G$ is 5-connected, it is also 4-light.
Let $F=\overline{G[X_G]}$ be the graph formed by the non-edges between the roots of $G$.
Since $|E(G)|\ge 4|V(G)|-10=4n(G)+10$, we have $\rho_4(G)\ge 10-|E(G[X_G])|=|E(F)|$.  Corollary~\ref{cor-univ} then implies that $\widetilde{G}$ is $\SS_{5,|E(F)|}$-universal,
and in particular $\widetilde{G}$ contains $F$ as an $\id$-rooted minor.  This implies that $G$ contains $K_5$ as a rooted minor.
\end{proof}

\section{Preliminaries}

Let us now give a few more definitions and simple results that we need in the argument.
We often use the following standard fact.
\begin{observation}\label{obs-submod}
Let $(A,B)$ and $(C,D)$ be separations of a graph $G$ of orders $n_1$ and $n_2$, respectively.
Then $(A\cup C,B\cap D)$ and $(A\cap C,B\cup D)$ are also separations of $G$, and their orders $n'$ and $n''$ satisfy
$$n'+n''=n_1+n_2.$$
\end{observation}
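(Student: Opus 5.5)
We verify the separation property and the order identity directly from the definitions, using only that a separation $(A,B)$ satisfies $V(G)=A\cup B$ and has no edge between $A\setminus B$ and $B\setminus A$.

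\emph{Covering.} For $(A\cup C,B\cap D)$, take a vertex $v\notin A\cup C$. Then $v\in B$ because $A\cup B=V(G)$, and $v\in D$ because $C\cup D=V(G)$. Hence $(A\cup C)\cup(B\cap D)=V(G)$. The same argument with the roles of the sides exchanged shows $(A\cap C)\cup(B\cup D)=V(G)$.

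\emph{No crossing edges for $(A\cup C,B\cap D)$.} Suppose $uv$ is an edge with $u\in (A\cup C)\setminus(B\cap D)$ and $v\in (B\cap D)\setminus(A\cup C)$. Then $v\in B\setminus A$ and $v\in D\setminus C$. By symmetry we may assume $u\in A$. If $u\notin B$, then $u\in A\setminus B$ and $v\in B\setminus A$, which $(A,B)$ forbids. Otherwise $u\in B$, and since $u\notin B\cap D$ we get $u\notin D$. As $C\cup D=V(G)$, this gives $u\in C\setminus D$, while $v\in D\setminus C$, which $(C,D)$ forbids.

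\emph{No crossing edges for $(A\cap C,B\cup D)$.} This follows from the previous case applied to the swapped separations $(B,A)$ and $(D,C)$. That case shows $(B\cup D,A\cap C)$ is a separation, and separations are symmetric in their two sides.

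\emph{Orders.} Writing $S=A\cap B$ and $T=C\cap D$, the two new separators are
$$(A\cup C)\cap B\cap D=(A\cap B\cap D)\cup(C\cap B\cap D)$$
and
$$A\cap C\cap(B\cup D)=(A\cap C\cap B)\cup(A\cap C\cap D).$$
We count each vertex $v$ by how many of these two sets contain it, and compare with $[v\in S]+[v\in T]$.

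\begin{itemize}
\item If $v\in S\cap T$, then $v$ lies in all of $A,B,C,D$, so it is in both new separators, contributing $2$ on each side.
\item If $v\in S\setminus T$, then $v\in A\cap B$ and $v$ misses exactly one of $C,D$. If $v\in C\setminus D$, it lies in $A\cap C\cap B$, the second separator, but not in the first, since $v\notin D$. If $v\in D\setminus C$, it lies in $A\cap B\cap D$, the first separator, but not in the second, since $v\notin C$. Either way it contributes $1$.
\item If $v\in T\setminus S$, the symmetric argument gives a contribution of $1$.
\item If $v\notin S\cup T$, then $v$ cannot lie in either new separator. The first requires $v\in B\cap D$ together with $v\in A$ or $v\in C$, which would put $v$ in $S$ or $T$. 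The second requires $v\in A\cap C$ together with $v\in B$ or $v\in D$, which again puts $v$ in $S$ or $T$.
\end{itemize}

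Summing over all vertices gives $n'+n''=|S|+|T|=n_1+n_2$.

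The only step needing care is the edge argument, specifically the subcase $u\in A\cap B$ that falls back on $(C,D)$. The rest is set bookkeeping.
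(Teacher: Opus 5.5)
Your proof is correct and complete. The covering and crossing-edge checks are right, including the subcase $u\in A\cap B$ that falls back on $(C,D)$ and the reduction of the second separation to the first via $(B,A)$ and $(D,C)$. The four-way vertex count correctly gives $n'+n''=n_1+n_2$.

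The paper gives no proof to compare against; it simply cites this as a standard fact. For the order identity there is a shorter route once you have the covering property. Inclusion--exclusion gives $n'=|A\cup C|+|B\cap D|-|V(G)|$ and $n''=|A\cap C|+|B\cup D|-|V(G)|$. Adding these and using $|A\cup C|+|A\cap C|=|A|+|C|$ and $|B\cap D|+|B\cup D|=|B|+|D|$ yields $(|A|+|B|-|V(G)|)+(|C|+|D|-|V(G)|)=n_1+n_2$. This replaces your case analysis on vertices.
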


For a rooted graph $G$ and a vertex $v\in V(G)\setminus X_G$, let $\deg^X_G v$ denote the number of neighbors of $v$ in $X_G$
and let $\deg^+_G v=\deg_G v+\deg^X_G v$. We drop the subscripts when the graph $G$ is clear from the context.
Observe that
\begin{equation}\label{eq-rhot}
\rho_t(G)=\sum_{v\in V(G)\setminus X_G} (\tfrac{1}{2}\deg^+ v-t).
\end{equation}

We say that a \emph{counterexample} is a $5$-rooted $4$-light graph $G$
that is not universal. Thus, there exists a graph $H$ with vertex set $X_G$ contained in the target of $G$ such that $H$ is not an $\id$-rooted minor of $G$;
we say that such a graph is a \emph{flaw} of $G$ if it is maximal, i.e., no proper supergraph of $H$ belongs to the target of $G$.
We aim to prove Theorem~\ref{thm-mainplus} by contradiction, and thus we suppose that a counterexample exists.
Note that if $G$ is a counterexample, then $\widetilde{G}$ is a counterexample as well.
A \emph{minimal counterexample} is a counterexample $G$ with $n(G)$ minimum and subject to that with $|E(G)|$ minimum (and in particular, $X_G$ is an independent set in $G$).
We now study the properties of minimal counterexamples, eventually concluding that they cannot exist and obtaining the desired contradiction.
Let us start with several simple observations.
\begin{observation}\label{obs-univsep}
If $G$ is a minimal counterexample and $(A,B)$ is a root separation of order five such that $A\neq X_G$, then the 5-rooted
graphs $R_{A,B}$ and $\widetilde{R}_{A,B}$ are universal.
\end{observation}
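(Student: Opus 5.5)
The plan is to apply the minimality of $G$: show that $\widetilde{R}_{A,B}$ is a $4$-light $5$-rooted graph with fewer non-root vertices than $G$, so it is not a counterexample and hence is universal. Then transfer universality to $R_{A,B}$.

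\textbf{Step 1: $R_{A,B}$ is smaller than $G$.} The root set of $R_{A,B}$ is $A\cap B$, which has size five, and its non-root vertices are those of $B\setminus A$. Since $A\neq X_G$ and $X_G\subseteq A$, there is a vertex in $A\setminus X_G$. This vertex is a non-root of $G$, and it is not a vertex of $R_{A,B}$ at all if it lies in $A\setminus B$. If instead it lies in $A\cap B$, it is a root of $R_{A,B}$ but a non-root of $G$. In either case $n(R_{A,B})=|B\setminus A|<n(G)$, because $B\setminus A\subseteq V(G)\setminus X_G$ (as $X_G\subseteq A$) and this inclusion misses that vertex. The same holds for $\widetilde{R}_{A,B}$.

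\textbf{Step 2: $\widetilde{R}_{A,B}$ is $4$-light.} Take a root separation $(C,D)$ of $R_{A,B}$ of order at most four, so $A\cap B\subseteq C\subseteq B$ and $D\subseteq B$. Then $(A\cup C,D)$ is a separation of $G$, since $D\setminus C\subseteq B\setminus A$ has no neighbours in $A\setminus B$. It contains $X_G$ on the left side. Its order is $|(A\cup C)\cap D|=|C\cap D|\le 4$, because $A\cap D\subseteq A\cap B\subseteq C$. Its right-hand side is exactly $\roots{G[D]}{(C\cap D)}$, which is also the right-hand side of $(C,D)$ in $R_{A,B}$. By the $4$-lightness of $G$ this has $\rho_4\le 0$. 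Removing edges between roots does not change which separations exist or their right-hand sides' $\rho$, so $\widetilde{R}_{A,B}$ is $4$-light as well. Minimality then gives that $\widetilde{R}_{A,B}$ is not a counterexample, i.e.\ it is universal.

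\textbf{Step 3: from $\widetilde{R}_{A,B}$ to $R_{A,B}$.} We have $\rho_4(R_{A,B})=\rho_4(\widetilde{R}_{A,B})$, so both graphs have the same target. Any rooted model in $\widetilde{R}_{A,B}$ is also a model in its supergraph $R_{A,B}$. Hence $R_{A,B}$ is universal too.

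The only delicate point is Step 2, namely checking that composing separations preserves the root condition and yields exactly the same right-hand side. I expect this to be routine.
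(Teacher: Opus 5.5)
Your proof is correct and follows essentially the same route as the paper: you lift a root separation $(C,D)$ of order at most four to the root separation $(A\cup C,D)$ of $G$ with the same right-hand side to get $4$-lightness, use $X_G\subsetneq A$ to get $n(\widetilde{R}_{A,B})=|B\setminus A|<n(G)$ and invoke minimality, and then transfer universality to the supergraph $R_{A,B}$, which has the same $4$-density. The only cosmetic difference is that the paper compares $\widetilde{R}^F_{C,D}$ with $\widetilde{R}^G_{A\cup C,D}$ directly for $F=\widetilde{R}_{A,B}$, whereas you argue for $R_{A,B}$ first and then note that deleting root--root edges changes nothing.
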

\begin{proof}
Note that if $(C,D)$ is a root separation of $F=\widetilde{R}_{A,B}$, then $(A\cup C,D)$ is a root separation of $G$ and
$\widetilde{R}^F_{C,D}=\widetilde{R}^G_{A\cup C, D}$.
Since $G$ is 4-light, it follows that $F$ is 4-light.  Since $X_G\subsetneq A$, we have $n(F)=|B\setminus A|<|V(G)\setminus X_G|=n(G)$,
and thus $F$ is not a counterexample.  Consequently, $F$ is universal.  Since $F$ is an $\id$-rooted minor of $R_{A,B}$,
the 5-rooted graph $R_{A,B}$ is universal as well.
\end{proof}

\begin{observation}\label{obs-lightminor}
Let $G$ and $G'$ be 5-rooted graphs such that $X_{G'}=X_G$.
Suppose that $G$ is a minimal counterexample, $G'$ is an $\id$-rooted minor of $G$, and $G'$ is $4$-light.
If either $n(G')<n(G)$, or $n(G')=n(G)$ and $|E(G')|<|E(G)|$, then $G'$ is universal and $\rho_4(G')<\rho_4(G)$.
\end{observation}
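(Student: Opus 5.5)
The plan has two parts. First I show that $G'$ is universal. Then I derive $\rho_4(G')<\rho_4(G)$ from the fact that $G$ is not universal.

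For universality, note that $G'$ is a $4$-light $5$-rooted graph. It is smaller than $G$ in the order used to define a minimal counterexample: either $n(G')<n(G)$, or $n(G')=n(G)$ and $|E(G')|<|E(G)|$. So $G'$ is not a counterexample, and hence it is universal. The only thing to check is that the definition of counterexample asks for nothing beyond being $5$-rooted, $4$-light and non-universal, which holds by definition.

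For the density inequality, argue by contradiction and suppose $\rho_4(G')\ge\rho_4(G)$. The $m$-target is monotone in $m$: each successive class in the list contains the previous one, since $\SS_{5,0}\subseteq\SS_{5,1}\subseteq\SS_{5,3}$, $\SS_{5,3}\subseteq\SS_{5,4}^-$ (a graph with at most three edges cannot be $K_2+K_3$, which has four edges), and $\SS_{5,4}^-\subseteq\SS_{5,6}\subseteq\SS_{5,8}\subseteq\SS_{5,9}\subseteq\SS_{5,10}$. Because $\rho_4$ may be non-integral, I need the target to be monotone in the real value. This holds because each case boundary is a threshold of the form $m\ge 7$, $m=6$, and so on. If $\rho_4$ is half-integral, I read the definition as using thresholds, for instance $\lfloor m\rfloor$; in fact $\rho(G)$ and $n(G)$ are integers, so $\rho_4$ is an integer and no issue arises.

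Under this assumption the target of $G$ is contained in the target of $G'$. Let $H$ be a flaw of $G$. Then $H$ lies in the target of $G'$, and since $G'$ is universal, $H$ is a $\pi$-rooted minor of $G'$ for every bijection $\pi$, in particular for $\id$. Since $G'$ is an $\id$-rooted minor of $G$ with $X_{G'}=X_G$, composing the models gives $H$ as an $\id$-rooted minor of $G$. This contradicts $H$ being a flaw, so $\rho_4(G')<\rho_4(G)$.

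The main obstacle, modest here, is justifying the composition of rooted models. Given a model $\mu'$ of $H$ in $G'$ with $x\in\mu'(x)$, and a model $\nu$ of $G'$ in $G$ with $x\in\nu(x)$ for roots $x$, I define $\mu(h)=\bigcup_{u\in\mu'(h)}\nu(u)$. Each such set is connected because $\mu'(h)$ is connected in $G'$ and each edge of $G'$ is realised by an edge of $G$ between the corresponding branch sets. The sets are pairwise disjoint, and $x\in\nu(x)\subseteq\mu(x)$. For each edge $hh'$ of $H$, the edge $\mu'(hh')=uu'$ of $G'$ maps to $\nu(uu')$, which joins $\mu(h)$ and $\mu(h')$. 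I also confirm the monotonicity of targets by the explicit inclusions listed above.
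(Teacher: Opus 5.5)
Your proof is correct and is essentially the paper's argument in contrapositive form. The paper notes that a flaw $F$ of $G$ is not an $\id$-rooted minor of $G'$, hence (since $G'$ is universal) not in the target of $G'$; so the target of $G$ is not contained in that of $G'$, and $\rho_4(G')<\rho_4(G)$ follows from monotonicity of targets. Your explicit chain of inclusions, the integrality of $\rho_4$, and the composition of rooted models fill in details the paper leaves implicit.
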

\begin{proof}
By the minimality of $G$, the rooted graph $G'$ cannot be a counterexample, and thus it is universal.
Let $F$ be a flaw of $G$. Recall that $F$ belongs to the target $\SS$ of $G$ and $F$ is not an $\id$-rooted minor of $G$.
Since $G'$ is an $\id$-rooted minor of $G$, it follows that the graph $F$ is not an $\id$-rooted minor of $G'$.
Since $G'$ is universal, this implies that $F$ does not belong to the target $\SS'$ of $G'$.
Therefore, $\SS\not\subseteq \SS'$, and thus $\rho_4(G)>\rho_4(G')$.
\end{proof}

\begin{observation}\label{obs-nomg}
Let $G$ and $G'$ be 5-rooted graphs such that $X_{G'}=X_G$.
If $G$ is a counterexample, $F$ is a flaw of $G$, and $G'$ is an $\id$-rooted minor of $G$, then $F-E(G'[X_G])$ is not an $\id$-rooted minor of $G'$.
In particular, if $G$ is a minimal counterexample,
$G'$ is $4$-light, and either $n(G')<n(G)$, or $n(G')=n(G)$ and $|E(G')|<|E(G)|$, then $F-E(G'[X_G])$ does not belong to the target of $G'$.
\end{observation}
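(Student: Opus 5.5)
The first assertion follows by contraposition. Suppose $F' = F - E(G'[X_G])$ were an $\id$-rooted minor of $G'$, witnessed by a model $\mu'$; for each root $x$ we have $x \in \mu'(x)$. Since $G'$ is an $\id$-rooted minor of $G$, there is a model $\nu$ of $G'$ in $G$ with $x \in \nu(x)$ for every $x \in X_G$.

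We compose these models by setting $\mu(x) = \bigcup_{y \in \mu'(x)} \nu(y)$ for each root $x$.
\begin{itemize}
\item Each set $\mu(x)$ is connected in $G$ and contains $x$.
\item The sets $\mu(x)$ are pairwise disjoint.
\item Every edge of $F'$ is realized by the $\nu$-image of the $G'$-edge that $\mu'$ assigns to it.
\end{itemize}

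Now consider an edge $xy \in E(F) \cap E(G'[X_G])$. The edge $xy$ of $G'$ is realized by $\nu$ as an edge of $G$ between $\nu(x) \subseteq \mu(x)$ and $\nu(y) \subseteq \mu(y)$. So all edges of $F$ are realized, and $\mu$ is an $\id$-rooted model of $F$ in $G$. This contradicts $F$ being a flaw of $G$.

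For the second assertion, assume $G$ is a minimal counterexample, $G'$ is $4$-light, and $G'$ is smaller than $G$ in the stated sense. By Observation~\ref{obs-lightminor}, $G'$ is universal. If $F'$ belonged to the target of $G'$, universality applied with the bijection $\id$ would make $F'$ an $\id$-rooted minor of $G'$. This contradicts the first part, so $F'$ does not belong to the target of $G'$.

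The only point requiring care is the composition of models. In particular, edges of $F$ that are already edges of $G'$ between roots need no branch-set edge from $\mu'$, since they come directly from $\nu$. The rest is routine.
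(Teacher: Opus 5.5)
Your proof is correct and is exactly the argument the paper leaves implicit: it states this observation without proof, relying on the same reasoning as in Observation~\ref{obs-lightminor}. That reasoning is to compose the $\id$-rooted models, note that edges of $F$ already present in $G'[X_G]$ come for free, and then invoke universality of $G'$ with the identity bijection.
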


\section{The $(\le\!4)$-rooted case}\label{sec-fewroots}

For $k\le 4$, an exact characterization of $k$-rooted graphs not containing $K_k$ as a rooted minor is known.
To state this characterization, the following concept will be useful.
For a $k$-rooted graph $G$, we say that a $k$-rooted graph $M$ is a \emph{skeleton} of $G$
if $V(M)\subseteq V(G)$, $X_M=X_G$, $M$ is a supergraph of $G[V(M)]$, and for each component $C$ of $G-V(M)$, the vertices of $M$
with a neighbor in $C$ induce a clique of size at most $k-1$ in $M$.  The following result for $k\in\{1,2,3\}$ is folklore.
\begin{lemma}\label{lemma-123}
Let $G$ be a $k$-rooted graph with $k\in \{1,2,3\}$.  The graph $G$ does not contain $K_k$ as a rooted minor if and only if
\begin{itemize}
\item $k\in \{2,3\}$ and $G$ has a skeleton $M\neq K_k$ with vertex set $X_G$, or
\item $k=3$ and $G$ has a skeleton $M$ such that $|V(M)\setminus X_G|=1$ and $X_G$ is an independent set in $M$.
\end{itemize}
\end{lemma}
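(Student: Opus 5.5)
We prove both directions separately, handling $k=1,2,3$ in turn. Throughout we use the definition of a skeleton $M$ of $G$: $M$ contains $G[V(M)]$, and for each component $C$ of $G-V(M)$ the attachment set $N(C)\subseteq V(M)$ is a clique in $M$ of size at most $k-1$.

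\emph{Sufficiency.} The key fact is that skeletons obstruct rooted clique minors. Suppose $M$ is a skeleton of $G$ and $\mu$ is a rooted model of $K_k$ in $G$. We build a rooted model of $K_k$ in $M$. For each branch set $\mu(v)$, put $\mu'(v)=\mu(v)\cap V(M)$.

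Consider a component $C$ of $G-V(M)$ and a branch set $\mu(v)$. The part of $\mu(v)$ inside $C$ can only reach the rest of $\mu(v)$ through $N(C)$. Since $N(C)$ is a clique in $M$, every path through $C$ between two vertices of $N(C)$ can be replaced by an edge of $M$. Hence each $\mu'(v)$ still induces a connected subgraph of $M$, and each $\mu'(v)$ is nonempty because it contains its root.

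Now take an edge of the model realized by a path through $C$ between different branch sets. Both branch sets meet $N(C)$, and $N(C)$ is a clique, so this adjacency is realized in $M$. The only remaining case is a branch set lying entirely inside $C$. This cannot happen, because every branch set contains a root and roots lie in $V(M)$. So $M$ contains $K_k$ as a rooted minor.

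It remains to see that $M$ itself has no rooted $K_k$ minor in the two listed cases.
\begin{itemize}
\item If $V(M)=X_G$ and $M\neq K_k$, the only rooted minors of $M$ are subgraphs of $M$ on the roots, and none of these is $K_k$.
\item If $k=3$ and $V(M)=X_G\cup\{w\}$ with $X_G$ independent in $M$, then $w$ must be merged into one root. That root becomes adjacent to the other two, but those two remain nonadjacent.
\end{itemize}
Note that the size bound $|N(C)|\le k-1$ is not needed for this direction.

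\emph{Necessity, $k=1$.} Here $K_1$ is always a rooted minor, so the first bullet correctly does not apply.

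\emph{Necessity, $k=2$.} If $K_2$ is not a rooted minor, then the two roots lie in different components of $G$. Take $M=G[X_G]$, which is edgeless. Every component of $G-X_G$ then attaches to at most one root, since a component attaching to both would join them. So $M$ is a skeleton with $M\neq K_2$.

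\emph{Necessity, $k=3$.} This is the main case. Let the roots be $x_1,x_2,x_3$.

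If some root, say $x_1$, is separated from $\{x_2,x_3\}$ in $G$, then every component of $G-X_G$ attaches to $\{x_1\}$ or to a subset of $\{x_2,x_3\}$. We take $M$ on $X_G$ containing $G[X_G]$ together with the edge $x_2x_3$ if that helps make attachment sets cliques. This $M$ misses both $x_1x_2$ and $x_1x_3$, so $M\neq K_3$.

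Otherwise we may use connectivity. Consider a root separation of order at most one that separates a root from the other two roots. Its separating vertex is either a root or a nonroot.
\begin{itemize}
\item If it is a root, say $x_3$ separates $x_1$ from $x_2$, then the skeleton on $X_G$ with edges $x_1x_3$ and $x_2x_3$ works. Every component of $G-X_G$ attaches to $\{x_1,x_3\}$ or $\{x_2,x_3\}$, and $M$ misses $x_1x_2$.
\item If it is a nonroot $w$ separating one root from the others, we use a skeleton containing $w$. This is the second bullet only if all three roots become separated at $w$; otherwise we reduce to the previous cases.
\end{itemize}

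The crux is showing that if no cut vertex among $V(G)$ separates the roots pairwise, then a rooted $K_3$ minor exists. For this we plan to use Menger / the fan lemma. Since $x_1$ and $x_2$ are not separated by $x_3$, there is an $x_1$–$x_2$ path $P$ avoiding $x_3$. Since no single vertex separates $x_3$ from $P$, there are two disjoint paths from $x_3$ to $P$, ending at vertices $p,q$ of $P$. We then split $P$ between $p$ and $q$ into the branch sets of $x_1$ and $x_2$, and use the two fan paths to give $x_3$ edges to both branch sets.

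The hard part will be making this splitting work when $p$ or $q$ is an endpoint or a root. We also need a careful case analysis showing that the obstructions are exactly the skeleton configurations listed in the lemma. Edges already present among the roots must be absorbed into $M$ throughout.
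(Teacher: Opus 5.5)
Your sufficiency direction is correct: restricting a rooted model to $V(M)$ works because each attachment set $N(C)$ is a clique in $M$. The cases $k\le 2$ are also fine.

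\textbf{The gap.} It lies in the necessity direction for $k=3$, in the configuration where a non-root vertex $w$ separates one root, say $x_3$, from the other two, while $x_1$ and $x_2$ lie in the same component of $G-w$. Such a $w$ is not an obstruction. Take $V(G)=\{x_1,x_2,x_3,w\}$ with edges $x_1x_2,wx_1,wx_2,wx_3$; then $\{x_1\},\{x_2\},\{x_3,w\}$ is a rooted $K_3$ model.

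None of your earlier cases applies to this graph, so ``otherwise we reduce to the previous cases'' has nothing to reduce to. Your fan step fails on it too: for every choice of $P$, every $x_3$--$V(P)$ path passes through $w$.

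In general, the claim that no single vertex separates $x_3$ from $P$ holds only if no vertex at all separates two of the roots. Indeed, a vertex cutting $x_3$ from $V(P)$ cuts $x_3$ from $x_1$ or from $x_2$. So this configuration must be disposed of before the fan argument, and your proposal does not do so. This is exactly the ``careful case analysis'' you defer, and it is the substance of the lemma (the paper gives no proof, citing it as folklore).

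\textbf{A way to close it: induction on $|V(G)|$.}
\begin{itemize}
\item Let $D$ be the component of $G-w$ containing $x_3$. If $w$ has no neighbour in $D$, you are in your disconnected case. Let $G'=G-D$, with roots $x_1,x_2,w$.
\item Any rooted $K_3$ model $\mu$ of $G$ has $w\in\mu(x_3)$. Otherwise $\mu(x_3)\subseteq D$, and both $\mu(x_1)$ and $\mu(x_2)$ would need to contain $w$ to reach $D\cup\{w\}$.
\item Removing $D$ from $\mu(x_3)$ then gives a model in $G'$. Conversely, contracting $D$ onto $w$ lifts a model of $G'$ to $G$. So $G'$ has no rooted $K_3$ minor, and by induction it has a skeleton of one of the two types.
\item That skeleton lifts to $G$:
  \begin{itemize}
  \item A skeleton on $\{x_1,x_2,w\}$ missing $x_1x_2$ means $w$ separates all three roots of $G$ pairwise, giving the second bullet.
  \item One missing $wx_1$ means $x_2$ separates $x_1$ from $w$ in $G'$, hence from $x_3$ in $G$, giving the first bullet.
  \item A second-bullet skeleton of $G'$ centred at $w'$ yields one for $G$ centred at $w'$, since $D$ attaches only at $w$.
  \end{itemize}
\end{itemize}

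\textbf{Finishing with your fan argument.} Once no vertex separates any two roots, your path-plus-fan construction goes through. The splitting you flag as the hard part is routine. With $p$ closer to $x_1$ than $q$ along $P$, take $\mu(x_1)=V(P[x_1,p])$, $\mu(x_2)=V(P)\setminus\mu(x_1)$, and $\mu(x_3)$ the vertices of the two fan paths other than $p$ and $q$.
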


The characterization for $k=4$ is more involved and was found by Fabila-Monroy and Wood~\cite{fabila2013rooted}.
For a vertex set $X$, an \emph{$X$-web} is a plane graph with the outer face bounded by a cycle with vertex set $X$,
all internal faces having length three, and no non-facial triangles.  See Figure~\ref{fig-nok4} illustrating the
five cases from this theorem.
\begin{figure}

\begin{asy}
void mkvs (bool web, bool webu, bool webl)
{
  if (web && !webu)
    draw (v[0] -- v[1], white + dashed);
  if (web && !webl)
    draw (v[2] -- v[3], white + dashed);
  if (web && !webu && ! webl)
    {
      draw (v[1] -- v[2], white + dashed);
      draw (v[0] -- v[3], white + dashed);
    }

  for (int k = 0; k < 4; ++k)
    vertex (v[k], black);
  label ("$x_1$", v[0], NW);
  label ("$x_2$", v[1], NE);
  label ("$x_3$", v[2], SE);
  label ("$x_4$", v[3], SW);

  if (web && webu)
    {
      vertex (v[4]);
      vertex (v[5]);
      label ("$v_1$", v[4], W);
      label ("$v_2$", v[5], E);
    }

  if (web && webl)
    {
      vertex (v[6]);
      vertex (v[7]);
      label ("$v_3$", v[6], E);
      label ("$v_4$", v[7], W);
    }
}

real wd = 2.5, sk = 2.5, ht = 5, wo = 1;

void ram(pair at, string cse, bool web, bool webu, bool webl)
{
  v[0] = at;
  v[1] = v[0] + (wd,0);
  v[2] = v[0] + (wd,-ht);
  v[3] = v[0] + (0,-ht);

  v[4] = v[0] + (0, webu ? -wo : 0);
  v[5] = v[1] + (0, webu ? -wo : 0);
  v[6] = v[2] + (0, webl ? wo : 0);
  v[7] = v[3] + (0, webl ? wo : 0);

  if (web)
    {
      filldraw (v[4] -- v[5] -- v[6] -- v[7] -- cycle, fillpen=interp(blue, white, 0.9));
      label ("web", interp (v[4], v[6], 0.5));
    }

  label (cse, at + (0.5wd, -ht - 1));
}

ram ((0,0), "(a)", false, false, false);

v[4] = interp (v[0],v[3], 0.5) + (wd/3,0);
v[5] = interp (v[0],v[3], 0.5) + (2wd/3,0);

for (i = 0; i < 4; ++i)
  draw (v[4] -- v[i] -- v[5]);
draw (v[4]--v[5]);
vertex (v[4]);
vertex (v[5]);
label ("$v_1$", v[4], W);
label ("$v_2$", v[5], E);

mkvs (false, false, false);

ram ((wd + sk,0), "(b)", false, false, false);

v[4] = interp (v[0],v[3], 0.5) + (wd/3,0);
v[5] = interp (v[0],v[3], 0.5) + (2wd/3,0.5);
v[6] = interp (v[0],v[3], 0.5) + (2wd/3,-0.5);

for (i = 0; i < 4; ++i)
  draw (v[4] -- v[i]);
draw (v[0] -- v[5] -- v[1]);
draw (v[2] -- v[6] -- v[3]);
draw (v[4]--v[5]--v[6]--cycle);
vertex (v[4]);
vertex (v[5]);
vertex (v[6]);
label ("$v_1$", v[5], E);
label ("$v_2$", v[6], E);
label ("$v_3$", v[4], W);

mkvs (false, false, false);

ram ((2*(wd + sk),0), "(c)", true, false, false);
mkvs (true, false, false);

ram (((wd+sk)/2, -ht - 3), "(d)", true, true, false);
for (i = 0; i < 2; ++i)
  for (j = 0; j < 2; ++j)
    draw (v[i] -- v[4+j]);
mkvs (true, true, false);

ram ((3*(wd+sk)/2, -ht - 3), "(e)", true, true, true);
for (i = 0; i < 2; ++i)
  for (j = 0; j < 2; ++j)
    {
      draw (v[i] -- v[4+j]);
      draw (v[2 + i] -- v[6+j]);
    }
mkvs (true, true, true);
\end{asy}

\caption{Skeletons from Theorem~\ref{thm-k4}.}\label{fig-nok4}
\end{figure}

\begin{theorem}\label{thm-k4}
Let $G$ be a rooted graph with four roots.  If $G$ does not contain $K_4$ as a rooted minor, then $G$ has a skeleton $M$
such that
\begin{itemize}
\item[(a)] $|V(M)\setminus X_G|\le 2$; or
\item[(b)] $V(M)\setminus X_G=\{v_1,v_2,v_3\}$, $v_1v_2v_3$ is a triangle in $M$, $\deg^X_M v_3=4$, $\deg^X_M v_1,\deg^X_M v_2=2$, and $v_1$ and $v_2$ do not have a common neighbor in $X_G$; or
\item[(c)] $M$ is an $X_G$-web; or
\item[(d)] there exist distinct roots $x_1$ and $x_2$ and vertices $v_1,v_2\in M\setminus X_G$ such that the neighbors of each of $x_1$ and $x_2$ are exactly $v_1$ and $v_2$, and $M-\{x_1,x_2\}$ is
an $((X_G\setminus\{x_1,x_2\})\cup\{v_1,v_2\})$-web with $v_1$ and $v_2$ adjacent through an edge incident with the outer face; or
\item[(e)] letting $X_G=\{x_1,x_2,x_3,x_4\}$, the graph $M-X_G$ is a $\{v_1,v_2,v_3,v_4\}$-web with the outer face bounded by a 4-cycle $v_1v_2v_3v_4$,
the neighbors of each of the roots $x_1$ and $x_2$ are exactly $v_1$ and $v_2$
and neighbors of each of the roots $x_3$ and $x_4$ are exactly $v_3$ and $v_4$.
\end{itemize}
\end{theorem}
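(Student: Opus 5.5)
The plan is to derive the statement from the characterization of Fabila-Monroy and Wood~\cite{fabila2013rooted}, after recasting their clique-sum description in the language of skeletons. First we may replace $G$ by an edge-maximal supergraph $G'$ on the same vertex set that still does not contain $K_4$ as a rooted minor. This is harmless, because a skeleton of $G'$ is also a skeleton of $G$: the condition ``$M$ is a supergraph of $G[V(M)]$'' is inherited when edges are deleted, and so is the component condition, since components of $G-V(M)$ only refine components of $G'-V(M)$. So from now on we assume $G$ is edge-maximal.

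Next we extract the skeleton from the root separations of order at most three. Consider all proper root separations $(A,B)$ of $G$ of order at most $3$, and for each of them choose $B\setminus A$ inclusion-maximal.

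By Observation~\ref{obs-submod}, two such separations either have disjoint sides $B\setminus A$, or can be merged into a single one. This is the usual uncrossing argument: if $(A,B)$ and $(C,D)$ are such separations, then $(A\cap C,B\cup D)$ and $(A\cup C,B\cap D)$ have orders summing to at most $6$. The second is still a root separation, so the order of the first is at most $3$ unless the second is improper.

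We then let $V(M)$ be the set of vertices lying in no maximal side $B\setminus A$. Each component $C$ of $G-V(M)$ then attaches to a set $A\cap B$ of at most three vertices. We claim that by edge-maximality $A\cap B$ is already a clique in $G$. Indeed, suppose an edge $uv$ inside $A\cap B$ were missing and adding it created a rooted $K_4$ model. We can reroute that model: the edge $uv$ is replaced by a $u$--$v$ path through $C$, or, if the model already uses vertices of $C$, by contracting $C$ onto a vertex of the separator. This works because $C$ has no roots and, by the maximal choice of the side, is adjacent to every vertex of $A\cap B$. So $M=G[V(M)]$ is a skeleton, provided it has no rooted $K_4$ minor. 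That holds because any rooted model in $M$ using a clique edge on a separator lifts back to $G$ by the same rerouting.

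It remains to classify the resulting graphs $M$. Such an $M$ is internally $4$-connected with respect to $X_G$, meaning it has no proper root separation of order at most $3$. It also contains no rooted $K_4$, and it is edge-maximal with this property.

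If $|V(M)\setminus X_G|\le 2$ we are in case (a). Otherwise, the theorem of Fabila-Monroy and Wood (equivalently, the two-linkage theorem of Robertson–Seymour~\cite{rs9} applied to the pairs of roots) says that such a graph is, up to small exceptional configurations, planar with all four roots on a common face. The small exceptional configuration gives case (b). In the planar situation, maximality forces all inner faces to be triangles. Internal $4$-connectivity rules out separating triangles except those that consist of roots together with their unique neighbours.

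The case analysis then splits according to how many roots have degree exactly $2$ in $M$. If no root has degree $2$, the outer cycle through the roots bounds a web, which is case (c). If one pair of roots have degree $2$ and share their two neighbours $v_1,v_2$, we get case (d). If both pairs do, we get case (e). A degree-$2$ root must share its neighbours with another root, since otherwise a separation of order $2$ or $3$ would exist, or a rooted $K_4$ could be routed around the face.

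The step I expect to be the main obstacle is this last classification. One has to verify carefully that the five listed configurations are exhaustive and that nothing else survives maximality and internal $4$-connectivity. In particular, one must pin down exactly when roots have degree $2$ and must share neighbours. I would handle it by quoting the Fabila-Monroy–Wood list directly and translating each of their cases into a skeleton, rather than re-deriving it from the linkage theorem.
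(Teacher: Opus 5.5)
The paper gives no proof of this statement. It quotes the result from Fabila-Monroy and Wood~\cite{fabila2013rooted} and restates it in skeleton language, so your final intention to read the five outcomes off their list matches what the paper does. The problem is the reduction you add on top, whose key step is false.

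\textbf{The clique claim.} You assert the following. If $uv$ is a missing edge in the separator of an order-$3$ root separation whose side $C$ is connected and adjacent to the whole separator, then a rooted $K_4$ model in $G+uv$ can be rerouted through $C$. Here is a counterexample:
\begin{itemize}
\item Take roots $x_1,\dots,x_4$, edges $x_4x_1,x_4x_2,x_4x_3$, and one non-root $c$ adjacent to $x_1,x_2,x_3$.
\item Then $(X_G,\{x_1,x_2,x_3,c\})$ is such a separation, and its side $\{c\}$ is inclusion-maximal.
\item The graph has no rooted $K_4$, since $c$ supplies at most two of the three edges among $x_1,x_2,x_3$.
\item Adding $x_1x_2$ creates a rooted $K_4$: put $c$ into the branch set of $x_3$.
\end{itemize}
The model already needs $C$ to supply two separator edges, so $C$ cannot also stand in for $uv$. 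For the same reason, your claim that rooted models in the torso ``lift back'' to $G$ fails: the torso here is $K_4$ on the roots. So this argument does not show that edge-maximality forces clique separators. By Lemma~\ref{lemma-123}, the bad separations are exactly those whose separator is independent and whose side has a skeleton with a single non-root vertex. These need separate treatment. Without it, either your $M=G[V(M)]$ is not a skeleton, or, once the separator cliques are added, $M$ may contain a rooted $K_4$. In the latter case no classification of $K_4$-free graphs applies to $M$.

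\textbf{Uncrossing.} From $k_1+k_2=|A\cap B|+|C\cap D|\le 6$ you get $k_1\le 3$ only when the corner $(A\cup C,B\cap D)$ has order at least $3$. Properness of that corner does not guarantee this when $G$ has root separations of order at most $2$. You must dispose of those first, or choose minimum-order separations before maximizing sides.

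\textbf{Description of the classification.} Outcomes (d) and (e) are not planar with the four roots on a common face. In (d), add a vertex $z$ adjacent to all roots. This gives a $K_{3,3}$ minor with sides $\{x_1\},\{x_2\},\{x_3,x_4\}$ and $\{v_1\},\{v_2\},\{z\}$, using the outer cycle $v_1v_2x_ax_b$ of the web. Outcome (e) is similar, with $\{v_3,v_4,x_3\}$ as the third branch set. So (d) and (e) cannot arise ``in the planar situation'' as you describe, and that part must be taken verbatim from Fabila-Monroy and Wood.
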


\begin{corollary}\label{cor-k4}
Let $G$ be a 4-light $k$-rooted graph with $k\le 4$ such that $\rho_4(G)>0$.
If $K_k$ is not a rooted minor of $G$, then
$k=4$, $X_G$ is an independent set in $G$, and $\rho_4(G)=1$; and in that case $G$ is $\SS_{4,5}$-universal.
\end{corollary}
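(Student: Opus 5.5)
We argue by case analysis on the skeletons from Lemma~\ref{lemma-123} and Theorem~\ref{thm-k4}, bounding $\rho_4(G)$ from above in each case. The key tool is 4-lightness applied to the root separations induced by a skeleton $M$. For each component $C$ of $G-V(M)$, let $N_C$ be the set of vertices of $M$ with a neighbour in $C$; then $|N_C|\le k-1$. Take $B=V(C)\cup N_C$ and $A=V(G)\setminus V(C)$; this is a root separation of order less than $k$ (at most $k-1\le 3$). Since $G$ is 4-light, $\rho_4(R_{A,B})\le 0$. Moreover, $\rho(G)$ and $n(G)$ split as sums over these pieces plus the contribution of $M$ itself: an edge of $G$ with both ends in $V(M)$ and not between two roots is counted once on the $M$-side, and edges within $N_C$ are not counted in $\rho(R_{A,B})$, because there they join two roots of $R_{A,B}$. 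Hence
\[\rho_4(G)\le \rho(\widetilde{M})-4\,|V(M)\setminus X_G|,\]
using $E(G[V(M)])\subseteq E(M)$.

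Now we evaluate the right-hand side in each case.

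For $k\le 3$, Lemma~\ref{lemma-123} gives either $V(M)=X_G$, so $\rho_4\le 0$, or one non-root vertex of degree at most $3$, so $\rho_4\le 3-4<0$. Either way this contradicts $\rho_4(G)>0$. The same happens for $k=4$ in the cases where the skeleton has $V(M)=X_G$.

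For $k=4$ we check each case of Theorem~\ref{thm-k4}:
\begin{itemize}
\item Case (a) with $p\le 2$ non-root vertices: $\rho(\widetilde M)\le 4p+\binom p2$, giving $\rho_4\le 1$, with equality only when $p=2$, both vertices are adjacent to all four roots, and they are adjacent to each other.
\item Case (b): the count is $4+2+2+3-12=-1$.
\item Cases (c), (d), (e): these are planar webs, and Euler's formula gives $\rho_4\le 0$. For an $X$-web with $p$ inner vertices, $|E|=3(p+4)-7=3p+5$. Excluding the at most $4$ root–root edges leaves $\rho\ge 3p+1$, but $\rho\le 3p+5-(\text{boundary edges})$, so $\rho_4\le 3p+5-4-4p\le 1-p\le 0$ when $p\ge1$. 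For $p=0$ the web has no non-root vertices and $\rho_4\le 0$. Cases (d) and (e) are handled the same way, treating $v_1,v_2$ (and $v_3,v_4$) as outer vertices of the web and adding the $4$ (or $8$) root edges.
\end{itemize}
So $\rho_4(G)>0$ forces case (a) with the extremal configuration and $\rho_4(G)=1$. Equality also forces the roots to contribute no edges among themselves. More precisely, if two roots were adjacent in $G$, then $K_4$ would not be excluded in the equality structure. The cleaner way to see this: if $M$ contains a root edge, the contractions $v_1\to x_1$, $v_2\to x_2$ would produce $K_4$ rooted, contradicting the hypothesis. So $X_G$ is independent.

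For the final claim, every graph $S$ on the four roots with at most $5$ edges is $K_4$ minus at least one edge, say missing $x_ix_j$. Map $v_1$ into the branch set of one root and $v_2$ into another, chosen to realise all non-edges required. In the extremal skeleton, $G[V(M)]$ is exactly $K_{2,4}$ plus the edge $v_1v_2$ (edges of $M$ inside $N_C$ are realised by paths through $C$, or by 4-lightness they are real edges). Contracting $v_1$ into $x_a$ and $v_2$ into $x_b$ yields all edges at $x_a$ and $x_b$, plus $x_ax_b$. This is $K_4$ minus the edge $x_cx_d$ for $\{c,d\}$ the complement of $\{a,b\}$, so every $S\in\SS_{4,5}$ is obtained.

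\textbf{Main obstacle.} The delicate point is that skeleton edges need not be edges of $G$. We must argue that the edges of $M$ we use are realised in $G$ through the cut-off components, via connectivity inside each component $C$ whose attachment set $N_C$ is a clique of $M$. Otherwise the non-edges in $M$ would further lower the bound on $\rho_4$.
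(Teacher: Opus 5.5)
Your proposal is correct and is essentially the paper's argument: split $G$ along the components of $G-V(M)$, use 4-lightness to get $\rho_4(G)\le\rho_4(G[V(M)])\le\rho_4(M)$, and run through the skeletons of Lemma~\ref{lemma-123} and Theorem~\ref{thm-k4} to see that only outcome (a) gives $\rho_4(M)\ge 1$, namely two adjacent non-root vertices each joined to all four roots. Your ``main obstacle'' is not really one, and realising all edges of a clique $N_C$ by paths through $C$ would not work in general: tightness of that chain forces $\rho_4(G[V(M)])=\rho_4(M)$, so the nine edges of $M$ at $v_1,v_2$ are genuine edges of $G$; then a root edge of $G$ (not merely of $M$, which may contain root edges absent from $G$) together with contracting $v_1,v_2$ into the other two roots yields $K_4$, and the same contractions give $\SS_{4,5}$-universality.
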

\begin{proof}
The claims are true if $G$ contains $K_k$ as a rooted minor.  Hence, we can assume that this is not the case.
Let $M$ be a skeleton of $G$ as in Lemma~\ref{lemma-123} or Theorem~\ref{thm-k4}.
Let $K_1$, \ldots, $K_m$ be the components of $G-V(M)$, and for $i\in\{1,\ldots,m\}$,
let $(A_i,B_i)$ be the root separation of $G$ such that $A_i=V(G)\setminus V(K_i)$ and $B_i$ consists of $V(K_i)$
and the vertices of $M$ with a neighbor in $K_i$.  Since $M$ is a skeleton of $G$, note that the
separation $(A_i,B_i)$ has order less than $k$, and since $G$ is $4$-light, we have $\rho_4(R_{A_i,B_i})\le 0$.
Therefore,
$$1\le \rho_4(G)=\rho_4(G[V(M)])+\sum_{i=1}^m \rho_4(R_{A_i,B_i})\le \rho_4(G[V(M)])\le \rho_4(M).$$
Since $k\le 4$, this is only possible if $n(M)\ge 2$.  In particular, we must have $k=4$,
since no such skeleton arises in Lemma~\ref{lemma-123}.

If $n(M)=2$, then $\rho_4(M)\ge 1$ implies that both vertices $v_1,v_2\in V(M)\setminus X_G$ are adjacent to all roots.
Moreover, we necessarily have $\rho_4(G)=\rho_4(M)=1$, and thus $\rho_4(G[V(M)])=1$.
Hence, all edges of $M$ incident with $v_1$ and $v_2$ are also present in $G$.
This clearly implies that $G$ is $\SS_{4,5}$-universal, and that $K_4$ is a rooted minor of $G$ if $X_G$ is not an independent set in $G$.
Therefore, we can assume that $n(M)\ge 3$ and the outcome (a) of Theorem~\ref{thm-k4} does not hold.

In the outcome (b), we would have $\rho_4(M)\le -1$, which is not possible.
In the three remaining outcomes, we apply Euler's formula to the web forming a part of $M$,
obtaining
\begin{itemize}
\item $\rho(M)=3n(M)+1$ and $\rho_4(M)=1-n(M)$ in the outcome (c),
\item $\rho(M)=3n(M)+2$ and $\rho_4(M)=2-n(M)$ in the outcome (d), and
\item $\rho(M)=3n(M)+1$ and $\rho_4(M)=1-n(M)$ in the outcome (e).
\end{itemize}
Since $n(M)\ge 3$, these outcomes are also excluded.
\end{proof}

Theorem~\ref{thm-k4} can also be used to find rooted minors in 5-rooted graphs.
In particular, we are going to apply it to derive the following properties.
Let $K_{2,3}^+$ denote the graph obtained from $K_{2,3}$ by adding an edge between
the vertices of degree three.  Let $\{V_2,V_4\}$ be the partition of the vertices of $K_{2,3}^+$
to those of degree two and degree four, respectively.  We say that a $5$-rooted graph $G$
is \emph{nearly $K_{2,3}^+$-universal} if there exists a root $y\in X_G$ (called an \emph{exceptional root})
such that for every bijection $\pi:V(K_{2,3}^+)\to X_G$,
\begin{itemize}
\item if $\pi^{-1}(y)\in V_4$, then $K_{2,3}^+$ is a $\pi$-rooted minor of $G$, and
\item if $\pi^{-1}(y)\in V_2$, then for every edge $e\in E(K_{2,3}^+)$ incident with
$\pi^{-1}(y)$, the graph $K_{2,3}^+-e$ is a $\pi$-rooted minor of $G$.
\end{itemize}

\begin{observation}\label{obs-23univ-to-sizeuniv}
If a $5$-rooted graph is nearly $K_{2,3}^+$-universal, then it is $\SS_{5,3}$-universal,
and if it is $K_{2,3}^+$-universal, then it is $\SS_{5,4}^-$-universal.
\end{observation}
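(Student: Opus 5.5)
The plan is to reduce both statements to a purely combinatorial fact about spanning subgraphs of $K_{2,3}^+$, using only that rooted minors are closed under deleting edges. If $S$ is a $\pi$-rooted minor of $G$ and $S'$ is a spanning subgraph of $S$, then $S'$ is also a $\pi$-rooted minor of $G$: we keep the same model and just forget the images of the deleted edges. Thus, given a graph $H$ with $V(H)=X_G$ (we identify graphs on five vertices with graphs on $X_G$ via the bijection in question), it suffices to find a bijection $\tau:V(K_{2,3}^+)\to X_G$ such that $H$ is contained in $\tau(K_{2,3}^+)$, or in $\tau(K_{2,3}^+-e)$ for an allowed edge $e$.

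First I record the structure of $K_{2,3}^+$. Here $V_4=\{u,w\}$ with $uw$ an edge, $V_2=\{p,q,r\}$, and $u,w$ are adjacent to each of $p,q,r$. So there are 7 edges, and the complement is exactly the triangle $pqr$. Hence $H\subseteq\tau(K_{2,3}^+)$ if and only if $\tau(V_2)$ is an independent set in $H$.

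\emph{Second statement.} Let $H\in\SS_{5,4}^-$; we need $\alpha(H)\ge 3$. If $\alpha(H)\le 2$, then $\overline{H}$ is triangle-free with at least $6$ edges on five vertices. By Mantel's theorem, $\overline{H}=K_{2,3}$, so $H=K_2+K_3$, which is excluded. Taking $\tau$ that maps $V_2$ onto an independent $3$-set of $H$ finishes this case, since $K_{2,3}^+$-universality gives $\tau(K_{2,3}^+)$ as a rooted minor for every bijection.

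\emph{First statement.} Let $H$ have at most $3$ edges and let $y$ be the exceptional root. If $H-y$ has an independent $3$-set $I$, we map $V_2$ onto $I$. Then $\tau^{-1}(y)\in V_4$, and the full $K_{2,3}^+$ is available, so $H$ is a $\pi$-rooted minor.

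Otherwise, $H-y$ is a four-vertex graph with at most $3$ edges and independence number at most $2$. A short enumeration shows that it is $2K_2$, $P_4$, or $K_3+K_1$; in particular it has at least $2$ edges, so $\deg_H y\le 1$. In each case I choose an independent $3$-set $I\ni y$ in $H$ in which $y$ has at most one $H$-neighbour among the two vertices of $X_G\setminus I$:
\begin{itemize}
\item for $2K_2$ with edges $ab,cd$, take $I=\{y,a,c\}$;
\item for the path $abcd$, take $I=\{y,a,c\}$ (here $y$ is isolated);
\item for the triangle $abc$ plus $d$, take $I=\{y,a,d\}$ (here $y$ is isolated).
\end{itemize}
Since $y$ has degree at most $1$, it is nonadjacent to one of $b,d$ in the first case, and in the other two cases it has no neighbours at all. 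Now map $V_2$ onto $I$, and let $e$ be the edge of $K_{2,3}^+$ joining $\tau^{-1}(y)$ to the preimage of a non-neighbour of $y$ in $X_G\setminus I$. Then $H\subseteq\tau(K_{2,3}^+-e)$, and near universality provides $K_{2,3}^+-e$ as a $\tau$-rooted minor.

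The only point requiring care is this last enumeration of the graphs $H-y$. It is a finite check, and I expect no real obstacle.
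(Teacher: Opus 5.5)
Your proof is correct. The paper states this observation without proof, and your argument is the routine verification it leaves to the reader: the complement of $K_{2,3}^+$ is the triangle on $V_2$, so everything reduces to finding an independent $3$-set of $H$ to serve as $\tau(V_2)$. Mantel's theorem handles $\SS_{5,4}^-$, and your enumeration $H-y\in\{2K_2,P_4,K_3+K_1\}$ handles the case where the exceptional root must be placed in $V_2$. The one thing to tidy is the $2K_2$ case: label the edges $ab,cd$ so that the neighbour of $y$ (if any) lies in $\{b,d\}$, since otherwise $\{y,a,c\}$ need not be independent.
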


To show that a 5-rooted graph is (nearly) $K_{2,3}^+$-universal, we generally use the following observation.
Here, $K_{2,5}^-$ denotes the graph obtained from $K_{2,5}$ by deleting an edge.
\begin{observation}\label{obs-near-bistar-univ}
Let $G$ be a 5-rooted graph, let $H$ be $K_{2,5}$ or $K^-_{2,5}$ and let $\pi$ be a bijection
between the vertices of $H$ of degree at most two and the roots of $G$.  Suppose that $H$ is a $\pi$-rooted
minor of $G$.
\begin{itemize}
\item If $H=K_{2,5}$, then $G$ is $K_{2,3}^+$-universal.
\item If $H=K^-_{2,5}$ and $v$ is the vertex of $H$ of degree one, then $G$ is nearly $K_{2,3}^+$-universal with the exceptional root $\pi(v)$.
\end{itemize}
\end{observation}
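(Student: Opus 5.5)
The plan is to build the required models directly by merging the two high-degree branch sets of the given model of $H$ into branch sets of roots. Write $K_{2,3}^+$ with $V_4=\{a,b\}$ (adjacent to each other and to every vertex of $V_2$) and $V_2=\{c_1,c_2,c_3\}$.

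Fix a $\pi$-rooted model $\nu$ of $H$ in $G$. Let $u,w$ be the two vertices of $H$ of degree at least three, and set $U=\nu(u)$ and $W=\nu(w)$. For each root $x\in X_G$, let $B_x=\nu(\pi^{-1}(x))$; it is connected and contains $x$. If $H=K_{2,5}$, every $B_x$ has an edge to $U$ and an edge to $W$. If $H=K^-_{2,5}$, the same holds except that $B_{\pi(v)}$ is guaranteed an edge only to one of the centres; by symmetry we name the centres so that this is $U$.

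\textbf{Case $H=K_{2,5}$.} Take an arbitrary bijection $\sigma:V(K_{2,3}^+)\to X_G$ and define
\[
\mu(a)=B_{\sigma(a)}\cup U,\qquad \mu(b)=B_{\sigma(b)}\cup W,\qquad \mu(c_k)=B_{\sigma(c_k)}.
\]
These sets are pairwise disjoint. The set $\mu(a)$ is connected because $B_{\sigma(a)}$ has an edge to $U$; similarly $\mu(b)$ is connected. The edge $ab$ is realized by an edge between $B_{\sigma(a)}$ and $W$. Each edge $c_ka$ is realized by an edge between $B_{\sigma(c_k)}$ and $U$, and each edge $c_kb$ by an edge between $B_{\sigma(c_k)}$ and $W$. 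Hence $\mu$ is a $\sigma$-rooted model of $K_{2,3}^+$.

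\textbf{Case $H=K^-_{2,5}$, with $y=\pi(v)$ as the exceptional root.} The construction is the same, but we choose which of $a,b$ receives $U$.

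If $\sigma^{-1}(y)\in V_4$, say $\sigma^{-1}(y)=a$, set $\mu(a)=B_y\cup U$ and $\mu(b)=B_{\sigma(b)}\cup W$. Then $\mu(a)$ is connected through the edge from $B_y$ to $U$. The edge $ab$ is realized by an edge between $B_{\sigma(b)}$ and $U$. All remaining sets $B_{\sigma(c_k)}$ are adjacent to both $U$ and $W$, so we obtain a full $\sigma$-rooted model of $K_{2,3}^+$.

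If $\sigma^{-1}(y)=c_k\in V_2$, let $e$ be an edge incident with $c_k$. Let $a'\in\{a,b\}$ be the end of the other edge at $c_k$ (the one not equal to $e$), and let $b'$ be the remaining vertex of $V_4$. Set $\mu(a')=B_{\sigma(a')}\cup U$ and $\mu(b')=B_{\sigma(b')}\cup W$. Both sets are connected, since $B_{\sigma(a')}$ and $B_{\sigma(b')}$ are not the defective set $B_y$. The edge $a'b'$ is realized between $B_{\sigma(a')}$ and $W$. The edge $c_ka'$ is realized between $B_y$ and $U$. The other $c_j$ are adjacent to both $U$ and $W$. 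Thus $K_{2,3}^+-e$ is a $\sigma$-rooted minor, as required.

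The only point needing care is this last subcase: the single missing adjacency of $B_y$ must be absorbed by the deleted edge $e$, which is why $U$ has to go to the $V_4$-vertex that remains adjacent to $c_k$.
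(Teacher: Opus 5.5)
Your proof is correct. It is the routine verification the paper leaves implicit, since the observation is stated without proof: merge the two centre branch sets into the branch sets of the roots assigned to $V_4$, giving $U$ (the centre that $B_{\pi(v)}$ still touches) to $\sigma^{-1}(\pi(v))$ or to the $V_4$-vertex that stays adjacent to $c_k$. Your disjointness, connectivity and edge-realization checks all go through as you wrote them.
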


A \emph{vampire} is a 5-rooted graph $W$ such that $V(W)\setminus X_W=\{v_1,v_2\}$, $v_1v_2\in E(W)$,
and there exist distinct vertices $x_1,x_2\in X_W$ such that $v_1$ is adjacent to all vertices in $X_W\setminus\{x_2\}$
and $v_2$ is adjacent to all vertices in $X_W\setminus\{x_1\}$.  The edges $v_1x_1$ and $v_2x_2$ are the \emph{fangs} of the vampire.

\begin{corollary}\label{cor-nok4}
Let $G$ be a 4-light 5-rooted graph.  If $\rho_4(G)>0$, then $G$ is $K_{1,4}$-universal and
\begin{itemize}
\item[(x)] $G$ is $(K_4+K_1)$-universal, or
\item[(i)] $\rho_4(G)=1$ and
\begin{itemize}
\item $G$ contains a non-root vertex adjacent to all roots, or
\item $G$ is nearly $K_{2,3}^+$-universal, or
\item there exists a vampire $W$ with $X_W=X_G$ such that $G$ contains $W$ as an $\id$-rooted minor.
\end{itemize}
\item[(ii)] $\rho_4(G)=2$ and $G$ is nearly $K_{2,3}^+$-universal; or
\item[(iii)] $\rho_4(G)=3$ and $G$ contains vertices $v_1,v_2\in V(G)\setminus X_G$ such that $v_1v_2\in E(G)$
and both $v_1$ and $v_2$ are adjacent to all roots (and in particular $G$ is $K_{2,3}^+$-universal).
\end{itemize}
In particular, either $\rho_4(G)=1$ and a non-root vertex of $G$ is adjacent to all roots, or
$G$ contains a graph with vertex set $X_G$ and with seven edges as an $\id$-rooted minor.
\end{corollary}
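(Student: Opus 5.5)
The plan is to reduce everything to the $4$-rooted case by deleting or splitting off one root, and then use Corollary~\ref{cor-k4} together with Theorem~\ref{thm-k4} to control the resulting graph. Fix a root $x\in X_G$ and let $G_x$ be the $4$-rooted graph obtained from $G-x$ by declaring every neighbour of $x$ to be a root. This is awkward, since there may be many such neighbours. So instead we work with the $4$-rooted graph $G'=\roots{(G-x)}{(X_G\setminus\{x\})}$. Every root separation of $G'$ of order at most $3$ extends, by adding $x$ to the left side, to a root separation of $G$ of order at most $4$. Hence $G'$ is $4$-light.

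Next we compare densities. We have $\rho_4(G')=\rho_4(G)-\deg_G x$, counted over non-root neighbours, so $\rho_4(G')$ may well be negative and this direct reduction is too weak. The better route is to exploit $\rho_4(G)>0$ through the skeleton structure. Choose a maximal family of disjoint light separations of order at most $4$ and pass to the "core". We expect it to suffice to first prove that the $5$-rooted graph contains a non-root vertex $v$ whose neighbourhood and connections reach many roots. By (\ref{eq-rhot}), $\rho_4(G)>0$ gives a non-root vertex with $\deg^+ v\ge 9$. Then $4$-lightness, via Menger's theorem applied to root separations of order less than $5$ whose right side contains $v$ and has positive density, yields five disjoint paths from $\{v\}\cup N(v)$ to $X_G$ (the short-cut version). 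The plan from here is as follows.

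\textbf{Step 1 ($K_{1,4}$-universality).} For each root $y$, we want four disjoint paths from the connected set $B_y$ (the branch set containing $y$) to the other roots. We would use the fan lemma from a high-$\deg^+$ vertex. Any cut of order at most $3$ separating $v$ from the roots gives a root separation that is not $4$-light, because its right side contains $v$ and has positive $\rho_4$ (again by (\ref{eq-rhot}) and the fact that sums over light pieces are nonpositive, as in the proof of Corollary~\ref{cor-k4}).

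\textbf{Step 2 (case split on $\rho_4(G)$).} Take the $4$-rooted graph $G-y$ with roots $X_G\setminus\{y\}$, contracting $y$ into a neighbour $u$ so that $u$ becomes a root. Then apply Corollary~\ref{cor-k4} and Theorem~\ref{thm-k4}. If every such graph contains $K_4$ rooted, together with the remaining root's connections, we get outcome (x). Otherwise the skeleton outcomes (a)–(e) and the density computation from Corollary~\ref{cor-k4} force $\rho_4$ to be small. In outcome (a), which has two central vertices, we obtain either a vertex adjacent to all roots, a vampire, or a $K_{2,5}^-$ or $K_{2,5}$ model. Observation~\ref{obs-near-bistar-univ} then converts the latter into (near) $K_{2,3}^+$-universality. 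The value of $\rho_4$ (namely $1$, $2$ or $3$) is determined by how many of the ten possible edges from $v_1,v_2$ to the roots are present, which matches (i)–(iii).

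\textbf{Main obstacle.} The main obstacle is the bookkeeping in Step 2: showing that the web outcomes (c)–(e) cannot occur once the fifth root is reattached, and that in outcome (a) the density exactly pins down the listed cases. For the web outcomes, we would try the Euler-formula computation of Corollary~\ref{cor-k4}, adding the contribution $\deg^X$ of the fifth root, to show $\rho_4\le 0$.

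\textbf{Final claim.} The "in particular" statement follows by inspection of the cases. $K_4+K_1$ has $6$ edges, and $K_{1,4}$ together with it can be combined, or else (iii) and $K_{2,3}^+$ give $7$ edges. In (ii), near $K_{2,3}^+$-universality plus $K_{1,4}$ gives $7$ edges. A vampire yields $K_{2,3}^+$ minus a fang-type edge, which suffices.
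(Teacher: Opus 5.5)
There are genuine gaps. The most serious is your plan for the web outcomes, which would fail because they cannot be excluded by density.

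\textbf{Setup.} Theorem~\ref{thm-k4} has no lightness or density hypothesis. So your worry that $\rho_4(G-x)$ may be negative, and the detour through Corollary~\ref{cor-k4} and contracting a root into a neighbour, are unnecessary. Apply the theorem directly to $G-x$, let $M$ be the skeleton, and set $G'=G[V(M)\cup\{x\}]$. Each component of $G-(V(M)\cup\{x\})$ attaches to at most four vertices, so $4$-lightness of $G$ gives
$\rho_4(G)\le\rho_4(G')=\rho_4(M)+n(M)-q$.
Here $q$ counts the edges of $M$ at non-roots that are missing from $G$, plus the non-roots of $M$ not adjacent to $x$.

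\textbf{Why the webs survive.} Reattaching $x$ contributes exactly the $n(M)$ that cancels the Euler deficit. Outcome (c) gives $\rho_4(G')=1-q$, (d) gives $2-q$, and (e) gives $1-q$. These outcomes therefore genuinely occur with $\rho_4(G)\in\{1,2\}$, and they are precisely where the vampire alternative (and part of the near-$K_{2,3}^+$ alternative) comes from. An Euler computation ``showing $\rho_4\le 0$'' cannot succeed; you need a structural analysis inside the web.
\begin{itemize}
\item In (c) with $q=0$: either both diagonal pairs of the outer $4$-cycle have common interior neighbours, and planarity gives one vertex adjacent to all five roots; or the neighbourhoods of two opposite roots $x_1,x_3$ are internally disjoint $x_2$--$x_4$ paths, which together with a connecting path contract to a vampire.
\item In (d) with $q=0$: contracting a web path from $v_1$ to $x_3$ avoiding $\{v_2,x_4\}$ (or from $v_2$ to $x_4$ avoiding $\{v_1,x_3\}$) gives a $K^-_{2,5}$ model. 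The case $q=1$ needs a case analysis on the single missing edge to produce a vampire.
\item In (e): contracting $v_1v_4$ and $v_2v_3$ gives $K_{2,5}$.
\end{itemize}
Outcome (b) also needs its own treatment: $\rho_4(M)=-1$, so $q\le 1$, and one contracts $v_1v_2$. In (a) you miss the subcase where $v_1v_2\in E(G)$ and both $v_i$ are non-adjacent to the same root $y$. That subcase needs a path from $y$ to $\{v_1,v_2\}$ through a component of $G-X_G$ adjacent to all roots.

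\textbf{Step~1.} This step is unsound. A root separation whose right side contains one vertex with $\deg^+ v\ge 9$ need not have positive $\rho_4$, since the remaining vertices there can be sparse. So nothing forces five disjoint paths from $N[v]$ to $X_G$. The correct argument sums instead of localizing. $\rho_4(G)$ is the sum of $\rho_4$ over the right sides cut off by the components of $G-X_G$, and components adjacent to at most four roots contribute at most $0$. Hence some component $K_0$ is adjacent to all five roots, and contracting $K_0$ into any root gives that root's star.

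\textbf{Final claim.} The same $K_0$ is what you need in case (x). You cannot simply ``combine'' a $K_{1,4}$ minor with a $K_4+K_1$ minor. Instead, extend one model of $K_4+K_1$ by following a path from the isolated root through $K_0$ until it first meets another branch set. Also, a vampire gives all of $K_{2,3}^+$: contract $v_1$ into $x_1$ and $v_2$ into $x_2$. $K_{2,3}^+$ minus an edge has only six edges and would not suffice.
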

\begin{proof}
Let $c=\rho_4(G)$.  Without loss of generality, we can assume that $X_G$ is an independent set in $G$,
since otherwise we can delete all edges between the roots.

Since $G$ is $4$-light and $\rho_4(G)>0$, there exists a component $K_0$ of $G-X_G$
such that each root of $G$ has a neighbor in $K_0$.  Thus, for any $x\in X_G$, we can contract all vertices of
$K_0$ to $x$ and show that $G$ contains $K_{1,4}$ as a $\pi$-rooted minor for each bijection $\pi:V(K_{1,4})\to X_G$
mapping the vertex of $K_{1,4}$ of degree four to $x$.  This shows that $G$ is $K_{1,4}$-universal.

Suppose now that the conclusion (x) is false, and thus there exists a root $x\in X_G$ such that the 4-rooted graph $G-x$ does not
contain $K_4$ as a rooted minor.  Then $G-x$ has a skeleton $M$ satisfying one of the conditions
from the statement of Theorem~\ref{thm-k4}.  Let $G'=G[V(M)\cup \{x\}]$ and let $c'=\rho_4(G')$.
Let $K_1$, \ldots, $K_m$ be the components of $G-(V(M)\cup\{x\})$, and for $i\in\{1,\ldots,m\}$,
let $(A_i,B_i)$ be the root separation of $G$ such that $A_i=V(G)\setminus V(K_i)$ and $B_i$ consists of $V(K_i)$
and the vertices of $V(M)\cup\{x\}$ with a neighbor in $K_i$.  Since $M$ is a skeleton of $G-x$, note that the
separation $(A_i,B_i)$ has order less than $5$, and since $G$ is $4$-light, we have $\rho_4(R_{A_i,B_i})\le 0$.
Therefore,
$$1\le c=\rho_4(G)=\rho_4(G')+\sum_{i=1}^m \rho_4(R_{A_i,B_i})\le \rho_4(G')=c'.$$
Moreover, let $q_x$ be the number of vertices of $V(M)\setminus X_M$ that are not adjacent to $x$ in $G$,
and let $q=q_x+\rho(M)-|E(G[V(M)])|$.
We have
\begin{equation}
c'=\rho_4(G')=\rho_4(M)+n(M)-q.\label{eq-missing}
\end{equation}
Let us now discuss the possible outcomes of Theorem~\ref{thm-k4} one by one.
In each of the cases, we use the labelling of the vertices of $M$ as in the statement of Theorem~\ref{thm-k4} (and Figure~\ref{fig-nok4}).
\begin{itemize}
\item Let us first consider the case that $M$ satisfies (a).  There are the following possibilities:
\begin{itemize}
\item $G'$ has exactly one non-root vertex $v$, the vertex $v$ is adjacent to all roots,
$c'=c=1$, and the outcome (i) holds.
\item $G'$ has exactly two non-root vertices $v_1$ and $v_2$, and $v_1v_2\not\in E(G)$.
Then $c'\in\{1,2\}$, $\deg^X_G v_1,\deg^X_G v_2\ge 4$, and at least one of $v_1$ and $v_2$
is adjacent to all roots.  It follows that $G$ is nearly $K_{2,3}^+$-universal and
one of the outcomes (i) and (ii) holds.
\item $G'$ has exactly two non-root vertices $v_1$ and $v_2$, and $v_1v_2\in E(G)$.
Without loss of generality, we have $\deg^X v_1\ge \deg^X v_2$.
\begin{itemize}
\item Suppose first that $\deg^X v_1=5$.  If $c=1$, then the outcome (i) holds,
and thus suppose that $2\le c\le c'$.  Thus, $\deg^X v_2=2+c'\ge 4$.
If $c'=2$, then $G$ is nearly $K_{2,3}^+$-universal and the outcome (ii) holds,
and if $c'=3$, then $G$ is $K_{2,3}^+$-universal and one of the outcomes (ii) and (iii) holds.
\item Next, suppose that $\deg^X v_1=4$, and thus $\deg^X v_2=4$ and $c'=c=1$.
If $v_1$ and $v_2$ have different neighborhoods in $X_G$, then there exists a vampire $W$ with $X_W=X_G$ such that $G$ contains $W$ as an $\id$-rooted minor,
and the outcome (i) holds.
Otherwise, let $y\in X_G$ be the common non-neighbor of $v_1$ and $v_2$.  As we have observed at the beginning of
the proof, there exists a component $K_0$ of $G-X_G$ such that each root has a neighbor in $K_0$.  Since $M$ is a skeleton of $G-x$,
we necessarily have $\{v_1,v_2\}\cap V(K_0)\neq\emptyset$.
Thus, there exists a path $P$ from $y$ to $v_1$ or $v_2$ through $K_0$.  In this case, we contract the path $P$ to a single edge and
apply Observation~\ref{obs-near-bistar-univ} to show that $G$ is nearly $K_{2,3}^+$-universal.
Therefore, the outcome (i) again holds.
\end{itemize}
\end{itemize}
\item Next, let us consider the case that $M$ satisfies (b).  Then $\rho_4(M)=-1$ and
by (\ref{eq-missing}), $1\le c'=2-q$.
\begin{itemize}
\item Suppose first that $q=1$, and thus $c=c'=1$.  If $\deg^X_G v_3=5$, then the outcome (i) holds.
Otherwise, since $q=1$, we have $\deg^X_G v_3=4$, $\deg^X_G v_1=\deg^X_G v_2=3$ and $v_1v_2\in E(G)$;
we can contract the edge $v_1v_2$ and apply Observation~\ref{obs-near-bistar-univ} to show that $G$ is nearly $K_{2,3}^+$-universal.
Hence, the outcome (i) again holds.
\item Next, suppose that $q=0$.  Then $\deg^X_G v_3=5$, and we can contract the edge $v_1v_2$ and apply Observation~\ref{obs-near-bistar-univ} to show
that $G$ is $K_{2,3}^+$-universal.  Therefore, one of the outcomes (i) and (ii) holds.
\end{itemize}
\item Next, let us consider the case that $M$ satisfies (c).  Euler's formula implies that
$\rho_4(M)=1-n(M)$, and by (\ref{eq-missing}), we have $c'=1-q$.
Consequently $q=0$ and $c=c'=1$.  
Let $x_1x_2x_3x_4$ be the cycle bounding the outer face of the $(X_G\setminus \{x\})$-web $M$.
We can assume that we are not in the case (a), and thus $n(M)\ge 3$.
Since $M$ does not contain non-facial triangles, it follows that $x_1x_3,x_2x_4\not\in E(M)$.
Note that since $X_G$ is an independent set in $G$, the cycle $x_1x_2x_3x_4$ does not appear in $G$,
and since $q=0$, we have $M-E(x_1x_2x_3x_4)\subseteq G$.  Moreover, $q=0$ also implies that all vertices
of $V(M)\setminus \{x_1,\ldots,x_4\}$ are adjacent to $x$ in $G$.

If the vertices $x_1$ and $x_3$ have a common neighbor in $M-\{x_2,x_4\}$ and the vertices $x_2$ and $x_4$ have
a common neighbor in $M-\{x_1,x_3\}$, then by planarity, there exists a vertex $v\in V(M)\setminus\{x_1,\ldots, x_4\}$
adjacent to all vertices of $X_G$ in $G$.  Consequently, the outcome (i) holds.

Otherwise, we can by symmetry assume that $x_1$ and $x_3$ do not have any common neighbors other than $x_2$ and $x_4$.
Since all internal faces of $M$ are triangles and $M$ does not contain non-facial triangles,
for $i\in\{1,3\}$, the neighbors of $x_i$ in $M$ form a path $P_i$ from $x_2$ to $x_4$. Since $x_2x_4\not\in E(M)$,
each of the paths $P_1$ and $P_3$ has at least one internal vertex, and $M-\{x_2,x_4\}$ contains a path $P$ starting in
$V(P_1)\setminus\{x_2,x_4\}$, ending in $V(P_3)\setminus\{x_2,x_4\}$, and otherwise disjoint from these sets.
Let us contract the path $P_1-\{x_2,x_4\}$ to a single vertex $v_1$, the path $P_3-\{x_2,x_4\}$ to a single vertex $v_3$,
and the path $P$ to the edge $v_1v_3$.  The resulting rooted minor of $G$ is a vampire with fangs $v_1x_1$ and $v_3x_3$,
and thus the outcome (i) holds.

\item Next, let us consider the case that $M$ satisfies (d).  Euler's formula implies that
$\rho_4(M)=2-n(M)$, and by (\ref{eq-missing}), we have $c'=2-q$.
Moreover, we can assume that we are not in the case (a), and thus $n(M)\ge 3$.  Since the $\{v_1,v_2,x_3,x_4\}$-web $M-\{x_1,x_2\}$
does not contain non-facial triangles, it follows that $v_1v_2x_3x_4$ is an induced cycle in $M$.
\begin{itemize}
\item Suppose first that $q=0$.  Note that any two distinct roots except for $x_3$ and $x_4$ have $v_1$ or $v_2$ as a common neighbor,
and $x_3$ and $x_4$ have a common neighbor in the $\{v_1,v_2,x_3,x_4\}$-web $M-\{x_1,x_2\}$.
Since all internal faces of $M-\{x_1,x_2\}$ are triangles, the graph $M-\{x_1,x_2\}$ contains either a path from $v_1$ to $x_3$
disjoint from $\{v_2,x_4\}$, or a path from $v_2$ to $x_4$ disjoint from $\{v_1,x_3\}$.
We can contract such a path to a single edge ($v_1x_3$ or $v_2x_4$) and apply
Observation~\ref{obs-near-bistar-univ} to show that $G$ is nearly $K_{2,3}^+$-universal.
Therefore, one of the outcomes (i) and (ii) holds.

\item Next, suppose that $q=1$, and thus $c=c'=1$.
For $i\in\{1,2\}$, let $S_i=\{v_ix_1,v_ix_2,v_ix_{5-i},v_ix\}$.
If $S_1\cup S_2\cup\{v_1v_2\}\subseteq E(G)$, then $G$ contains a vampire with fangs
$v_1x_4$ and $v_2x_3$ as a subgraph, and the outcome (i) holds.

Otherwise, there exists an edge $e_0\in (S_1\cup S_2\cup \{v_1v_2\})\setminus E(G)$;
this edge is unique, since $q=1$.  By symmetry between $S_1$ and $S_2$ and between $x_1$ and $x_2$, we can assume
that $e_0\not\in S_2\cup \{v_1x_2\}$.  Let us first consider the case that $e_0\in \{v_1x_4,v_1v_2\}$.
Since the cycle $v_1v_2x_3x_4$ is induced, the ends of $e_0$ have a common neighbor $v$ in $M-\{x_1,x_2\}$
not belonging to this cycle.  By contracting the edge $vv_1$, we obtain a vampire with fangs
$v_1x_4$ and $v_2x_3$ as an $\id$-rooted minor of $G$, and the outcome (i) holds.

Similarly, if $e_0=v_1x$, we can contract an edge from $v_1$ to a neighbor in $M-\{x_1,x_2,v_2,x_4\}$ (which is adjacent to $x$, since $q=1$),
and obtain a vampire with fangs $v_1x_4$ and $v_2x_3$ as an $\id$-rooted minor of $G$, again resulting in the outcome (i).

It remains to consider the case that $e_0=v_1x_1$.  Since all internal faces of $M-\{x_1,x_2\}$ are triangles and $v_2x_4\not\in E(G)$,
there exists a path $P$ from $v_1$ to $x_3$ in $M-\{x_1,x_2,v_2,x_4\}$.  By contracting $P$ to a single edge,
we obtain a vampire with fangs $v_1x_4$ and $v_2x_1$ as an $\id$-rooted minor of $G$, and the outcome (i) holds.
\end{itemize}

\item Finally, let us consider the case that $M$ satisfies (e).  Euler's formula implies that $\rho_4(M)\le 1-n(M)$,
and by (\ref{eq-missing}), we have $c'=1-q$.
Consequently $q=0$ and $c=c'=1$.  Since $q=0$, we have $M-E(M[X_M])\subseteq G$ and all vertices of $M-X_G$ are adjacent to $x$.
Thus, we can contract the edges $v_1v_4$ and $v_2v_3$ of $G$ and apply Observation~\ref{obs-near-bistar-univ}
to see that $G$ is $K_{2,3}^+$-universal.  Hence, the outcome (i) holds.
\end{itemize}
Therefore, one of the conclusions of the lemma holds.
\begin{itemize}
\item If $G$ is nearly $K_{2,3}^+$-universal, or if $G$ contains a vampire as an $\id$-rooted minor,
then we can contract edges of $G$ to make the subgraph induced by the roots
a supergraph of $K_{2,3}^+$, showing that $G$ contains an $\id$-rooted minor with seven edges between roots.
\item If $G$ is $(K_4+K_1)$-universal, then let $H=K_4+K_1$ and let $\pi:V(H)\to X_G$ be any bijection.
Then $G$ contains $H$ as a $\pi$-rooted minor.  Let $\mu$ be a $\pi$-rooted model of $H$ in $G$
and let $z$ be the isolated vertex of $H$.  Recall that $G-X_G$ has a component $K_0$
such that each root has a neighbor in $K_0$, and thus $G$ contains a path $P$ from
$\pi(z)$ to another root.  Let $P_0$ be the longest initial segment vertex-disjoint from $\mu(V(H)\setminus \{z\})$,
and let $y$ be the vertex of $H$ such that the vertex of $P$ following $P_0$ belongs to $\mu(y)$.
Let $H'=H+yz$; then $G$ clearly contains $H'$ as a $\pi$-rooted minor, and thus
$G$ contains an $\id$-rooted minor with seven edges between roots.
\end{itemize}
It follows that if $\rho_4(G)\ge 2$, or if $\rho_4(G)=1$ and no non-root vertex is adjacent to all the roots,
then $G$ contains an $\id$-rooted minor with seven edges between roots.
\end{proof}

For a rooted graph $G$ and a root $z$, the \emph{$z$-star} in $G$ is the graph with vertex set $X_G$ and with edges $zx$ for all $x\in X_G\setminus\{z\}$.
Thus, the rooted graph $G$ is $K_{1,|X_G|-1}$-universal if and only if for every $z\in X_G$ it contains the $z$-star as an $\id$-rooted minor.
For a set $Z\subseteq X_G$, the \emph{$Z$-star} in $G$ is the union of the $z$-stars for $z\in Z$.
Let us note the following simple consequence of the first part of Corollary~\ref{cor-nok4}.

\begin{observation}\label{obs-42dense}
Every counterexample $G$ satisfies $\rho_4(G)\ge 2$, and each flaw of $G$ contains a matching of size two or a triangle.
\end{observation}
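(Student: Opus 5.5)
We argue directly from the definition of a counterexample, the $m$-target, and Corollary~\ref{cor-nok4}. Let $G$ be a counterexample and $F$ a flaw of $G$. First we show $\rho_4(G)\ge 2$. If $\rho_4(G)\le 0$, the target of $G$ is $\SS_{5,0}$, which contains only the edgeless graph on $X_G$. That graph is trivially an $\id$-rooted minor of $G$ (take singleton branch sets), so $G$ would be universal, a contradiction. If $\rho_4(G)=1$, the target is $\SS_{5,1}$, whose maximal members are single edges $xy$ on $X_G$. By Corollary~\ref{cor-nok4}, $G$ is $K_{1,4}$-universal, so it contains the $x$-star as an $\id$-rooted minor. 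Deleting the edges of the star other than $xy$ gives every single-edge graph, hence $G$ is universal, again a contradiction. Therefore $\rho_4(G)\ge 2$.

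Next we show the structure of the flaw. We know $F$ is not an $\id$-rooted minor of $G$, and $G$ contains every $z$-star, i.e.\ every $K_{1,4}$ centered at any root. Since $\rho_4(G)\ge 2$, the target is at least $\SS_{5,3}$, and by maximality $F$ has at least three edges. If $F$ is such that no other member of the target contains it, then in particular $F$ has as many edges as allowed: at least $3$ when $\rho_4(G)=2$, and at least $4$ when $\rho_4(G)=3$. Now consider a graph on five vertices that contains neither a matching of size two nor a triangle. Every two edges share a vertex and no three edges form a triangle, so all edges share a common vertex, and the graph is a star. Any star centered at $z$ is a subgraph of the $z$-star, which is an $\id$-rooted minor of $G$, so $F$ would be an $\id$-rooted minor of $G$. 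This contradicts the choice of $F$. Hence $F$ contains a matching of size two or a triangle.

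We do not use the maximality of $F$ in an essential way beyond ensuring $\rho_4(G)\ge2$, and the main step is the elementary fact that an intersecting family of edges with no triangle is a star. This fact holds as follows. Suppose $e_1=ab$ and $e_2=ac$ are two edges. A third edge must meet both: if it avoids $a$, it must be $bc$, which yields a triangle. Therefore every edge contains $a$. No step is a serious obstacle; the only care needed is to check that the edgeless and single-edge cases of the target are indeed realized, which is exactly what the $K_{1,4}$-universality part of Corollary~\ref{cor-nok4} provides.
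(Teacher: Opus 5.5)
Your proof is correct and uses the same ingredients as the paper. Those ingredients are that the edgeless graph is trivially realized, that Corollary~\ref{cor-nok4} gives $K_{1,4}$-universality once $\rho_4(G)\ge 1$, and that a graph with neither a matching of size two nor a triangle is a star. The difference is only in the order: the paper gets $\rho_4(G)\ge 1$ from the flaw having an edge, then the matching or triangle, and concludes that $|E(F)|\ge 2$ forces $\rho_4(G)\ge 2$, whereas you rule out $\rho_4(G)\le 1$ by direct case analysis first (your remarks that maximality forces three or four edges are true but unused).
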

\begin{proof}
Consider any flaw $H$ of $G$.  Since $H$ is not an $\id$-rooted minor of $G$, $H$ has at least one edge, and since $H$ belongs to the target of $G$,
we have $\rho_4(G)\ge 1$.  By Corollary~\ref{cor-nok4}, $G$ is $K_{1,4}$-universal.  Since $H$ is not an $\id$-rooted minor of $G$,
not all edges of $H$ are incident with the same vertex, and thus $H$ contains a matching of size two or a triangle.
In particular $|E(H)|\ge 2$, and since $H$ belongs to the target of $G$, we have $\rho_4(G)\ge 2$.
\end{proof}

The second part of Corollary~\ref{cor-nok4} has the following easy consequence.

\begin{corollary}\label{cor-foden}
Let $G$ be a 4-light 5-rooted graph, let $x$ be a root of $G$ and let $F$ be a graph with vertex set $X_G$
such that $x$ is an isolated vertex of $F$.
If $G$ does not contain $F$ as an $\id$-rooted minor, then
\begin{itemize}
\item $\rho_4(G)\le 1$, or
\item $\rho_4(G)=2$ and $|E(F)|\ge 5$, or
\item $\rho_4(G)\le 3$ and $F-x$ is isomorphic to $K_4$.
\end{itemize}
\end{corollary}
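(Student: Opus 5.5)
We argue by contraposition: assume $\rho_4(G)\ge 2$ and that none of the listed exceptions occurs, and show that $F$ is an $\id$-rooted minor of $G$. Since $x$ is isolated in $F$, $F$ is a graph on the four vertices $X_G\setminus\{x\}$ (plus $x$), so $|E(F)|\le 6$. The tool is Corollary~\ref{cor-nok4}, applied to $G$ (which is 4-light with $\rho_4(G)>0$). Its final sentence gives the following: since $\rho_4(G)\ge 2$, $G$ contains as an $\id$-rooted minor a graph $H$ on $X_G$ with seven edges. Every graph on $X_G$ with seven edges in which $x$ has degree at most $d$ has at least $7-d$ edges in $X_G\setminus\{x\}$. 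Since $K_4$ has six edges, we need more precise information than the edge count alone, so we go through the cases of Corollary~\ref{cor-nok4}.

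The first case is outcome (x), where $G$ is $(K_4+K_1)$-universal. Taking the bijection that maps the isolated vertex of $K_4+K_1$ to $x$, we obtain $K_4$ on $X_G\setminus\{x\}$ as an $\id$-rooted minor. This contains $F$, since $F\subseteq K_4$ on those four vertices, and so we are done. Hence we may assume (x) fails, which leaves outcomes (ii) and (iii), because (i) requires $\rho_4(G)=1$.

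In outcome (ii), $\rho_4(G)=2$ and $G$ is nearly $K_{2,3}^+$-universal, with some exceptional root $y$. Here we must show that every $F$ with at most four edges and $x$ isolated is an $\id$-rooted minor, because $|E(F)|\ge 5$ is an allowed exception. Any graph on the four vertices $X_G\setminus\{x\}$ with at most four edges is contained in $K_{2,3}^+-x'$ for a suitable placement, or in $K_{2,3}^+-e$. To see this, note that deleting a degree-two vertex of $K_{2,3}^+$ leaves $K_4$ minus an edge (five edges), and deleting a degree-four vertex leaves the star $K_{1,3}$. We put $x$ on a degree-two vertex $w$ of $K_{2,3}^+$. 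Then $K_{2,3}^+-w$ is $K_4^-$, which contains every four-vertex graph with at most four edges except $C_4$ and $K_{1,3}$. We handle $C_4$ and $K_{1,3}$ separately: $K_4^-$ contains $C_4$, and it also contains $K_{1,3}$ centered at a vertex of degree three. So we only need $K_{2,3}^+-w$ itself, on any prescribed bijection. If $x\ne y$, we map $w$ to $x$ and obtain the full $K_{2,3}^+$. If $x=y$, the near-universality gives $K_{2,3}^+-e$ for an edge $e$ at $w$, and removing $w$'s edges does not affect $K_{2,3}^+-w$. Either way, $F\subseteq K_4^-$ is realized, because each $F$ with at most four edges on four vertices embeds in $K_4^-$ under a suitable labelling of the other roots.

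In outcome (iii), $\rho_4(G)=3$ and there are adjacent non-roots $v_1,v_2$, both complete to $X_G$. Here we must realize every $F\neq K_4$ on $X_G\setminus\{x\}$, that is, every $F$ with at most five edges. Such an $F$ is contained in $K_4^-$, say missing the edge $ab$. We contract $v_1$ into $a$ and $v_2$ into $b$. Then $a$ and $b$ become adjacent to all roots, and to each other through the edge $v_1v_2$, so every root pair except the one avoiding both $a$ and $b$ is an edge. Choosing $a,b$ to be the endpoints of the edge of $K_4-ab$ opposite to $ab$ gives the required graph.

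Finally, if $\rho_4(G)\ge 4$, then the target reasoning together with Corollary~\ref{cor-nok4} forces (x). Indeed, (ii) and (iii) pin $\rho_4$ at 2 and 3, so when $\rho_4(G)\ge 4$ outcome (x) holds and $F$ is realized. The main care needed is the case $x=y$ in outcome (ii), which I expect to be the only delicate point.
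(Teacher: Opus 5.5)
Your proof has a gap in outcome (ii). It is not the case $x=y$ that you flagged: there, with $w\mapsto x$, the guaranteed missing edge lies at $w$ and is irrelevant, exactly as you say. The unjustified step is ``If $x\ne y$, we map $w$ to $x$ and obtain the full $K_{2,3}^+$.''

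Near-universality yields the full $K_{2,3}^+$ only for bijections $\pi$ with $\pi^{-1}(y)\in V_4$. If $y$ lands on a degree-two vertex $c$, you only get $K_{2,3}^+-e$ for edges $e$ at $c$. Such an $e$ is one of the five edges of $K_{2,3}^+-w$ that you need.

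Your bijection is not free either. The two degree-two vertices other than $w$ must be sent to the ends of a non-edge of $F-x$. So $y$ can be placed on a degree-four vertex only if $F-x$ has a non-edge avoiding $y$. This fails precisely when $F-x$ contains the triangle on $X_G\setminus\{x,y\}$. Examples are a triangle plus isolated $y$, or a triangle plus one pendant edge to $y$. These have at most four edges, so they must be realized, and your argument does not realize them.

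The repair is short. In that case $y$ has at most one neighbour in $F$. Write $X_G\setminus\{x,y\}=\{p,q,r\}$, where $p$ is the $F$-neighbour of $y$ if there is one. Let $a,b$ be the degree-four vertices and $w,c,c'$ the degree-two vertices of $K_{2,3}^+$. Take $\pi$ with $w\mapsto x$, $c\mapsto y$, $c'\mapsto q$, $a\mapsto p$, $b\mapsto r$. Near-universality says $K_{2,3}^+-bc$ is a $\pi$-rooted minor. Its edges inside $X_G\setminus\{x\}$ are $pq,pr,qr,py$, which contain $F$.

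The paper avoids this case split by arguing by contradiction:
\begin{itemize}
\item It fixes a non-edge $y_1y_2$ of $F-x$ and notes that the $\{y_3,y_4\}$-star $H\cong K_{2,3}^+$ contains $F$.
\item Hence the exceptional root is a degree-two vertex of $H$. It is not $x$, since $H-xy_3\supseteq F$, so say it is $y_1$.
\item Then $H-y_1y_3$, $H-y_1y_4$ and the $\{y_1,y_i\}$-stars for $i\in\{2,3,4\}$ are all minors, which forces five edges into $F$.
\end{itemize}

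Your treatment of outcome (x) and of $\rho_4(G)\ge 4$ is fine. Outcome (iii) is also fine, provided you contract $v_1,v_2$ into the two vertices off the chosen non-edge, which is what your last sentence there intends.
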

\begin{proof}
We can assume $\rho_4(G)\ge 2$, as otherwise the first conclusion holds.
We apply Corollary~\ref{cor-nok4} to the graph $\widetilde{G}$.
Since $G$ does not contain $F$ as an $\id$-rooted minor,
the rooted graph $\widetilde{G}$ is not $(K_4+K_1)$-universal, and thus the outcome (x) does not hold.
Therefore, (ii) or (iii) holds, and thus $\rho_4(G)\le 3$.

Suppose now that $F-x$ is not isomorphic to $K_4$.  Let $X_G=\{x,y_1,y_2,y_3,y_4\}$, where the labels are chosen so that $y_1y_2\not\in E(F)$.
Let $H$ be the $\{y_3,y_4\}$-star in $G$.
Since $F$ is not an $\id$-rooted minor of $G$, $H$ is not an $\id$-rooted minor of $\widetilde{G}$.
Since $H$ is isomorphic to $K_{2,3}^+$, it follows that $\widetilde{G}$ is not $K_{2,3}^+$-universal, and in particular (iii) cannot hold.

Therefore (ii) holds, $\rho_4(G)=2$, and $\widetilde{G}$ is nearly $K_{2,3}^+$-universal.
The exceptional root of $\widetilde{G}$ is one of the vertices of $H$ of degree two.
Since $F$ is not an $\id$-rooted minor of $G$ and $x$ is an isolated vertex of $F$, the graph $H-xy_3$ is not an $\id$-rooted minor of $\widetilde{G}$, and thus $x$ is not the exceptional root.
Hence, we can by symmetry assume that $y_1$ is the exceptional root of $\widetilde{G}$.

It follows that $\widetilde{G}$ contains as $\id$-rooted minors the graphs $H-y_1y_3$ and $H-y_1y_4$, and
since $F$ is not an $\id$-rooted minor of $G$, we have $y_1y_3,y_1y_4\in E(F)$.  Moreover,
$\widetilde{G}$ contains as an $\id$-rooted minor the $\{y_1,y_i\}$-star for each $i\in \{2,3,4\}$,
and since $F$ is not an $\id$-rooted minor of $G$, we conclude that $y_3y_4,y_2y_4,y_2y_3\in E(F)$.
Therefore, $|E(F)|=5$.
\end{proof}

We can now apply this claim to a minimal counterexample.

\begin{corollary}\label{cor-nearfour}
If $F$ is a flaw of a minimal counterexample $G$, then $F$ does not have isolated vertices, and in particular $|E(F)|\ge 3$.
\end{corollary}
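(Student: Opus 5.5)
Suppose for contradiction that some flaw $F$ of a minimal counterexample $G$ has an isolated vertex $x\in X_G$. Since $F$ is a flaw, $F$ is not an $\id$-rooted minor of $G$. We apply Corollary~\ref{cor-foden} to $G$, $x$ and $F$, and by Observation~\ref{obs-42dense} we may assume $\rho_4(G)\ge 2$. This leaves two outcomes to rule out: (1) $\rho_4(G)=2$ and $|E(F)|\ge 5$, or (2) $\rho_4(G)\le 3$ and $F-x\cong K_4$. In both cases we compare $F$ against the target of $G$, using that $F$ lies in the target.

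In case (1), the target for $\rho_4(G)=2$ is $\SS_{5,3}$. So $|E(F)|\le 3$, which contradicts $|E(F)|\ge 5$.

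In case (2), $F$ has six edges. Since $\rho_4(G)\ge 2$ in a minimal counterexample, we have $\rho_4(G)\in\{2,3\}$, so the target is $\SS_{5,3}$ or $\SS^-_{5,4}$. Every graph in either class has at most four edges, again a contradiction. Hence $F$ has no isolated vertices.

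For the final claim, a graph on five vertices with no isolated vertices needs at least $\lceil 5/2\rceil=3$ edges, so $|E(F)|\ge 3$. The only point needing care is reading off the targets correctly for $\rho_4(G)\in\{2,3\}$; no other step should be an obstacle. The same bound also follows from Observation~\ref{obs-42dense}: a matching of size two covers only four vertices, so covering the fifth needs a further edge, and a triangle leaves two vertices to cover.
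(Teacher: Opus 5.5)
Your proof is correct and is essentially the paper's argument. You use Observation~\ref{obs-42dense} to get $\rho_4(G)\ge 2$, then observe that for $\rho_4(G)\in\{2,3\}$ the target of $G$ contains only graphs with at most four edges, so none of the outcomes of Corollary~\ref{cor-foden} can hold. Your closing aside is not an independent route, though: in the matching case Observation~\ref{obs-42dense} by itself only gives $|E(F)|\ge 2$, and the third edge needs the absence of isolated vertices you had just proved.
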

\begin{proof}
By Observation~\ref{obs-42dense}, we have $\rho_4(G)\ge 2$.  Moreover, if $\rho_4(G)\le 3$, then the flaw $F\in \SS_{5,4}^-$ has at most four edges.
Consequently, none of the conclusions of Corollary~\ref{cor-foden} holds, and thus no vertex of $F$ is isolated.
\end{proof}

\section{$K_\star$ and $K^-_\star$-universal separations}

For a root separation $(A,B)$ of a rooted graph which is clear from the context and for a class $\SS$ of graphs (or a single graph $S$),
we are going to say that $(A,B)$ is \emph{$\SS$-universal} (or \emph{$S$-universal}, or \emph{nearly $K_{2,3}^+$-universal}) if the rooted graph $R_{A,B}$ has this property.
To further simplify the notation, we say that a root separation $(A,B)$ of order $k$ is
\begin{itemize}
\item \emph{$K_{1,\star}$-universal} to mean that it is $K_{1,k-1}$-universal,
\item \emph{$K_\star$-universal} to mean that it is $K_k$-universal, and
\item \emph{$K^-_\star$-universal} to mean that it is $K^-_k$-universal, where $K^-_k$ denotes the graph obtained from the clique $K_k$ by removing a single edge.
\end{itemize}

First, let us note several technical observations.
The \emph{census} of a rooted graph $F$ is the set of all its $\id$-rooted minors with vertex set $X_F$.
If $F$ is a part of a larger rooted graph $G$ attaching to the rest only through the roots of $F$,
it should be clear that replacing $F$ by any other rooted graph with the same census does not affect
the census of $G$.  More precisely:

\begin{observation}\label{obs-ceneq}
Let $G_1$ be a rooted graph and let $(A,B)$ be a root separation of $G_1$.  Let $H$ be a rooted graph with $X_H=A\cap B$ and otherwise vertex-disjoint from $G_1$.
Let $G_2=(L_{A,B}-E(G_1[A\cap B]))\cup H$ be the graph obtained from $G_1$ by replacing the right part of the separation $(A,B)$ by $H$.
If $H$ and $R_{A,B}$ have the same census, then $G_1$ and $G_2$ have the same census.
\end{observation}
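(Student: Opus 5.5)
By symmetry it suffices to show one inclusion: every graph in the census of $G_1$ lies in the census of $G_2$. The reverse inclusion follows by swapping the roles of $R_{A,B}$ and $H$. Let $S=A\cap B$. Let $F$ be a graph on $X_{G_1}$ that is an $\id$-rooted minor of $G_1$, witnessed by a model $\mu$. Here $X_{G_2}=X_{G_1}\subseteq A$, since $(A,B)$ is a root separation. The plan is to read off from $\mu$ an $\id$-rooted minor $F'$ of $R_{A,B}$ on vertex set $S$, replace it by a model of $F'$ in $H$, and splice this with the part of $\mu$ lying in $A\setminus S$.

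\emph{Step 1: projecting the model to $B$.} For each root $v$ of $F$, consider the branch set $\mu(v)$. The components of $G_1[\mu(v)\cap B]$ are of two kinds. Those avoiding $S$ lie entirely in $B\setminus A$, so they have no neighbours outside $B$. They cannot contain a root, because roots lie in $A$, and hence the whole branch set lies in $B\setminus A$. That is impossible for a branch set containing a root. So every component $C$ of $G_1[\mu(v)\cap B]$ meets $S$.

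Define $F'$ on vertex set $S$ as follows. Two vertices $s,s'\in S$ are joined if they lie in the same component $C$ of $G_1[\mu(v)\cap B]$ for some $v$, or if they lie in components of two different branch sets $\mu(v)\neq\mu(w)$ and $\mu(vw)$ is an edge of $G_1[B]$ joining those two components. Contracting each such component onto a chosen vertex of $C\cap S$ is not quite enough, since a component may contain several vertices of $S$. It is therefore cleaner to contract each component $C$ to a spanning tree on $C\cap S$. We record the resulting $\id$-rooted minor of $R_{A,B}$: keep all edges among the $S$-vertices that arise by contracting non-$S$ vertices of $C$, plus edges realising the $\mu$-edges lying in $B$.

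Vertices of $B\setminus A$ outside all branch sets are deleted. This gives a graph $F'$ on $S$ that is an $\id$-rooted minor of $R_{A,B}$, and so $F'$ is in the census of $H$.

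\emph{Step 2: reassembling.} Take an $\id$-rooted model $\nu$ of $F'$ in $H$; note that $\nu(s)\ni s$ for each $s\in S$. Define $\mu'(v)=(\mu(v)\cap A)\cup\bigcup_{s\in\mu(v)\cap S}\nu(s)$ for each root $v$. These sets are pairwise disjoint, because the $\nu(s)$ are disjoint and $H$ is disjoint from $G_1$ apart from $S$.

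Each $\mu'(v)$ is connected in $G_2$. Any two pieces of $\mu(v)$ that were connected through $B$ are connected in $F'$ by an edge inside the same branch set, and that edge is realised in $H$ by $\nu$. Pieces connected through $A\setminus S$ use edges of $L_{A,B}$ with at least one end outside $S$, and these edges survive in $G_2$. Edges $\mu(vw)$ are handled the same way: one with an end in $A\setminus S$ is kept, and one inside $B$ is replaced by $\nu$ of the corresponding edge of $F'$.

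The main obstacle is the bookkeeping in Step 1. Edges of $G_1[S]$ are removed in $G_2$, so they must be treated as part of $R_{A,B}$ rather than $L_{A,B}$, and connectivity of the branch sets must genuinely be captured by $F'$ rather than lost when paths alternate between the two sides. Defining $F'$ via the components of $\mu(v)\cap B$, as above, handles this: any path in $\mu(v)$ decomposes into segments in $A$ and segments in $B$ that meet only at vertices of $S$.
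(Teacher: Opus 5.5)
Your proof is correct and is essentially the paper's argument. The sets $C\cap S$, for components $C$ of $G_1[\mu(v)\cap B]$, are the parts of the paper's partition $\mathcal{P}$ of $A\cap B$, and your $F'$ is its graph $Z$: connected on each part, with one realised edge for each $\mu$-edge lying in $B$. You then take a model of $F'$ in $H$, splice it with $\mu$ restricted to $A$, and get the reverse inclusion by symmetry, exactly as the paper does.

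In the write-up, keep only your corrected definition of $F'$. The rule you state first makes each $C\cap S$ a clique and joins all pairs across two adjacent components, and that need not give an $\id$-rooted minor of $R_{A,B}$. For example, it fails if $R_{A,B}$ is just a path $s_1s_2s_3$ with $s_1,s_2,s_3\in S$ all lying in one branch set.
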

\begin{proof}
Consider any graph $S$ in the census of $G_1$ and let $\mu$ be an $\id$-rooted model of $S$ in $G_1$.
Let $\PP$ be a partition of $A\cap B$ such that distinct vertices $u,v\in A\cap B$ belong to the same part if and only if
there exists a root $x\in X_{G_1}$ such that $u,v\in \mu(x)$ and the graph $R_{A,B}[\mu(x)\cap B]$ contains a path from $u$ to $v$
(this path may be needed to keep the subgraph induced by $\mu(x)$ connected).

Let $D$ be the graph with vertex set $\PP$ and with two parts $P_1$ and $P_2$ adjacent if and only if there exist distinct vertices
$x,y\in X_{G_1}$ such that $P_1\subseteq \mu(x)$, $P_2\subseteq\mu(y)$, and $R_{A,B}$ has an edge between the component
of $R_{A,B}[\mu(x)\cap B]$ containing $P_1$ and the component of $R_{A,B}[\mu(y)\cap B]$ containing $P_2$ (such an edge may be
needed to represent the edge of $S$ between $x$ and $y$, if any).

The restriction of $\mu$ to $R_{A,B}$ shows that there exists a graph $Z$ with $V(Z)=A\cap B$ such that $Z$ is an $\id$-rooted minor of $R_{A,B}$,
each part of $\PP$ induces a connected subgraph of $Z$, and for each edge $P_1P_2\in E(D)$, there exists an edge of $Z$ with one end in $P_1$ and the other end in $P_2$.
Since $H$ and $R_{A,B}$ have the same census, there also exists an $\id$-rooted model $\eta$ of $Z$ in $H$.  The natural combination of the restriction of $\mu$ to $L_{A,B}$ with $\eta$
gives an $\id$-rooted model of $S$ in $G_2$, and thus $S$ belongs to the census of $G_2$.

Since this holds for every graph in the census of $G_1$, it follows that the census of $G_1$ is a subset of the census of $G_2$.  The opposite inclusion follows by symmetry.
\end{proof}

Let $(A,B)$ and $(C,D)$ be root separations of a rooted graph $G$.
We say that $(C,D)$ is an \emph{isolator} of $(A,B)$ if $C\subseteq A$, $B\subseteq D$,
and the order of $(C,D)$ is minimal among the root separations with these properties.
The \emph{root connectivity} of $(A,B)$ is defined as the order of its isolators.
We say that $(A,B)$ is \emph{linked to the roots} if its root connectivity is equal to $|A\cap B|$.
Moreover, the root separation $(A,B)$ is \emph{strongly linked to the roots}
if $(A,B)$ has no isolators other than itself and the separation $(X_G,V(G))$ in the case that $|A\cap B|=|X_G|$.
Menger's theorem has the following consequence.
\begin{observation}\label{obs-isol}
Let $(A,B)$ be a root separation of a rooted graph $G$ and let $k$ be the root connectivity of $(A,B)$.
Then there exists a system $\QQ$ of $k$ pairwise vertex-disjoint paths in $G[A]$ starting in $X_G$,
ending in $A\cap B$, and otherwise disjoint from $X_G\cup B$.  Moreover, for each vertex $x\in X_G\cap B$,
the system $\QQ$ contains a single-vertex path consisting of $x$.
\end{observation}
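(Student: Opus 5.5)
We derive Observation~\ref{obs-isol} from Menger's theorem applied in $G[A]$ between the sets $X_G$ and $A\cap B$. Let $(C,D)$ be an isolator of $(A,B)$, so its order $k$ is minimal among root separations with $C\subseteq A$ and $B\subseteq D$. The plan is to show that every set $S\subseteq A$ separating $X_G$ from $A\cap B$ in $G[A]$ has size at least $k$. Menger's theorem then yields $k$ vertex-disjoint $X_G$--$(A\cap B)$ paths in $G[A]$. Finally, we trim these paths so that the remaining conditions of the statement hold.

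\textbf{Lower bound on separators.} Let $S\subseteq A$ meet every path in $G[A]$ from $X_G$ to $A\cap B$; paths consisting of a single vertex count, so $S\supseteq X_G\cap B$. Let $C'$ be the union of $S$ and the vertex sets of the components of $G[A]-S$ containing a vertex of $X_G$, and let $D'=(V(G)\setminus C')\cup S$. Then $X_G\subseteq C'\subseteq A$. Since $S$ separates, $C'\cap B\subseteq S$, so $B\subseteq D'$. We check that $(C',D')$ is a separation of $G$. An edge from $C'\setminus S$ to $D'\setminus S$ cannot stay inside $A$, because such an edge would join a component of $G[A]-S$ to a vertex outside it. It also cannot go from $A\setminus B$ to $B\setminus A$, since $(A,B)$ is a separation. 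The remaining possibility is an edge from $C'\setminus S$ to a vertex of $A\cap B$, but $C'\setminus S$ contains no vertex of $A\cap B$ with a neighbour outside $S$ along such a path, so this is excluded by the choice of $S$. Hence $(C',D')$ is a root separation with $C'\subseteq A$, $B\subseteq D'$, and order $|C'\cap D'|=|S|$. Minimality of the isolator gives $|S|\ge k$.

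\textbf{Getting the paths.} Menger's theorem applied in $G[A]$ now gives $k$ pairwise vertex-disjoint paths from $X_G$ to $A\cap B$. Shorten each path to its segment between its last vertex in $X_G$ and the first subsequent vertex in $A\cap B$. After this, each path has its interior disjoint from $X_G\cup B$, using the fact that $A\setminus B$ is disjoint from $B$. For $x\in X_G\cap B$, any path through $x$ can be shortened to the single-vertex path $x$, while disjointness is preserved.

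Two points remain to check. First, every $x\in X_G\cap B$ should actually appear as one of the paths. Each such $x$ lies in every separator $S$, so $X_G\cap B$ is contained in every minimum separator. Deleting $X_G\cap B$ and applying Menger to the remaining sets, with $k-|X_G\cap B|$ paths, and then adding back the trivial paths gives the desired system. Second, we must verify that the minimum separator bound survives this deletion. It does, because a separator in the reduced graph together with $X_G\cap B$ is a separator in $G[A]$. This bookkeeping is the only mildly delicate step. The main point is the construction of $(C',D')$ from a separator, which relies on the definition of root connectivity as the minimum order over isolators.
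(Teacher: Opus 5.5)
Your proof is correct and is the standard Menger argument the paper intends; the paper gives no proof beyond calling this a consequence of Menger's theorem. Turning an $X_G$--$(A\cap B)$ separator $S$ of $G[A]$ into the root separation $(C',D')$ of order $|S|$ gives the lower bound $|S|\ge k$, and applying Menger in $G[A]-(X_G\cap B)$ produces the trivial paths. The one blemish is the muddled ``remaining possibility'' sentence in your separation check: the case actually left is an edge from $(C'\setminus S)\cap B$ to $B\setminus A$, and it cannot occur because you already showed $C'\cap B\subseteq S$.
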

We say such a system of paths $\QQ$ is a \emph{root linkage} for $(A,B)$.
For an isolator $(C,D)$ of $(A,B)$, note that $(A\cap D, B)$ is a root separation of $R_{C,D}$ of root connectivity $|C\cap D|$.
We say that a root linkage $\PP$ for $(A\cap D, B)$ in $R_{C,D}$ is an \emph{isolator linkage} for $(C,D)$ and $(A,B)$;
we usually view $\PP$ as a system of paths in $G$ from $C\cap D$ to $A\cap B$.
For a path $P$ of a root or isolator linkage, the \emph{terminator} of $P$ is its end in $A\cap B$ and the \emph{origin} is its other end.

\begin{observation}\label{obs-cliques}
Let $G$ be a rooted graph and let $(A,B)$ be a $K_\star$-universal root separation of $G$ of order $k$.
Then each isolator $(C,D)$ of $(A,B)$ is $K_\star$-universal.  In particular, if $G$ is a counterexample
and $(A,B)$ is linked to the roots, then $k<5$.
\end{observation}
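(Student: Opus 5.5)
The first part will follow from Menger's theorem via an isolator linkage, and the second from comparing the order of the isolator with the size of the flaw.

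Let $(C,D)$ be an isolator of $(A,B)$, of order $k'=|C\cap D|$. Since $(C,D)$ has minimum order among root separations with $C\subseteq A$ and $B\subseteq D$, and $(A,B)$ is itself such a separation, we have $k'\le k$. Consider the root separation $(A\cap D,B)$ of $R_{C,D}$. Any root separation $(C',D')$ of $R_{C,D}$ with $A\cap D\supseteq C'$ and $B\subseteq D'$ gives a root separation $(C\cup C',D')$ of $G$ with $C\cup C'\subseteq A$ and $B\subseteq D'$, and its order is at most $|C'\cap D'|$. By minimality of $k'$, the root connectivity of $(A\cap D,B)$ in $R_{C,D}$ is $k'$. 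Observation~\ref{obs-isol} then gives an isolator linkage $\PP$: $k'$ pairwise vertex-disjoint paths in $G[A\cap D]$ from $C\cap D$ to $A\cap B$. Vertices of $C\cap D\cap B$ appear as single-vertex paths, and the paths are otherwise disjoint from $B$.

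Now let $\pi$ be an arbitrary bijection from $V(K_{k'})$ to $C\cap D$; we must find a $\pi$-rooted model of $K_{k'}$ in $R_{C,D}$. Let $T\subseteq A\cap B$ be the set of terminators, so $|T|=k'$. Since $(A,B)$ is $K_\star$-universal, $R_{A,B}$ contains $K_k$ as a rooted minor for any assignment of $V(K_k)$ to $A\cap B$. Fix such a model $\mu$ in $R_{A,B}$, choosing the labelling so that, for each path $P\in\PP$ with origin $\pi(v)$, the branch set containing the terminator of $P$ is indexed by $v$. The remaining $k-k'$ branch sets are ignored. Then set $\mu'(v)=\mu(v)\cup V(P)$. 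These sets are connected. They are pairwise disjoint, because the paths lie in $A$, meet $B$ only in their terminators, and are mutually disjoint, while the sets $\mu(v)$ lie in $B$ and meet $A\cap B$ in distinct vertices. Each $\mu'(v)$ contains the root $\pi(v)$ of $R_{C,D}$, and all pairs remain adjacent via the edges of $\mu$. All of this lies in $G[D]$, since $A\cap D$ and $B$ are both subsets of $D$. Hence $(C,D)$ is $K_\star$-universal.

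For the second part, suppose $G$ is a counterexample and $(A,B)$ is linked to the roots with $k\ge 5$. Being linked means the isolator has order $k$. Since $(X_G,V(G))$ is a root separation of order $5$ satisfying the containments, minimality gives $k\le 5$, so $k=5$ and $(X_G,V(G))$ is itself an isolator. By the first part, $R_{X_G,V(G)}=G$ is $K_5$-universal. Then every graph on $X_G$ is an $\id$-rooted minor of $G$, since edges can be deleted from a rooted model of $K_5$. So $G$ is universal, which contradicts $G$ being a counterexample. Hence $k<5$.

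The main obstacle is the first part: verifying that the isolator linkage stays inside $G[D]$ and meets $B$ only at its terminators, so that the extended branch sets remain disjoint. This is exactly what Observation~\ref{obs-isol}, applied within $R_{C,D}$, provides.
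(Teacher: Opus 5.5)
Your proof is correct and follows the same approach as the paper. For the first part, you contract $R_{A,B}$ to a clique on the terminators of an isolator linkage and then contract the linkage paths. For the second part, you note that linkedness would make $(X_G,V(G))$ an isolator of order five, which would force $K_5$ as a rooted minor of the counterexample $G$. You just spell out details the paper leaves implicit: the root connectivity of $(A\cap D,B)$ in $R_{C,D}$, the disjointness of the extended branch sets, and why $k\le 5$.
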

\begin{proof}
Consider any isolator $(C,D)$ of $(A,B)$, and let $\PP$ be an isolator linkage for $(C,D)$ and $(A,B)$.
Since $(A,B)$ is $K_\star$-universal, we can contract $R_{A,B}$ to form a clique on the terminators of $\PP$
and then contract the paths of $\PP$ to form a clique on $C\cap D$, showing that $(C,D)$ is $K_\star$-universal.

Suppose now that $(A,B)$ is linked to the roots.  If $G$ is a counterexample, then $|X_G|=5$ and $K_5$ is not a rooted minor of $G$,
and thus the root separation $(X_G,V(G))$ is not $K_\star$-universal.  It follows that
$(X_G,V(G))$ is not an isolator of $(A,B)$, and thus $k<5$.
\end{proof}

\begin{lemma}\label{lemma-diamonds}
Let $G$ be a rooted graph and let $(A,B)$ be a $K^-_\star$-universal root separation of $G$ of order $k\le |X_G|$.
Suppose that every root separation $(M,N)$ of $G$ of order at most two such that $B\subseteq M$ is $4$-light.
\begin{itemize}
\item For each isolator $(C,D)$ of $(A,B)$, either
\begin{itemize}
\item $(C,D)$ is $K_\star$-universal, or
\item $(C,D)$ is $K^-_\star$-universal, $A\cap B$ and $C\cap D$ are independent sets of the same size $k$, and $\rho_4(R_{A,B})\ge\rho_4(R_{C,D})$.
\end{itemize}
\item If $G$ is a minimal counterexample, $k=5$, and $(A,B)$ is linked to the roots, then $(A,B)=(X_G,V(G))$.
\end{itemize}
\end{lemma}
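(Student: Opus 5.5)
Fix an isolator $(C,D)$ of $(A,B)$ and an isolator linkage $\PP$ for $(C,D)$ and $(A,B)$. Since $(C,D)$ is an isolator, its order is at most $k$. The paths of $\PP$ link $C\cap D$ to a subset $T\subseteq A\cap B$ of size $|C\cap D|$.

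The first part is by a case analysis on how $T$ sits inside $A\cap B$ relative to the missing edge of the $K^-_k$ target. Let $R_{A,B}$ be $K^-_k$-universal. Suppose first that $|C\cap D|<k$, so $T\subsetneq A\cap B$. Then I will show $(C,D)$ is $K_\star$-universal. Given a pair of terminators in $T$, choose the bijection from $K_k^-$ so that the missing edge goes between some $u\in T$ and a vertex $w\in (A\cap B)\setminus T$. Contracting $w$'s branch set into $u$'s branch set keeps it connected via the branch of $w$ and the edge of $K_k^-$ from $w$ to $u$; that edge is present unless $\{u,w\}$ is the missing pair. To avoid this, put the missing edge between $w$ and some $w'\in T$ different from $u$, or between two vertices outside $T$ if there are two. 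Either way all pairs in $T$ become adjacent, giving a clique on $T$. Contracting along $\PP$ then gives a clique on $C\cap D$. This needs $k\ge 2$ or so; small $k$ is trivial.

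Next suppose $|C\cap D|=k$, so $T=A\cap B$. Contracting $R_{A,B}$ to $K_k^-$ with an arbitrary prescribed missing pair and pulling it back along $\PP$ shows $(C,D)$ is $K^-_\star$-universal. If it is not $K_\star$-universal, I will argue that $A\cap B$ and $C\cap D$ are independent and that $\rho_4(R_{A,B})\ge\rho_4(R_{C,D})$. For independence: an edge between two terminators, say, combined with $K_k^-$ having its missing edge exactly at that pair, yields a clique. An edge between origins works the same way. An edge inside the paths' region also helps, but I will avoid needing that. For the density inequality, write $\rho_4(R_{C,D})=\rho_4(R_{A,B})+\rho_4(H)$, where $H$ is the rooted graph $G[D\cap A]$ with root set $(C\cap D)\cup(A\cap B)$, counting appropriately; the terminal edges are absent by independence. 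I need $\rho_4(H)\le 0$.

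This density step is the main obstacle, and it is where the hypothesis on order-$\le 2$ separations enters. I would show that $H$ carries no ``extra'' structure. If some vertex or edge of $H$ outside the paths had enough attachments, we could reroute: find an edge between two distinct paths of $\PP$, contract it, and use the missing edge of $K_k^-$ at those terminators to get a clique, so $(C,D)$ would be $K_\star$-universal. Hence no edge joins distinct paths of $\PP$, and components of $H-\bigcup\PP$ attach to at most one path... I doubt that holds literally. The more realistic route is this: any component attaching to two paths gives a connection between them, and contracting it gives the clique. So each component $K$ of $H-V(\PP)$ attaches to a single path plus possibly nothing else. It is then cut off by a separation $(M,N)$ of order at most two with $B\subseteq M$ (attachments on one path, which must be contiguous or else minimality of the isolator fails). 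By hypothesis that separation is $4$-light, contributing $\le 0$. The paths themselves are induced by the same argument, with chord contractions shortening them. They contribute $\rho=|E(P)|$ with $n=|V(P)|-2$ or similar, i.e. at most $1-4(\ell-1)\le 0$ per path once $k$ paths are summed. This should give $\rho_4(H)\le 0$, though the bookkeeping of single-vertex paths and attachments needs care.

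For the second part, take $G$ a minimal counterexample, $k=5$, and $(A,B)$ linked to the roots. Then $(X_G,V(G))$ is an isolator. Since $G$ is a counterexample, it is not $K_5$-universal, so by the first part $X_G$ is independent, $(X_G,V(G))$ is $K_5^-$-universal, and $\rho_4(G)\le\rho_4(R_{A,B})$. If $A\neq X_G$, Observation~\ref{obs-univsep} shows $R_{A,B}$ is universal and not $K_5$-universal, so $\rho_4(R_{A,B})\le 6$ by the targets. Hence $\rho_4(G)\le 6$ and the target of $G$ is within $\SS_{5,9}$. Every $9$-edge graph is $K_5^-$, which $G$ contains, so $G$ is universal, a contradiction. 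Therefore $A=X_G$; together with the order being $5$, $B=V(G)$.
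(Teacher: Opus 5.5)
\textbf{There is a genuine gap in the density inequality.} The parts before it are fine: the case $|C\cap D|<k$ (stated circuitously; it suffices to put the missing edge of $K^-_k$ at a non-terminator), the $K^-_\star$-universality of $(C,D)$, and the independence of $A\cap B$ and $C\cap D$. Your second bullet is a valid contrapositive variant of the paper's argument via Observation~\ref{obs-univsep}. However, it rests entirely on the inequality $\rho_4(R_{C,D})\le\rho_4(R_{A,B})$, which is where the gap is.

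You correctly note that a path in $G[A\cap D]$ joining two different paths of $\PP$ gives $K_k$ as a rooted minor of $R_{C,D}$. You then apply the order-$\le 2$ hypothesis to each component $K$ of $H-V(\PP)$ separately, claiming $K$ is cut off by at most two vertices of the one path it attaches to. This is false. $K$ may attach to arbitrarily many vertices of that path, so the separation isolating $K$ alone has large order and the hypothesis says nothing about it. Nor can $K$ be bounded in isolation. For example, a single vertex adjacent to five interior vertices of $P_v$ has $\rho_4=1$ relative to its attachments. That excess is absorbed only by the $-4$'s of the interior vertices of $P_v$. The contiguity claim ``by minimality of the isolator'' is unsupported and would not bound the order by two anyway.

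\textbf{The fix} is to cut off each path \emph{together with} everything attached to it, using only its two ends. Since no path in $G[A\cap D]$ links distinct members of $\PP$, the set $A\cap D$ partitions into classes $D_v$ ($v\in C\cap D$) with $P_v\subseteq G[D_v]$ and no edges between different classes. This also gives independence at once. Let $x_v$ be the terminator of $P_v$ and set $A_v=(V(G)\setminus D_v)\cup\{v,x_v\}$. Then $(A_v,D_v)$ is a root separation of order at most two with $B\subseteq A_v$, so $\rho_4(R_{A_v,D_v})\le 0$ by hypothesis. A direct count gives
\[\rho_4(R_{C,D})=\rho_4(R_{A,B})+\sum_{v\in C\cap D}\bigl(\rho_4(R_{A_v,D_v})+\delta_v\bigr),\]
where $\delta_v=0$ if $v=x_v$ and $\delta_v=|E(G[\{v,x_v\}])|-4<0$ otherwise. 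The term $\delta_v$ corrects for the terminator becoming a non-root. Your identity $\rho_4(R_{C,D})=\rho_4(R_{A,B})+\rho_4(H)$ omits it, harmlessly since it is negative. With this decomposition the inequality follows, and then your second part goes through.
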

\begin{proof}
Consider any isolator $(C,D)$ of $(A,B)$, and let $\PP$ be an isolator linkage for $(C,D)$ and $(A,B)$.
If $|C\cap D|<k$, then since $(A,B)$ is $K^-_\star$-universal, we can contract $R_{A,B}$ to form a clique on the terminators of $\PP$
and then contract the paths of $\PP$ to form a clique on $C\cap D$, showing that $(C,D)$ is $K_\star$-universal.
Hence, suppose that $|C\cap D|=k$.  An analogous argument shows that the root separation $(C,D)$ is $K^-_\star$-universal.

Next, let us consider the case that the graph $L_{A,B}\cap R_{C,D}$ has a component containing more than one vertex of $C\cap D$.
Then there exist distinct paths $P_1,P_2\in \PP$ and a path $Q$ in $L_{A,B}\cap R_{C,D}$
with one end in $V(P_1)$, the other end in $V(P_2)$, and otherwise disjoint from the paths of $\PP$.
Let $u$ and $v$ be the terminators of the paths $P_1$ and $P_2$.
Since the root separation $(A,B)$ is $K^-_\star$-universal, we can contract $R_{A,B}$ to form the clique on $A\cap B$ except for the edge $uv$,
contract the paths of $\PP$, and contract all but one edge of $Q$ to obtain $K_k$ as a rooted minor of $R_{C,D}$, again showing that
the root separation $(C,D)$ is $K_\star$-universal.

Finally, suppose that this is not the case, and thus we can partition $A\cap D$ into $k$ sets $D_v$ for $v\in C\cap D$ so that
\begin{itemize}
\item the path $P_v$ of $\PP$ with origin $v$ is contained in $G[D_v]$, and
\item for all distinct $u,v\in C\cap D$, the graph $G$ does not contain any edge between $D_u$ and $D_v$.
\end{itemize}
In particular, this implies that $A\cap B$ and $C\cap D$ are independent sets in $G$.
For each vertex $v\in C\cap D$, let $A_v=(V(G)\setminus D_v)\cup \{v,x\}$, where $x$ is the terminator of $P_v$; then $(A_v,D_v)$ is a root separation of $G$ of
order at most two and $B\subseteq A_v$, and by assumptions, we have $\rho_4(R_{A_v,D_v})\le 0$.
Let $\delta_v=0$ if $v=x$ and $\delta_v=|E(G[\{v,x\}])|-4$ if $v\neq x$.
Then
$$\rho_4(R_{C,D})=\rho_4(R_{A,B})+\sum_{v\in C\cap D} (\rho_4(R_{A_v,D_v})+\delta_v)\le \rho_4(R_{A,B}).$$
This finishes the proof of the first part of the claim.

Suppose now that $G$ is a minimal counterexample, $k=5=|X_G|$, and the root separation $(A,B)$ is linked to the roots.
Then we can consider the isolator $(C,D)=(X_G,V(G))$.  By the first part, the root separation $(X_G,V(G))$ is $K^-_\star$-universal,
and thus the clique $K_5$ is the only flaw of $G$.  Since $K_5$ belongs to the target of $G$, we have $\rho_4(G)\ge 7$.
Since $K_5$ is not a rooted minor of $G$, the root separation $(X_G,V(G))$ is not $K_\star$-universal, and thus
the first part implies that $\rho_4(R_{A,B})\ge \rho_4(G)\ge 7$.
Moreover, the root separation $(A,B)$ is not $K_\star$-universal by Observation~\ref{obs-cliques}.
This implies that $R_{A,B}$ is a counterexample with a flaw $K_5$.
By the minimality of $G$, we conclude that $(A,B)=(X_G,V(G))$.
\end{proof}

For a root separation $(A,B)$ of a rooted graph $G$, the graph obtained from $L_{A,B}$ by adding all edges of a clique on $A\cap B$
is the \emph{torso} of $(A,B)$.  The graph obtained from $L_{A,B}$ by adding $t$ vertices adjacent only to $A\cap B$ is the \emph{$t$-twin reduction} of $(A,B)$.
Observation~\ref{obs-ceneq} has the following consequence.
\begin{observation}\label{obs-samecensus}
Let $(A,B)$ be a root separation of a rooted graph $G$.
\begin{itemize}
\item If $(A,B)$ is $K_\star$-universal, then the torso of $(A,B)$ is an $\id$-rooted minor of $G$ with the same census as $G$.
\item If $(A,B)$ is $K^-_\star$-universal but not $K_\star$-universal (and in particular $A\cap B$ is an independent set),
then the $(|A\cap B|-2)$-twin reduction of $(A,B)$ has the same census as $G$.
\end{itemize}
\end{observation}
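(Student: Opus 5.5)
Both parts follow from Observation~\ref{obs-ceneq}: in each case we exhibit a rooted graph $H$ with $X_H=A\cap B$ whose census equals that of $R_{A,B}$, and we identify the graph $(L_{A,B}-E(G[A\cap B]))\cup H$ with the torso (respectively, the twin reduction).

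\emph{First part.} Take $H$ to be the complete graph on $A\cap B$, so $H$ has no non-root vertices. The census of a clique with no non-root vertices is every graph on $A\cap B$. Since $(A,B)$ is $K_\star$-universal, $R_{A,B}$ contains $K_k$ as an $\id$-rooted minor, where $k=|A\cap B|$. Deleting edges then gives every graph on $A\cap B$, so the two censuses agree. Replacing the right-hand side by $H$ yields exactly the torso. The torso is also an $\id$-rooted minor of $G$: inside $R_{A,B}$, contract each branch set of the rooted model of $K_k$ onto its root, and delete everything else.

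\emph{Second part.} Here $k=|A\cap B|\ge 2$. Let $H$ consist of $A\cap B$, with no edges among these vertices, together with $k-2$ new vertices each adjacent to all of $A\cap B$. We show both censuses equal the set $\mathcal C$ of all graphs on $A\cap B$ other than $K_k$.

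First, $K_k$ lies in neither census. It is not in the census of $R_{A,B}$ because $(A,B)$ is not $K_\star$-universal. It is not in the census of $H$: there $A\cap B$ is independent and only $k-2$ non-root vertices exist. Each edge of the target $K_k$ must be realized through branch sets containing non-root vertices, and the roots whose branch sets contain a non-root vertex number at most $k-2$. Hence at least two roots have singleton branch sets, and these are non-adjacent in $H$.

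Second, every graph in $\mathcal C$ lies in both censuses. Each such graph is a subgraph of $K_k^-$, so it suffices to obtain $K_k^-$. For $R_{A,B}$ this holds by $K^-_\star$-universality, for every choice of the missing pair. For $H$, suppose the missing pair is $u,v$. Order the remaining $k-2$ roots, pair them with the $k-2$ new vertices, and contract each new vertex into its root. Each such root is then adjacent to every other root, while $u$ and $v$ stay non-adjacent. This gives exactly $K_k$ minus $uv$.

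The independence of $A\cap B$, asserted parenthetically in the statement, follows immediately: an edge $uv$ inside $A\cap B$, together with a $K_k$ minus $uv$ rooted model, would give $K_k$ and hence $K_\star$-universality.

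The only delicate point is checking that $K_k$ is not in the census of $H$, which is the counting argument above. The rest is bookkeeping through Observation~\ref{obs-ceneq}, noting that $(L_{A,B}-E(G[A\cap B]))\cup H$ is precisely the $(k-2)$-twin reduction, since $G[A\cap B]$ has no edges.
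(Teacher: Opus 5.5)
Your proposal is correct and is essentially the paper's argument. The paper simply records this observation as a direct consequence of Observation~\ref{obs-ceneq}: replace $R_{A,B}$ by a clique on $A\cap B$ to get the torso, or by $A\cap B$ together with $|A\cap B|-2$ new vertices complete to it to get the twin reduction. Your census computations supply the details it leaves implicit, including the pigeonhole argument that two roots must keep singleton branch sets, so $K_k$ is not a rooted minor of the twin gadget. Your unjustified assumption $k\ge 2$ is harmless, since every root separation of order at most one is trivially $K_\star$-universal.
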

A useful property of the torso is that in each root separation, the whole clique replacing $A\cap B$ must be contained
in one of the sides of the separation, and thus root separations of the torso correspond to root separations of the same order
in the original graph.  This is not necessarily true in twin reductions; however, the following weaker claim will suffice for us.
\begin{lemma}\label{lemma-2twin}
Let $H$ be a $5$-rooted graph, let $S\subseteq V(H)$ be a set of four of its vertices, and let $v_1,v_2\in V(H)\setminus (S\cup X_H)$
be distinct vertices adjacent to all vertices of $S$ and no other vertices of $H$.  Let $S'=S\cup\{v_1,v_2\}$ and $M=V(H)\setminus\{v_1,v_2\}$ and
suppose that the root separation $(M,S')$ is strongly linked to the roots.
If $H$ has a root separation $(C,D)$ of order at most four that is not 4-light, then it also has a non-4-light root separation $(C',D')$ of order at most $|C\cap D|$
such that $S'\subseteq C'$ and $v_1,v_2\not\in D'$.
\end{lemma}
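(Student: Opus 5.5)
The plan is to start from the non-4-light root separation $(C,D)$ and push the twins $v_1,v_2$, together with $S$, into the left side without increasing the order or destroying non-lightness. Two facts drive this. First, since $v_1,v_2$ are non-roots with exactly the four neighbours $S$, each contributes $\tfrac12\deg^+ v_i-4\le \tfrac12(4+4)-4=0$ to $\rho_4$ by (\ref{eq-rhot}). So discarding them from the right-hand side, or turning them from separator vertices into strictly-left vertices, never decreases $\rho_4$, provided the edges of the remaining vertices are not lost. Second, $(M,S')$ is a root separation of order four that is strongly linked to the roots. Hence every root separation $(P,Q)$ with $P\subseteq M$ and $S'\subseteq Q$ has order at least four, with equality only for $(P,Q)=(M,S')$. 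By Observation~\ref{obs-isol} there are also four disjoint paths in $H[M]$ from $X_H$ to $S$.

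\emph{Easy case.} If $v_1,v_2\in C\setminus D$, then all their neighbours lie in $C$. Hence $S\subseteq C$, and $(C',D')=(C,D)$ works. So assume some $v_i\in D$.

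\emph{General case.} I would apply Observation~\ref{obs-submod} to $(C,D)$ and $(M,S')$, giving $(C\cup M, D\cap S')$ and $(C\cap M, D\cup S')$. The latter is a root separation with $C\cap M\subseteq M$ and $S'\subseteq D\cup S'$, so strong linkedness gives it order at least $4$. Hence the former has order at most $|C\cap D|$, and it is trivial or tightly controlled, since $D\cap S'$ is small. This is the wrong orientation, so the real work is a symmetric argument. I would form $(C\cup S',\,(D\setminus\{v_1,v_2\})\cup(S\cap D))$ and bound its order by $|C\cap D|$. For each vertex $s\in S\cap(D\setminus C)$, the linkage path from $X_H$ to $s$ lies in $M$ and must cross $C\cap D$ at a vertex. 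I then want an injection from the newly added separator vertices $S\cap (D\setminus C)$ into the separator vertices that leave, namely $v_1,v_2$ when they lie in $C\cap D$, or vertices of $C\cap D$ on the linkage paths that become strictly left. The option of using $S\cap(D\setminus C)$ itself as separator should be compared with keeping $(C\cap D)$. Strong linkedness rules out the degenerate situation where $C\cap D$ separates all of $S$ from the roots in fewer than four vertices, unless it coincides with $S$. In that subcase $(C,D)$ is essentially $(M,S')$ or contains it, and the right side differs from $R_{M,S'}$ only by $v_1,v_2$, so one can directly take $C'=C\cup\{v_1,v_2\}$.

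\emph{Density.} After fixing $(C',D')$, I would verify $\rho_4(R_{C',D'})\ge\rho_4(R_{C,D})>0$ by comparing vertex by vertex. Vertices of $D\setminus C$ that stay strictly right keep all their edges, since their neighbourhoods lie in $D\subseteq D'\cup\{v_1,v_2\}$. The removed $v_i$ had contribution $\le 0$. Vertices of $S$ that move from strictly-right to the separator lose only edges to $v_1,v_2$, which must be accounted against the loss of $v_1,v_2$ themselves; this is the sum $\sum_i(\tfrac12\deg^+v_i-4)$ computed with the edges to $S$. A computation like the one in Lemma~\ref{lemma-diamonds} should show the net change is nonnegative.

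The main obstacle I expect is the order bound in the general case, when $S$ is split between $C\setminus D$, $C\cap D$ and $D\setminus C$ and $v_1,v_2$ sit on different sides. Here the counting via linkage paths is fiddly, and the degenerate equality case of strong linkedness has to be handled separately. I would first try to settle it with a single application of Observation~\ref{obs-submod} to $(C,D)$ and $(M\cup\{v_1,v_2\},S')$ in the appropriate orientation, and fall back on the explicit linkage-path counting only where that fails.
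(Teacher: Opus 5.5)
There is a genuine gap: the order bound for your candidate separation, which is the heart of the lemma, is never established.

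Your easy case is correct, and the separation you end up proposing, $(C\cup S',\,D\setminus\{v_1,v_2\})$, is the right candidate. Its order is $|C\cap D|-|C\cap D\cap\{v_1,v_2\}|+|S\setminus C|$. So you need
\[
|S\setminus C|\le |C\cap D\cap\{v_1,v_2\}|.
\]

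Your injection idea cannot deliver this. In the new separation the only vertices that leave the separator are $v_1$ and $v_2$; vertices of $C\cap D$ lying on linkage paths stay in $C'\cap D'$. So there is nothing else to inject $S\cap(D\setminus C)$ into.

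The inequality is also not automatic. It fails, for instance, if $S'\subseteq D\setminus C$, so the hypotheses must be used to exclude such configurations.

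Your treatment of the degenerate case is wrong as well. If $M\subseteq C$ and $D\subseteq S'$, then taking $C'=C\cup\{v_1,v_2\}$ leaves no non-root vertex on the right, which is a 4-light separation. The correct conclusion there is that $(C,D)$ itself is 4-light, a contradiction.

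The missing idea concerns the separation $(C\cap M,\,D\cup S')$, which you set aside as the wrong orientation. Once you also use that $(C,D)$ is not 4-light, its order is at least $5$, not merely $4$:
\begin{itemize}
\item Suppose its order were at most $4$. Then it would be an isolator of $(M,S')$, hence equal to $(M,S')$ by strong linkedness (as $4<|X_H|$).
\item That gives $M\subseteq C$ and $D\subseteq S'$, so $D\setminus C\subseteq\{v_1,v_2\}$.
\item Since $v_1v_2\notin E(H)$, this yields $\rho_4(R_{C,D})=\sum_{v\in D\setminus C}(\deg v-4)\le 0$, a contradiction.
\end{itemize}

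Counting directly, $|(C\cap M)\cap(D\cup S')|=|C\cap D|-|C\cap D\cap\{v_1,v_2\}|+|S\setminus D|$. Hence $|S\setminus D|\ge |C\cap D\cap\{v_1,v_2\}|+1\ge 1$. This gives the order bound as follows:
\begin{itemize}
\item Since $v_1,v_2$ are adjacent to all of $S$ and some vertex of $S$ lies in $C\setminus D$, we get $v_1,v_2\in C$.
\item If $S\not\subseteq C$, then also $v_1,v_2\in D$. Then $|S\setminus D|\ge 3$, so $|S\setminus C|\le 1\le |C\cap D\cap\{v_1,v_2\}|$.
\end{itemize}

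This also fixes your density step, which is inaccurate as stated. A vertex of $S$ moved into the separator loses its edges to all of $C\cap D$, not only to $v_1,v_2$. The correct accounting is:
\begin{itemize}
\item Making the at most one vertex of $S\setminus C$ a root changes $\rho_4$ by at least $4-|C\cap D|\ge 0$.
\item Deleting $v_1,v_2$, which lie in $C$ and are now roots with only root neighbours, leaves $\rho_4$ unchanged.
\end{itemize}
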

\begin{proof}
Since $(C,D)$ and $(M,S')$ are root separations of $H$, Observation~\ref{obs-submod} implies
that $(C_1,D_1)=(C\cap M,D\cup S')=(C\setminus\{v_1,v_2\},D\cup S')$ is also a root separation of $H$.  We claim that
$$|C_1\cap D_1|\ge 5.$$
Indeed, suppose for a contradiction that $|C_1\cap D_1|\le 4$.
Since the root separation $(M,S')$ is linked to the roots and its order is $|S|=4$,
it follows that $|C_1\cap D_1|=4$ and $(C_1,D_1)$ is an isolator of $(M,S')$.
Since $(M,S')$ is actually strongly linked to the roots and $|X_H|=5>|M\cap S'|$,
it follows that $(C_1,D_1)=(M,S')$. Consequently $M\subseteq C$ and $D\subseteq S'$, and thus
$D\setminus C\subseteq S'\setminus M=\{v_1,v_2\}$.
However, since $v_1v_2\not\in E(H)$, we have
$$\rho_4(R_{C,D})=\sum_{v\in D\setminus C} (\deg v-4)\le \sum_{v\in D\setminus C} (|C\cap D|-4)\le 0,$$
contradicting the assumption that the root separation $(C,D)$ is not $4$-light.

Now, note that
\begin{align*}
|C_1\cap D_1|&=|(D\cup S')\cap C\setminus\{v_1,v_2\}|\\
&=|D\cap C\setminus \{v_1,v_2\}|+|(S'\setminus D)\cap C\setminus\{v_1,v_2\}|\\
&=|C\cap D|-|C\cap D\cap \{v_1,v_2\}|+|S\setminus D|.
\end{align*}
Consequently,
\begin{equation}\label{eq-c1d1}
|S\setminus D|=|C_1\cap D_1|-|C\cap D|+|C\cap D\cap \{v_1,v_2\}|\ge |C\cap D\cap \{v_1,v_2\}|+1.
\end{equation}
In particular, $S\setminus D\neq\emptyset$, and since $v_1$ and $v_2$ are adjacent to all vertices of $S$ and $(C,D)$ is a separation,
it follows that $v_1,v_2\in C$.

Next, we claim that
\begin{equation}\label{eq-scsmall}
|S\setminus C|\le \min(1,|C\cap D\cap \{v_1,v_2\}|).
\end{equation}
This is trivial if $S\setminus C=\emptyset$.  On the other hand, if $S\setminus C\neq\emptyset$,
then since $v_1$ and $v_2$ are adjacent to all vertices of $S$ and $(C,D)$ is a separation, we have
$v_1,v_2\in D$ and $|C\cap D\cap \{v_1,v_2\}|=2$, and (\ref{eq-c1d1}) implies
$|S\setminus C|\le 4-|S\setminus D|\le 1<|C\cap D\cap \{v_1,v_2\}|$.

Finally, let us consider
$$(C',D')=(C\cup S',D\cap M)=(C\cup S,D\setminus \{v_1,v_2\}).$$  Note that $X_H\subseteq C\subseteq C'$.
By Observation~\ref{obs-submod}, since $(C,D)$ and $(S',M)$ are separations of $H$,
$(C',D')$ is a root separation of $H$.  Using (\ref{eq-scsmall}), we see that
\begin{align*}
|C'\cap D'|&=|(C\cup S)\cap D\setminus \{v_1,v_2\}|\\
&=|C\cap D\setminus \{v_1,v_2\}|+|(S\setminus C)\cap D\setminus \{v_1,v_2\}|\\
&=|C\cap D|-|C\cap D\cap \{v_1,v_2\}|+|S\setminus C|\\
&\le |C\cap D|.
\end{align*}
Recall that $|S\setminus C|\le 1$ by (\ref{eq-scsmall}).
Let $C_2=C\cup S$; then $(C_2,D)$ is a root separation of $H$ and the rooted graph $R_{C_2,D}$
is obtained from $R_{C,D}$ by making the vertex of $S\setminus C$ (if any) into a root.
We have
$$\rho_4(R_{C_2,D})\ge \rho_4(R_{C,D})+(4-|C\cap D|)\cdot |S\setminus C|\ge \rho_4(R_{C,D})>0.$$
Moreover, note that $R_{C',D'}=R_{C_2,D}-(\{v_1,v_2\}\cap D)$.  Since $v_1,v_2\in C$,
the vertices of $\{v_1,v_2\}\cap D$ are roots of $R_{C_2,D}$.  Moreover, since $S\subseteq C_2$,
all neighbors of the vertices of $\{v_1,v_2\}\cap D$ in $R_{C_2,D}$ are also roots.  Therefore,
$\rho_4(R_{C',D'})=\rho_4(R_{C_2,D})>0$,
and thus the separation $(C',D')$ is not $4$-light.  Since $S'\subseteq C'$ and $v_1,v_2\not\in D'$,
the conclusion of this lemma holds.
\end{proof}

We can now exclude two special kinds of root separations of a minimal counterexample.
We say that a root separation $(A,B)$ of a rooted graph $G$ is \emph{reducible} if 
\begin{itemize}
\item[(R1)] $B\setminus A\neq \emptyset$ and the separation $(A,B)$ is $K_\star$-universal, or
\item[(R2)] $|A\cap B|=4$, $|B\setminus A|\ge 3$, and the separation $(A,B)$ is $K^-_\star$-universal.
\end{itemize}

\begin{lemma}\label{lemma-noclicut}
A minimal counterexample $G$ does not contain any reducible root separation.
\end{lemma}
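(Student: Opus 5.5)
Throughout, let $G$ be a minimal counterexample and suppose for a contradiction that $(A,B)$ is a reducible root separation of $G$. The plan is to replace the right-hand side $R_{A,B}$ by a smaller gadget, namely the torso in case (R1) or the $2$-twin reduction in case (R2). By Observation~\ref{obs-samecensus} the gadget has the same census as $G$. We then check three things about the resulting rooted graph $G'$:
\begin{itemize}
\item $G'$ is an $\id$-rooted minor of $G$ with $n(G')<n(G)$;
\item $G'$ is $4$-light;
\item $\rho_4(G')\ge\rho_4(G)$.
\end{itemize}
Observation~\ref{obs-lightminor} then forces $\rho_4(G')<\rho_4(G)$, a contradiction. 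Equivalently: $G'$ is universal, its target contains the target of $G$, and since the censuses agree, $G$ would be universal.

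The first step is normalization. I would replace $(A,B)$ by a suitable isolator $(C,D)$ chosen with $D$ maximal, so that the new separation is strongly linked to the roots. Since $B\subseteq D$, the condition $D\setminus C\neq\emptyset$ is preserved.
\begin{itemize}
\item \emph{Case (R1).} By Observation~\ref{obs-cliques}, every isolator is again $K_\star$-universal. Its order is at most $4$: an order-$5$ isolator, or $(X_G,V(G))$ itself being an isolator, would give $K_5$ as a rooted minor of $G$. So the normalized separation has order at most $4$.
\item \emph{Case (R2).} By Lemma~\ref{lemma-diamonds}, whose hypothesis holds because $G$ is $4$-light, each isolator is either $K_\star$-universal or $K^-_\star$-universal of the same order $4$. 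In the first alternative we fall back into (R1), with $D\setminus C\supseteq B\setminus A$ nonempty. In the second, $|D\setminus C|\ge|B\setminus A|\ge 3$, so the separation is still of type (R2), with $C\cap D$ independent and strongly linked to the roots.
\end{itemize}

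The second step is the count. Since the separation has order at most $4$ and $G$ is $4$-light, we have $\rho_4(R_{A,B})\le 0$.
\begin{itemize}
\item \emph{Torso.} Deleting $B\setminus A$ removes exactly $\rho_4(R_{A,B})\le 0$ from $\rho_4$. Adding clique edges on $A\cap B$ can only increase $\rho(\cdot)$, since the edges among roots do not count. So $\rho_4(G')\ge\rho_4(G)$, and $n(G')<n(G)$ because $B\setminus A\ne\emptyset$.
\item \emph{$2$-twin reduction.} The two new vertices each have degree $4$ and so contribute $0$ to $\rho_4$. Hence again $\rho_4(G')\ge\rho_4(G)$. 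Here $n(G')\le n(G)-1$ because $|B\setminus A|\ge 3>2$.
\end{itemize}

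The main obstacle is showing that $G'$ is $4$-light.
\begin{itemize}
\item \emph{Torso.} In any root separation of $G'$, the clique on $A\cap B$ lies within one side. Therefore we can reinsert $B\setminus A$ on that side and obtain a root separation of $G$ of the same order. Its right-hand side has $\rho_4$ at least as large, because the reinserted part has $\rho_4\le 0$ only when it lands on the left. If it lands on the right, we use that its contribution replaces the clique edges, which I expect to require a short case check using $\rho_4(R_{A,B})\le 0$ together with the submodularity Observation~\ref{obs-submod}.
\item \emph{Twin reduction.} I would first apply Lemma~\ref{lemma-2twin}, which is where strong linkage is needed. It lets us assume that a non-light separation $(C',D')$ of $G'$ has $S'=(A\cap B)\cup\{v_1,v_2\}\subseteq C'$ and $v_1,v_2\notin D'$. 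Such a separation lifts to $G$ by putting $B\setminus A$ into the left side. The lift is a root separation of $G$ of the same order with the same right-hand side, contradicting the $4$-lightness of $G$.
\end{itemize}
With $4$-lightness established, Observation~\ref{obs-lightminor} applied to $G'$ gives the contradiction.
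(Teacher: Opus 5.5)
\textbf{Verdict: there is a genuine gap.} Your normalization, the density count, and the $2$-twin case via Lemma~\ref{lemma-2twin} are fine and agree with the paper. The gap is in the torso case, when the clique on $S=A\cap B$ lies in the right side $D$ of a root separation $(C,D)$ of the torso $L$. Lifting to $(C,D\cup B)$ gives
\[
\rho_4\bigl(R^G_{C,D\cup B}\bigr)=\rho_4\bigl(R^L_{C,D}\bigr)-a+\rho_4(R_{A,B})\le\rho_4\bigl(R^L_{C,D}\bigr),
\]
where $a\ge 0$ counts the added clique edges with an end in $D\setminus C$. So the inequality runs the wrong way, and $4$-lightness of $G$ says nothing about $(C,D)$.

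No case check can repair this, because with only the properties you use ($G$ $4$-light, $(A,B)$ reducible and strongly linked) the torso need not be $4$-light. Let $G$ consist of:
\begin{itemize}
\item the five roots and $S=\{s_1,s_2,s_3\}$;
\item a triangle $w_1w_2w_3$ with $w_is_i\in E(G)$ and $B=S\cup\{w_1,w_2,w_3\}$;
\item vertices $c_1,\dots,c_4$, each adjacent to all of $S$, with $c_jx_j\in E(G)$ for distinct roots $x_j$.
\end{itemize}
Then $G$ is $4$-light, and $(A,B)$ is $K_\star$-universal and strongly linked to the roots, so your normalization returns it unchanged. However, the root separation of the torso with cut $\{c_1,\dots,c_4\}$ and non-root part $S$ has $4$-density $15-12=3>0$. (When $|A\cap B|=4$, strong linkage does exclude such separations, since $(C,D\cup B)$ would be an isolator of $(A,B)$; the problem is with orders at most three.)

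The missing idea is a global extremal choice combined with Corollary~\ref{cor-k4}. The paper picks, among \emph{all} reducible root separations of $G$, one with $B$ maximal and then $A$ minimal. Since isolators of reducible separations are reducible, strong linkage comes for free. Now suppose $L$ is not $4$-light, and take a non-$4$-light root separation $(C,D)$ of $L$ of minimum order. If $S\subseteq C$, lift it as you do. Otherwise $S\subseteq D$, and the argument runs as follows.
\begin{itemize}
\item Minimality of the order makes $R^L_{C,D}$ a $4$-light rooted graph with positive $4$-density.
\item By Corollary~\ref{cor-k4}, it is $K_\star$-universal, or of order four and $K^-_\star$-universal.
\item Since $R^L_{C,D}$ is an $\id$-rooted minor of $R^G_{C,D\cup B}$, the separation $(C,D\cup B)$ of $G$ has the same property.
\item Moreover $|(D\cup B)\setminus C|=|D\setminus C|+|B\setminus A|\ge 3$.
\end{itemize}
Hence $(C,D\cup B)$ is reducible with a strictly larger right side, contradicting the maximality of $B$. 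In the example above, this is exactly the separation with cut $\{c_1,\dots,c_4\}$.
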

\begin{proof}
Suppose for a contradiction that $G$ contains a reducible root separation $(A,B)$;
we choose such a root separation with $B$ maximal and subject to that with $A$ minimal.

Consider any isolator $(C,D)$ of $(A,B)$.  If (R1) holds, then $(C,D)$ is $K_\star$-universal by Observation~\ref{obs-cliques}.
If (R2) holds, then Lemma~\ref{lemma-diamonds} implies that either $(C,D)$ is $K_\star$-universal, or
$|C\cap D|=4$ and $(C,D)$ is $K^-_\star$-universal.  Since $B\setminus A\subseteq D\setminus C$, in both cases
we conclude that the root separation $(C,D)$ is reducible.  By the choice of $(A,B)$, it follows that
$(C,D)=(A,B)$.  Therefore, $(A,B)$ is strongly linked to the roots and $|A\cap B|\le |X_G|=5$.

Moreover, if $|A\cap B|=5$, then $(A,B)$ is $K_\star$-universal, and since $(A,B)$ does not have any isolator other than itself,
we have $(A,B)=(X_G,V(G))$.  This implies that $K_5$ is a rooted minor of $G$, which is a contradiction.
Therefore, we have $|A\cap B|\le 4$.

Let $S=A\cap B$ and let us define a $5$-rooted graph $L$ and a set $Y$ as follows.
\begin{itemize}
\item If (R1) holds, then let $L$ be the torso of $(A,B)$ and let $Y=\emptyset$.
\item Otherwise (i.e., (R2) holds and $(A,B)$ is not $K_\star$-universal), then let $L$ be the $2$-twin reduction of $(A,B)$ and let $Y$ consist of the two newly added vertices.
\end{itemize}
Let $F$ be a flaw of the counterexample $G$.
By Observation~\ref{obs-samecensus}, the rooted graph $L$ has the same census as $G$, and thus $L$ does not contain $F$ as an $\id$-rooted minor.
Since the graph $G$ is $4$-light and $|A\cap B|\le 4$, we have
$$\rho_4(L)\ge\rho_4(L_{A,B})=\rho_4(G)-\rho_4(R_{A,B})\ge \rho_4(G).$$
Consequently, $F$ belongs to the target of $L$.  Moreover, since $(A,B)$ is reducible, we have $n(L)<n(G)$, and thus $L$ is not a counterexample.
This implies that $L$ is not $4$-light.

Let $(C,D)$ be a root separation of $L$ of smallest possible order (at most four) that is not $4$-light (and in particular $|D\setminus C|\ge 2$).
If $L$ is the $2$-twin reduction of $(A,B)$, Lemma~\ref{lemma-2twin} shows that we can without loss of generality additionally assume that
$S\subseteq C$ and $Y\subseteq C\setminus D$ (the root separation $(A,S\cup Y)$ of $L$ is strongly linked to the roots, since the root separation $(A,B)$ of $G$ is strongly linked to the roots).

If $S\subseteq C$, then let $C'=(C\setminus Y)\cup B$ and note that $(C',D)$ is a root separation
of $G$ of order $|C\cap D|\le 4$ and $\widetilde{R}^G_{C',D}=\widetilde{R}^L_{C,D}$.  Since $G$ is 4-light, it follows that the root
separation $(C,D)$ is 4-light, which is a contradiction.

Therefore, we have $S\not\subseteq C$, and in particular $L$ is the torso of $(A,B)$.
Since $S$ is a clique in $L$, it follows that $S\subseteq D$. 
Since $(C,D)$ is a non-4-light root separation of $L$
of the smallest possible order, the rooted graph $R_{C,D}$ is $4$-light.
By Corollary~\ref{cor-k4}, we conclude that either $(C,D)$ is $K_\star$-universal, or $|C\cap D|=4$ and $(C,D)$ is $K^-_\star$-universal.
Consequently, the root separation $(C, D\cup B)$ of $G$ either is $K_\star$-universal, or has order four and is $K^-_\star$-universal.
Moreover, $|(D\cup B)\setminus C|=|B\setminus A|+|D\setminus C|\ge |B\setminus A|+2\ge 3$,
and thus the root separation $(C,D\cup B)$ is reducible.
This contradicts the choice of $(A,B)$.
\end{proof}

In particular, Lemma~\ref{lemma-noclicut} implies that minimal counterexamples do not contain
root separations of order at most two, and moreover, we can constrain root separations of order three.
A rooted graph is \emph{essentially $4$-connected} if it is internally $3$-connected
and for every proper root separation $(A,B)$ of $G$ of order $3$, we have $|B\setminus A|=1$ and $A\cap B$ is an independent set in $G$.
In particular, this has the following consequences.
\begin{corollary}\label{cor-3conn}
Every minimal counterexample $G$ is essentially $4$-connected, and the underlying graph of $G$ is connected.
\end{corollary}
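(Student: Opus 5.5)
The plan is to derive everything from Lemma~\ref{lemma-noclicut}: take a proper root separation $(A,B)$ of $G$ of small order, show that it is reducible, and obtain a contradiction. Throughout we use that $G$ is $4$-light, so every root separation of order at most four is $4$-light. We also use that $X_G$ is independent in a minimal counterexample.

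First we show that the underlying graph of $G$ is connected and that $G$ is internally $3$-connected. Suppose $(A,B)$ is a proper root separation of order $k\le 2$, so $B\setminus A\neq\emptyset$. It suffices to check that $(A,B)$ is $K_\star$-universal. For $k=0$ this is vacuous, so any component avoiding the roots gives a reducible separation of type (R1). Hence every component contains a root. A root-free component is thus impossible, and we handle a disconnected $G$ separately below.

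For $k\in\{1,2\}$, choose $(A,B)$ with $B$ minimal, so that $R_{A,B}-(A\cap B)$ is connected. For $k=2$ the two vertices of $A\cap B$ both have neighbours in $B\setminus A$, since otherwise we get a smaller separation. Contracting $B\setminus A$ onto one of them then yields the edge, which is $K_2$, so the separation is $K_\star$-universal and hence reducible. For $k=1$, $K_1$-universality is trivial. In either case we contradict Lemma~\ref{lemma-noclicut}.

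The one subtle point is ensuring $B\setminus A$ attaches to all of $A\cap B$. We can instead choose $(A,B)$ of minimum order among proper ones. Then any separator vertex without a neighbour in $B\setminus A$ could be moved to $A$, giving a smaller proper separation.

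Connectivity of the underlying graph needs its own argument. If $G$ splits into components each containing roots, then some component misses a root. Corollary~\ref{cor-nok4} gives a component $K_0$ of $G-X_G$ adjacent to all roots whenever $\rho_4(G)>0$, which holds by Observation~\ref{obs-42dense}. So the roots all lie in one component. Any other component would be root-free, and we have already excluded that.

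Next we treat order three. Let $(A,B)$ be a proper root separation of order $3$, and suppose that $|B\setminus A|\ge 2$ or $A\cap B$ is not independent. Take such a separation with $B$ minimal, so that $B\setminus A$ induces a connected graph attaching to all three separator vertices; by internal $3$-connectivity it cannot attach to fewer. We want to show $(A,B)$ is $K_3$-universal, which makes it reducible via (R1) and gives a contradiction.

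We apply Lemma~\ref{lemma-123} with $k=3$ to $R_{A,B}$. If $K_3$ is not a rooted minor, there is a skeleton $M$ of one of two types.

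In the first type, $V(M)=A\cap B$. Every component of $B\setminus A$ then attaches to at most $2$ separator vertices, which contradicts the attachment to all three.

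In the second type, $M$ has a single non-root vertex $v$ and $A\cap B$ is independent. Each component of $R_{A,B}-V(M)$ then attaches to at most two vertices of $M$, forming a clique. The root separations cut off by these components have order at most $2$ in $G$ after combining with $A$, or are contained in such separations. By the previous step they must be trivial, meaning the components are empty. So $B\setminus A=\{v\}$ and $A\cap B$ is independent, which is exactly the permitted situation.

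The main obstacle is this last step. We have to translate the components hanging off the skeleton into proper root separations of $G$ of order at most $2$, and check that they are proper in $G$, in order to conclude $B\setminus A=\{v\}$.
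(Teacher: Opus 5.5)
Your argument is correct. For separations of order at most two it is the paper's proof: take a proper root separation of minimum order. Orders $0$ and $1$ are trivially $K_\star$-universal. For order two, every component of $G[B\setminus A]$ sees both separator vertices, which gives $K_2$. In each case this contradicts Lemma~\ref{lemma-noclicut}.

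The differences are in the other two steps.

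\textbf{Order three.} The paper argues directly. A cycle in $R_{A,B}$ could be joined to $A\cap B$ by three disjoint paths, yielding $K_3$, so $R_{A,B}$ is a tree. Internal $3$-connectivity then forces it to be a single non-root vertex joined to an independent $A\cap B$. You instead apply the folklore characterization of Lemma~\ref{lemma-123} to $R_{A,B}$. The step you flag as the main obstacle is really one line. A component $C$ of $G[B]-V(M)$ lies in $B\setminus A$, so all its neighbours in $G$ lie in $V(M)$, in a set of size at most two. Then $(V(G)\setminus V(C),V(C)\cup N(C))$ is a proper root separation of order at most two. Your route is shorter once Lemma~\ref{lemma-123} is granted. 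The paper's route is self-contained and does not lean on that characterization, which the paper states without proof.

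\textbf{Connectivity.} The paper splits $G$ into two parts and uses $4$-lightness to see that one part contains all five roots, which leaves a proper separation of order $0$. You use instead that all roots lie in one component. This follows from the component $K_0$ of $G-X_G$ adjacent to all roots, or simply from the $K_{1,4}$-universality in Corollary~\ref{cor-nok4}, since $\rho_4(G)\ge 2$ by Observation~\ref{obs-42dense}. This is the same $4$-lightness computation in different packaging.

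One small inaccuracy: choosing the bad order-three separation with $B$ minimal does not force $G[B\setminus A]$ to be connected. For example, if $B\setminus A$ consists of two vertices, each adjacent to all of an independent $A\cap B$, no separation in your class has smaller $B$. Your skeleton argument never uses connectivity, only that every component of $G[B\setminus A]$ has all three separator vertices as neighbours. So the minimality choice can simply be dropped.
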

\begin{proof}
Let $G$ be a minimal counterexample, and let $(A,B)$ be a root separation of $G$ such that $B\not\subseteq A$,
chosen so that the order $t$ of $(A,B)$ is the smallest possible.
By Lemma~\ref{lemma-noclicut}, the root separation $(A,B)$ is not $K_\star$-universal, and in particular $t\ge 2$;
hence, $G$ is internally $2$-connected.

If $t=2$, then since $G$ is internally $2$-connected, the graph $R_{A,B}$ is connected, and in particular contains
a path between the two vertices of $A\cap B$.
However, this implies that the root separation $(A,B)$ is $K_\star$-universal, contradicting Lemma~\ref{lemma-noclicut}.
Therefore, $t\ge 3$ and $G$ is internally $3$-connected.

Suppose now that $t=3$.  If $R_{A,B}$ contained a cycle, then by internal $3$-connectivity, we could connect it by three vertex-disjoint paths
to the vertices of $A\cap B$, obtaining $K_3$ as a rooted minor.  Since the root separation $(A,B)$ is not
$K_\star$-universal, the graph $R_{A,B}$ is a tree. More precisely, since $R_{A,B}$ is internally 3-connected,
$R_{A,B}$ can only consist of a single vertex not in $A\cap B$ adjacent to all vertices of $A\cap B$ (and $A\cap B$ is an independent set).
Since this holds for every proper root separation of $G$ of order three, it follows that $G$
is essentially $4$-connected.

Finally, suppose for a contradiction that the underlying graph of $G$ is not connected, and thus $G$ is the disjoint union of two rooted
graphs $G_1$ and $G_2$.  Since $2\le \rho_4(G)=\rho_4(G_1)+\rho_4(G_2)$, we can assume that $\rho_4(G_1)>0$.
Since $G$ is 4-light, this implies $|X_{G_1}|>4$, and thus $|X_{G_1}|=5$ and $|X_{G_2}|=0$.
However, then $(V(G_1),V(G_2))$ is a proper root separation of $G$ of order $0$, which is a contradiction.
\end{proof}

Lemma~\ref{lemma-noclicut} also implies that in minimal counterexamples, all ``sufficiently nontrivial'' root separations of order five
are linked to the roots.
\begin{corollary}\label{cor-litor}
Let $G$ be a minimal counterexample and let $(A,B)$ be a root separation of $G$ of order five.
If $\rho_4(R_{A,B})\ge 3$, then the root separation $(A, B)$ is linked to the roots.
\end{corollary}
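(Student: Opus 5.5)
Suppose for a contradiction that $(A,B)$ has order five and $\rho_4(R_{A,B})\ge 3$, but is not linked to the roots. Let $(C,D)$ be an isolator of $(A,B)$ and let $k=|C\cap D|\le 4$. The plan is to show that $(C,D)$ is reducible, which contradicts Lemma~\ref{lemma-noclicut}.

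The first step is to understand $R_{A,B}$. Because $G$ is $4$-light, every root separation of $G$ of order at most four is $4$-light. As in the proof of Observation~\ref{obs-univsep}, root separations of $\widetilde{R}_{A,B}$ correspond to root separations of $G$, so $R_{A,B}$ is $4$-light. If $A\neq X_G$, then $R_{A,B}$ is universal by Observation~\ref{obs-univsep}. If $A=X_G$, then $(A,B)$ is trivially linked to the roots, so this case does not arise.

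So $R_{A,B}$ is a $4$-light universal $5$-rooted graph with $\rho_4\ge 3$. Its target is then at least $\SS_{5,4}^-$, so $R_{A,B}$ is in particular $K_{2,3}^+$-universal. Alternatively, Corollary~\ref{cor-nok4} applies directly to $R_{A,B}$: with $\rho_4\ge 3$, either outcome (x) holds and $R_{A,B}$ is $(K_4+K_1)$-universal, or outcome (iii) holds and there are two adjacent vertices each adjacent to all of $A\cap B$.

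Next, let $\PP$ be an isolator linkage for $(C,D)$ and $(A,B)$. It consists of $k\le 4$ disjoint paths from $C\cap D$ to $A\cap B$, and their terminators form a $k$-subset $T$ of $A\cap B$. I will show that $(C,D)$ is $K_\star$-universal, which gives (R1); here $D\setminus C\supseteq B\setminus A\neq\emptyset$.

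1. If $k\le 4$ and $R_{A,B}$ is $(K_4+K_1)$-universal, choose a bijection sending the $K_4$ onto $T$ and the isolated vertex onto the remaining vertex of $A\cap B$. This realizes a clique on $T$ inside $R_{A,B}$.
2. Contracting the paths of $\PP$ then yields $K_k$ on $C\cap D$, so $(C,D)$ is $K_\star$-universal.
3. In outcome (iii), call the two adjacent vertices $v_1,v_2$. Take $K_{2,3}^+$ with its degree-four vertices mapped to two vertices of $T$ and the rest of $T$ among the degree-two vertices. This gives a clique on any four vertices of $A\cap B$ containing the two chosen ones. Alternatively, contract $v_1$ and $v_2$ onto suitable terminators and use extra connectivity. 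Either way the result is again $K_k$ on $T$, so $(C,D)$ is $K_\star$-universal.

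Finally, $(C,D)$ is a root separation with $D\setminus C\neq\emptyset$ that is $K_\star$-universal. It is therefore reducible by (R1), contradicting Lemma~\ref{lemma-noclicut}.

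The main obstacle is verifying that $\rho_4(R_{A,B})\ge 3$ really forces a $K_4$ on an arbitrary $4$-subset $T$ of the five roots of $R_{A,B}$. The threshold $3$ should be exactly what is needed. At $\rho_4=2$, near-$K_{2,3}^+$-universality fails for one bad choice of $T$; the example with $\rho_4=2$ in Figure~\ref{fig-addittight} shows this. So the case analysis rests on the universality guaranteed by $\SS_{5,4}^-$ together with Corollary~\ref{cor-nok4}(iii).

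For $k=4$, the graph $K_{2,3}^+$ restricted to $T$ gives $K_4$ whenever both degree-four vertices lie in $T$, which is always possible. So $K_{2,3}^+$-universality alone suffices there. For $k\le 3$, every $k$-subset of $A\cap B$ extends to a $4$-subset, so the same argument applies.
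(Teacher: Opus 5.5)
Your reduction to Lemma~\ref{lemma-noclicut} is the right idea, and your case (x) is fine. The argument breaks at the step that produces a $K_4$ on the terminator set $T$ when $k=4$. In $K_{2,3}^+$ the three degree-two vertices are pairwise non-adjacent. So any four vertices containing both degree-four vertices induce $K_4$ minus the edge joining the two degree-two vertices, never $K_4$. Your claims that this ``gives a clique on any four vertices'' and that ``$K_{2,3}^+$-universality alone suffices'' are therefore false.

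Outcome (iii) of Corollary~\ref{cor-nok4} does not rescue the step. Contracting $v_1$ and $v_2$ into two terminators still leaves the edge between the other two terminators missing, and the data you use do not supply it. For instance, take $A\cap B=T\cup\{w\}$ independent, $B\setminus A=\{v_1,v_2\}$, $w$ with no neighbor in $A\setminus B$, and $(C,D)=(A\setminus\{w\},B)$. Then the only non-root vertices of $R_{C,D}$ are $v_1,v_2,w$, and $w$ is adjacent only to $v_1$ and $v_2$. Every branch set containing a non-root vertex must therefore contain $v_1$ or $v_2$. So two branch sets are singletons $\{t_i\}$, which are non-adjacent, and $K_4$ is not a rooted minor of $R_{C,D}$. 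Hence $(C,D)$ need not be $K_\star$-universal, and (R1) is out of reach.

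A smaller point: a target of $\SS_{5,4}^-$ only guarantees graphs with at most four edges. It does not give $K_{2,3}^+$-universality, which has seven edges; only Corollary~\ref{cor-nok4} does.

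The fix, which is what the paper does, is to aim for (R2) instead of (R1). By Corollary~\ref{cor-3conn} and $|D|\ge 5$, you have $|C\cap D|=4$. Take distinct $u,v\in C\cap D$ with terminators $u',v'$. Contract $R_{A,B}$ either to the clique on $T$ (outcome (x)) or to the $\{u',v'\}$-star (by $K_{2,3}^+$-universality), then contract the linkage paths. This gives the $\{u,v\}$-star on $C\cap D$, so $(C,D)$ is $K^-_\star$-universal.

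You must then verify the size condition of (R2), which your proposal never checks. First, $\rho_4(R_{A,B})\ge 3$ forces $|B\setminus A|\ge 2$, since a single non-root vertex contributes at most $1$. Second, the vertex of $A\cap B$ that is not a terminator lies in $D\setminus C$. If it were in $C$, it would lie in $C\cap D\cap B$ and hence be a trivial path of the linkage by Observation~\ref{obs-isol}. Thus $|D\setminus C|\ge 3$, so $(C,D)$ is reducible, contradicting Lemma~\ref{lemma-noclicut}.
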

\begin{proof}
Let $(C,D)$ be an isolator for $(A,B)$, and suppose for a contradiction that $|C\cap D|\le 4$.
Since $G$ is essentially $4$-connected and $|D|\ge |A\cap B|=5$, we have $|C\cap D|=4$.
Let $\PP$ be an isolator linkage for $(C,D)$ and $(A,B)$ and let $Y$ be the set of its terminators.

We claim that the root separation $(C,D)$ is $K^-_\star$-universal, or equivalently, that for
all distinct $u,v\in C\cap D$, the 4-rooted graph $R_{C,D}$ contains the $\{u,v\}$-star as an $\id$-rooted
minor.  Indeed, let $u',v'\in A\cap B$ be the terminators of the paths of $\PP$ with origins $u$ and $v$, respectively.
By Corollary~\ref{cor-nok4}, the 5-rooted graph $R_{A,B}$ is $(K_4+K_1)$-universal or $K_{2,3}^+$-universal.
In the former case, we can contract $R_{A,B}$ to the clique on $Y$, and in the latter case to the $\{u',v'\}$-star.
By further contracting the paths of $\PP$, we obtain (a supergraph of) the $\{u,v\}$-star as an $\id$-rooted minor of $R_{C,D}$.

Moreover, $\rho_4(R_{A,B})\ge 3$ implies $|B\setminus A|\ge 2$, and thus $|D\setminus C|\ge 3$.
This contradicts Lemma~\ref{lemma-noclicut} applied to the root separation $(C,D)$.
\end{proof}

With this, we can show that the right-hand sides of proper root separations of order five in a minimal counterexample
are close to being 4-light.

\begin{lemma}\label{lemma-ubdel}
Let $G$ be a minimal counterexample.  If $(A,B)$ is a proper root separation of $G$ of order five, then
$\rho_4(R_{A,B})\le 4$.
\end{lemma}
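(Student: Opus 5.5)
Suppose for a contradiction that $(A,B)$ is a proper root separation of order five with $\rho_4(R_{A,B})\ge 5$. The plan is to show that $R_{A,B}$ is rich enough that we can replace it by something small without losing the flaw, contradicting the minimality of $G$.

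\textbf{Step 1: the right side is universal.} Since $\rho_4(R_{A,B})\ge 3$, Corollary~\ref{cor-litor} shows that $(A,B)$ is linked to the roots. The separation is proper, so $A\neq X_G$, and Observation~\ref{obs-univsep} applies: $R_{A,B}$ and $\widetilde{R}_{A,B}$ are universal. With $\rho_4(R_{A,B})\ge 5$, the target of $R_{A,B}$ contains $\SS_{5,8}$. Hence, for every bijection, $R_{A,B}$ contains every graph on $A\cap B$ with at most eight edges as a rooted minor, and in particular it is $K^-_\star$-universal up to one further missing edge.

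\textbf{Step 2: the left side still sees the flaw.} Fix a root linkage $\QQ$ for $(A,B)$. It consists of five disjoint paths from $X_G$ to $A\cap B$. Let $F$ be a flaw of $G$. We have $\rho_4(G)\ge \rho_4(R_{A,B})\ge 5$, with the first inequality coming from the $4$-lightness of the order-$\le 4$ parts. If $F$ has at most eight edges, we transfer it along $\QQ$ to a graph on $A\cap B$ and realize that graph in $R_{A,B}$. This gives $F$ as an $\id$-rooted minor of $G$, a contradiction.

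\textbf{Step 3: the remaining flaws, $F=K_5$ or $F=K_5^-$.} Here $\rho_4(G)\ge 6$. Now $R_{A,B}$ is itself a smaller graph with $\rho_4\ge 5$, and we compare the two sides via Observation~\ref{obs-lightminor}. The idea is to replace $R_{A,B}$ by a small gadget with the same census, in the spirit of Observations~\ref{obs-ceneq} and~\ref{obs-samecensus}. A suitable gadget is a few vertices attached to $A\cap B$ realizing exactly the rooted minors of $R_{A,B}$. The resulting graph $L$ is an $\id$-rooted minor with fewer non-root vertices and the same census, so $F$ is still missing from $L$. Because $|B\setminus A|$ is large, the gadget carries $4$-density at least that of $R_{A,B}$ minus what was removed; we need $\rho_4(L)\ge\rho_4(G)$ or at least that $F$ stays in the target of $L$.

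Two subcases arise.
\begin{itemize}
\item If $R_{A,B}$ is $K_\star$-universal, then $(A,B)$ is reducible, contradicting Lemma~\ref{lemma-noclicut}.
\item Otherwise $R_{A,B}$ is not $K_\star$-universal and $\rho_4(R_{A,B})\ge 5$. Then $R_{A,B}$ is a $4$-light 5-rooted graph with $n(R_{A,B})<n(G)$ and with flaw $K_5$ (or with $K^-_5$ failing). Its target forces $\rho_4(R_{A,B})\le 6$. We then use the $3$-twin/torso-like reduction to build $L$ with $\rho_4(L)\ge \rho_4(G)$ and apply Lemma~\ref{lemma-2twin}-style arguments to show $L$ is $4$-light.
\end{itemize}

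\textbf{Main obstacle.} The hardest step is Step 3. We must choose the replacement gadget so that three things hold at once: its census matches that of $R_{A,B}$; its $4$-density does not drop below $\rho_4(R_{A,B})$, so that $F$ stays in the target; and $L$ remains $4$-light. Preserving $4$-lightness after replacing a side of an order-five separation needs an analogue of Lemma~\ref{lemma-2twin} for five attachment vertices. I would first try adding two adjacent vertices complete to $A\cap B$, which gives $\rho_4=2\cdot 6/2\cdot$ adjusted. If that is too weak, I would fall back on the vampire and $K_{2,3}^+$ structures from Corollary~\ref{cor-nok4} to pin down which gadget matches the census.
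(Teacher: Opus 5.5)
\textbf{There is a genuine gap in Step~3.} Steps~1 and~2 are sound: Corollary~\ref{cor-litor} gives the root linkage $\QQ$, Observation~\ref{obs-univsep} makes $R_{A,B}$ universal, and pulling a flaw with at most eight edges back along $\QQ$ correctly shows $|E(F)|\ge 9$.

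Your justification of $\rho_4(G)\ge\rho_4(R_{A,B})$ is wrong, though you never need it. $4$-lightness bounds right-hand sides from above and gives no lower bound on $\rho_4(L_{A,B})$. The inequality, even strict, comes from Observation~\ref{obs-lightminor} applied to the minor obtained by contracting the paths of $\QQ$ and deleting the rest of $A\setminus B$.

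Step~3 is a plan, not an argument, and the right-side gadget strategy cannot close it. For $\rho_4(R_{A,B})=5$ you only know that the census contains every graph with at most eight edges. Which nine-edge graphs it contains is undetermined, and you give no gadget of $4$-density at least $5$ matching each possibility. More fundamentally, $|B\setminus A|$ need not be large. A triangle joined completely to $A\cap B$ already has $\rho_4=6$, and $5$ after deleting one edge. Any gadget with $\rho_4\ge 5$ needs at least three non-root vertices, so when $|B\setminus A|=3$ no replacement lowers $n$, and minimality gives no contradiction. The missing edges of $F$ have to come from the \emph{left} side.

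\textbf{The missing idea is to exploit $L_{A,B}$.} Every component of $L_{A,B}$ contains a path of $\QQ$, since the underlying graph is connected by Corollary~\ref{cor-3conn}. Let $C$ be a component with a non-root vertex that contains the fewest paths, say $m$ of them.

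If $m\le 2$, then $C$ meets the rest of $G$ only in the at most four origins and terminators of its paths. So $4$-lightness gives $C$, rooted at these vertices, $4$-density at most $0$. Contract those paths, and the rest of $C$ onto them. The result is a $4$-light $\id$-rooted minor with fewer non-root vertices whose $4$-density is still at least $\rho_4(G)$. The count works because the at most $4a$ edges inside this attachment set that become root--root edges are paid for by its $a$ non-root vertices being absorbed into roots. This contradicts Observation~\ref{obs-lightminor}.

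Hence $m\ge 3$, and all other components lie inside $X_G$. Contracting $C$ onto its $m$ roots makes them induce a connected graph, which gives at least $m-1\ge 2$ root--root edges. $K_5$ minus these edges then has at most eight edges. Your Step~2 argument (formally, Observation~\ref{obs-nomg}) realizes it when $\rho_4(R_{A,B})\ge 5$, which is a contradiction. This handles all flaws at once, so the separate $K_5$/$K_5^-$ analysis becomes unnecessary.
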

\begin{proof}
If $(A,B)$ is not linked to the roots, then $\rho_4(R_{A,B})\le 2$ by Corollary~\ref{cor-litor}.
Hence, suppose that $(A,B)$ is linked to the roots, and let $\PP$ be a root linkage for $(A,B)$.
Since the underlying graph of $G$ is connected, each component of $L_{A,B}$ contains at least one of the paths of $\PP$.
Since the root separation $(A,B)$ is proper, $L_{A,B}$ has a component $C$ such that $V(C)\not\subseteq X_G$.
Let $m$ be the number of paths of $\PP$ contained in $C$; we additionally choose $C$ so that $m$ is smallest possible.

Let $C'=C\cup X_G$ and $D=V(G)\setminus (V(C)\setminus (A\cap B))$, and note that $(C',D)$ is a root separation of $G$ of order five.
Let $G'$ be an $\id$-rooted minor of $G$ obtained from $G$ by contracting each path of $\PP$ contained in $C$ to a single vertex,
then contracting the rest of the component $C$ to the resulting root vertices so that the subgraph $G'[X_G\cap V(C)]$ is connected.
Note that $G'$ is obtained from $R_{C',D}$ by possibly adding some edges between roots, and thus $G'$ is 4-light and $\rho_4(G')=\rho_4(R_{C',D})$.
Moreover, we have $n(G')=n(G)-|V(C)\setminus X_G|<n(G)$, and thus Observation~\ref{obs-lightminor} gives
\begin{equation}\label{eq-lcpd}
\rho_4(R_{C',D})=\rho_4(G')<\rho_4(G).
\end{equation}
Suppose now that $m\le 2$, and let $X$ be the set $(X_G\cup (A\cap B))\cap V(C)$ consisting exactly of the origins and terminators of the paths of $\PP$ contained in $C$.
Let $a=|X\setminus (A\cap B)|=|X\setminus X_G|$.
We have $|X|=m+a\le 2m\le 4$, and since $G$ is $4$-light, it follows that $\rho_4(\roots{C}{X})\le 0$.
Let $t=|E(C[X])|$, and note that since $X_G$ is an independent set and $C$ is a component of $L_{A,B}$,
$t$ is the number of edges of $G[X_G\cup X]$ and all these edges join vertices of $X\setminus X_G$ to other vertices of $X$.
Consequently $t\le ma+\binom{a}{2}\le (m+a)a\le 4a$.
Moreover, note that $\rho_4(R_{C',D})=\rho_4(G)-\rho_4(\roots{C}{X})-t+4a\ge \rho_4(G)$.  This contradicts (\ref{eq-lcpd}).

Therefore, we have $m\ge 3$.  Since $|\PP|=5$, each component of $L_{A,B}$ other than $C$ contains at most two paths of $\PP$,
and by the choice of $C$ so that $m$ is smallest possible, it follows that each such component $C'$ satisfies $V(C')\subseteq X_G$.
Therefore, we have $(C',D)=(A,B)$ and
$$\rho_4(R_{A,B})=\rho_4(G').$$
Since the graph $G'[C\cap X_G]$ is connected, it has at least $m-1$ edges, and thus the graph $K_5-E(G'[X_G])$ has at most $11-m\le 8$ edges.
By Observation~\ref{obs-nomg}, the graph $K_5-E(G'[X_G])$ does not belong to the target of $G'$, and
thus $\rho_4(R_{A,B})=\rho_4(G')\le 4$.
\end{proof}

\section{Connectivity}

From Corollary~\ref{cor-3conn}, we know that the underlying graph of every minimal counterexample $G$ has to be connected.
However, it might be the case that the graph $G-X_G$ has more than one component.  Hence, we can express $G$ as the union of two $5$-rooted
graphs $G'$ and $G''$ intersecting only in the roots and satisfying $n(G'),n(G'')\ge 1$.  Let $F$ be a flaw of $G$.  To address this case, we might
first contract $G''$ to obtain some edges of $F$ and let $F'$ be the proper subgraph of $F$ formed by the remaining edges.
We now need to show that $F'$ is an $\id$-rooted minor of $G'$, and by the minimality of the counterexample $G$, it suffices to argue that $F'$
is contained in the target of $G'$.  Since $\rho_4(G')=\rho_4(G)-\rho_4(G'')$ may be smaller than $\rho_4(G)$,
this is not automatic.  In the following observations, we characterize the problematic cases depending on the
difference between $\rho_4(G)$ and $\rho_4(G')$.
\begin{observation}\label{obs-trade}
Let $G$ be a counterexample and let $G'$ be a 5-rooted graph such that $\rho_4(G')=\rho_4(G)-1$.
Let $F$ be a flaw of $G$, let $F'$ be a proper spanning subgraph of $F$, and let $\delta=|E(F)|-|E(F')|$.
If $F'$ does not belong to the target of $G'$, then
\begin{itemize}
\item[(i)] $\rho_4(G)=5$, $|E(F)|=8$, and $\delta=1$, or
\item[(ii)] $\rho_4(G)=4$, $|E(F)|=6$, and $\delta=1$, or
\item[(iii)] $\rho_4(G)=4$, $|E(F)|=6$, $\delta=2$, and $F'$ is isomorphic to $K_2+K_3$, or
\item[(iv)] $\rho_4(G)=2$, $|E(F)|=3$, and $\delta=1$.
\end{itemize}
\end{observation}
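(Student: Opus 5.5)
This is a finite case check on $m=\rho_4(G)$. Write $m$ for $\rho_4(G)$, so $\rho_4(G')=m-1$. Since $F$ is a flaw of $G$, it is a maximal graph in the $m$-target; we only use that it lies in the $m$-target. By Observation~\ref{obs-42dense}, $m\ge 2$. Let $e=|E(F)|$.

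The key step is to read off, for each $m$, the largest admissible edge count $e_{\max}(m)$ of a flaw. By maximality, $e=e_{\max}(m)$. The list is:
\begin{itemize}
\item $m\ge 7$: $e=10$;
\item $m=6$: $e=9$;
\item $m=5$: $e=8$;
\item $m=4$: $e=6$;
\item $m=3$: $e=4$, and $F\not\cong K_2+K_3$;
\item $m=2$: $e=3$.
\end{itemize}
For $m\le 2$, $\SS_{5,e}$ is downward closed and every 5-vertex graph with fewer edges lies in it. For $m=3$ the class is $\SS_{5,4}^-$, and a maximal member has four edges, because any graph with at most three edges extends to a four-edge graph other than $K_2+K_3$. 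Indeed, $K_2+K_3$ minus an edge is $K_2+P_3$, and adding a different edge to it gives a four-edge graph not isomorphic to $K_2+K_3$.

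Next, observe that $F'$ has $e-\delta$ edges with $\delta\ge 1$. We check whether $F'$ lies in the $(m-1)$-target. The only non-monotone target is $\SS_{5,4}^-$, so for all other targets membership depends only on the edge count.
\begin{itemize}
\item $m\ge 8$: the $(m-1)$-target is $\SS_{5,10}$, which contains everything.
\item $m=7$: the $(m-1)$-target is $\SS_{5,9}$, and $e-\delta\le 9$.
\item $m=6$: the $(m-1)$-target is $\SS_{5,8}$, and $e-\delta\le 8$.
\item $m=5$: the $(m-1)$-target is $\SS_{5,6}$ and $e=8$, so $F'$ fails only if $\delta=1$. This is case (i).
\item $m=4$: the $(m-1)$-target is $\SS_{5,4}^-$ and $e=6$. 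Here $F'$ fails if $\delta=1$ (five edges), or if $\delta=2$ and $F'\cong K_2+K_3$. For $\delta\ge 3$ it has at most three edges and belongs. These are cases (ii) and (iii).
\item $m=3$: the $(m-1)$-target is $\SS_{5,3}$ and $e=4$, so $e-\delta\le 3$ always belongs.
\item $m=2$: the $(m-1)$-target is $\SS_{5,1}$ and $e=3$, so $F'$ fails only when $\delta=1$. This is case (iv).
\end{itemize}

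I expect no real obstacle. The only delicate point is the bookkeeping around the non-monotone class $\SS_{5,4}^-$. It matters in two places: justifying that $e=4$ when $m=3$, and identifying the single exceptional isomorphism type, $K_2+K_3$, when $m=4$ and $\delta=2$.
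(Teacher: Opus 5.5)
Your argument is correct and is essentially the paper's proof: a case-by-case check on $\rho_4(G)$, using that a flaw has exactly the maximum edge count of its target, with $\SS_{5,4}^-$ the only class where the isomorphism type ($K_2+K_3$) matters. There are only small slips: you say you use only membership but then rely on maximality, ``for $m\le 2$'' should read ``for $m\neq 3$'', and for $m=3$ you only need $|E(F)|\le 4$ (your extension argument also overlooks that $K_2+K_3$ minus an edge can be $K_3+2K_1$, though that case extends fine too).
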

\begin{proof}
Since $F$ belongs to the target $\SS$ of $G$ and $|E(F)|\ge\delta\ge 1$, we have $\rho_4(G)\ge 1$.
Moreover, since $F'$ does not belong to the target $\SS'$ of $G'$, we have
$\rho_4(G')\le 6$, and thus $\rho_4(G)=\rho_4(G')+1\le 7$.
We now perform a straightforward case analysis summarized in the following table
(the value of $|E(F)|$ given in the table is exact, since $F$ is a flaw of $G$,
and thus $F\in \SS$ but no proper supergraph of $F$ belongs to $\SS$).

\begin{center}
\begin{tabular}{>{\rowmac}c|>{\rowmac}c>{\rowmac}c>{\rowmac}c>{\rowmac}c>{\rowmac}c<{\clearrow}}
$\rho_4(G)$		&$\SS$		&$|E(F)|$	&$\SS'$		&$F'\not\in\SS'\Rightarrow |E(F')|\ge$	&$\delta\le$\\
\hline
$7$			&$\SS_{5,10}$	&10		&$\SS_{5,9}$	&10					&0\\
$6$			&$\SS_{5,9}$	&9		&$\SS_{5,8}$	&9					&0\\
\setrow{\color{red}}$5$	&$\SS_{5,8}$	&8		&$\SS_{5,6}$	&7					&1\\
\setrow{\color{red}}$4$	&$\SS_{5,6}$	&6		&$\SS_{5,4}^-$	&4					&2\\
$3$			&$\SS_{5,4}^-$	&4		&$\SS_{5,3}$	&4					&0\\
\setrow{\color{red}}$2$	&$\SS_{5,3}$	&3		&$\SS_{5,1}$	&2					&1\\
$1$			&$\SS_{5,1}$	&1		&$\SS_{5,0}$	&1					&0\\
\end{tabular}
\end{center}
Since $F'$ is a proper spanning subgraph of $F$, we have $\delta\ge 1$, and thus only the
rows shown in red are possible.  The outcomes of this Observation then correspond to the only four ways
the constraints expressed in the table can be satisfied.
\end{proof}

\begin{observation}\label{obs-trade2}
Let $G$ be a counterexample and let $G'$ be a 5-rooted graph such that $\rho_4(G')=\rho_4(G)-2$.
Let $F$ be a flaw of $G$, let $F'$ be a spanning subgraph of $F$,
and let $\delta=|E(F)|-|E(F')|$.
If $\delta\ge 3$ and $F'$ does not belong to the target of $G'$, then
\begin{itemize} 
\item[(i)] $\rho_4(G)=5$, $|E(F)|=8$, and $\delta=3$, or
\item[(ii)] $\rho_4(G)=5$, $|E(F)|=8$, $\delta=4$, and $F'$ is isomorphic to $K_2+K_3$.
\end{itemize}
\end{observation}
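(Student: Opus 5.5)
The argument is the same case analysis as in the proof of Observation~\ref{obs-trade}, now with $\rho_4(G')=\rho_4(G)-2$. Throughout, $\SS$ denotes the target of $G$ and $\SS'$ the target of $G'$.

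First, $F$ belongs to $\SS$ and $|E(F)|\ge\delta\ge 3$, so $\rho_4(G)\ge 2$. Since $F'$ does not belong to $\SS'$, the class $\SS'$ is not $\SS_{5,10}$, which forces $\rho_4(G')\le 6$. Hence $\rho_4(G)\le 8$. If $\rho_4(G)\ge 8$, then $\rho_4(G')\ge 6$, so $\SS'\supseteq\SS_{5,9}$. In that case $F'$, which has at most $10-\delta\le 7$ edges, would lie in $\SS'$, a contradiction. So $2\le \rho_4(G)\le 7$.

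Next we tabulate each value of $\rho_4(G)\in\{2,\dots,7\}$. Because $F$ is a flaw, $F$ is maximal in $\SS$, so $|E(F)|$ is determined exactly:
\begin{itemize}
\item for the classes $\SS_{5,t}$, maximality gives $|E(F)|=t$;
\item for $\SS_{5,4}^-$, maximality gives $|E(F)|=4$, since every 4-edge graph other than $K_2+K_3$ lies in the class;
\item a $3$-edge graph is never maximal in $\SS_{5,4}^-$, because adding an edge cannot always produce $K_2+K_3$. This case does not arise anyway.
\end{itemize}
For each row we then record $\SS'$, the least value of $|E(F')|$ that forces $F'\notin\SS'$, and the resulting upper bound on $\delta=|E(F)|-|E(F')|$.
\begin{itemize}
\item $\rho_4(G)=7$: $|E(F)|=10$ and $\SS'=\SS_{5,8}$, so $|E(F')|\ge 9$ and $\delta\le 1$.
\item $\rho_4(G)=6$: $|E(F)|=9$ and $\SS'=\SS_{5,6}$, so $|E(F')|\ge 7$ and $\delta\le 2$.
\item $\rho_4(G)=5$: $|E(F)|=8$ and $\SS'=\SS_{5,4}^-$. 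Here $F'\notin\SS'$ means either $|E(F')|\ge 5$, so $\delta\le 3$, or $F'\cong K_2+K_3$, so $\delta=4$.
\item $\rho_4(G)=4$: $|E(F)|=6$ and $\SS'=\SS_{5,3}$, so $|E(F')|\ge 4$ and $\delta\le 2$.
\item $\rho_4(G)=3$: $|E(F)|=4$ and $\SS'=\SS_{5,1}$, so $|E(F')|\ge 2$ and $\delta\le 2$.
\item $\rho_4(G)=2$: $|E(F)|=3$ and $\SS'=\SS_{5,0}$, so $|E(F')|\ge 1$ and $\delta\le 2$.
\end{itemize}

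Imposing $\delta\ge 3$ eliminates every row except $\rho_4(G)=5$. That row yields exactly the two stated outcomes: $\delta=3$, or $\delta=4$ with $F'\cong K_2+K_3$.

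The only step needing care is the exact value of $|E(F)|$ for $\rho_4(G)=3$, where the target is $\SS_{5,4}^-$. Even if one only used $|E(F)|\le 4$ there, we would get $\delta\le 4-2=2$, so this row is excluded regardless.
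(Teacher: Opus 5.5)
Your proof is correct and follows the paper's own argument, namely a row-by-row table over the possible values of $\rho_4(G)$, with $|E(F)|$ pinned down by maximality of the flaw. The only difference is that the paper uses $|E(F')|\le 10-\delta\le 7$ at the outset to get $\rho_4(G')\le 4$, hence $\rho_4(G)\le 6$, whereas you first cap $\rho_4(G)\le 8$, rule out $8$ separately, and treat $\rho_4(G)=7$ as an extra row of the table.
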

\begin{proof}
Since $F$ belongs to the target $\SS$ of $G$ and $|E(F)|\ge \delta\ge 3$, we have $\rho_4(G)\ge 2$.
Moreover, since $F'$ does not belong to the target $\SS'$ of $G'$ and $|E(F')|\le \binom{5}{2}-\delta\le 7$, we have
$\rho_4(G')\le 4$, and thus $\rho_4(G)=\rho_4(G')+2\le 6$.
We now perform a straightforward case analysis summarized in the following table:

\begin{center}
\begin{tabular}{>{\rowmac}c|>{\rowmac}c>{\rowmac}c>{\rowmac}c>{\rowmac}c>{\rowmac}c<{\clearrow}}
$\rho_4(G)$		&$\SS$		&$|E(F)|$	&$\SS'$		&$F'\not\in\SS'\Rightarrow |E(F')|\ge$	&$\delta\le$\\
\hline
$6$			&$\SS_{5,9}$	&9		&$\SS_{5,6}$	&7					&2\\
\setrow{\color{red}}$5$	&$\SS_{5,8}$	&8		&$\SS_{5,4}^-$	&4					&4\\
$4$			&$\SS_{5,6}$	&6		&$\SS_{5,3}$	&4					&2\\
$3$			&$\SS_{5,4}^-$	&4		&$\SS_{5,1}$	&2					&2\\
$2$			&$\SS_{5,3}$	&3		&$\SS_{5,0}$	&1					&2\\
\end{tabular}
\end{center}
Since $\delta\ge 3$, only the row shown in red is possible.  The outcomes of this Observation then correspond to the only two ways
the constraints expressed in the table can be satisfied.
\end{proof}

\begin{observation}\label{obs-trade3}
Let $G$ be a counterexample and let $G'$ be a 5-rooted graph such that $\rho_4(G')=\rho_4(G)-3$.
Let $F$ be a flaw of $G$, let $F'$ be a spanning subgraph of $F$,
and let $\delta=|E(F)|-|E(F')|$.
If $\delta\ge 5$ and $F'$ does not belong to the target of $G'$, then
$\rho_4(G)=6$, $|E(F)|=9$, $\delta=5$, and $F'$ is isomorphic to $K_2+K_3$.
\end{observation}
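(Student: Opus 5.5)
This goes exactly like Observations~\ref{obs-trade} and~\ref{obs-trade2}: a table-driven case analysis on $\rho_4(G)$, using that a flaw of $G$ is a maximal member of the target of $G$.

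First we bound $\rho_4(G)$ from both sides. Since $F$ belongs to the target $\SS$ of $G$ and $|E(F)|\ge\delta\ge 5$, the target contains a graph with at least five edges, so $\rho_4(G)\ge 4$. On the other hand, $|E(F')|=|E(F)|-\delta\le 10-5=5$. The $m$-target contains all of $\SS_{5,6}$ for every $m\ge 4$, so $F'\notin\SS'$ forces $\rho_4(G')\le 3$, that is, $\rho_4(G)\le 6$. Hence $\rho_4(G)\in\{4,5,6\}$.

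Next we treat each value, reading off $|E(F)|$ exactly from maximality of the flaw.
\begin{itemize}
\item If $\rho_4(G)=4$, then $\SS=\SS_{5,6}$, so $|E(F)|=6$. Also $\rho_4(G')=1$ and $\SS'=\SS_{5,1}$, so $F'\notin\SS'$ needs $|E(F')|\ge 2$. This gives $\delta\le 4<5$, which is impossible.
\item If $\rho_4(G)=5$, then $|E(F)|=8$. Also $\rho_4(G')=2$ and $\SS'=\SS_{5,3}$, so $|E(F')|\ge 4$. This gives $\delta\le 4$, again impossible.
\item If $\rho_4(G)=6$, then $\SS=\SS_{5,9}$ and $|E(F)|=9$. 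Also $\rho_4(G')=3$ and $\SS'=\SS_{5,4}^-$. A graph $F'$ on five vertices lies outside $\SS_{5,4}^-$ exactly when it has at least five edges or is isomorphic to $K_2+K_3$. So either $|E(F')|\ge 5$, giving $\delta\le 4$ (impossible), or $F'\cong K_2+K_3$ with four edges, giving $\delta=5$.
\end{itemize}
The only surviving case is the claimed one, and the argument can be presented as a table in the style of the previous two observations.

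There is no real obstacle here. The only care needed is to remember the special class $\SS_{5,4}^-$ at $m=3$ and to use the exactness of $|E(F)|$ from maximality.
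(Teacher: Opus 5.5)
Your proposal is correct and matches the paper's proof essentially step for step. The paper uses the same bounds $\rho_4(G)\ge 4$ (from $|E(F)|\ge 5$) and $\rho_4(G)\le 6$ (from $|E(F')|\le 5$ and $\SS_{5,5}\subseteq$ every $m$-target with $m\ge 4$), followed by the same three-row case analysis, in which only $\rho_4(G)=6$ with $F'\cong K_2+K_3\notin\SS_{5,4}^-$ survives.
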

\begin{proof}
Since $F$ belongs to the target $\SS$ of $G$ and $|E(F)|\ge \delta\ge 5$, we have $\rho_4(G)\ge 4$.
Moreover, since $F'$ does not belong to the target $\SS'$ of $G'$ and $|E(F')|\le \binom{5}{2}-\delta\le 5$, we have
$\rho_4(G')\le 3$, and thus $\rho_4(G)=\rho_4(G')+3\le 6$.
We now perform a straightforward case analysis summarized in the following table:

\begin{center}
\begin{tabular}{>{\rowmac}c|>{\rowmac}c>{\rowmac}c>{\rowmac}c>{\rowmac}c>{\rowmac}c<{\clearrow}}
$\rho_4(G)$		&$\SS$		&$|E(F)|$	&$\SS'$		&$F'\not\in\SS'\Rightarrow |E(F')|\ge$	&$\delta\le$\\
\hline
\setrow{\color{red}}$6$	&$\SS_{5,9}$	&9		&$\SS_{5,4}^-$	&4					&5\\
$5$			&$\SS_{5,8}$	&8		&$\SS_{5,3}$	&4					&4\\
$4$			&$\SS_{5,6}$	&6		&$\SS_{5,1}$	&2					&4\\
\end{tabular}
\end{center}
Since $\delta\ge 5$, only the row shown in red is possible.
Consequently $\rho_4(G)=6$, $|E(F)|=9$, $\delta=5$, and $|E(F')|=4$ and $F'\not\in \SS_{5,4}^-$, i.e., $F'$ is isomorphic to $K_2+K_3$.
\end{proof}

Let us now proceed with the proof of the promised connectivity result.

\begin{lemma}\label{lemma-connwiro}
If $G$ is a minimal counterexample, then the graph $G-X_G$ is connected and each root of $G$ has at least one non-root neighbor.
\end{lemma}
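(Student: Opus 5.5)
We argue by contradiction, following the strategy outlined at the start of this section. Let $G$ be a minimal counterexample and $F$ a flaw of $G$; recall that $X_G$ is independent and that $\rho_4(G)\ge 2$ by Observation~\ref{obs-42dense}.

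The second claim follows from the first. Suppose some root $x$ has no non-root neighbor. Then, since $X_G$ is independent, $x$ is isolated. By Corollary~\ref{cor-3conn} the underlying graph of $G$ is connected, so $G-X_G$ is empty and $\rho_4(G)\le 0$, which is a contradiction. Thus it suffices to show that $G-X_G$ is connected.

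Suppose it is not, and write $G=G_1\cup G_2$ with $G_1\cap G_2=\roots{(G[X_G])}{X_G}$ and $n(G_1),n(G_2)\ge 1$. Each $G_i$ is an $\id$-rooted minor of $G$ (delete the other side). Its root separations correspond to root separations of $G$: add the other side to the left part. Hence each $G_i$ is 4-light, and by minimality each is universal. Moreover $\rho_4(G)=\rho_4(G_1)+\rho_4(G_2)$. By Observation~\ref{obs-lightminor}, $\rho_4(G_i)<\rho_4(G)$, so $\rho_4(G_i)\ge 1$ for both $i$. Write $c_i=\rho_4(G_i)$.

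The plan is to decompose the flaw as $F=F_1\cup F_2$ with $F_i$ in the $c_i$-target. Each $F_i$ is then an $\id$-rooted minor of $G_i$. Since $G_1$ and $G_2$ meet only in $X_G$, the two rooted models combine, rootwise, into a rooted model of $F$ in $G$, which is a contradiction. So everything reduces to a combinatorial statement: for $c=c_1+c_2$ with $c_1,c_2\ge 1$, every maximal member $F$ of the $c$-target splits into a member of the $c_1$-target and a member of the $c_2$-target.

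I would check this pair by pair, with $c\le 7$ suffices by monotonicity. The $m$-targets have edge bounds $0,1,3,4^-,6,8,9,10$ for $m=0,\ldots,7$, where $4^-$ means the $K_2+K_3$ exception. Observations~\ref{obs-trade}, \ref{obs-trade2} and~\ref{obs-trade3} already list the obstructions for the differences $1$, $2$ and $3$.

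In most splits the counts work out immediately. For example, with $c=2=1+1$ we have $|E(F)|=3=1+1+1$, which does not split into two single edges. This is exactly case (iv) of Observation~\ref{obs-trade}, so the edge counts alone do not always suffice.

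For the problematic cases I would use more structure rather than just the target.
\begin{itemize}
\item By Corollary~\ref{cor-nok4}, each $G_i$ with $c_i\ge 1$ is $K_{1,4}$-universal. Hence $G_i$ can realize any star, and in the case (i) subcases it realizes $K_{2,3}^+$-type minors or contains a vertex adjacent to all roots.
\item By Corollary~\ref{cor-nearfour}, $F$ has no isolated vertex. By Observation~\ref{obs-42dense}, $F$ contains a matching of size two or a triangle.
\end{itemize}
For instance, when $c=2$ and $F$ has three edges, $F$ splits into a star and a single edge. The star is realized by $G_1$ via $K_{1,4}$-universality, and the single edge is realized by $G_2$, since a one-edge graph lies in $\SS_{5,1}$.

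Similarly, when $c=4$ or $c=5$ with $c_1=1$, I would let $G_1$ realize a maximal star of $F$. This star has up to four edges, which is far more than the $c_1$-target alone permits. The remainder then lies in the $(c-1)$-target as long as it avoids the $K_2+K_3$ exception, and when it does not, I would choose a different star.

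When some $c_i=1$ and $G_i$ has the richer structure of Corollary~\ref{cor-nok4}(x) or (i), I would use that structure directly. A vertex adjacent to all roots gives every star. A $(K_4+K_1)$-universal side gives a $K_4$. A vampire gives seven edges.

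The main obstacle is the finite but fiddly case analysis over the splits $(c_1,c_2)$ and the isomorphism types of maximal $F$, particularly $c\in\{4,5,6\}$. There the $K_2+K_3$ exception and the 6- and 8-edge flaws must be split compatibly with what Corollary~\ref{cor-nok4} guarantees for a side with small $\rho_4$.
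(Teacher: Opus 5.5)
Your setup is sound. Splitting $G=G_1\cup G_2$ along the roots, each $G_i$ is 4-light and universal, both $c_i=\rho_4(G_i)$ are at least $1$ by Observation~\ref{obs-lightminor}, and rooted models on the two sides combine rootwise. When some $c_i=1$, you realize on that side the star of $F$ at a vertex of maximum degree and check the rest against the $(c-1)$-target; this is exactly the paper's argument via Observation~\ref{obs-trade}. Your proof of the second claim from Corollary~\ref{cor-3conn} is also fine, although the contradiction is immediate: a connected graph with at least five vertices has no isolated vertex.

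The gap is the case $c_1,c_2\ge 2$. You defer it as a ``fiddly case analysis'', and the tools you list do not settle it. Take $c_1=c_2=3$. Then $\rho_4(G)=6$ and the flaw $F$ is $K_5$ minus an edge, so it has nine edges. From the targets and $K_{1,4}$-universality alone, each side is only known to be $\SS_{5,4}^-$-universal, and this class already contains $K_{1,4}$, so the star trick adds nothing. Since $4+4<9$, targets cannot cover $F$. Putting a full star of $F$ on one side leaves $K_4$ or $K_4-e$ for the other, which is more than four edges. The split $(2,3)$ can still be rescued by putting the $4$-star on the $\rho_4=2$ side, since the remaining four edges then have an isolated vertex and lie in $\SS_{5,4}^-$. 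The split $(3,3)$, however, cannot be handled by targets and stars.

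The missing idea is to stop splitting $F$ when both $c_i\ge 2$. By the final sentence of Corollary~\ref{cor-nok4}, a side with $\rho_4\ge 2$ contains some seven-edge graph $H$ on $X_G$ as an $\id$-rooted minor. The other side also has $\rho_4\ge 2$, so it is $\SS_{5,3}$-universal and contains the three-edge complement $\overline{H}$. Combining the two models gives $K_5$ as a rooted minor of $G$, regardless of what $F$ is. This handles every split with $c_1,c_2\ge 2$ at once, and together with your $c_i=1$ argument it is exactly the paper's proof. Alternatively, you could treat $(3,3)$ using the structural outcomes (x) and (iii) of Corollary~\ref{cor-nok4}, but the proposal as written does not do this.
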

\begin{proof}
Let $C_1$, \ldots, $C_k$ be the components of $G-X_G$.  For $i\in \{1,\ldots,k\}$, let $B_i$ be the set consisting of $V(C_i)$
and of the roots with at least one neighbor in $C_i$, and let $A_i=(V(G)\setminus B_i)\cup X_G$; then $(A_i, B_i)$ is
a root separation of $G$.  Let $G_i=R_{A_i,B_i}$, and let $c_i=\rho_4(G_i)=\rho_4(R_{A_i,B_i})$.
Clearly $\rho_4(G)=c_1+\cdots+c_k$.
If there existed $i$ such that $c_i\le 0$,
then the 5-rooted graph $G-V(C_i)$ would be 4-light and its 4-density would be at least $\rho_4(G)$, and
thus it would be a counterexample contradicting the minimality of $G$.
Therefore, we have $c_i\ge 1$ for every $i\in \{1,\ldots,k\}$.  Since $G$ is 4-light,
it follows that $|A_i\cap B_i|=5$ and $X_{G_i}=X_G$.
In other words, every root vertex has at least one neighbor in every component of $G-X_G$.

Suppose now for a contradiction that $k\ge 2$.  For any set $S\subsetneq\{1,\ldots,k\}$,
let $G_S=\bigcup_{i\in S} G_i$ and note that by Observation~\ref{obs-univsep}, the 5-rooted graph $G_S$ is universal.

Suppose first that $c_1,\ldots,c_k\ge 2$.  By Corollary~\ref{cor-nok4}, we can contract $G_2$ to a graph $H$ on $X_G$ with seven edges.  Moreover, 
since $c_1\ge 2$, the 5-rooted graph $G_1$ is $\SS_{5,3}$-universal, and thus $\overline{H}$ is an $\id$-rooted minor of $G_1$.
It follows that $K_5$ is a rooted minor of $G$, which is a contradiction.

Therefore, we can assume that $c_k=1$.  Let $F$ be a flaw of $G$, let $v$ be a vertex of $F$ of maximum degree,
and let $\delta=\deg_F v$; note that $|E(F)|\le \tfrac{\delta\cdot |V(F)|}{2}=\tfrac{5}{2}\delta$.  Let $G'$ be the union of $G_{\{1,\ldots,k-1\}}$ with the $v$-star
and let $F'=F-E(G'[X_G])$ be the graph obtained from $F$ by removing the edges incident with $v$.  By Corollary~\ref{cor-nok4}, the $5$-rooted graph $G_k$ is $K_{1,4}$-universal,
and thus $G'$ is an $\id$-rooted minor of $G$.  By Observation~\ref{obs-nomg}, it follows that $F'$ is not an $\id$-rooted minor of $G'$.
Since $G'$ is universal, this implies that $F'$ does not belong to the target of $G'$.
Moreover, $\rho_4(G')=\rho_4(G)-c_k=\rho_4(G)-1$, and thus we can apply Observation~\ref{obs-trade}.
However if $\delta=2$, then $|E(F)|\le \tfrac{5}{2}\delta=5$, and if $\delta=1$, then $|E(F)|\le \tfrac{5}{2}\delta<3$,
which excludes all the outcomes of this observation.

This contradiction shows that $k=1$, and thus the graph $G-X_G$ is connected.
\end{proof}

We now continue in a similar vein, excluding another kind of special root separations (depicted in Figure~\ref{fig-nobisep}).
In preparation for that, let us show the following technical lemma.

\begin{lemma}\label{lemma-canconto}
Let $D$ be a 5-rooted universal graph such that $0\le\rho_4(D)\le 4$, and moreover if $\rho_4(D)\ge 1$, then $D$ is $K_{1,4}$-universal.
Let $x$ and $v$ be distinct roots of $D$ such that all edges of $D[X_D]$ are incident with $v$.  Let $S$ be a 5-rooted graph obtained from $D$ by adding
a new vertex $y$, possibly adding the edge $vy$, and letting $X_S=(X_D\setminus\{v\})\cup \{y\}$.
Let $F$ be a graph with $V(F)=X_S$ and without isolated vertices.
If $\rho_4(S)\ge 1$, then there exists an $\id$-rooted model $\mu$ of a graph $H$ with $V(H)=X_S$ in $S$
such that the following claims hold. Let $\delta=|E(F\cap H)|$ and $F'=F-E(H)$.
\begin{itemize}
\item The graph $F'$ has no edge incident with $x$ except possibly the edge $xy$, and if $xy\in E(F')$, then $v\in\mu(x)$.
\item If $F'-y$ has at least one edge, then at least one of the following conditions holds:
\begin{itemize}
\item[(a)] $\rho_4(S)=1$ and
\begin{itemize}
\item $\delta=1$, $F'$ has at least two isolated vertices, and one of them is $x$; or
\item $\delta=2$ and either $x$ is an isolated vertex of $F'$ or $|E(F'-\{x,y\})|=1$; or 
\item $\delta\ge 3$.
\end{itemize}
Or,
\item[(b)] $\rho_4(S)=2$, $\delta\ge 3$, $x$ is an isolated vertex of $F'$, and $|E(F'-\{x,y\})|=1$. Or,
\item[(c)] $\rho_4(D)=3$, $F-\{x,y\}$ is a triangle, $|E(F'-\{x,y\})|=1$, $\deg^X_S v\le 4$,
and if $\deg^X_S v=4$, then $x$ is an isolated vertex of $F'$.
\end{itemize}
Let us remark that the last case intentionally refers to $\rho_4(D)$ rather than $\rho_4(S)$.
\end{itemize}
\end{lemma}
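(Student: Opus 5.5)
The proof is a finite case analysis. The degree of freedom is which branch set of $S$ receives the vertex $v$. Write $N=N_S(v)\cap X_S$ for the root neighbours of $v$ in $S$, and $d=\deg^X_S v=|N|$.

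\textbf{Density bookkeeping.} In $S$, the vertex $v$ is no longer a root. Its edges to $X_D\setminus\{v\}$ were root--root edges of $D$, so they are not counted in $\rho(D)$. The possible edge $vy$ is new. Hence
$$\rho_4(S)=\rho_4(D)+d-4.$$
Because $d\le 5$, this gives $\rho_4(D)\ge \rho_4(S)-1\ge 0$. Moreover $\rho_4(S)\ge 1$ forces $d\ge 5-\rho_4(D)$. Also, $y$ is adjacent only to $v$ (if at all), and every edge of $S[X_S]$ has the form $vz$ with $z\neq v$, so $S[X_S]$ has no edges. Thus every edge of $H$ incident with $y$ must be realised through $v$.

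\textbf{Two constructions.}
\begin{itemize}
\item[(A)] Assume $vy\in E(S)$. Put $v$ into $\mu(y)$. Then $S$ contains $D$ with $v$ renamed $y$ as an $\id$-rooted minor. Since $D$ is universal, any graph in the $\rho_4(D)$-target, transported along this renaming, is an $\id$-rooted minor $H$ of $S$.
\item[(B)] Put $v$ into $\mu(x)$. Take an $\id$-rooted model $\mu_D$ in $D$ of a graph $H_D$ on $X_D$ containing the edge $xv$. Set $\mu(x)=\mu_D(x)\cup\mu_D(v)$ and keep the other branch sets of $\mu_D$; the edge $xv$ of $H_D$ makes $\mu(x)$ connected. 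If $vy\in E(S)$, add the edge $xy$ to $H$; otherwise $y$ is isolated in $H$. Then $x$ receives all neighbours of $v$ in $H_D$, together with $N\setminus\{x\}$ directly. The edges of $H_D$ are obtained from the $\rho_4(D)$-target when $\rho_4(D)\ge 1$, or from the $v$-star via $K_{1,4}$-universality. When $\rho_4(D)=0$, we use only the direct edges from $v$ to $N$, and then $x\in N$ is needed.
\end{itemize}

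\textbf{Choosing the construction.} The first bullet of the lemma is what forces the choice. All edges of $F$ at $x$ other than $xy$ must be covered, and if $xy$ is left uncovered then $v$ must lie in $\mu(x)$. Construction (B) automatically makes $v\in\mu(x)$, so in (B) we only need to realise $F$'s edges at $x$ other than $xy$. Construction (A) is usable only when $xy$ gets covered, or $xy\notin E(F)$, and when the chosen target graph contains all $F$-edges at $x$.

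The work is then a case split on $\rho_4(D)\in\{0,\dots,4\}$ and on $d$, with $d\ge 5-\rho_4(D)$, and on whether $vy\in E(S)$. In each case we choose $H_D$ inside the target of $D$ greedily: first cover the $F$-edges at $x$ (and at $y$ via $v$ where possible), then spend the remaining target budget on $F-\{x,y\}$.
\begin{itemize}
\item For $\rho_4(D)\le 2$ we have $\rho_4(S)\in\{1,2\}$. The available $H$ has at most three edges beyond those forced at $x$. Counting what remains of $F'-y$ should land in (a) or (b). This uses that $F$ has no isolated vertices, which is what guarantees $\delta\ge 1$ or $\delta\ge 2$.
\item For $\rho_4(D)\in\{3,4\}$, the targets $\SS_{5,4}^-$ and $\SS_{5,6}$ are large enough to cover all of $F-y$ after the edges at $x$ are fixed. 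The exception is when $F-\{x,y\}$ is a triangle. There the missing $K_2+K_3$ in $\SS_{5,4}^-$, for $\rho_4(D)=3$, leaves exactly one edge, which is outcome (c).
\end{itemize}

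\textbf{Main obstacle.} I expect the hardest part to be the subcase $\rho_4(D)=3$ with $d\le 4$. There the exclusion of $K_2+K_3$ from the $3$-target interacts with the requirement that all $F$-edges at $x$ be covered. For that subcase I would first try construction (B) with $H_D$ a $K_{1,4}$-type star centred at $v$ plus one extra edge. I would then check the condition ``$d=4\Rightarrow x$ isolated in $F'$'' by choosing whether $x\in N$.
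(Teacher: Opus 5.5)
\textbf{The gap: $v$ may need to go into a third branch set.}
Your construction (B) is essentially the paper's main construction: contract $D$ to a maximal $Q\supseteq M$ taken from the target of $D$ or from $K_{1,4}$, then merge $v$ into $\mu(x)$. But you only ever place $v$ in $\mu(x)$ or $\mu(y)$, and that fails when $\rho_4(S)=1$ and $vy\in E(S)$. This case contains all of $\rho_4(D)=0$, because there $d=5$ is forced.

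Here is a concrete failure. Let $D$ be the star with centre $v$ on $X_D=\{v,x,a,b,c\}$, with all vertices roots. Then $n(D)=0$, $\rho_4(D)=0$, and $D$ is universal with target $\SS_{5,0}$. Let $S=D+vy$, so $\rho_4(S)=1$, and let $F$ be the path $x\,a\,b\,c\,y$. Every $\id$-rooted minor of $S$ on $X_S$ is either edgeless or contained in the $r$-star, where $r$ is the root with $v\in\mu(r)$.
\begin{itemize}
\item Your (A), i.e.\ $r=y$, leaves $xa$ in $F'$ and violates the first bullet.
\item Your (B), i.e.\ $r=x$, gives $\delta=1$ and $F'=\{ab,bc,cy\}$, whose only isolated vertex is $x$. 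So the first alternative of (a) fails, the other alternatives of (a) need $\delta\ge 2$, and (b), (c) need $\rho_4(S)=2$ or $\rho_4(D)=3$.
\item The lemma holds here only via $r=a$, which gives $\delta=2$ with $x$ isolated in $F'$.
\end{itemize}

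This is exactly the step the paper performs first. If $\rho_4(S)=1$ and $vy\in E(S)$, it makes $v$ adjacent to all of $X_S$, contracting $D$ to the $v$-star when $d\le 4$. It then contracts $vx$ if $\deg_F x\ge 2$. Otherwise it contracts $vz$ for the unique $F$-neighbour $z$ of $x$, so that both $x$ and $z$ are isolated in $F'$; this is what the first alternative of (a) is tailored to. Only after this does the paper run your (B). At that point it knows $\rho_4(D)\ge 1$ and that $\rho_4(S)=1$ forces $vy\notin E(S)$, which keeps $|M|$ small.

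For $\rho_4(D)\ge 1$ you could instead repair (A) using the star centred at $z$, which $K_{1,4}$-universality supplies (it gives stars at every root, not only the $v$-star you invoke). For $\rho_4(D)=0$ no such star exists, so a third placement of $v$ is unavoidable. The real subtlety therefore lies here, not in the $\rho_4(D)=3$ subcase you flag.

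\textbf{Inaccuracies in the remaining case split.}
\begin{itemize}
\item $\rho_4(D)=2$ allows $\rho_4(S)=3$, when $d=5$. This is harmless, since then $M=\emptyset$ and $F-y$ is fully covered.
\item For $\rho_4(D)=3$, the target $\SS_{5,4}^-$ does not always cover $F-y$ once the edges at $x$ are fixed. With $d=3$, $|M|=2$ and $F-\{x,y\}$ a triangle, $Q_0$ has five edges. This, and not the exclusion of $K_2+K_3$, is the second source of outcome (c).
\item Without the reduction above, $d=2$ also allows $|M|=3$, which ends in (a) with $\delta\ge 3$.
\end{itemize}
So the counting has to track $|M|$, not only whether $F-\{x,y\}$ is a triangle.
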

\begin{proof}
Let $s=\deg^X_S v$ and note that $\rho_4(S)=\rho_4(D)+s-4$.

Suppose first that $\rho_4(S)=1$ and $vy\in E(S)$.
If $s\le 4$, then $\rho_4(D)=\rho_4(S)+4-s\ge 1$, and by the assumptions $D$ is $K_{1,4}$-universal.
Hence, we can contract $D$ to the $v$-star, ensuring that the resulting $\id$-rooted minor of $S$
contains all edges between $v$ and $X_S$ (including the edge $vy$ present by the assumptions).  This is also automatically the case when $s=5$.
If $\deg_F x\ge 2$, then we obtain $H$ by further contracting the edge $vx$;
this ensures that $x$ is an isolated vertex of $F'$ and $\delta\ge 2$.
Otherwise, since $F$ does not have isolated vertices, the vertex $x$ has exactly one neighbor $z$ in $F$.
In this case we let $H$ be obtained by instead contracting the edge $vz$; this ensures that $\delta\ge 1$ and
$x$ and $z$ are isolated vertices of $F'$.
In both cases, the conclusion (a) is satisfied.
Therefore, we can assume that
\begin{equation}\label{eq-s1novy}
\text{if $\rho_4(S)=1$, then $vy\not\in E(S)$.}
\end{equation}
In particular either $\rho_4(S)\ge 2$ or $s\le 4$, and thus $\rho_4(D)=\rho_4(S)+4-s\ge 1$.  

Let $\SS'$ be the target of $D$ and let $\SS=\SS'\cup \{K_{1,4}\}$;
by the assumptions, the 5-rooted graph $D$ is $\SS$-universal.  Let $M$ be the set of all pairs $uv$ where $u\in X_S\setminus\{y\}$
is a non-neighbor of $v$ in $S$ such that $u=x$ or $ux\in E(F)$.  Let $Q_0$ be the graph with vertex set $X_D$ and with edge set $M\cup E(F-\{x,y\})$,
and let $Q$ be a spanning subgraph of $Q_0$ such that $M\subseteq E(Q)$ and $Q$ is an $\id$-rooted minor of $D$,
chosen so that $|E(Q)|$ is maximum possible (such a subgraph exists, since all edges of $M$ are incident with the same vertex and $D$ is $K_{1,4}$-universal).

Let $H$ be the $\id$-rooted minor of $S$ obtained by first contracting $D$ to $Q$ (retaining the already existing edges of $D$ between $v$ and $X_D\setminus\{v\}$), then contracting the edge $vx$
(if $vx\not\in E(S)$, then $vx\in M$, and thus this edge is created in the first contraction).
By the definition of $M$ and $Q$, this ensures that $F'$ has no edge incident with $x$ except possibly for the edge $xy$.
Moreover, we always have $v\in\mu(x)$.

If $E(F'-y)=\emptyset$, then there is nothing more to prove.  Hence, suppose that this is not the case.
Since $x$ is an isolated vertex of $F'-y$, this means that $E(F'-\{x,y\})\neq\emptyset$, and thus $Q\neq Q_0$.
By the maximality of $|E(Q)|$, this in particular implies that $Q_0\not\in \SS$.  Observe that
\begin{equation}\label{eq-delta}
\delta\ge |E(Q)\setminus\{vx\}|.
\end{equation}
Moreover, if $vy\in E(S)$, then $x$ is an isolated vertex of $F'$.  Since $x$ is not isolated in $F$, we furthermore have
\begin{equation}\label{eq-deltabetter}
\text{if $M\subseteq\{vx\}$ and $vy\in E(S)$, then $\delta\ge |E(Q)\setminus\{vx\}|+1$.}
\end{equation}
Let us now discuss cases based on the value of $\rho_4(D)$.
\begin{itemize}
\item If $\rho_4(D)=1$, then either $s=4$ and $\rho_4(S)=1$, or $s=5$ and $\rho_4(S)=2$.
By (\ref{eq-s1novy}), in the former case we have $vy\not\in E(S)$.  Consequently, in either case $S$ contains all edges between $v$ and $X_S\setminus\{y\}$,
and consequently $M=\emptyset$.   It follows that $E(Q_0)=E(F-\{x,y\})$.  Since $D$ is $K_{1,4}$-universal and $Q\neq Q_0$,
the maximality of $|E(Q)|$ implies that $F-\{x,y\}$ is a triangle, $|E(Q)|=2$, and $|E(F'-\{x,y\})|=1$.
If $vy\in E(S)$, then $x$ is an isolated vertex of $F'$ and $\delta\ge 3$ by (\ref{eq-deltabetter}).
If $vy\not\in E(S)$, then $s=4$, $\rho_4(S)=1$, and $\delta\ge 2$ by (\ref{eq-delta}).
In either case, the conclusions (a) or (b) are satisfied.

\item If $\rho_4(D)=2$, then we have $s\ge 3$, $\rho_4(S)=s-2$, and $D$ is $\SS_{5,3}$-universal by the assumptions.
Since $Q_0\not\in \SS_{5,3}$, we have $|E(Q_0)|\ge 4$, and the maximality of $|E(Q)|$ implies that $|E(Q)|\ge 3$.
Moreover, note that $|M|\le 1$, since either $s\ge 4$, or $s=3$ and (\ref{eq-s1novy}) implies $vy\not\in E(S)$.
Since $|E(Q_0)|\ge 4$, observe that $F-\{x,y\}$ is a triangle, $|M|=1$, and $|E(F'-\{x,y\})|=1$.  Moreover, $|M|=1$ implies $s\le 4$ and $\rho_4(S)\le 2$.
If $vy\in E(S)$, then $x$ is an isolated vertex of $F'$, and depending on whether $M=\{vx\}$ or not,
(\ref{eq-deltabetter}) or (\ref{eq-delta}) implies that $\delta\ge 3$.
If $vy\not\in E(S)$, then $s=3$, $\rho_4(S)=1$, and (\ref{eq-delta}) implies $\delta\ge 2$.
In either case, the conclusions (a) or (b) are satisfied.

\item If $\rho_4(D)=3$, then we have $s\ge 2$, $\rho_4(S)=s-1$, and $D$ is $\SS_{5,4}^-$-universal by the assumptions.

If $Q_0$ is isomorphic to $K_2+K_3$, then since $Q_0$ does not contain any edges between $x$ and $X_S\setminus\{x,y\}$,
it follows that $M=\{vx\}$ and $F-\{x,y\}$ is a triangle. By the maximality of $|E(Q)|$, we have $|E(Q)|=3$,
and thus $|E(F'-\{x,y\})|=1$.  Since $vx\not\in E(S)$, we have $s\le 4$.  Moreover, if $s=4$, then $vy\in E(S)$
and $xy\not\in E(F')$.  Hence, the conclusion (c) is satisfied.

On the other hand, if $Q_0$ is not isomorphic to $K_2\cup K_3$, then since $Q_0\not\in \SS_{5,4}^-$, we have $|E(Q_0)|\ge 5$.
Observe that $Q_0$ has a spanning subgraph $Q'$ such that $M\subseteq E(Q')$, $|E(Q')|=4$, and $Q'$ is not isomorphic to $K_2\cup K_3$.
The maximality of $|E(Q)|$ implies that $|E(Q)|\ge 4$.  By (\ref{eq-delta}), we have $\delta\ge 3$, and thus the conclusions of the Lemma hold
when $\rho_4(S)=1$, that is, $s=2$.  Hence, we can assume that $s\ge 3$, and thus $|M|\le 2$.  Since $5\le |E(Q_0)|=|E(F-\{x,y\})|+|M|$,
it follows that $|M|=2$ (and thus $s=3$) and $|E(F-\{x,y\})|=3$ (i.e., $F-\{x,y\}$ is a triangle).
Moreover, since $|E(Q)|\ge 4$ and $|M|=2$, we have $|E(F'-\{x,y\})|=1$.  Therefore, the conclusion (c) is again satisfied.

\item Finally, suppose that $\rho_4(D)=4$, and thus $s\ge 1$, $\rho_4(S)=s$, and $D$ is $\SS_{5,6}$-universal by the assumptions.  
Since $Q_0\not\in \SS_{5,6}$, we have $|E(Q_0)|\ge 7$; this implies that $|M|=4$, and thus $s=1$ and $\rho_4(S)=1$.
Moreover, we have $|E(Q)|=6$ by the maximality of $|E(Q)|$ and $\delta\ge 5$ by (\ref{eq-delta}), and thus the conclusion (a) holds.
\end{itemize}
\end{proof}

\begin{figure}
\begin{center}
\begin{asy}
v[0] = (0,0);
v[1] = (1,0);
v[2] = (2,0);
v[3] = (3,0);
v[4] = (4,0);
v[5] = (2,3);

v[6] = v[0] + (0,1.5);
draw (v[0] -- (v[6] + (-0.4,-0.2)));
draw (v[5] -- (v[6] + (0.4,0.4)));

draw ((v[0]+0.2SW) .. (interp(v[0],v[3],0.5)+(0,-0.4)) .. (v[3]+0.2E) .. (v[3]+0.2N) .. (v[5]+0.2NE){dir(120)} .. cycle, blue);
label ("$D$", (-0.5,3), blue);
draw ((v[1]+0.2N) .. (v[1]+0.2W) .. ((interp(v[1],v[4],0.5)+(0,-0.4)) .. v[4]+0.2SE) .. (v[5]+0.2NW){dir(240)} .. cycle, red);
label ("$B$", (4.5,3), red);

label("$x$", v[0], NE);
label("$y$", v[4], N);
label("$v$", v[5], S);
label("$M$", v[2], N);

draw (circle (v[6],1));
label ("$K$", v[6]);

vertex(v[0], black);
vertex(v[1], black);
vertex(v[2], black);
vertex(v[3], black);
vertex(v[4], black);
vertex(v[5]);
\end{asy}
\end{center}
\caption{The setting of Lemma~\ref{lemma-nobisep}.}\label{fig-nobisep}
\end{figure}

We are now ready to show that minimal counterexamples do not contain separations as depicted in Figure~\ref{fig-nobisep}.

\begin{lemma}\label{lemma-nobisep}
Let $G$ be a minimal counterexample and let $(A,B)$ be a root separation of $G$ of order five
such that $|B\cap X_G|=4$.  Then each vertex in $B\cap X_G$ has a neighbor in $V(G)\setminus (B\cup X_G)$.
\end{lemma}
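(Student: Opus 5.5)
The plan is to argue by contradiction. Suppose some root $y\in B\cap X_G$ has no neighbor in $V(G)\setminus(B\cup X_G)$. Since $X_G$ is independent, every neighbor of $y$ then lies in $B$.

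Let $x$ be the unique root in $A\setminus B$. Then $A\cap B=(X_G\setminus\{x\})\cup\{v\}$ for a single non-root vertex $v$, as in Figure~\ref{fig-nobisep}. I will split $G$ along $(A,B)$ into two 5-rooted pieces:
\begin{itemize}
\item the right side $R=\widetilde{R}_{A,B}$, with roots $A\cap B$;
\item the left piece $D=\roots{G[A\setminus\{y\}]}{((X_G\setminus\{y\})\cup\{v\})}$.
\end{itemize}
The key point is that, because $y$ sends no edges into $A\setminus B$, the rooted graph $S$ obtained from $D$ by adding $y$, together with the edge $vy$ if present, is exactly $L_{A,B}$ with its original roots. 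This is precisely the situation of Lemma~\ref{lemma-canconto}. Since $X_G$ is independent, all edges of $D[X_D]$ are incident with $v$.

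First I verify the hypotheses of Lemma~\ref{lemma-canconto} for $D$.
\begin{itemize}
\item \emph{Universality.} Root separations of $D$ lift to root separations of $G$ (add $B$ to the left side), so $D$ is 4-light. We have $n(D)<n(G)$: $v$ becomes a root, and $y$ is a root in $G$ anyway. By minimality, $D$ is universal, exactly as in the proof of Observation~\ref{obs-univsep}.
\item \emph{Bounds on $\rho_4(D)$.} The separation $(A\setminus\{y\},\dots)$ viewed from $B$ is a proper root separation of order five, so Lemma~\ref{lemma-ubdel} should give $\rho_4(D)\le 4$. If $\rho_4(D)<0$, then $S$ would have $\rho_4(S)\le 0$; this case I exclude directly by arguing that then $G$ restricted to $B$ would be a smaller counterexample.
\item \emph{$K_{1,4}$-universality.} This follows from Corollary~\ref{cor-nok4} whenever $\rho_4(D)\ge 1$.
\end{itemize}

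Now take a flaw $F$ of $G$. By Corollary~\ref{cor-nearfour}, $F$ has no isolated vertices. Assuming $\rho_4(S)\ge 1$, Lemma~\ref{lemma-canconto} provides a model $\mu$ of some $H$ in $S$. The residual $F'=F-E(H)$ then has no edges at $x$ except possibly $xy$, and in that case $v\in\mu(x)$.

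I then transfer the rest of the problem to the right side. Form $G'$ from $R$ by identifying $v$ with $x$ (so $v\in\mu(x)$ serves as $x$'s representative), keeping $X_{G'}=X_G$ and adding the edges of $H$ between roots. $G'$ is an $\id$-rooted minor of $G$ and is 4-light, again because root separations lift to $G$. It has fewer non-root vertices, since $D$ contained $K\neq\emptyset$; the component of $G-X_G$ on the $A$ side is nonempty because $G-X_G$ is connected by Lemma~\ref{lemma-connwiro}. Hence $G'$ is universal, and Observation~\ref{obs-nomg} says $F'$ is not in the target of $G'$.

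Writing $\rho_4(G')=\rho_4(G)-\rho_4(S)$, I combine this with the tradeoff Observations~\ref{obs-trade}, \ref{obs-trade2} and \ref{obs-trade3}, taking $\delta=|E(F\cap H)|$. If $F'-y$ has no edges, then $F'$ is a star at $y$, which $G'$ realizes because it is $K_{1,4}$-universal (from $\rho_4(G')\ge 1$), a contradiction. Otherwise one of the outcomes (a), (b), (c) holds, and I compare each with the allowed rows:
\begin{itemize}
\item Outcome (a), where $\rho_4(S)=1$, must land in Observation~\ref{obs-trade}. Its conditions on $\delta$ and on the structure of $F'$ (isolated $x$, or $|E(F'-\{x,y\})|=1$) should contradict the exact cases $|E(F)|\in\{3,6,8\}$ there, possibly after using that $G'$ is also nearly $K_{2,3}^+$-universal.
\item Outcome (b) must be matched against Observation~\ref{obs-trade2}.
\item Outcome (c) requires $F-\{x,y\}$ to be a triangle, and then $F'$ is essentially one edge plus edges at $y$. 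I will handle this by hand using Corollary~\ref{cor-nok4} on $G'$.
\end{itemize}

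The main obstacle I expect is this final bookkeeping, especially case (c). There $\rho_4(S)$ may be as large as $3$ while $\delta$ is small, so the tradeoff observations do not apply directly. If the counting alone does not suffice, I would first try exploiting the symmetry between $x$ and $y$, choosing which root to treat as exceptional.
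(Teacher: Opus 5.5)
There is a genuine gap. Your architecture is the paper's: split along $(A,B)$ and $(C,D)=(B\cup X_G,A\setminus\{y\})$ (your $D$ is $R_{C,D}$), apply Lemma~\ref{lemma-canconto} there, push the residual $F'$ into $\widetilde{R}_{A,B}$ with $v$ renamed $x$, and finish with Observations~\ref{obs-trade}--\ref{obs-trade3}.

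\textbf{The missing normalization.} The configuration is symmetric under $x\leftrightarrow y$, $(A,B)\leftrightarrow(C,D)$, because $x$ has no neighbour in $B\setminus A$. The paper uses this at the very start to assume $\rho_4(R_{C,D})\le\rho_4(R_{A,B})$, and in case of equality $\deg_F x\ge\deg_F y$. You omit this, and two things break.
\begin{itemize}
\item Your claim $\rho_4(G')\ge 1$ has no basis. Nothing at this stage prevents $\rho_4(G')=\rho_4(R_{A,B})\le 0$. Then $G'$ need not be $K_{1,4}$-universal, and the case where $F'$ is a nonempty star at $y$ is not excluded. With the normalization and $s=\deg^X v$, Observation~\ref{obs-42dense} gives $2\rho_4(G')\ge\rho_4(R_{A,B})+\rho_4(R_{C,D})=\rho_4(G)+4-s\ge 6-s\ge 1$.
\item Outcome (c), which you leave open, is closed by the same normalization. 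It forces $\rho_4(G')\in\{3,4\}$. Since $|E(F')|\le 5$ and $F'$ is not in the target of $G'$, we get $\rho_4(G')=3$ and $xy\in E(F')$. Hence $s\le 3$ and $\rho_4(G)=s+2\le 5$, so $F$ has at least two non-edges, all between $\{x,y\}$ and $M=X_G\setminus\{x,y\}$. The tie-break puts one of them at $y$. Then $|E(F')|\le 4$ and $F'\not\cong K_2+K_3$, so $F'\in\SS_{5,4}^-$, a contradiction.
\end{itemize}
Without the tie-break, $F=K_5$ minus two edges at $x$ can leave $F'$ equal to the $y$-star plus one edge inside $M$. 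That is five edges with $y$ of degree four. Corollary~\ref{cor-nok4} does not supply this graph when only its outcome (x) holds. So the symmetry is essential, not a fallback.

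\textbf{Two smaller points.}
\begin{itemize}
\item To get $\rho_4(S)\ge 1$ (which also yields $\rho_4(D)\ge 0$), use an $x$--$v$ path through $A\setminus B$, which exists by Lemma~\ref{lemma-connwiro}. Contracting it exhibits $R_{A,B}$, with $v$ renamed $x$, as an $\id$-rooted minor of $G$. Observation~\ref{obs-lightminor} then gives $\rho_4(R_{A,B})<\rho_4(G)$.
\item Your $G'$ (identify $v$ with $x$ and add $E(H)$) need not be an $\id$-rooted minor of $G$. Lemma~\ref{lemma-canconto} guarantees $v\in\mu(x)$ only when $xy\in E(F')$. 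Argue as the paper does instead: a model of $F'$ (with $x$ renamed $v$) in $\widetilde{R}_{A,B}$, followed by contracting the sets $\mu(z)$, would produce a supergraph of $F$ in $G$. Then invoke universality of $\widetilde{R}_{A,B}$ from Observation~\ref{obs-univsep}.
\end{itemize}
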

\begin{proof}
Suppose for a contradiction that a vertex $y\in B\cap X_G$ does not have any neighbor in $V(G)\setminus (B\cup X_G)$.
Let $x$ be the unique root in $X_G\setminus B$, let $M=X_G\setminus\{x,y\}$, and let $v$ be the unique vertex in $A\cap B\setminus X_G$.
Let $C=B\cup X_G$ and $D=A\setminus\{y\}$ and observe that $(C,D)$ is another root separation of $G$ of order five.
Note that $G=R_{A,B}\cup R_{C,D}$ and $R_{A,B}\cap R_{C,D}=G[M\cup\{v\}]$.
Let $s=\deg^X v\le 5$ be the number of edges between $v$ and $X_G$, and note that
\begin{equation}\label{eq-deco}
\rho_4(G)=\rho_4(R_{A,B})+\rho_4(R_{C,D})+s-4.
\end{equation}
By Lemma~\ref{lemma-ubdel} and the symmetry between the root separations $(A,B)$ and $(C,D)$, we can also assume that
\begin{equation}\label{eq-abheavy}
\rho_4(R_{C,D})\le \rho_4(R_{A,B})\le 4.
\end{equation}
Let $F$ be a flaw of $G$.  Moreover, further exploiting the symmetry, we can assume that
\begin{equation}\label{eq-missleft}
\text{if $\rho_4(R_{C,D})=\rho_4(R_{A,B})$, then $\deg_F x\ge \deg_F y$.}
\end{equation}

Note that $R_{C,D}$ contains a path $P$ from $x$ to $v$ intersecting $C\cap D$ only in its ends:
By Lemma~\ref{lemma-connwiro}, $x$ has a non-root neighbor, necessarily belonging to $D\setminus X_G$.
Hence, either $xv\in E(G)$ and we can set $P=xv$, or $x$ has a neighbor in $D\setminus C$.
In the latter case, let $K$ be a component of $G[D\setminus C]$ containing a neighbor of $x$.
Since $G-X_G$ is connected by Lemma~\ref{lemma-connwiro}, $K$ also contains a neighbor of $v$,
and thus we can find the desired path $P$ through $K$.
We can now contract $P$ and obtain an $\id$-rooted minor of $G$ isomorphic to $R_{A,B}$.
Thus, Observation~\ref{obs-lightminor} implies that $\rho_4(R_{A,B})<\rho_4(G)$,
and by (\ref{eq-deco}) we have $\rho_4(R_{C,D})\ge 5-s\ge 0$.

Let $S=G-(B\setminus A)$.  Note that $\rho_4(S)=\rho_4(G)-\rho_4(R_{A,B})\ge 1$.
Moreover, note that $R_{C,D}$ is universal by Observation~\ref{obs-univsep},
and if $\rho_4(R_{C,D})\ge 1$, then $R_{C,D}$ is $K_{1,4}$-universal by Corollary~\ref{cor-nok4}.
By Corollary~\ref{cor-nearfour}, $F$ does not have isolated vertices.
Therefore, we can apply Lemma~\ref{lemma-canconto} (with $R_{C,D}$ playing the role of the rooted graph $D$ from the statement of this lemma);
let $\mu$, $H$, $\delta$, and $F'$ be as in the conclusions.

Let $F''$ be the graph with vertex set $A\cap B$ obtained from $F'$ by relabelling the vertex $x$ to $v$.
We claim that $F''$ is not an $\id$-rooted minor of $\widetilde{R}_{A,B}$.
Indeed, suppose for a contradiction that we could form an $\id$-rooted minor $S'$ of $G$
by contracting $\widetilde{R}_{A,B}$ to $F''$.
Note that $S'$ is a supergraph of $S$, and since $x$ has no neighbor in $F'$ other than possibly $y$,
the set $E(S')\setminus E(S)$ consists of the edges $E(F'-x)$, which join the vertices of $X_G=X_S$,
and possibly the edge $vy$.  Moreover, if $xy\in E(F')$, then $v\in\mu(x)$.
Hence, we can now further contract the connected subgraphs $S'[\mu(z)]$ for $z\in X_G$
and the resulting $\id$-rooted minor of $G$ is $H\cup F'\supseteq F$.  This is a contradiction, since $F$ is a flaw of $G$.
Let $G'$ be the 5-rooted graph obtained from $\widetilde{R}_{A,B}$ by relabelling the vertex $v$ to $x$,
so that $F'$ is not an $\id$-rooted minor of $G'$.
By Observation~\ref{obs-univsep}, $\widetilde{R}_{A,B}$ as well as $G'$ are universal, and thus $F'$ is not contained
in the target of $G'$.

Since $\rho_4(G)\ge 2$ by Observation~\ref{obs-42dense},
(\ref{eq-deco}) and (\ref{eq-abheavy}) give $$2\rho_4(G')\ge \rho_4(R_{A,B})+\rho_4(R_{C,D})=\rho_4(G)+4-s\ge 6-s,$$
and thus $\rho_4(G')\ge 1$.  By Corollary~\ref{cor-nok4}, this implies that $G'$ is $K_{1,4}$-universal.
Since $F'$ is not an $\id$-rooted minor of $G'$, not all edges of $F'$ are incident with $y$, and thus
$E(F'-y)\neq \emptyset$.
Consequently at least one of the conditions (a), (b), or (c) from the statement of Lemma~\ref{lemma-canconto} holds.
Let us discuss them separately.

\begin{itemize}
\item If (a) holds, then $\rho_4(G')=\rho_4(G)-\rho_4(S)=\rho_4(G)-1$ and $\delta\ge 1$.
Note that $F'$ is a spanning subgraph of $F$ with $|E(F')|=|E(F)|-\delta$, and thus
we can apply Observation~\ref{obs-trade}, which in particular implies that $\delta\le 2$.

If $\delta=2$, then by (a) either $x$ is an isolated vertex of $F'$, or $xy$ is the only
edge of $F'$ incident with $x$ and $|E(F'-\{x,y\})|=1$, and thus $F'$ is not isomorphic
to $K_2+K_3$.  Consequently the outcome (iii) of Observation~\ref{obs-trade} is excluded,
and thus $\delta=1$.

Hence, by (a) the graph $F'$ has two isolated vertices $x$ and $z$.  In particular $|E(F')|\le 3$
and $|E(F)|=|E(F')|+\delta\le 4$, excluding the outcomes (i) and (ii) of Observation~\ref{obs-trade}.
Consequently the outcome (iv) holds, and thus $|E(F)|=3$ and $|E(F')|=2$.  Since $x$ and $z$
are isolated vertices of $F'$, both edges of $F'$ are incident with the same vertex $u\in X_G\setminus\{x,z\}$.
However, since $G'$ is $K_{1,4}$-universal, this implies that $F'$ is an $\id$-rooted minor of $G'$,
which is a contradiction.

\item If (b) holds, then $\rho_4(G')=\rho_4(G)-\rho_4(S)=\rho_4(G)-2$ and $\delta\ge 3$.
Since $x$ is an isolated vertex of $F'$, the graph $F'$ is not isomorphic to $K_2+K_3$.
Moreover, we additionally have $|E(F'-\{x,y\})|=1$, and thus $|E(F')|\le 4$.
However, this contradicts Observation~\ref{obs-trade2}.

\item Finally, suppose that (c) holds, and in particular $\rho_4(R_{C,D})=3$,
and (\ref{eq-abheavy}) implies that $\rho_4(G')\in \{3,4\}$.
Since $F'$ cannot have any edge incident with $x$ other than $xy$ and since $|E(F'-\{x,y\})|=1$, we get
$|E(F')|\le 5$.  Since $F'$ does not belong to the target of $G'$, it follows that
$\rho_4(G')=3$ and the target of $G'$ is $\SS_{5,4}^-$.  Since $F'\not\in \SS_{5,4}^-$,
we have that either $|E(F')|=5$ or $F'$ is isomorphic to $K_2+K_3$, and thus $x$ is not an
isolated vertex of $F'$.  Consequently $xy\in E(F')$ and (c) implies that $s\le 3$.

By (\ref{eq-deco}), we have $\rho_4(G)=\rho_4(G')+\rho_4(R_{C,D})+s-4=s+2\le 5$.
Since $F$ is a flaw of $G$, it follows that $|E(F)|\le 8$, and thus $F$ has at least two non-edges.
Since $xy\in E(F)$ and $F-\{x,y\}$ is a triangle, these non-edges are between $\{x,y\}$ and $M=X_G\setminus\{x,y\}$.
By (\ref{eq-missleft}), at least one of these non-edges is incident with $y$.
Therefore, $E(F')$ consists of the edge $xy$, the single edge of $E(F'-\{x,y\})$, and at most two edges between $y$ and $M$,
and thus $|E(F')|\le 4$.  Moreover, if $|E(F')|=4$, then $\deg_{F'} y=3$, and thus $F'$ is not isomorphic to $K_2+K_3$.
It follows that $F'$ belongs to the target $\SS_{5,4}^-$ of $G'$, which is a contradiction.
\end{itemize}
In all cases, we obtained a contradiction, concluding the argument.
\end{proof}

In particular, we obtain an improvement to the bound on the degrees of roots
in a minimal counterexample.

\begin{corollary}\label{cor-twononroot}
If $G$ is a minimal counterexample, then each root of $G$ has at least two non-root neighbors.
\end{corollary}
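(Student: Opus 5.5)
By Lemma~\ref{lemma-connwiro}, every root $y$ of a minimal counterexample $G$ has at least one non-root neighbor. Suppose for a contradiction that $y$ has exactly one non-root neighbor $v$. The plan is to build a root separation of order five in which $y$ has no neighbor outside $B\cup X_G$, which is exactly the situation Lemma~\ref{lemma-nobisep} rules out.

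The natural candidate is
\[
A=V(G)\setminus\{y\},\qquad B=(X_G\setminus\{y\})\cup\{v,y\}.
\]
First we check that $(A,B)$ is a separation. Since $X_G$ is independent in a minimal counterexample, $y$ has no root neighbors, so all neighbors of $y$ lie in $\{v\}\subseteq A\cap B$. Hence $G=G[A]\cup G[B]$. Moreover $X_G\setminus\{y\}\subseteq A$, and we can either rename the roles or apply the lemma to this configuration directly. There are two ways to do this.

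\emph{Option 1.} Take $A'=A\cup\{y\}$, so that $A'\cap B=X_G\cup\{v\}$. This has order six, which is too large for Lemma~\ref{lemma-nobisep}.

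\emph{Option 2.} Fix another root $x\ne y$ and let
\[
B=\{x,v\}\cup(X_G\setminus\{y\})\setminus\{x\}\cup\{y\},
\]
that is, $B=(X_G\setminus\{x\})\cup\{v\}$, and $A=V(G)\setminus\{y\}$.

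With Option 2 we verify the hypotheses. First, $X_G\subseteq A\cup\ldots$ fails, since $y\notin A$; to repair this, put $A=(V(G)\setminus\{y\})\cup\{y\}$ is not allowed either. So instead take $A=V(G)\setminus\{y\}\cup\{y\}$ restricted appropriately:
\[
A=V(G)\setminus\{\,\}\ \text{minus nothing},
\]
and define the separation as $(V(G)\setminus\emptyset,\ldots)$. The cleaner choice is
\[
A=V(G),\qquad B=(X_G\setminus\{x\})\cup\{v\}.
\]
This is degenerate, but it suffices, because Lemma~\ref{lemma-nobisep} only needs the following: a root separation $(A,B)$ of order five with $|B\cap X_G|=4$, containing a vertex $y\in B\cap X_G$ with no neighbor in $V(G)\setminus(B\cup X_G)$.

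Here $(V(G),B)$ has order $|B|=5$ and $|B\cap X_G|=4$. The root $y\in B\cap X_G$ has $N(y)=\{v\}\subseteq B$, so $y$ has no neighbor outside $B\cup X_G$. Lemma~\ref{lemma-nobisep} then says $y$ must have such a neighbor, which is a contradiction.

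The one point to check is that the proof of Lemma~\ref{lemma-nobisep} does not implicitly require $(A,B)$ to be proper. If it does, we use Option 1 instead: take $A=V(G)\setminus\{y\}$ and $B=(X_G\setminus\{x\})\cup\{v\}$ with the root $x$ removed from $B$. Then $(A\cup X_G,B)$ is a separation, because $y$'s only neighbor $v$ lies in $B$ and every other vertex of $B$ is a root, which is fine for edges into $A$. Its order is $|B\cap(A\cup X_G)|=5$, and the lemma applies in the same way. This verification, that the separation is valid and of order exactly five, is the main (though routine) step.
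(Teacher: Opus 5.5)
Your final argument is correct and is essentially the paper's. Lemma~\ref{lemma-nobisep} has no properness hypothesis, so applying it to the non-proper separation $(V(G),(X_G\setminus\{x\})\cup\{v\})$ is legitimate. In fact, because your separation has $\rho_4(R_{A,B})=0$, the symmetry step at the start of the lemma's proof switches to the partner separation $(X_G\cup\{v\},V(G)\setminus\{y\})$. That partner is exactly the proper separation the paper uses directly: there every root of $B$ violates the lemma, because $V(G)\setminus(B\cup X_G)=\emptyset$.

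So your worry about properness is unfounded, but your fallback would not have helped anyway. Since $(V(G)\setminus\{y\})\cup X_G=V(G)$, it is the very same separation. The clean fix for your Option~1 was simply to swap the two sides, taking $A=X_G\cup\{v\}$ and $B=V(G)\setminus\{y\}$.
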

\begin{proof}
By Lemma~\ref{lemma-connwiro}, each root of $G$ has at least one non-root neighbor.
Suppose now for a contradiction that a root $x$ of $G$ has exactly one non-root neighbor~$v$.
Let $A=X_G\cup\{v\}$ and $B=V(G)\setminus \{x\}$; then $(A,B)$ is a root separation of $G$ of order five
and $|B\cap X_G|=4$.  However, $V(G)\setminus (B\cup X_G)=\emptyset$, contradicting Lemma~\ref{lemma-nobisep}.
\end{proof}

\section{(Nearly) saturated separations}

For a rooted graph $G$ and a set $Y\subseteq V(G)$,
let $K^G_Y$ denote the graph obtained from the clique with vertex set $Y$ by deleting the edges between the vertices of $Y\cap X_G$.
That is, $K^G_Y$ contains all edges with at least one non-root end.
We say that a root separation $(A,B)$ of $G$ is \emph{saturated} if $K^G_{A\cap B}$ is an $\id$-rooted minor of $R_{A,B}$.
It is easy to see that contracting $R_{A,B}$ in $G$ to $K^G_{A\cap B}$ does not create any fundamentally new separations,
making it simple to argue about the 4-lightness of the resulting rooted graph $G'$.
Based on this, one might suspect that the argument from Lemma~\ref{lemma-noclicut} used to exclude $K_\star$-universal
separations should be easy to generalize to the saturated ones.  This is essentially the case, but
we run into complications arising from the fact that in this setting $(A,B)$ is not necessarily $4$-light
(since we cannot easily exclude the case that $(A,B)$ has order five), and thus we need a much more involved accounting
for the value of $\rho_4(G')$.

\begin{lemma}\label{lemma-nosatur}
A minimal counterexample $G$ does not have any saturated proper root separation of order at most five.
\end{lemma}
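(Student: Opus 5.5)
We argue by contradiction, in the same spirit as Lemma~\ref{lemma-noclicut}. Suppose $G$ has a saturated proper root separation $(A,B)$ of order at most five. We choose it with $B$ maximal, and subject to that with $A$ minimal. Let $S=A\cap B$ and let $G'$ be obtained from $L_{A,B}$ by adding the edges of $K^G_S$; then $G'$ is an $\id$-rooted minor of $G$ with $X_{G'}=X_G$.

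The first step is to dispose of small orders. If $S\subseteq X_G$, then $K^G_S$ has no edges, and saturation says nothing; since $(A,B)$ is proper, this forces $B\setminus A\neq\emptyset$ with $S\subseteq X_G$. Lemma~\ref{lemma-connwiro} then excludes this unless $S=X_G$ and $A=X_G$, which contradicts properness. So $S$ contains a non-root vertex. If $|S|\le 4$, saturation gives a clique on $S$ except possibly for edges among roots. Roots in $S$ are joined by edges in $G'$ through the non-root vertices of $S$, but not directly, so the statement is not literally $K_\star$-universality. Since $G$ is $4$-light, $\rho_4(R_{A,B})\le 0$, so $\rho_4(G')\ge\rho_4(L_{A,B})\ge\rho_4(G)$. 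Moreover $n(G')<n(G)$ because $B\setminus A\neq\emptyset$. Observation~\ref{obs-lightminor} then gives a contradiction, provided $G'$ is $4$-light. Thus the core of the order-$\le 4$ case is showing that $G'$ is $4$-light.

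The second step is this $4$-lightness, using the property that the clique on $S\setminus X_G$ (together with all its edges to $S$) must lie on one side of any separation. Take a non-$4$-light root separation $(C,D)$ of $G'$ of order at most four. Since $K^G_S$ joins every non-root vertex of $S$ to all of $S$, either $S\subseteq C$, or $S\setminus X_G\subseteq D$ and moreover $S\subseteq D$ up to roots, which lie in $C$ anyway. In the first case, $(C\cup B,D)$ is a root separation of $G$ with the same strictly right-hand side, which contradicts $4$-lightness of $G$. In the second case, $(C,D\cup B)$ is a root separation of $G$ of order at most four, and its right-hand side contracts onto a saturated configuration. I expect this to yield a saturated separation with larger $B$, contradicting the choice of $(A,B)$. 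Checking this needs saturation of $(C,D\cup B)$: we take a smallest non-$4$-light $(C,D)$, use Corollary~\ref{cor-k4} as in Lemma~\ref{lemma-noclicut} to get rooted cliques in $R^{G'}_{C,D}$, and combine these with the model of $K^G_S$.

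The third step, which I expect to be the main obstacle, is order exactly five. Here $(A,B)$ need not be $4$-light, and $\rho_4(G')=\rho_4(G)-\rho_4(R_{A,B})+\rho_4(K^G_S)$ can drop. We write $k=|S\cap X_G|\le 4$ for the number of roots in $S$. Then $\rho_4(\roots{K^G_S}{S})=\binom{5}{2}-\binom{k}{2}-4(5-k)$, which is small when $k$ is small. We bound $\rho_4(R_{A,B})\le 4$ by Lemma~\ref{lemma-ubdel}. If $\rho_4(G')\ge\rho_4(G)$, we finish as before. Otherwise, $G'$ is universal (after the $4$-lightness argument of step two) and has a smaller target. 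Following Lemma~\ref{lemma-nobisep}, we compare flaws: we contract $R_{A,B}$ differently, using its universality from Observation~\ref{obs-univsep} together with Corollary~\ref{cor-nok4}, to produce extra edges of a flaw $F$ among roots. We then apply Observations~\ref{obs-trade}, \ref{obs-trade2} and~\ref{obs-trade3} to the resulting drop in density. We case-split on $k$. For $k=4$ we use Lemma~\ref{lemma-nobisep} and Corollary~\ref{cor-twononroot}. For $k\le 3$ we use Corollary~\ref{cor-litor}, so that $(A,B)$ is linked to the roots when $\rho_4(R_{A,B})\ge 3$, and we route the model through a root linkage. The remaining exceptional cases, namely $F'\cong K_2+K_3$ and the borderline densities $4$ and $5$, are where I expect the delicate bookkeeping to concentrate.
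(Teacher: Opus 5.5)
\textbf{Order at most four.} Your treatment of this case is essentially sound, and it differs from the paper's route. The paper first proves that $(A,B)$ is strongly linked to the roots and uses this to force $S\subseteq C$ for every small separation $(C,D)$ of $G'$. You instead take a smallest non-4-light $(C,D)$ of $G'$, note that $S\subseteq C$ or $S\subseteq D$, and in the latter case transfer the conclusion of Corollary~\ref{cor-k4} to $(C,D\cup B)$. One correction is needed. If Corollary~\ref{cor-k4} only gives $K^-_\star$-universality and $|C\cap D\cap X_G|\le 1$, you cannot conclude that $(C,D\cup B)$ is saturated, so the appeal to maximality of $B$ fails. However, $|(D\cup B)\setminus C|\ge |D\setminus C|+|B\setminus A|\ge 2+1$, so $(C,D\cup B)$ is reducible and Lemma~\ref{lemma-noclicut} finishes. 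The same lemma also handles the $K_\star$ case, so maximality of $B$ is not needed at all.

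\textbf{Order five: the gap.} This case is the whole substance of the lemma, and your third step names tools but contains no argument.

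\emph{The density identity is wrong.} The non-root vertices of $S$ stay non-roots of $G'$ and are already counted in $n(L_{A,B})$. The correct identity is $\rho_4(G')=\rho_4(G)-\rho_4(R_{A,B})+m$ with $m=|E(K^G_S)\setminus E(G)|$. Combined with Observation~\ref{obs-lightminor}, this gives the starting inequality $\rho_4(R_{A,B})\ge m+1$. Lemmas~\ref{lemma-connwiro}, \ref{lemma-noclicut} and~\ref{lemma-ubdel} then give $2\le |S\cap X_G|\le 4$ and $m\le 3$.

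\emph{The linkage input is missing.} Let $F$ be a flaw and $T\subseteq E(F[S\cap X_G])$. To turn a rooted minor $M+T$ of $\widetilde{R}_{A,B}$ into a rooted minor of $G$, you need a root linkage of size five. This is exactly what shows $T\neq E(F[S\cap X_G])$ and that $F-T$ lies outside the target of $G'$, so that Observations~\ref{obs-trade}, \ref{obs-trade2} and~\ref{obs-trade3} can be applied. The linkage is also needed when $\rho_4(R_{A,B})\in\{1,2\}$, where Corollary~\ref{cor-litor} gives nothing.
\begin{itemize}
\item The paper gets it by showing that every isolator of a saturated separation is saturated. Contract $R_{A,B}$ onto $K^G_{A\cap B}$ and then along an isolator linkage; a path with a non-root origin has a non-root terminator, so all required edges appear.
\item Hence choosing $A$ minimal makes $(A,B)$ strongly linked to the roots.
\item Strong linkedness, not just linkedness, is needed when $|S\cap X_G|=4$. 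Together with Lemma~\ref{lemma-nobisep}, it gives a connected subgraph of $G-B$ containing $x_1$ and touching every other root. Corollary~\ref{cor-k4}, applied to $G[B]$ rooted at $X_G\setminus\{x_1\}$, then forces $F[S\cap X_G]\cong K_4$ and $\rho_4(R_{A,B})=m+1\le 3$.
\end{itemize}

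\emph{The remaining case analysis is absent.} The case $|S\cap X_G|=3$ needs a separate argument excluding $E(M)=\{v_1v_2\}$, which splits $G-(B\setminus\{v_1,v_2\})$ along the two linkage paths. Only after that can one choose $T$ maximal, derive $\rho_4(R_{A,B})\ge m+2$, and reach the final contradiction. None of these steps is carried out in your proposal.
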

\begin{proof}
Suppose for a contradiction that there exists a saturated proper root separation of order at most five,
and let $(A,B)$ be one with $A$ minimal.  
Consider any isolator $(C,D)$ for $(A,B)$, and let $\PP$ be an isolator
linkage for $(C,D)$ and $(A,B)$.  Consider distinct vertices $u,v\in C\cap D$ such that $v\not\in X_G$, and let $u'$ and $v'$ be
the terminators of the paths $P_u,P_v\in \PP$ with origins $u$ and $v$, respectively.
Note that $v'\not\in X_G$; indeed, otherwise we would have $v'\in C\cap D$ and the path $P_v$ would consist only of $v'$,
i.e., we would have $v=v'\in X_G$.  In particular, we have $u'v'\in E(K^G_{A\cap B})$.  Since this holds for all distinct vertices $u,v\in C\cap D$ such that $v\not\in X_G$,
it follows that by first contracting $R_{A,B}$ to $K^G_{A\cap B}$, then contracting the paths of $\PP$,
we obtain a supergraph of $K^G_{C\cap D}$ as an $\id$-rooted minor of $R_{C,D}$.  Therefore, the root separation $(C,D)$ is saturated.
Since $|C\cap D|\le |A\cap B|$, the minimality of $A$ implies that if $(C,D)\neq (A,B)$, then the root separation $(C,D)$ is not proper.
Since $|C\cap D|\le |A\cap B|\le 5=|X_G|$, this can only be the case when $(A,B)$ has order five and $(C,D)=(X_G,V(G))$.  It follows that $(A,B)$ is strongly linked to the roots.

Let $G_0$ be the $\id$-rooted minor of $G$ obtained by contracting $R_{A,B}$ to $K^G_{A\cap B}$.
Consider any proper root separation $(C,D)$ of $G_0$ of order at most four.  Since the only isolators of $(A,B)$
are $(A,B)$ itself and $(X_G,V(G))$ when $(A,B)$ has order five, we cannot have $A\cap B\subseteq D$,
since then $(C,D\cup B)$ would be a different isolator.  Hence, there exists a vertex $v\in (A\cap B)\setminus D$.
The vertex $v$ is adjacent to all vertices of $A\cap B\setminus X_G$ in $G_0$, and thus $A\cap B\setminus X_G\subseteq C$.
We also have $X_G\subseteq C$, and thus $A\cap B\subseteq C$.
Note that $(C\cup B,D)$ is a root separation of $G$ of order at most four with the same strictly right-hand side as $(C,D)$,
and since $G$ is $4$-light, it follows that the root separation $(C,D)$ of $G_0$ is $4$-light.
Since this holds for all proper root separations of $G_0$ of order at most four, we conclude that $G_0$ is $4$-light.

Let $M=K^G_{A\cap B}-E(G[A\cap B])$ be the subgraph formed by the edges created by the contraction of $R_{A,B}$ to $K^G_{A\cap B}$,
and let $m=|E(M)|$. Note that
$$\rho_4(G_0)=\rho_4(L_{A,B})+m=\rho_4(G)-\rho_4(R_{A,B})+m.$$
Since the root separation $(A,B)$ is proper, we have $n(G_0)<n(G)$, and thus Observation~\ref{obs-lightminor}
gives $\rho_4(G_0)\le \rho_4(G)-1$. Therefore,
$$\rho_4(R_{A,B})\ge m+1\ge 1.$$
Since $G$ is $4$-light, this in particular implies that the root separation $(A,B)$ has order five.
Let $s=|A\cap B\cap X_G|$.  We cannot have $A\cap B=X_G$, since the root separation $(A,B)$ is proper and $G-X_G$ is connected by Lemma~\ref{lemma-connwiro}.
Moreover, the root separation $(A,B)$ is not $K_\star$-universal by Lemma~\ref{lemma-noclicut}, and thus $K^G_{A\cap B}$ is not a clique.
Therefore
$$2\le s\le 4.$$
Note also that $\rho_4(R_{A,B})\le 4$ by Lemma~\ref{lemma-ubdel}, and thus
$$m\le 3.$$
Let $y_1$, \ldots, $y_s$ be the vertices of $A\cap B\cap X_G$ and let $x_1$, \ldots, $x_{5-s}$ be the vertices of $X_G\setminus B$.
Let $\QQ=\{Q_1,\ldots,Q_5\}$ be a root linkage for $(A,B)$, where for $i\in\{1,\ldots,5-s\}$,
the origin of the path $Q_i$ is $x_i$; let $v_i$ denote the terminator of the path $Q_i$.
Let us remark that $A\cap B\setminus X_G=\{v_1,\ldots, v_{5-s}\}$.
Let $F$ be a flaw of $G$, let $F_X=F[A\cap B\cap X_G]$, and let $F_X^+$ be the graph obtained from $F_X$ by adding isolated vertices $v_1$, \ldots, $v_{5-s}$.  Let us now consider two special cases.
\begin{claim}\label{cl-s4}
If $s=4$, then $F_X$ is isomorphic to $K_4$, $\rho_4(R_{A,B})=m+1$, $\widetilde{R}_{A,B}$ is not $(K_4+K_1)$-universal, and $\rho_4(R_{A,B})\le 3$.
\end{claim}
\begin{subproof}
By Lemma~\ref{lemma-nobisep}, each vertex of $X_G\setminus \{x_1\}$ has a neighbor in $A\setminus (B\cup X_G)$.
Moreover, since the root separation $(A,B)$ is strongly linked to the roots, the vertex $x_1$ has a neighbor in each component $K$ of $G-(B\cup X_G)$,
as otherwise $(A\setminus V(K),B\cup V(K))$ would be an isolator of $(A,B)$.  Consequently, $G-B$ has a connected subgraph $S$ containing $x_1$ such that each vertex of $X_G\setminus \{x_1\}$
has a neighbor in $S$.

Let $G'$ be the 4-rooted graph with the underlying graph $G[B]$ and the roots $X_G\setminus\{x_1\}$.
If $F_X$ were an $\id$-rooted minor of $G'$, then we could contract $G'$ to $F_X$ and contract $S$ to $x_1$, obtaining a supergraph of $F$
as an $\id$-rooted minor of $G$.  Since $F$ is a flaw of $G$, this is not possible.  Hence, $F_X$ is not an $\id$-rooted minor of $G'$.
This in particular implies that $\widetilde{R}_{A,B}$ is not $(K_4+K_1)$-universal, since in that case $F_X$ would actually be an $\id$-rooted minor of $G'-v_1$.
By Corollary~\ref{cor-nok4}, this implies that $\rho_4(R_{A,B})\le 3$.

We now aim to apply Corollary~\ref{cor-k4} to $G'$.  First, we need to argue that $G'$ is $4$-light: Consider any root separation $(C,D)$ of $G'$ of order at most three,
and let $A'=A\cup C$.
If $v_1\in C$, then $(A',D)$ is a root separation of $G$ with the right-hand side equal to $R^{G'}_{C,D}$, and since $G$ is $4$-light, the separations $(A',D)$ and $(C,D)$
are $4$-light.  If $v_1\in D\setminus C$, then note that $(A',D)$ is a root separation of $G$ of order at most four (with $A'\cap D=(C\cap D)\cup\{v_1\}$), necessarily 4-light.
The vertex $v_1$ has at most $|C\cap D|\le 3$ neighbors in $C\cap D$, and thus $\rho_4(R_{C,D})\le \rho_4(R^G_{A',D})+3-4<0$ and the root separation $(C,D)$ is $4$-light in this case as well.
That is, $G'$ is indeed 4-light.

Moreover, $v_1$ has exactly $4-m$ neighbors in $X_{G'}=X_G\setminus\{x_1\}$,
and thus $\rho_4(G')=\rho_4(R_{A,B})+(4-m)-4=\rho_4(R_{A,B})-m\ge 1$.  Since $F_X$ is not an $\id$-rooted minor of $G'$, Corollary~\ref{cor-k4}
implies that $\rho_4(G')=\rho_4(R_{A,B})-m=1$ and $F_X$ is isomorphic to $K_4$.
\end{subproof}

\begin{claim}\label{cl-s3}
If $s=3$, then $E(M)\neq\{v_1v_2\}$.
\end{claim}
\begin{subproof}
Suppose for a contradiction that $E(M)=\{v_1v_2\}$.  Since $m=1$, we have $\rho_4(R_{A,B})\ge 2$, and $\widetilde{R}_{A,B}$ is $\SS_{5,3}$-universal
by Observation~\ref{obs-univsep}.  Consequently, we can contract $\widetilde{R}_{A,B}$ to its $\id$-rooted minor $F^+_X$,
then contract the paths $Q_1$ and $Q_2$ to obtain an $\id$-rooted minor of $G$ containing all edges between the vertices of $X_G$ except
for the edge $x_1x_2$.  Since $F$ is not an $\id$-rooted minor of $G$, we have $x_1x_2\in E(F)$.
Moreover, $F_X$ is a triangle and $\rho_4(R_{A,B})\le 3$, as otherwise we could have obtained a supergraph of $F$ as an $\id$-rooted minor of $G$ by contracting
$\widetilde{R}_{A,B}$ to $F^+_X+v_1v_2$ instead of just to $F^+_X$.

Similarly, the paths $Q_1$ and $Q_2$ are contained in different components of $G-(B\setminus \{v_1,v_2\})$.
Indeed, otherwise there would exist a path $P$ in $G-(B\setminus \{v_1,v_2\})$ between $Q_1$ and $Q_2$ intersecting these paths exactly in its ends,
and we could contract $\widetilde{R}_{A,B}$ to $F^+_X$, contract the paths $Q_1$ and $Q_2$, and contract the path $P$ to form the last edge $x_1x_2$ of $F$.
Therefore, we can divide $G-(B\setminus \{v_1,v_2\})$ into subgraphs $K_1$ and $K_2$ such that $Q_1\subseteq K_1$, $Q_2\subseteq K_2$,
and $G$ does not contain any edge between $K_1$ and $K_2$.

For $i\in \{1,2\}$, let $D_i=V(K_i)\cup \{y_1,y_2,y_3\}$ and let $C_i=(V(G)\setminus D_i)\cup \{x_i,v_i,y_1,y_2,y_3\}$; then $(C_i,D_i)$ is
a root separation of $G$ of order five.  Moreover, let $a_i=1$ if $x_iv_i\in E(G)$ and $a_i=0$ otherwise.
Suppose now that $\rho_4(R_{C_1,D_1})+a_1\ge 2$.  Since $\widetilde{R}_{C_1,D_1}$ is universal by Observation~\ref{obs-univsep},
it follows that we can contract $R_{C_1,D_1}$ to the $\id$-rooted minor with vertex set $C_1\cap D_1$ and edges $x_1v_1$ and $y_2y_3$,
contract $\widetilde{R}_{A,B}$ to its $\id$-rooted minor $(F^+_X-y_2y_3)+v_1v_2$, and contract the edge $x_1v_1$ and the path $Q_2$,
obtaining a supergraph of $F$ as an $\id$-rooted minor of $G$.  This is a contradiction, and thus $\rho_4(R_{C_1,D_1})+a_1\le 1$, and
similarly $\rho_4(R_{C_2,D_2})+a_2\le 1$.

Observe that
\begin{align*}
\rho_4(G)&=\rho_4(R_{C_1,D_1})+\rho_4(R_{C_2,D_2})+\rho_4(R_{A,B})+\deg^X v_1+\deg^X v_2-8\\
&=\rho_4(R_{C_1,D_1})+\rho_4(R_{C_2,D_2})+\rho_4(R_{A,B})+(3+a_1)+(3+a_2)-8\\
&=(\rho_4(R_{C_1,D_1})+a_1)+(\rho_4(R_{C_2,D_2})+a_2)+\rho_4(R_{A,B})-2\\
&\le 3.
\end{align*}
However, since $F_X$ is a triangle and $x_1x_2\in E(F)$, a subgraph of $F$ is isomorphic to $K_2+K_3$.
This is a contradiction, since $F$ belongs to the target of $G$.
\end{subproof}

Let $T\subseteq E(F_X)$ be a largest set such that $M+T$ is an $\id$-rooted minor of $\widetilde{R}_{A,B}$,
if possible chosen so that $F-T$ is not isomorphic to $K_2+K_3$, and let $\delta=|T|$.
Note that such a set exists, since $(A,B)$ is saturated, and thus $M$ is an $\id$-rooted minor of $\widetilde{R}_{A,B}$.
By first contracting $\widetilde{R}_{A,B}$ to $M+T$, then contracting the paths $Q_i$ for $i\in\{1,\ldots,5-s\}$,
we obtain a supergraph of $F-(E(F_X)\setminus T)$ as an $\id$-rooted minor of $G$.  Since $F$ is not an $\id$-rooted minor of $G$,
it follows that $T\neq E(F_X)$ and $\delta<|E(F_X)|$.  In combination with the preceding observations, this has the following consequences.
\begin{claim}\label{cl-narcliq}
$s\in\{3,4\}$, $\rho_4(R_{A,B})\le 3$, $m\le 2$, and $F_X$ is a clique.
\end{claim}
\begin{subproof}
Since $\rho_4(R_{A,B})\ge m+1$, Observations~\ref{obs-univsep} and \ref{obs-canaddm} imply that the 5-rooted graph $\widetilde{R}_{A,B}$ is $\SS_{5,m+1}$-universal,
and thus $\delta\ge 1$ and $|E(F_X)|\ge 2$.  Since $|V(F_X)|=s$, this in particular implies that $s\ge 3$, and thus $s\in\{3,4\}$.

Similarly, if $\rho_4(R_{A,B})=4$, then $\widetilde{R}_{A,B}$ is $\SS_{5,6}$-universal and $m\le 3$, and thus $\widetilde{R}_{A,B}$ is $\SS_{5,m+3}$-universal.
This implies that $\delta\ge 3$ and $|E(F_X)|\ge 4$, and since $|V(F_X)|=s$, we have $s=4$.  However, this contradicts Claim~\ref{cl-s4}, and thus $\rho_4(R_{A,B})\le 3$ and $m\le 2$.

Finally, suppose for a contradiction that $F_X$ is not a clique.  By Claim~\ref{cl-s4}, we have $s=3$.
We can by symmetry assume that all edges of $F_X$
are incident with $y_1$.  By Corollary~\ref{cor-nok4}, we can contract $\widetilde{R}_{A,B}$ to the $y_1$-star, and thus $M$ must contain an edge not incident with
$y_1$.  In particular $m\ge 1$, and thus $\rho_4(R_{A,B})\ge 2$ and $\widetilde{R}_{A,B}$ is $\SS_{5,3}$-universal.  Since $M\cup F_X$ is not an $\id$-rooted minor of $\widetilde{R}_{A,B}$ and
$m\le 2$, this implies that $m=2$, $|E(F_X)|=2$ and $\rho_4(R_{A,B})=3$.  Note that $M\cup F_X$ is not isomorphic to $K_2+K_3$,
since $F_X$ is an induced subgraph of $M\cup F_X$ with three vertices and two edges.  However $\rho_4(R_{A,B})=3$ implies that
$\widetilde{R}_{A,B}$ is $\SS_{5,4}^-$-universal and $M\cup F_X$ is an $\id$-rooted minor of $\widetilde{R}_{A,B}$, which is a contradiction.
\end{subproof}
Next, observe that by contracting $\widetilde{R}_{A,B}$ to $M+T$,
we obtain the graph $G_0+T$ as an $\id$-rooted minor of $G$.  Since $F$ is not an $\id$-rooted minor of $G$,
it follows that $F'=F-T$ is not an $\id$-rooted minor of $G_0$.  Recall that $G_0$ is 4-light and $n(G_0)<n(G)$,
and thus $G_0$ is universal by Observation~\ref{obs-lightminor}.  Consequently $F'$ does not belong to the target of $G_0$.

\begin{claim}
$\rho_4(R_{A,B})\ge m+2$, and consequently $m\in \{0,1\}$ and $\rho_4(R_{A,B})\in\{2,3\}$.
\end{claim}
\begin{subproof}
Suppose for a contradiction $\rho_4(R_{A,B})=m+1$, and thus $\rho_4(G_0)=\rho_4(G)-1$.  
We can by symmetry assume that $y_1$ is not the only vertex of $A\cap B\cap X_G$ adjacent to all vertices of $X_G\setminus B$ in $F$.
Let $T'=\{y_1y_2,y_1y_3\}$ and note that $M+T'$ is an $\id$-rooted minor of $\widetilde{R}_{A,B}$:
\begin{itemize}
\item If $m=0$, then this is the case since $\widetilde{R}_{A,B}$ is $K_{1,4}$-universal.
\item If $m=1$, then $\rho_4(R_{A,B})\ge 2$ and this is the case since $\widetilde{R}_{A,B}$ is $\SS_{5,3}$-universal.
\item If $m=2$, then $\rho_4(R_{A,B})=3$ and this is the case since $\widetilde{R}_{A,B}$ is $\SS_{5,4}^-$-universal
and the graph $M+T'$ cannot be isomorphic to $K_2+K_3$, since $\{y_1,y_2,y_3\}$ induces its subgraph with exactly two edges.
\end{itemize}
Moreover, note that $F-T'$ is not isomorphic to $K_2+K_3$, since $F_X$ is a clique and by the assumption on $y_1$ when $s=3$.
Therefore, we have $\delta\ge |T'|=2$ and if $\delta=2$, then $F'$ is not isomorphic to $K_2+K_3$.
However, this contradicts Observation~\ref{obs-trade} with $G_0$ playing the role of $G'$.
\end{subproof}
If $s=3$, then $|E(M\cup F_X)|=m+3\le \rho_4(R_{A,B})+1$.  Moreover, Claim~\ref{cl-s3} implies that $M\cup F_X$ is not isomorphic to $K_2+K_3$.
Since $\rho_4(R_{A,B})\in\{2,3\}$ and $\widetilde{R}_{A,B}$ is universal by Observation~\ref{obs-univsep}, it follows that $M\cup F_X$ is an $\id$-rooted minor of
$\widetilde{R}_{A,B}$, which is a contradiction.  Therefore, we have
$$s=4.$$
Recall that by Claim~\ref{cl-s4}, the 5-rooted graph $\widetilde{R}_{A,B}$ is not $(K_4+K_1)$-universal.
\begin{itemize}
\item If $\rho_4(R_{A,B})=2$, then by Corollary~\ref{cor-nok4}, the 5-rooted graph $\widetilde{R}_{A,B}$ is nearly $K_{2,3}^+$-universal, and we can by symmetry
assume that its exceptional root is $y_1$ or $v_1$; moreover, in this case we have $E(M)=\emptyset$.
\item If $\rho_4(R_{A,B})=3$, then by Corollary~\ref{cor-nok4}, the 5-rooted graph $\widetilde{R}_{A,B}$ is $K_{2,3}^+$-universal.
Moreover, we can by symmetry assume that $E(M)\subseteq \{v_1y_1\}$.
\end{itemize}
Let $T'=E(F_X)\setminus \{y_3y_4\}$.  In both cases, $M+T'$ is a subgraph of the graph obtained from the $\{y_1,y_2\}$-star
by deleting the edge $v_1y_2$, and thus $M+T'$ is an $\id$-rooted minor of $\widetilde{R}_{A,B}$.  Consequently $\delta\ge |T'|=5$.
By Observation~\ref{obs-trade2} applied with $G'=G_0$, we see that $\rho_4(G_0)\neq \rho_4(G)-2$.
Since $\rho_4(G_0)=\rho_4(G)-\rho_4(R_{A,B})+m$, it follows that $\rho_4(R_{A,B})=3$, $m=0$, and $\rho_4(G_0)=\rho_4(G)-3$.
By Observation~\ref{obs-trade3}, we conclude that $F'$ is isomorphic to $K_2+K_3$.
However, $F'-x_1$ has only one edge $y_3y_4$, which is a contradiction.
\end{proof}

Let us note the following simple consequence.
\begin{corollary}\label{cor-noplus1}
Let $(A,B)$ be a proper root separation of a minimal counterexample $G$.
If the order of $(A,B)$ is at most five, then $|A\cap B\setminus X_G|\ge 2$.
\end{corollary}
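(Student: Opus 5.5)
The plan is to show that whenever $|A\cap B\setminus X_G|\le 1$, the separation $(A,B)$ is saturated. Then Lemma~\ref{lemma-nosatur} gives the contradiction directly. Recall that $K^G_{A\cap B}$ consists of the clique on $A\cap B$ with all edges between roots removed. So if $A\cap B\subseteq X_G$, then $K^G_{A\cap B}$ has no edges at all. It is then trivially an $\id$-rooted minor of $R_{A,B}$, so $(A,B)$ is saturated. This contradicts Lemma~\ref{lemma-nosatur}, since $(A,B)$ is proper of order at most five.

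Now suppose $A\cap B\setminus X_G=\{v\}$ for a single vertex $v$. Here $K^G_{A\cap B}$ is the star centered at $v$ whose leaves are the vertices of $A\cap B\cap X_G$. To realise it as an $\id$-rooted minor of $R_{A,B}$, I need a connected subgraph of $G[B]$ that contains $v$, avoids the other vertices of $A\cap B$, and has a neighbour of each $y\in A\cap B\cap X_G$.

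Since $(A,B)$ is proper, $B\setminus A\neq\emptyset$. By Corollary~\ref{cor-3conn}, $G$ is essentially $4$-connected. I would take a component $K$ of $G[B\setminus A]$ and consider $N(K)\subseteq A\cap B$. There are two cases.
\begin{itemize}
\item If $N(K)$ is a proper subset of $A\cap B$, then $(V(G)\setminus V(K),\,V(K)\cup N(K))$ is a proper root separation of smaller order. Induction on the order would handle it, but that only recovers a contradiction if the smaller separation also has at most one non-root vertex in its separator, which does hold since $N(K)\setminus X_G\subseteq\{v\}$.
\item Otherwise $K$ attaches to every vertex of $A\cap B$, including $v$. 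Then $K+v$ is the desired connected subgraph: contracting $K$ into $v$ yields the star $K^G_{A\cap B}$.
\end{itemize}

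So I would argue by choosing a counterexample separation of minimum order. In that case every component $K$ of $B\setminus A$ sees all of $A\cap B$, because otherwise $N(K)$ would give a smaller counterexample separation (still proper, still with at most one non-root vertex in the separator). The one degenerate subcase is $N(K)\subseteq X_G$ with $v\notin N(K)$. This is still a separation with zero non-root separator vertices, which the first paragraph already excludes.

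The main point to check carefully is that the reduced separation $(V(G)\setminus V(K),V(K)\cup N(K))$ is proper, meaning $V(G)\setminus V(K)\not\subseteq V(K)\cup N(K)$. This follows because $A\setminus B$ is nonempty or $A$ contains roots outside $N(K)$. If $A\subseteq N(K)\cup V(K)$, then $A=A\cap B$ and the original separation was not proper. Beyond this properness check, I expect no real obstacle: everything reduces to Lemma~\ref{lemma-nosatur}.
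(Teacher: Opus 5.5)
Your proof is correct and is essentially the paper's argument. The paper chooses the offending separation with $B$ minimal, which makes $G[B\setminus A]$ connected with every vertex of $A\cap B$ having a neighbour in it; you choose one of minimum order to obtain a component $K$ with $N(K)=A\cap B$ (properness of the reduced separation is immediate, since $A\setminus B\neq\emptyset$ is disjoint from $V(K)\cup N(K)$). In both versions, contracting that component into the unique non-root separator vertex yields $K^G_{A\cap B}$ and contradicts Lemma~\ref{lemma-nosatur}.
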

\begin{proof}
Suppose for a contradiction that $|A\cap B\setminus X_G|\le 1$.
By choosing such a root separation with $B$ minimal, we can assume that
the graph $H=G[B\setminus A]$ is connected and that each vertex of $A\cap B$ has a neighbor in $B\setminus A$.
By contracting $H$ to the vertex in $A\cap B\setminus X_G$ (if any), we obtain $K^G_{A\cap B}$ as an $\id$-rooted
minor of $R_{A,B}$.  However, this contradicts Lemma~\ref{lemma-nosatur}.
\end{proof}

We now aim to strengthen Lemma~\ref{lemma-nosatur} even more.  For a non-negative integer $k$, we say
that a root separation $(A,B)$ of a graph $G$ is \emph{$k$-nearly-saturated} if $K^G_{A\cap B}$ has a matching $M$ consisting of at most $k$ edges
such that $K^G_{A\cap B}-M$ is an $\id$-rooted minor of $R^G_{A,B}$.
Note that we do not require to be able to prescribe the matching $M$, and thus being $1$-nearly-saturated
is a weaker condition than being $K^-_\star$-universal even for root separations $(A,B)$ such that $K^G_{A\cap B}$ is a clique, i.e.,
such that $|B\cap X_G|\le 1$.  Moreover, being $0$-nearly-saturated is the same as being saturated.
Let us start with a technical observation.

\begin{observation}\label{obs-starsat}
Let $G$ be a rooted graph, let $(A,B)$ be a root separation of $G$,
and let $(C,D)$ be an isolator of $(A,B)$.
\begin{itemize}
\item If $(A,B)$ is $K_{1,\star}$-universal, then $(C,D)$ is also $K_{1,\star}$-universal.
\item If $|A\cap B|=|C\cap D|=5$ and $(A,B)$ is $K_{2,3}^+$-universal (or nearly $K_{2,3}^+$-universal), then $(C,D)$ is also $K_{2,3}^+$-universal (or nearly $K_{2,3}^+$-universal).
\item For a positive integer $k$, if $(A,B)$ is $k$-nearly-saturated, then $(C,D)$ is also $k$-nearly-saturated.
Moreover, if $(C,D)$ has order less than $|A\cap B|$, then $(C,D)$ is $(k-1)$-nearly-saturated.
\end{itemize}
In particular, if $G$ is a minimal counterexample and the root separation $(A,B)$ has order at most five and is $1$-nearly-saturated,
then $(A,B)$ is linked to the roots.
\end{observation}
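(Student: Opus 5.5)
All three bullets follow the same pattern used in Observation~\ref{obs-cliques} and Lemma~\ref{lemma-diamonds}. We fix an isolator linkage $\PP$ for $(C,D)$ and $(A,B)$, first contract $R_{A,B}$ to a suitable $\id$-rooted minor on $A\cap B$, and then contract each path of $\PP$ into its origin. Each path $P_u\in\PP$ has origin $u\in C\cap D$ and terminator $u'\in A\cap B$, and the map $u\mapsto u'$ is injective. So any graph $Z$ on $A\cap B$ that is an $\id$-rooted minor of $R_{A,B}$ yields, as an $\id$-rooted minor of $R_{C,D}$, the graph on $C\cap D$ obtained by pulling $Z$ back along $u\mapsto u'$.

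For the first bullet, fix $z\in C\cap D$ and take $Z$ to be the $z'$-star on $A\cap B$; pulling it back gives the $z$-star on $C\cap D$.

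For the second bullet, both separations have order five, so $u\mapsto u'$ is a bijection. A $\pi$-rooted model of $K_{2,3}^+$ (or of $K_{2,3}^+-e$) for $(C,D)$ is obtained from the corresponding model for $(A,B)$ with the composed bijection. If $(A,B)$ is nearly universal with exceptional root $y'$, then $(C,D)$ is nearly universal with exceptional root the origin of the path ending in $y'$.

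For the third bullet, let $N$ be a matching with at most $k$ edges such that $K^G_{A\cap B}-N$ is an $\id$-rooted minor of $R_{A,B}$. We need two facts.
\begin{itemize}
\item As in the proof of Lemma~\ref{lemma-nosatur}, if $v\in C\cap D\setminus X_G$, then $v'\notin X_G$. Hence every edge $uv$ of $K^G_{C\cap D}$ has $u'v'\in E(K^G_{A\cap B})$.
\item The set of pairs $uv$ with $u'v'\in N$ is a matching $N'$ of size at most $k$.
\end{itemize}
So $K^G_{C\cap D}-N'$ is an $\id$-rooted minor of $R_{C,D}$.

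If $|C\cap D|<|A\cap B|$, some vertex $w\in A\cap B$ is not a terminator. If $w$ is covered by an edge $w w_2\in N$, the terminator $w_2$ loses its only missing partner, and that edge is simply not pulled back. The point to check is that $w$ is still usable when it is uncovered, or when $w_2$ is not itself a terminator. In that case contract $w$ (together with its connected branch set) into the branch set of the terminator of some other edge $u'v'\in N$. This restores $u'v'$, since $w$ is adjacent to $v'$ in $K^G_{A\cap B}-N$. Here $w$ may be a root; but if $w\in A\cap B\cap X_G$, it belongs to every separation containing the roots, so it lies in $C\cap D$ as a single-vertex path and is a terminator. Hence $w\notin X_G$, so $w$ is adjacent to everything except possibly its matching partner, and the contraction is legal. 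If $N=\emptyset$ there is nothing to do. In every case $N'$ shrinks by one, giving $(k-1)$-near-saturation.

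For the final claim, suppose $(A,B)$ has order at most five, is $1$-nearly-saturated, and is not linked to the roots. Then an isolator $(C,D)$ has order less than $|A\cap B|$, so by the above it is $0$-nearly-saturated, that is, saturated. It is proper: its order is below five, and if $B\subseteq C$ then $B\subseteq C\cap D$, which is too small, since $|B|\ge|A\cap B|>|C\cap D|$. This contradicts Lemma~\ref{lemma-nosatur}. The main obstacle is the bookkeeping in the order-drop case of the third bullet, ensuring that the contraction of a non-terminator genuinely eliminates one matching edge.
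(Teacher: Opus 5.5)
Your proposal is correct and follows essentially the same route as the paper. Like the paper, you pull back an $\id$-rooted minor of $R_{A,B}$ along an isolator linkage. In the order-drop case you use a non-terminator, necessarily a non-root, which either already kills a matching edge or is adjacent to everything and can be contracted to restore one. You then invoke Lemma~\ref{lemma-nosatur}, and your explicit check that the low-order isolator is proper is a detail the paper leaves implicit.
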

\begin{proof}
Let $\PP$ be an isolator linkage for $(C,D)$ and $(A,B)$.  Let $T$ be the set of terminators of $\PP$, and for a vertex $v\in C\cap D$, let $f(v)\in T$
be the terminator of the path of $\PP$ with origin $v$.  Note that if a vertex $u$ is contained in $A\cap B\cap X_G$, then $u$
is also contained in $C\cap D\cap X_G$ and $f(u)=u$, and in particular $A\cap B\cap X_G\subseteq T$.

Suppose that $(A,B)$ is $K_{1,\star}$-universal, and let $v\in C\cap D$ be an arbitrary vertex.
Then $R_{A,B}$ contains the $f(v)$-star as an $\id$-rooted minor.  By further contracting the paths of $\PP$,
we obtain the $v$-star as an $\id$-rooted minor of $R_{C,D}$.
Hence, $(C,D)$ is $K_{1,\star}$-universal.  Analogously, if $|A\cap B|=|C\cap D|=5$ and $(A,B)$ is (nearly) $K_{2,3}^+$-universal, then $(C,D)$ is (nearly) $K_{2,3}^+$-universal.

Suppose next that $(A,B)$ is $k$-nearly-saturated, and let $M\subseteq E(K^G_{A\cap B})$ be a matching of size at most $k$
such that $H=K^G_{A\cap B}-M$ is an $\id$-rooted minor of $R_{A,B}$.  Let $H'$ be the graph obtained from $H$ by relabelling
each vertex $u\in T$ as $f^{-1}(u)$ (leaving the vertices in $V(H)\setminus T$ with the original label).
By first contracting $R_{A,B}$ to $H$ and then contracting the paths of $\PP$,
we see that $H'$ is an $\id$-rooted minor of $R_{C,D}$.  Since $f^{-1}$ maps vertices of $A\cap B\cap X_G$ to a subset of vertices of
$C\cap D\cap X_G$, the set $M'=E(K^G_{C\cap D})\setminus E(H')$ is a matching of size at most $k$.
Hence, the root separation $(C,D)$ is $k$-nearly-saturated.

Suppose now that $|C\cap D|<|A\cap B|$, and thus there exists a vertex $v\in V(H)\setminus T=V(H')\setminus (C\cap D)$; note that $v\not\in X_G$.
If $|M'|<k$, then $(C,D)$ is actually $(k-1)$-nearly-saturated; hence, suppose that $|M'|=k$.  This implies
that $v$ is not incident with any edge of $M$, and since $v\not\in X_G$ and $H+M=K^G_{A\cap B}$, the vertex $v$ is adjacent in $H$ to all other vertices.
Consequently $v$ is also adjacent in $H'$ to all other vertices.  Let $wz\in M'$ be an arbitrary edge.
By contracting the edge $vw$ of $H'$, we obtain $(H'-v)+wz\supseteq K^G_{C\cap D}-(M'\setminus\{wz\})$
as an $\id$-rooted minor of $R_{C,D}$, showing that $(C,D)$ is $(k-1)$-nearly-saturated.

If $G$ were a minimal counterexample, $k=1$, and $(A,B)$ had order at most five, then the saturated proper root separation $(C,D)$
would contradict Lemma~\ref{lemma-nosatur}.  Hence, in this case every isolator for $(A,B)$ has order $|A\cap B|$,
and thus $(A,B)$ is linked to the roots.
\end{proof}

The main challenge with excluding a $1$-nearly-saturated separation $(A,B)$ from a minimal counterexample $G$ is that after
contracting $R_{A,B}$ to $K^G_{A\cap B}-e$ for an edge $e=uv$, the resulting $\id$-rooted minor may have a root separation $(C,D)$
of order at most four separating $u$ from $v$, which does not directly correspond to a root separation of order at most four in $G$.
Thus, we need a separate argument to show that such root separations are 4-light.  Actually, we can as well do this argument
in the more general setting of $2$-nearly-saturated root separations.

\begin{lemma}\label{lemma-satulight}
Let $G$ be a minimal counterexample and let $(A,B)$ be a proper $2$-nearly-saturated
$K_{1,\star}$-universal root separation of $G$ of order five.  Let $M\subseteq E(K^G_{A\cap B})$ be a matching of size at most two
such that $H=K^G_{A\cap B}-M$ is an $\id$-rooted minor of $R_{A,B}$.
If the root separation $(A,B)$ is strongly linked to the roots, then the $\id$-rooted minor $L=L_{A,B}\cup H$ of $G$ is 4-light.
\end{lemma}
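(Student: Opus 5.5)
Let $S=A\cap B$ and let $(C,D)$ be a proper root separation of $L$ of order at most four. The goal is to show $\rho_4(R^L_{C,D})\le 0$.

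\emph{Step 1: reduce to the case where $S$ is split.} If $S\subseteq C$, then $(C\cup B,D)$ is a root separation of $G$ of the same order. Its strictly right-hand side equals that of $(C,D)$ in $L$, so it is 4-light because $G$ is. Thus we may assume some vertex $u\in S$ lies in $D\setminus C$.

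If also $S\subseteq D$, then $(C,D\cup B)$ is a root separation of $G$ with $C\subseteq A$ and $B\subseteq D\cup B$, of order at most four. It would therefore be an isolator of $(A,B)$ distinct from $(A,B)$ and $(X_G,V(G))$. This contradicts strong linkedness. Hence there is also a vertex $w\in S\setminus D$, so $w\in C\setminus D$.

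Since $(C,D)$ separates $u$ from $w$, we get $uw\notin E(H)$. As $H=K^G_{S}-M$, either $uw\in M$ or $u,w\in X_G$.

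\emph{Step 2: use the structure of $H$.} Each non-root vertex of $S$ is adjacent in $H$ to all of $S$ except its at most one $M$-partner. So every vertex of $S\setminus X_G$ not in $C\cap D$ lies on the same side as all its $H$-neighbours. Since $(A,B)$ is proper, Corollary~\ref{cor-noplus1} gives $|S\setminus X_G|\ge 2$.

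A case analysis on how $S$ distributes over $C\setminus D$, $C\cap D$ and $D\setminus C$ should then show the following. Because $|C\cap D|\le 4$ and $M$ has at most two edges, the only way to split $S$ is with $C\cap D$ containing most of $S$. The split side $(D\setminus C)\cap S$ then consists of one or two vertices. Each of them is a root, or a non-root whose only missing $H$-edges go to $M$-partners or roots in $C\setminus D$.

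\emph{Step 3: transfer the separation to $G$.} I would compare $(C,D)$ with the separation $(C\cup B,\,D\cup S)$ of $G$, or more precisely with $(C\cup B,\,(D\setminus S)\cup (C\cap D\cap S)\cup (S\cap D))$. Its order exceeds $|C\cap D|$ by at most the number of vertices of $S\cap(D\setminus C)$, so it is at most five.

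If the order is at most four, $G$ being 4-light finishes the argument. The correction terms in $\rho_4$ coming from $H$-edges, compared with $G$-edges inside $S$, can be bounded as in the proof of Lemma~\ref{lemma-2twin}.

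If the order is exactly five, I would instead use Lemma~\ref{lemma-ubdel} together with $K_{1,\star}$-universality. If the resulting right-hand side had positive 4-density, then contracting the $u$-star in $R_{A,B}$ and routing through it would make the separation saturated or $1$-nearly-saturated of order at most five. That contradicts Lemma~\ref{lemma-nosatur}, or, via Observation~\ref{obs-starsat}, yields an isolator of smaller order.

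\emph{Main obstacle.} The hardest step is the accounting in Step 3. Edges of $H$ between $S\cap(D\setminus C)$ and $C\cap D$ are counted in $R^L_{C,D}$ but may be missing in $G$. I would first try to bound the number of such edges by $4$ per vertex of $S\cap(D\setminus C)$, so that each such vertex contributes $\deg-4\le 0$, exactly as in Lemma~\ref{lemma-2twin}. I would then handle separately the case where this vertex has degree five in $H$, meaning it is unmatched and non-root; there the degree bound alone does not suffice and the extra structure from Step 2 has to be used.
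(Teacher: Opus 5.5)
\textbf{Step~1 is right, but it misses the observation that makes the rest unnecessary.} Since $X_G\subseteq C$, the vertex $u\in S\cap(D\setminus C)$ is not a root. So $uw\notin E(H)$ forces $uw\in M$; the alternative ``$u,w\in X_G$'' cannot occur. This holds for every $u\in S\cap(D\setminus C)$ and every $w\in S\cap(C\setminus D)$, and $M$ is a matching. Hence $S\cap(D\setminus C)=\{u\}$, $S\cap(C\setminus D)=\{w\}$, and the other three vertices of $S$ lie in $C\cap D$. Your Step~2 claims that the $D$-side part may have two vertices, or may contain roots, are therefore incorrect.

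\textbf{The genuine gap is Step~3.} Passing to the separation $(C\cup B,D)$ of $G$, of order $|C\cap D|+1$, only settles the case $|C\cap D|\le 3$. When $|C\cap D|=4$ you get a separation of order five.
\begin{itemize}
\item At this point in the paper the only bound for it is $\rho_4\le 4$ from Lemma~\ref{lemma-ubdel}.
\item The identity $\rho_4(R^L_{C,D})=\rho_4(R^G_{C\cup B,D})+d-4$ with $d\le 4$ does not give $\rho_4(R^L_{C,D})\le 0$.
\item Your suggestion to contract the $u$-star and obtain a saturated or $1$-nearly-saturated separation does not say which separation, or why it would be saturated.
\end{itemize}
So the argument is incomplete exactly at the obstacle you flagged.

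\textbf{The missing idea is to use strong linkedness a second time,} now moving $w$ to the right side. Let $D'=D\cup\{w\}$. Then $S\subseteq D'$, and $(C,D'\cup B)$ is a root separation of $G$ with $C\subseteq A$, $B\subseteq D'\cup B$, and order $|C\cap D|+1\le 5$. Since $(A,B)$ is linked to the roots, its order is exactly five, so it is an isolator of $(A,B)$. It is not $(A,B)$ itself, because $u\in A\setminus C$. By strong linkedness it must therefore be $(X_G,V(G))$. Then $C=X_G$ and $S\setminus\{u\}\subseteq C=X_G$, contradicting Corollary~\ref{cor-noplus1}.

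Hence the split case never occurs. Every root separation of $L$ of order at most four has $S\subseteq C$, and your Step~1 transfer to $(C\cup B,D)$ finishes the proof with no density accounting at all. This is exactly the paper's argument.
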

\begin{proof}
Consider any root separation $(C,D)$ of $L$ of order at most four.
Since $(A,B)$ is linked to the roots, we cannot have $A\cap B\subseteq D$, as in that case $(C,D\cup B)$ would be a root separation of $G$ of order at most four.

Suppose now that $A\cap B\not\subseteq C$ and $A\cap B\not\subseteq D$.  Let $u\in A\cap B\setminus D$
and $v\in A\cap B\setminus C$ be arbitrary vertices.  Then $uv\not\in E(L)$, and consequently $uv\not\in E(H)$.
Since $A\cap B\cap X_G\subset C$, we have $v\not\in X_G$, and thus $uv$ is an edge of $M$.
Since this holds for any pair of vertices in $A\cap B\setminus D$ and in $A\cap B\setminus C$
and since $M$ is a matching, it follows that $A\cap B\setminus D=\{u\}$, $A\cap B\setminus C=\{v\}$,
and $A\cap B\setminus\{u,v\}\subseteq C\cap D$.  Let $D'=D\cup \{u\}$, and observe that
$(C,D')$ is a root separation of $L$ of order $|C\cap D|+1\le 5$. Since $A\cap B\subseteq D'$, it follows that $(C,D'\cup B)$ is a root separation of $G$ of order at most five.
Since $(A,B)$ is strongly linked to the roots, the root separation $(C,D'\cup B)$ is equal to either $(A,B)$ or $(X_G,V(G))$.  The former is not possible, since $v\in A\setminus C$.
It follows that $C=X_G$, and thus $A\cap B\setminus\{v\}\subset X_G$; however, this contradicts Corollary~\ref{cor-noplus1}.

Therefore, for every root separation $(C,D)$ of $L$ of order at most four, we have $A\cap B\subseteq C$.
Consequently, $(C\cup B,D)$ is a root separation of $G$ with the strictly right-hand side equal to $\widetilde{R}^L_{C,D}$.
Since $G$ is 4-light, it follows that $L$ is $4$-light.
\end{proof}

With this, it is easy to mostly exclude $2$-nearly-saturated root separations of order five from minimal counterexamples.

\begin{lemma}\label{lemma-no2nearly}
Let $G$ be a minimal counterexample and let $(A,B)$ be a proper
$K_{1,\star}$-universal root separation of $G$ of order five.
If the root separation $(A,B)$ is strongly linked to the roots, then it is not $2$-nearly-saturated.
\end{lemma}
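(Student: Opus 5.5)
The plan is to mimic the proof of Lemma~\ref{lemma-nosatur}, using Lemma~\ref{lemma-satulight} to replace the easy 4-lightness argument available there for saturated separations. Suppose for a contradiction that $(A,B)$ is $2$-nearly-saturated, and fix a matching $M$ of size at most two with $H=K^G_{A\cap B}-M$ an $\id$-rooted minor of $R_{A,B}$. Let $L=L_{A,B}\cup H$.

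\emph{Step 1: density bound on the right side.} By Lemma~\ref{lemma-satulight}, $L$ is 4-light. Since $(A,B)$ is proper, $n(L)<n(G)$. Observation~\ref{obs-lightminor} then says that $L$ is universal and $\rho_4(L)\le\rho_4(G)-1$. Writing $m=|E(H)\setminus E(G[A\cap B])|$ for the number of created edges, we have $\rho_4(L)=\rho_4(G)-\rho_4(R_{A,B})+m$. Hence $\rho_4(R_{A,B})\ge m+1$, and Lemma~\ref{lemma-ubdel} gives $\rho_4(R_{A,B})\le 4$, so $m\le 3$.

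\emph{Step 2: setup.} Let $s=|A\cap B\cap X_G|$. We have $s\le 3$ by Corollary~\ref{cor-noplus1}, and $s\ge 2$ since $G-X_G$ is connected by Lemma~\ref{lemma-connwiro}. So there are $5-s\in\{2,3\}$ non-root vertices $v_i$ in $A\cap B$, joined to the roots $x_i\in X_G\setminus B$ by a root linkage $\QQ$. Since $H$ contains the whole clique on the $v_i$ except the edges of $M$, and $H$ has many edges, $m$ is large unless $G[A\cap B]$ is already dense. In particular $m\le 3$ forces $G[A\cap B]$ to contain most edges at the non-root vertices.

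\emph{Step 3: transferring the flaw.} Let $F$ be a flaw of $G$ and $F_X=F[A\cap B\cap X_G]$. Since $\widetilde R_{A,B}$ is universal by Observation~\ref{obs-univsep}, choose the largest $T\subseteq E(F_X)$ such that $(H-E(G[A\cap B]))+T$ is an $\id$-rooted minor of $\widetilde R_{A,B}$. Where possible, choose $T$ so that $F-T$ is not isomorphic to $K_2+K_3$. Contracting $R_{A,B}$ this way shows that $F'=F-T$ is not an $\id$-rooted minor of $L$, so $F'$ is not in the target of $L$.

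\emph{Step 4: contradiction via the trade observations.} Apply Observations~\ref{obs-trade}, \ref{obs-trade2} and \ref{obs-trade3}, according to whether $\rho_4(G)-\rho_4(L)=\rho_4(R_{A,B})-m$ equals $1$, $2$ or $3$. This requires lower bounds on $\delta=|T|$, obtained from the universality of $\widetilde R_{A,B}$ ($\SS_{5,m+1}$-universal by Observation~\ref{obs-canaddm}, plus the $K_{1,\star}$-universality hypothesis and Corollary~\ref{cor-nok4} for stars and $K_{2,3}^+$). The contractions along $\QQ$ use the missing matching edges of $M$ to realize root-to-root edges of $F$; the $K_{1,\star}$-universality lets us contract $R_{A,B}$ to a star centred at an endpoint of an $M$-edge and recover that edge.

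The hard part is the case analysis in Steps 3 and 4, over $s\in\{2,3\}$, the position of $M$ (edges among the $v_i$, or between some $v_i$ and a root $y_j$), and $\rho_4(R_{A,B})-m\in\{1,2,3\}$. The remaining outcomes will need side arguments in the style of Claims~\ref{cl-s4} and \ref{cl-s3}. These include splitting $G-(B\setminus\{v_i\})$ into components that contain different paths of $\QQ$ and bounding $\rho_4$ of the resulting order-five separations. There one shows, as in Claim~\ref{cl-s3}, that otherwise $\rho_4(G)\le 3$ while $F$ contains $K_2+K_3$. I expect the case $s=2$ with $M$ covering two disjoint pairs of non-root vertices to be the main obstacle. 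There, I would first try contracting $R_{A,B}$ to the $v$-star for a suitable $v$ and rerouting through $\QQ$ to realize the missing edges.
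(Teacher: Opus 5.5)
Your Step 1 is exactly the inequality the paper's proof rests on, but the proposal is not a proof. Steps 3--4 are an unexecuted case analysis: you defer ``the hard part'' to side arguments you never give. Moreover, the flaw-transfer bookkeeping of Lemma~\ref{lemma-nosatur} does not carry over directly. Contracting the paths of $\QQ$ no longer supplies the edges at $x_i$ that correspond to missing matching edges at $v_i$.

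The missing idea is that no flaw or trade analysis is needed at all. Take the matching $M$ of \emph{minimum} size $k$ such that $H=K^G_{A\cap B}-M$ is an $\id$-rooted minor of $R_{A,B}$. By Lemma~\ref{lemma-nosatur}, $k\ge 1$, so fix $e_0\in M$. Your Step 1 gives $t:=\rho_4(R_{A,B})\ge |E(H)\setminus E(G)|+1\ge |E(H+e_0)\setminus E(G)|$, and $t\le 4$ by Lemma~\ref{lemma-ubdel}. Hence $(H+e_0)-E(G[A\cap B])$ is a graph on $A\cap B$ with at most $t$ edges. So it lies in $\SS_{5,t}$, which is contained in the target of $\widetilde R_{A,B}$ by Observation~\ref{obs-canaddm}. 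Since $\widetilde R_{A,B}$ is universal by Observation~\ref{obs-univsep}, this graph is an $\id$-rooted minor of $\widetilde R_{A,B}$. Adding back the edges of $G[A\cap B]$ shows that $H+e_0=K^G_{A\cap B}-(M\setminus\{e_0\})$ is an $\id$-rooted minor of $R_{A,B}$. This contradicts the minimality of $k$, and the lemma follows in a few lines.

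Separately, your claim in Step 2 that $s\ge 2$ is unjustified. Connectivity of $G-X_G$ (Lemma~\ref{lemma-connwiro}) only rules out $A\cap B=X_G$. In Lemma~\ref{lemma-nosatur}, the bound $s\ge 2$ came from the fact that a saturated separation with $s\le 1$ is $K_\star$-universal. That argument is not available for $2$-nearly-saturated separations, so your case split over $s\in\{2,3\}$ would also miss $s\in\{0,1\}$.
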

\begin{proof}
Suppose for a contradiction that the root separation $(A,B)$ is $2$-nearly-saturated.
Let $k\in\{0,1,2\}$ be minimum such that $(A,B)$ is $k$-nearly-saturated,
and let $M\subseteq E(K^G_{A\cap B})$ be a matching of size $k$ such that $H=K^G_{A\cap B}-M$ is an $\id$-rooted minor of $R_{A,B}$.
By Lemma~\ref{lemma-nosatur}, we have $k\ge 1$; let $e_0$ be any edge of $M$.  Let $L=L_{A,B}\cup H$.  By Lemma~\ref{lemma-satulight}, the $\id$-rooted minor $L$ of $G$ is 4-light.
Since $n(L)<n(G)$, Observation~\ref{obs-lightminor}
implies that $\rho_4(L)<\rho_4(G)$.
On the other hand,
$$\rho_4(L)=\rho_4(G)-\rho_4(R_{A,B})+|E(H)\setminus E(G)|,$$
and thus
$$\rho_4(R_{A,B})\ge |E(H)\setminus E(G)|+1=|E(H+e_0)\setminus E(G)|.$$
Let $t=\rho_4(R_{A,B})$ and note that $t\le 4$ by Lemma~\ref{lemma-ubdel}.
By Observations~\ref{obs-univsep} and \ref{obs-canaddm}, the 5-rooted graph $\widetilde{R}_{A,B}$ is $\SS_{5,t}$-universal.
This implies that the graph $(H+e_0)-E(G[A\cap B])$ is an $\id$-rooted minor of $\widetilde{R}_{A,B}$,
and thus $H+e_0=K^G_{A\cap B}-(M\setminus\{e_0\})$ is an $\id$-rooted minor of $R_{A,B}$.
It follows that the root separation $(A,B)$ is $(k-1)$-nearly-saturated, contradicting the minimality of $k$.
\end{proof}

We can simplify the assumptions for $1$-nearly-saturated separations.

\begin{corollary}\label{cor-noK5minus}
Let $G$ be a minimal counterexample and let $(A,B)$ be a proper $K_{1,\star}$-universal root separation of $G$ of order five.
Then $(A,B)$ is not $1$-nearly-saturated.
\end{corollary}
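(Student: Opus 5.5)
The plan is to reduce to Lemma~\ref{lemma-no2nearly} by passing to an isolator. Suppose for a contradiction that $(A,B)$ is $1$-nearly-saturated. By the last part of Observation~\ref{obs-starsat}, $(A,B)$ is linked to the roots. So every isolator $(C,D)$ of $(A,B)$ has order five, and among these isolators I would take one with $C$ minimal (equivalently, $D$ maximal).

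\textbf{Step 1: $(C,D)$ inherits the hypotheses.} By Observation~\ref{obs-starsat}, $(C,D)$ is again $K_{1,\star}$-universal and $1$-nearly-saturated, hence also $2$-nearly-saturated. It is also strongly linked to the roots. Indeed, any isolator $(C',D')$ of $(C,D)$ satisfies $C'\subseteq C\subseteq A$ and $B\subseteq D\subseteq D'$. Its order is at most five, so it is an isolator of $(A,B)$ as well, and since $(A,B)$ is linked to the roots its order is exactly five. Minimality of $C$ then forces $(C',D')=(C,D)$, unless $(C',D')=(X_G,V(G))$, which is permitted by the definition of strongly linked.

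\textbf{Step 2: $(C,D)$ is proper.} If $(C,D)$ is proper, Lemma~\ref{lemma-no2nearly} applied to it gives the desired contradiction. So the only remaining case is that $(C,D)$ is not proper. Since $B\setminus A\subseteq D\setminus C$ is nonempty (as $(A,B)$ is proper), $D\not\subseteq C$. Thus non-properness means $C\subseteq D$, so $C=C\cap D$ has size five and contains $X_G$, giving $C=X_G$.

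\textbf{Step 3: the case $C=X_G$.} This case is degenerate and is the main obstacle. Here $(X_G,V(G))$ is $1$-nearly-saturated, meaning $G$ contains $K_5$ minus a matching of size at most one as a rooted minor (recall $X_G$ is independent, so $K^G_{X_G}$ has no edges and the notion degenerates). I would avoid this by choosing $(C,D)$ differently. Among the isolators of $(A,B)$ other than $(X_G,V(G))$, take one with $C$ minimal; $(A,B)$ itself is such an isolator, so the family is nonempty. Every such isolator is proper: it has order five and is different from $(X_G,V(G))$, and $C\subseteq D$ would force $C=X_G$. Strong linkedness then follows as in Step 1, because any isolator of $(C,D)$ is either $(X_G,V(G))$ or an isolator of $(A,B)$ in this family contained in it.

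The care needed is in checking that this modified minimality really yields strong linkedness and that $K_{1,\star}$-universality transfers along the isolator linkage. Both follow from Observation~\ref{obs-starsat}, with no need for its strict-order clause. With that, Lemma~\ref{lemma-no2nearly} applies to $(C,D)$ and gives the contradiction.
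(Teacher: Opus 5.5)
Your proposal is correct and is essentially the paper's argument. The paper chooses the offending separation with $A$ minimal globally, whereas you fix $(A,B)$ and take an isolator other than $(X_G,V(G))$ with $C$ minimal; in both versions Observation~\ref{obs-starsat} transfers $K_{1,\star}$-universality and $1$-nearly-saturation to the (order-five) isolators, minimality gives strong linkedness, and Lemma~\ref{lemma-no2nearly} yields the contradiction. (Your aside in Step 3, that $(X_G,V(G))$ being $1$-nearly-saturated means $G$ has $K_5$ minus a matching as a rooted minor, is inaccurate because $K^G_{X_G}$ is edgeless, but it plays no role in your final argument.)
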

\begin{proof}
Suppose for a contradiction that there exists such a proper root separation $(A,B)$ which is $1$-nearly-saturated,
and let us choose one with $A$ minimal.  Observation~\ref{obs-starsat} implies that $(A,B)$ is linked to the roots,
and thus all isolators of $(A,B)$ have order five.
Moreover, Observation~\ref{obs-starsat} implies that isolators of $(A,B)$ are $K_{1,\star}$-universal and $1$-nearly-saturated.
By the minimality of $A$, the root separation $(A,B)$ does not have any proper isolators other than itself,
and thus it is strongly linked to the roots.  This contradicts Lemma~\ref{lemma-no2nearly}.
\end{proof}

We can also simplify the assumptions of Lemma~\ref{lemma-no2nearly} for $2$-nearly-saturated separations,
but only to a lesser extent: The issue is that Corollary~\ref{cor-noK5minus} does not exclude the existence
of $1$-nearly-saturated proper root separations of order at most four, and thus Observation~\ref{obs-starsat}
does not force $2$-nearly-saturated root separations of order five to be linked to the roots.
However, it turns out that the following additional assumption (forbidding ``degenerate'' root separations where some vertices
of the cut set can be removed from the left-hand side) suffices.
Let $(A, B)$ be a root separation of a rooted graph $G$.  We say that a vertex $u\in A\cap B$ is \emph{$(A,B)$-exposed}
if $G$ contains a path $P$ from $X_G$ to $u$ such that $V(P)\cap B=\{u\}$, or equivalently, that the component of $G-(A\cap B\setminus \{u\})$
containing $u$ intersects $X_G$.
We say that the root separation $(A,B)$ is \emph{exposed} if all vertices of $A\cap B$ are $(A,B)$-exposed.
In particular, every root separation linked to the roots is exposed.  We will need the following well-known observation (we include a proof for completeness).
\begin{observation}\label{obs-exposed}
Let $(A,B)$ be a root separation of a rooted graph $G$.  For every $(A,B)$-exposed vertex $v\in A\cap B$,
there exists a root linkage for $(A,B)$ containing a path ending in $v$.
\end{observation}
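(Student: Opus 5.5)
The plan is to apply Menger's theorem to an auxiliary graph in which the vertices of $A\cap B$ other than $v$ are identified. Let $k$ be the root connectivity of $(A,B)$. By Observation~\ref{obs-isol}, every root linkage consists of $k$ vertex-disjoint paths from $X_G$ to $A\cap B$ in $G[A]$, with all internal vertices outside $X_G\cup B$. Such paths are exactly the vertex-disjoint $X_G$--$(A\cap B)$ paths in the graph $G'=G[A]$ after deleting the edges inside $A\cap B$. The task is to find a maximum family of such paths that uses $v$ as an endpoint.

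The natural approach is an augmenting-path (Menger) argument. Take a root linkage $\QQ$ of size $k$. If some path of $\QQ$ already ends in $v$, we are done. Otherwise, since $v$ is exposed, there is a path $P$ from $X_G$ to $v$ with $V(P)\cap B=\{v\}$.

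First I would handle the trivial case. If $v\in X_G$, then $v\in X_G\cap B$, and Observation~\ref{obs-isol} guarantees that the single-vertex path $v$ already belongs to $\QQ$. So assume $v\notin X_G$.

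The main step is the following. Form the digraph or flow network for the $X_G$--$(A\cap B)$ linkage problem in $G'$ with unit vertex capacities. Add a super-source $s$ joined to $X_G$ and a super-sink $t$ joined to $A\cap B$. Then $\QQ$ corresponds to a maximum flow of value $k$. Now modify the sink side so that the path through $v$ is forced. Consider the set $Z=(A\cap B)\setminus\{v\}$ together with $v$, and ask for a maximum linkage from $X_G$ to $A\cap B$ in which $v$ is required as a terminal.

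Equivalently, I would use the standard fact that the endpoint sets of maximum linkages form the bases of a gammoid (a matroid on $A\cap B$). The set $\{v\}$ is independent in this gammoid precisely because of the path $P$: the singleton linkage $\{P\}$ exists. By the matroid augmentation property, $\{v\}$ extends to a basis, that is, to a set of $k$ terminators linked from $X_G$. To keep the proof self-contained, I would instead argue directly with Menger's theorem. Define $G''$ from $G'$ by adding a new vertex $w$ adjacent only to $v$, and require $k+1$ disjoint paths from $X_G\cup\{w\}$ to $A\cap B$ after subdividing appropriately. Showing that no small cut exists then reduces to the exposedness of $v$.

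The main obstacle is the bookkeeping that keeps the new paths internally disjoint from $B$ and from $X_G$, and that keeps the single-vertex paths at $X_G\cap B$. I would address this by deleting $B\setminus A$ and the edges inside $A\cap B$ from the start, and by noting that any path from $X_G$ to $A\cap B$ can be truncated at its first vertex of $A\cap B$ and its last vertex of $X_G$. Finally, I would use the standard augmentation of $\QQ$ along $P$ (the linkage version of the Ford–Fulkerson alternating-path argument). Follow $P$ backwards from $v$ until it first meets a path $Q\in\QQ$. Then reroute: the linkage number cannot exceed $k$, and an alternating path yields a new linkage of size $k$ whose terminator set is the old one with one terminator exchanged for $v$. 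This is exactly the gammoid exchange, so the resulting linkage contains a path ending in $v$.
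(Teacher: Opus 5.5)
Your last paragraph is a correct proof, and it is more elementary than the paper's. The two constructions before it should be dropped.

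\textbf{Why the splice works.} No alternating or backward steps are needed.
\begin{itemize}
\item Paths of $\QQ$ meet $B$ only in their terminators, and $v$ is not a terminator, so $v$ lies on no path of $\QQ$.
\item Take $P$ meeting $X_G$ only in its first vertex.
\item If $P$ avoided every path of $\QQ$, then $\QQ\cup\{P\}$ would be $k+1$ disjoint paths from $X_G\subseteq C$ to $A\cap B\subseteq D$, for an isolator $(C,D)$. Each such path contains a vertex of $C\cap D$, contradicting $|C\cap D|=k$. This is the only place maximality is used, and only the trivial direction of Menger is needed.
\item So let $q$ be the last vertex of $P$ lying on some $Q\in\QQ$. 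Then $q\neq v$, so $q\notin B$. Hence $q$ is not the terminator of $Q$, and $Q$ is not one of the trivial paths at $X_G\cap B$.
\item Replace $Q$ by its initial segment up to $q$ followed by the segment of $P$ from $q$ to $v$. The result is a root linkage in which $v$ is a terminator.
\end{itemize}

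\textbf{Comparison with the paper.} The paper builds a digraph $\vec F$ from $G[A]$ by taking both orientations of each edge and deleting arcs leaving $A\cap B$. It then adds a set $T$ of $k-1$ dummy sinks receiving arcs from all of $A\cap B$. It checks that no $k-1$ vertices separate $X_G$ from $T\cup\{v\}$, using an existing linkage $\PP_0$ when the deleted set is not $T$ and the path $P$ when it is. Menger then gives $k$ disjoint paths into the $k$-set $T\cup\{v\}$, forcing one to end in $v$. This is a one-shot use of the hard direction of Menger, in effect the gammoid augmentation you wanted to cite. Your exchange needs only the trivial direction. It also shows more: the new linkage differs from the old one in a single path, and its terminator set changes by a single exchange.

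\textbf{The detours.}
\begin{itemize}
\item The gadget with $w$ is wrong as stated. You ask for $k+1$ disjoint paths ending in distinct vertices of $A\cap B$, which cannot exist when $|A\cap B|=k$ (for instance whenever $(A,B)$ is linked to the roots). Even when they exist, the path from $w$ is just $wv$ and the other $k$ paths avoid $v$, which is the opposite of what you need.
\item The gammoid remark is valid only for the digraph in which arcs leaving $A\cap B$ are removed, i.e.\ the paper's $\vec F$ without $T$. In the undirected graph $G[A]-E(G[A\cap B])$, paths may pass through vertices of $A\cap B$. Truncating them at their first vertex of $A\cap B$, as you propose, can lose the very terminator $v$ you need to keep.
\end{itemize}
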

\begin{proof}
Let $k$ be the root connectivity of $(A,B)$ and let $\PP_0$ be an arbitrary root linkage for $(A,B)$ (of size $k$).
Since $v$ is $(A,B)$-exposed, there exists a path $P$ in $G$ from $X_G$ to $v$ such that $V(P)\cap B=\{v\}$.

Let $\vec{F}$ be the directed graph obtained as follows.  We start with $G[A]$ and replace each of its edges
by a pair of oppositely directed edges.  Next, we delete all edges directed away from vertices of $A\cap B$
(and in particular, all edges between vertices of $A\cap B$).  Finally, we add a set $T$ of $k-1$
new vertices and for each $x\in A\cap B$ and $y\in T$, we add a directed edge from $x$ to $y$.

Consider any set $X$ of $k-1$ vertices of $\vec{F}$.  Since $|\PP_0|=k$, there exists a path $Q\in \PP_0$
disjoint from $X$.  If $X\neq T$, then we can add a vertex of $T\setminus X$ to $Q$ and obtain a directed
path in $\vec{F}-X$ from $X_G$ to $T\cup\{v\}$.  If $X=T$, then $P$ corresponds to a directed path
in $\vec{F}-X$ from $X_G$ to $T\cup\{v\}$.

By Menger's theorem, this implies that $\vec{F}$ contains a system $\PP$ of $k$ pairwise vertex-disjoint directed paths from $X_G$ to $T\cup\{v\}$,
without loss of generality intersecting $X_G$ only in their first vertices.
Since $\vec{F}-T$ does not contain any edges directed away from vertices of $A\cap B$, each path of $\PP$ intersects $A\cap B$ in
exactly one vertex (the last one for the path ending in $v$ and the next-to-last one for the rest of the paths).
Hence, the subpaths of $\PP$ between $X_G$ and $A\cap B$ form a root linkage for $(A,B)$, necessarily containing a path ending in $v$.
\end{proof}

We can now show that adding the assumption of being exposed is enough to solve the aforementioned root linkedness issue.
\begin{lemma}\label{lemma-reachtolink}
Let $G$ be a minimal counterexample and let $(A,B)$ be a $2$-nearly-saturated root separation of $G$ of order five.  If the root separation $(A,B)$ is exposed,
then it is linked to the roots.
\end{lemma}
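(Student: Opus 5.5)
We argue by contradiction. Suppose that $(A,B)$ is exposed and $2$-nearly-saturated, but is not linked to the roots. Let $(C,D)$ be an isolator of $(A,B)$ and let $k=|C\cap D|<5$. The goal is to show that some isolator of $(A,B)$ is a saturated proper root separation of order at most five, which contradicts Lemma~\ref{lemma-nosatur}. The new ingredient, compared with Observation~\ref{obs-starsat}, is that exposure lets us choose the isolator linkage so that the one unused vertex of $A\cap B$ absorbs a missing matching edge.

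\textbf{Step 1: the isolator has order four and is proper.} Since $B\subseteq D$, we have $|D|\ge |A\cap B|=5$. By Corollary~\ref{cor-3conn}, $G$ is essentially $4$-connected, so every proper root separation of order at most three has $|D\setminus C|=1$. A proper $(C,D)$ with $k\le 3$ would give $|D|\le 4$, which is impossible, so $k\ge 4$. Hence $k=4$.

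\textbf{Step 2: choosing the isolator linkage.} Fix a matching $M\subseteq E(K^G_{A\cap B})$ with $|M|\le 2$ such that $H=K^G_{A\cap B}-M$ is an $\id$-rooted minor of $R_{A,B}$. By Observation~\ref{obs-isol}, every root linkage for $(A,B)$ consists of four paths; it contains all roots in $A\cap B$ (as trivial paths) and misses exactly one vertex $v\in A\cap B$, necessarily with $v\notin X_G$. We want to choose the linkage so that the missed vertex is favourable:
\begin{itemize}
\item if $|M|=2$, we want $v$ to be an end of an edge of $M$;
\item if $|M|\le 1$, any $v$ will do.
\end{itemize}
When $|M|=2$, the matching covers four of the five vertices, so there is a unique unmatched vertex $w$. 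If $w\in X_G$, then $w$ is used by every linkage and the missed vertex is automatically matched. Otherwise, $w$ is $(A,B)$-exposed because $(A,B)$ is exposed, and Observation~\ref{obs-exposed} gives a root linkage for $(A,B)$ containing a path ending at $w$; again the missed vertex is matched.

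We then convert this root linkage into an isolator linkage for $(C,D)$ and $(A,B)$ with the same terminator set. I expect this to be the main technical obstacle. The plan is to use the fact that every root linkage crosses $C\cap D$ in exactly four vertices, since $(C,D)$ has order four and separates $X_G$ from $A\cap B$. We then take the subpaths between $C\cap D$ and $A\cap B$; equivalently, we rerun the Menger argument of Observation~\ref{obs-exposed} inside $R_{C,D}$.

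\textbf{Step 3: deriving saturation.} Let $T$ be the set of terminators and let $v$ be the missed vertex. In $H$, the vertex $v$ is adjacent to every vertex of $A\cap B$ except its $M$-partner, if it has one. Contract $R_{A,B}$ to $H$ and then contract the paths of the linkage. Among the terminators, the missing pairs are the edges of $M$ not incident with $v$; by the choice in Step 2 there is at most one such edge $wz$.

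If such an edge $wz$ exists, then $v$ is adjacent to $w$, because $v$'s partner lies outside $\{w,z\}$. Contracting the edge $vw$ then creates the edge $wz$, just as in the proof of Observation~\ref{obs-starsat}. After relabelling terminators by origins, this shows that $K^G_{C\cap D}$ is an $\id$-rooted minor of $R_{C,D}$, so $(C,D)$ is saturated.

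The separation $(C,D)$ is proper: $D\setminus C\supseteq B\setminus A\neq\emptyset$ because $(A,B)$ is nontrivial, and $C\not\subseteq D$ by Corollary~\ref{cor-3conn}. It has order four, so it contradicts Lemma~\ref{lemma-nosatur}. Therefore $(A,B)$ is linked to the roots.
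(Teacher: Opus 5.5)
Your proof is correct and follows essentially the same route as the paper. You use exposedness and Observation~\ref{obs-exposed} to make the $M$-unmatched vertex a terminator, so the unique non-terminator is matched, hence adjacent to both ends of the one remaining missing edge; contracting it yields $K^G_{C\cap D}$ and contradicts Lemma~\ref{lemma-nosatur}. The paper merely disposes of $|M|\le 1$ via Observation~\ref{obs-starsat} and takes the isolator linkage directly, while you restrict a root linkage of $G$ to $R_{C,D}$, which is equally valid. One small fix: properness of $(C,D)$ follows from $|C|\ge|X_G|=5$ and $|D|\ge|A\cap B|=5$ against $|C\cap D|\le 4$. It does not follow from $B\setminus A\neq\emptyset$, which is not a hypothesis, or from Corollary~\ref{cor-3conn}, and it should be stated before you invoke essential $4$-connectivity in Step 1.
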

\begin{proof}
We can assume that $(A,B)$ is not $1$-nearly-saturated, as otherwise the conclusion holds by Observation~\ref{obs-starsat}.
Let $M\subseteq E(K^G_{A\cap B})$ be a matching of size two such that $H=K^G_{A\cap B}-M$ is an $\id$-rooted minor of $R_{A,B}$.
Let $V(H)=A\cap B=\{v,w,x,y,z\}$, where $M=\{wx,yz\}$.  Suppose for a contradiction that an isolator $(C,D)$ of $(A,B)$ has order at most four.
Corollary~\ref{cor-3conn} implies that $|C\cap D|=4$.

Since the vertex $v$ is $(A,B)$-exposed, Observation~\ref{obs-exposed} implies that there exists an isolator linkage $\PP$ for
$(C,D)$ and $(A,B)$
such that $v$ is a terminator of $\PP$.  We can assume that $z$ is the unique non-terminator of $\PP$ in $A\cap B$; then $z\not\in C$, and thus $z\not\in X_G$.
For each vertex $s\in\{v,w,x,y\}$, let $f(s)$ denote the origin of the path of $\PP$ with terminator $s$.  Let us also define $f(z)=z$.

Let $H'$ be the graph obtained from $H$ by relabelling each vertex $s\in V(H)$ to $f(s)$.
By first contracting $R_{A,B}$ to $H$, then contracting the paths of $\PP$,
we see that $H'$ is an $\id$-rooted minor of $R_{C,D}$.  Since $X_G\cap A\cap B\subseteq X_G\cap C\cap D$,
the vertex $v$ is not incident with any edge of $M$, and $z\not\in C\cap D$,
observe that $E(K^G_{C\cap D})\setminus E(H')\subseteq \{f(w)f(x)\}$.  Since $z\not\in X_G$ and $yz$ is the only edge of $M$ incident with $z$, we have $zw,zx\in E(H)$,
and thus $zf(w),zf(x)\in E(H')$.  Hence, we can further contract the edge $zf(x)$ in $H'$ to obtain a supergraph of $K^G_{C\cap D}$ as an $\id$-rooted minor of $R_{C,D}$.
This contradicts Lemma~\ref{lemma-nosatur}.
\end{proof}

Hence, with the assumption of being exposed, we get the desired result on $2$-nearly-saturated separations.

\begin{corollary}\label{cor-noK5minus2}
Let $G$ be a minimal counterexample and let $(A,B)$ be a proper $K_{1,\star}$-universal root separation of $G$ of order five.
If the root separation $(A,B)$ is exposed, then it is not $2$-nearly-saturated.
\end{corollary}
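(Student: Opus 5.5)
Suppose for a contradiction that some proper, exposed, $K_{1,\star}$-universal root separation of order five is $2$-nearly-saturated. The plan mirrors the derivation of Corollary~\ref{cor-noK5minus} from Observation~\ref{obs-starsat}. We reduce to the strongly linked case and then invoke Lemma~\ref{lemma-no2nearly}. Among all such counterexample separations, choose $(A,B)$ with $A$ minimal.

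\emph{Step 1: linkage.} Since $(A,B)$ is exposed and $2$-nearly-saturated of order five, Lemma~\ref{lemma-reachtolink} shows that it is linked to the roots. Hence every isolator $(C,D)$ of $(A,B)$ has order five.

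\emph{Step 2: the isolators inherit the hypotheses.} By Observation~\ref{obs-starsat}, each isolator $(C,D)$ is $K_{1,\star}$-universal and $2$-nearly-saturated. It is also exposed, because linkedness passes to it. Indeed, any root separation $(C',D')$ with $C'\subseteq C$ and $D\subseteq D'$ also satisfies $C'\subseteq A$ and $B\subseteq D'$, so its order is at least five. Thus $(C,D)$ is linked to the roots, and linked separations are exposed. Finally, if $(C,D)\neq(X_G,V(G))$, then $(C,D)$ is proper. The only delicate point is that $D\not\subseteq C$; this holds since $B\setminus A\subseteq D\setminus C$. Moreover $C\not\subseteq D$ unless $C=C\cap D$ is a five-element set containing $X_G$, i.e.\ $C=X_G$, which means $(C,D)=(X_G,V(G))$.

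\emph{Step 3: conclude.} By the minimality of $A$, every isolator of $(A,B)$ is either $(A,B)$ itself or $(X_G,V(G))$. Therefore $(A,B)$ is strongly linked to the roots. Lemma~\ref{lemma-no2nearly} now says that $(A,B)$ is not $2$-nearly-saturated, which is the desired contradiction.

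The main obstacle is Step 2: checking that exposedness and properness genuinely transfer to a smaller isolator. Exposedness is handled via linkedness, as above. Properness reduces to ruling out $C=X_G$; that case is exactly the exceptional isolator permitted in the definition of strong linkedness, so it causes no trouble.
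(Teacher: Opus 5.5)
Your proof is correct and follows the paper's own argument step for step. It uses Lemma~\ref{lemma-reachtolink} for linkedness, Observation~\ref{obs-starsat} to pass the hypotheses to isolators, minimality of $A$ to obtain strong linkedness, and Lemma~\ref{lemma-no2nearly} for the contradiction. Your explicit checks that each isolator is exposed (via linkedness) and proper (unless it equals $(X_G,V(G))$) fill in details the paper leaves implicit.
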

\begin{proof}
Suppose for a contradiction that there exists such a proper exposed $K_{1,\star}$-universal root separation $(A,B)$ of order five which is $2$-nearly-saturated,
and let us choose one with $A$ minimal.  By Lemma~\ref{lemma-reachtolink}, the root separation $(A,B)$ is linked to the roots,
and thus every isolator for $(A,B)$ has order five.
Moreover, Observation~\ref{obs-starsat} implies that each such isolator is $K_{1,\star}$-universal and $2$-nearly-saturated. By the minimality of $A$,
it follows that the root separation $(A,B)$ does not have any proper isolators other than itself, and thus it is strongly linked to the roots.
This contradicts Lemma~\ref{lemma-no2nearly}.
\end{proof}

For a rooted graph $G$ and a set $S\subseteq V(G)$, let $k^G(S)=|E(K^G_S)\setminus E(G)|$.
Corollary~\ref{cor-noK5minus2} can be used to obtain the following bounds.

\begin{corollary}\label{cor-neic2}
Let $G$ be a minimal counterexample and let $(A,B)$ be a proper exposed root separation of $G$ of order five.
If $(A,B)$ is $K_{1,\star}$-universal (and in particular if $\rho_4(R_{A,B})>0$), then $|A\cap B\cap X_G|\le 3$,
$k^G(A\cap B)\ge 4$, and if $k^G(A\cap B)=4$ then $K^G_{A\cap B}-E(G[A\cap B])$ is isomorphic to $C_4+K_1$.
Moreover,
\begin{itemize}
\item If $\rho_4(R_{A,B})=2$, then $k^G(A\cap B)\ge 6$.
\item If $\rho_4(R_{A,B})=3$, then $k^G(A\cap B)\ge 7$.
\item If $\rho_4(R_{A,B})=4$, then $k^G(A\cap B)\ge 9$ and $|A\cap B\cap X_G|\le 2$.
\end{itemize}
\end{corollary}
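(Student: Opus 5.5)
The whole argument rests on Corollary~\ref{cor-noK5minus2}. Since $(A,B)$ is proper, exposed, of order five and $K_{1,\star}$-universal, it is not $2$-nearly-saturated. So for every matching $N\subseteq E(K^G_{A\cap B})$ with $|N|\le 2$, the graph $K^G_{A\cap B}-N$ is not an $\id$-rooted minor of $R_{A,B}$. Equivalently, if $Q=K^G_{A\cap B}-E(G[A\cap B])$ is the graph of ``missing'' pairs, then $Q-N$ is not an $\id$-rooted minor of $\widetilde{R}_{A,B}$. Note that $|E(Q)|=k^G(A\cap B)$.

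The parenthetical claim comes first. If $\rho_4(R_{A,B})>0$, then $R_{A,B}$ is universal by Observation~\ref{obs-univsep}, hence $K_{1,\star}$-universal by Corollary~\ref{cor-nok4}. Lightness on subseparations follows as in Observation~\ref{obs-univsep}.

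For the first bounds, I use $K_{1,\star}$-universality. Any subgraph of $Q$ that is a star can be realized as an $\id$-rooted minor of $\widetilde{R}_{A,B}$. Thus for every matching $N$ of size at most two, $Q-N$ is not a star, i.e.\ not all of its edges share a vertex. I then enumerate small graphs on five vertices.
\begin{itemize}
\item If $|E(Q)|\le 3$, deleting a matching of size at most two from $Q$ leaves at most one edge. Since $|E(Q)|\le 3$ does not force a large matching, I check the few possible shapes (triangle, paths, stars, $P_3+K_2$) and find that in each case some deletion leaves a star.
\item If $|E(Q)|=4$, the remaining two edges must form a matching for every choice of deleted matching. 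Running over the graphs with four edges, only $C_4$ (plus an isolated vertex) has the property that every removal of a maximal matching of size two leaves two disjoint edges, or is otherwise not a star. This gives $C_4+K_1$.
\item If $|A\cap B\cap X_G|\ge 4$, then $Q$ has at most $\binom52-\binom42=4$ edges, all incident with the single non-root vertex, so $Q$ is a star. This contradicts the above, hence $|A\cap B\cap X_G|\le 3$.
\end{itemize}

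For the refined bounds, I use the stronger universality of $\widetilde{R}_{A,B}$ coming from its target (Observations~\ref{obs-univsep} and \ref{obs-canaddm}).
\begin{itemize}
\item If $\rho_4(R_{A,B})=2$, then $\widetilde{R}_{A,B}$ is $\SS_{5,3}$-universal, so $|E(Q-N)|\ge 4$ for every matching $N$ with $|N|\le2$. Whenever $Q$ has a matching of size two this forces $|E(Q)|\ge 6$. Given $k^G\ge4$, such a matching exists unless $Q$ is a star or a triangle, both of which were already excluded.
\item If $\rho_4(R_{A,B})=3$, then $\widetilde{R}_{A,B}$ is $\SS_{5,4}^-$-universal, so $Q-N$ either has at least five edges or is $K_2+K_3$. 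This gives $|E(Q)|\ge 6$. To get $7$, I rule out $|E(Q)|=6$: then every matching $N$ of size two must leave exactly $K_2+K_3$. Choosing the two deleted edges to avoid the triangle structure gives a contradiction, and there are only a few shapes to check.
\item If $\rho_4(R_{A,B})=4$, then $\widetilde{R}_{A,B}$ is $\SS_{5,6}$-universal, so $|E(Q-N)|\ge7$ and hence $|E(Q)|\ge 9$. Then $|A\cap B\cap X_G|\le 2$, since three roots in $A\cap B$ would give $|E(Q)|\le 10-3=7$.
\end{itemize}

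The main obstacle is the small-graph case analysis for $k^G=4$ and for $\rho_4=3$. The key tool throughout is choosing a matching of size two whose removal leaves a star, or leaves a graph in the target.
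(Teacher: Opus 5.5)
Your proposal is correct and follows essentially the paper's route. It uses Corollary~\ref{cor-noK5minus2} (not $2$-nearly-saturated) together with the star minors from $K_{1,\star}$-universality to get $k^G(A\cap B)\ge 4$ and the $C_4+K_1$ case, and then the target-universality of $\widetilde{R}_{A,B}$ for the refined bounds.

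The differences are cosmetic. The paper replaces your small-graph enumeration by taking a maximum-degree vertex $v$ of $Q$ and noting that $k^G(A\cap B)=\Delta(Q)+|E(Q-v)|$, where $Q-v$ is not a matching. It also gets $|A\cap B\cap X_G|\le 3$ from Corollary~\ref{cor-noplus1} rather than from your star argument. Separately, your opening remark that deleting a matching from a graph with at most three edges leaves at most one edge is false for the triangle and $K_{1,3}$; your subsequent shape check does cover these cases.
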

\begin{proof}
Recall that if $\rho_4(R_{A,B})>0$, then $R_{A,B}$ is $K_{1,\star}$-universal by Corollary~\ref{cor-nok4}.
Corollary~\ref{cor-noplus1} implies that $|A\cap B\cap X_G|\le 3$.
Let $H=K^G_{A\cap B}-E(G)$ and let $k=k^G(A\cap B)=|E(H)|$.
By Corollary~\ref{cor-noK5minus2} the root separation $(A,B)$ is not $2$-nearly-saturated,
and thus for every $\id$-rooted minor $H'$ of $R_{A,B}$ with vertex set $A\cap B$, the graph $H-E(H')$ has a vertex of degree at least two.

Let $v$ be a vertex of $H$ of the largest degree.  Since the $v$-star is an $\id$-rooted minor of $R_{A,B}$,
the graph $H-v$ has a vertex of degree at least two.  Hence, $k=\Delta(H)+|E(H-v)|\ge 4$.
Moreover, if $k=4$, then $\Delta(H)=2$ and $H-v$ has exactly two edges, both incident with the same vertex.
This implies that either $H$ is isomorphic to $C_4+K_1$ or $H$ is a path $uvz_1z_2z_3$;
and the latter is not possible, since in that case $\Delta(H-z_1)=1$ and $z_1$-star is an $\id$-rooted minor of $R_{A,B}$.

Let us now strengthen this bound for root separations $(A,B)$ such that $\rho_4(R_{A,B})\ge 2$.  To this end, recall that the 5-rooted graph $R_{A,B}$ is universal by Observation~\ref{obs-univsep}.
Moreover, since $k\ge 4$ and not all edges of $H$ are incident with the same vertex, there exists a matching $M\subseteq E(H)$ of size two.
\begin{itemize}
\item If $\rho_4(R_{A,B})=2$, then $R_{A,B}$ is $\SS_{5,3}$-universal.  Since $H-M$ is not
an $\id$-rooted minor of $R_{A,B}$, it follows that it has at least four edges, and thus
$k=|E(H)|\ge 6$.
\item Suppose now that $\rho_4(R_{A,B})=3$, and thus $R_{A,B}$ is $\SS_{5,4}^-$-universal.
Consequently $H-M\not\in \SS_{5,4}^-$, and thus either $k\ge 7$, or $k=6$ and $H-M$ is isomorphic to $K_2+K_3$.
In the latter case, $H$ consists of a $4$-cycle $v_1v_2v_3v_4$ and a vertex $v_5$ adjacent to $v_3$ and $v_4$,
and $M=\{v_1v_4,v_2v_3\}$.  However, then $H'=H-\{v_1v_2,v_3v_4\}\in \SS_{5,4}^-$ is an $\id$-rooted minor of $R_{A,B}$
and $\Delta(H-E(H'))=1$, which is a contradiction.  Therefore, we have $k\ge 7$.
\item If $\rho_4(R_{A,B})=4$, then $R_{A,B}$ is $\SS_{5,6}$-universal.  Consequently $H-M$ has at least $7$ edges,
and thus $k\ge 9$.
\end{itemize}
\end{proof}

Moreover, we can improve the bounds for nearly $K_{2,3}^+$-universal root separations.
\begin{observation}\label{obs-nearbc}
Let $G$ be a minimal counterexample and let $(A,B)$ be a proper exposed root separation of $G$ of order five.
\begin{itemize}
\item If $(A,B)$ is nearly $K_{2,3}^+$-universal, then $|A\cap B\cap X_G|\le 2$ and $k^G(A\cap B)\ge 7$.
\item If $(A,B)$ is $K_{2,3}^+$-universal, then $G[A\cap B]$ together with the clique on $A\cap B\cap X_G$ forms a matching and $k^G(A\cap B)\ge 8$.
\end{itemize}
\end{observation}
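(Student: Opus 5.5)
Both parts follow the pattern of Corollary~\ref{cor-neic2}. Let $S=A\cap B$, $H=K^G_S-E(G)$ and $k=|E(H)|=k^G(S)$. Both (nearly) $K_{2,3}^+$-universality and $K_{2,3}^+$-universality imply $K_{1,\star}$-universality (each $z$-star is contained in some $K_{2,3}^+$ with $z$ in $V_4$, and we may pick $V_4$ to avoid the exceptional root). So Corollaries~\ref{cor-noK5minus2} and \ref{cor-noplus1} apply: for every $\id$-rooted minor $H'$ of $R_{A,B}$ on $S$, the graph $H-E(H')$ has a vertex of degree at least two.

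\textbf{Bound on $|S\cap X_G|$.} Suppose $|S\cap X_G|=3$, with roots $y_1,y_2,y_3$ and non-roots $v_1,v_2$. Then $K^G_S$ has seven edges, namely all edges except the triangle on the roots. Choose a bijection placing $v_1,v_2$ in $V_4$ and the roots in $V_2$, avoiding the exceptional root issue. Then $K_{2,3}^+$ (or $K_{2,3}^+-e$) covers all seven edges of $K^G_S$, possibly minus one edge $e$ incident with a root. This would make $(A,B)$ $1$-nearly-saturated, contradicting Corollary~\ref{cor-noK5minus}. Hence $|S\cap X_G|\le 2$, as $|S\cap X_G|\le 3$ already.

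\textbf{Bound on $k$.} For $k^G(S)\ge 7$, I would show that for every $H$ with at most six edges, we can pick a bijection $\pi$ such that $H$ minus the edges of the resulting $K_{2,3}^+$ (minus possibly one edge at the exceptional root) has maximum degree at most one. That would make $(A,B)$ $2$-nearly-saturated, a contradiction. $K_{2,3}^+$ with $V_4=\{a,b\}$ misses exactly the triangle on $V_2=\{c,d,e\}$. So we need a triangle $cde$ in $K_5$ such that $E(H)\cap E(cde)$, plus the removed edge, forms a matching. When the exceptional root $y$ is in $V_2$, the removed edge may be chosen as any edge at $y$; we choose it to be an edge not in $H$ where possible. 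When $y$ is in $V_4$, nothing is removed.

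The main obstacle is this combinatorial check. Since the triangle has only three edges, $H\cap cde$ fails to be a matching only if it contains two edges of the triangle, so we need a triangle containing at most one edge of $H$. With $|E(H)|\le 6$, the complement of $H$ has at least four edges. The task is to find a triangle with two complement edges, together with a suitable placement of the exceptional root. I would first use the known structure of $H$ from Corollary~\ref{cor-neic2}, then do a short case analysis. I would also recheck the constraint that edges of $H$ among roots do not exist, since they are not in $K^G_S$.

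\textbf{The $K_{2,3}^+$-universal case.} Here any triangle $cde$ works with no exception. Failure of $2$-nearly-saturation means every triangle contains two edges of $H$. The complement of $H$ in $K_5$ must then be triangle-free and, moreover, contain no path of length two; otherwise that path extends to a triangle with at most one $H$-edge. So $E(K_5)\setminus E(H)$ is a matching. These are exactly the edges of $G[S]$ together with the root clique, which is the first claim. Then $k\ge 10-2=8$.
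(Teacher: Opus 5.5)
Your reduction is the paper's, written for the complement. Your $H$ is the complement, inside the clique on $S$, of the paper's graph $\overline{H}:=G[S]\cup K$, where $K$ is the clique on $S\cap X_G$. Both arguments produce a $2$-nearly-saturated $(A,B)$ and contradict Corollary~\ref{cor-noK5minus2}. The $K_{2,3}^+$-universal part is complete and matches the paper. Your proof of $|S\cap X_G|\le 2$ via Corollary~\ref{cor-noK5minus} is also valid, although the paper obtains it from the same argument that gives $k\ge 7$. One small slip: for $K_{1,\star}$-universality you should put the exceptional root $y$ \emph{into} $V_4$, together with $z$, or with any other vertex if $z=y$. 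It cannot be avoided when $z=y$, and with $y\in V_4$ the full $K_{2,3}^+$ is available.

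The genuine gap is the bound $k^G(S)\ge 7$ in the nearly-universal case. You stop at ``I would do a short case analysis'' and never say how to place $y$. A triangle with at most one $H$-edge does not suffice on its own. Suppose the triangle comes from a path $vuw$ in $\overline{H}$ with $y=v$ an endpoint, and $vw,va,vb\in E(H)$. Then whichever of $va,vb$ is deleted, the leftover has two edges at $y$.

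The missing idea is to choose the path so that both endpoints lie in $T=S\setminus\{y\}$; the centre $u$ may be $y$. Then always delete the edge $ux_2$. With $V_2=\{u,v,w\}$ and $V_4=\{x_1,x_2\}$, the graph $K_{2,3}^+-ux_2$ is an $\id$-rooted minor of $R_{A,B}$, both when $y\in\{x_1,x_2\}$ and when $y=u$. The leftover then lies in $\{vw,ux_2\}$, which is a matching.

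Such a $u$ exists by a degree count, with no need for Corollary~\ref{cor-neic2}. Suppose every vertex has at most one $\overline{H}$-neighbour in $T$. Then $\deg_{\overline{H}}y\le 1$, a neighbour of $y$ has degree at most $2$, and every other vertex has degree at most $1$. Hence $|E(\overline{H})|\le 3$, that is, $k^G(S)\ge 7$. Moreover, a triangle on roots inside $\overline{H}$ would violate the same condition, which yields $|S\cap X_G|\le 2$ at once. This is exactly the paper's proof.
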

\begin{proof}
Let $K$ be the clique on $A\cap B\cap X_G$, let $H=G[A\cap B]\cup K$, and note that since $E(K)$ contains exactly the non-edges of $K^G_{A\cap B}$,
we have $k^G(A\cap B)=10-|E(H)|$.  If $(A,B)$ is nearly $K_{2,3}^+$-universal but not $K_{2,3}^+$-universal, then let $y\in A\cap B$ be the exceptional root of $R_{A,B}$ and
let $T=A\cap B\setminus\{y\}$; otherwise, let $T=A\cap B$.

Let us first consider the case that there exists a vertex $u\in A\cap B$ incident with at least two distinct edges $uv$ and $uw$ in $H$ such that
$v,w\in T$.  Let $A\cap B\setminus\{u,v,w\}=\{x_1,x_2\}$, let $S$ be the
graph obtained from the $\{x_1,x_2\}$-star by deleting the edge $ux_2$, and let $S'=S\cup G[A\cap B]$.  Let $M=\{ux_2,vw\}$.
Since $uv,uw\in E(H)$, the graph $H\cup S$ contains all edges between the vertices of $A\cap B$ except possibly for those in $M$,
and thus $E(K^G_{A\cap B})\setminus E(S')\subseteq M$.
Since $(A,B)$ is nearly $K_{2,3}^+$-universal and the roots $v$ and $w$ of $R_{A,B}$ are not exceptional, the graph $S$ is an $\id$-rooted minor of $R_{A,B}$,
and thus $S'$ is also an $\id$-rooted minor of $R_{A,B}$.  This implies that the root separation $(A,B)$ is $2$-nearly-saturated,
contradicting Corollary~\ref{cor-noK5minus2}.

Hence, each vertex of $H$ has at most one neighbor in $T$.  If $(A,B)$ is $K_{2,3}^+$-universal,
then this implies that $H$ is a matching and $k^G(A\cap B)=10-|E(H)|\ge 8$.

If $(A,B)$ is only nearly $K_{2,3}^+$-universal, then this implies that $\deg_H y\le 1$,
if $H$ contains an edge $uy$, then $\deg_H u\le 2$, and each vertex $v\in T$ non-adjacent to $y$
satisfies $\deg_H v\le 1$.  Thus, all components of $H$ have size at most two, except possibly the component containing $y$ which can be a 3-vertex path.
It follows that $|A\cap B\cap X_G|=|V(K)|\le 2$, $|E(H)|\le 3$, and $k^G(A\cap B)\ge 7$.
\end{proof}

\section{Novas}

Next, we aim to mostly exclude $(K_4+K_1)$-universal separations from a minimal counterexample.
Let us start with a technical lemma which we use in the proof to argue that a certain root separation is exposed,
and thus it satisfies the bounds from Corollary~\ref{cor-neic2}.
We say that a root separation $(A,B)$ of a rooted graph $G$ is \emph{right-linked}
if for all vertices $u\in A\cap B\setminus X_G$ and $v\in A\cap B$, the graph $R_{A,B}-(A\cap B\setminus\{u,v\})$ contains a path from $u$ to $v$.
Note that this in particular is the case whenever $R_{A,B}$ contains an $\id$-rooted minor with vertex set $A\cap B$ in which $u$ and $v$ are adjacent.
\begin{lemma}\label{lemma-showexposed}
Let $(A,B)$ be a root separation of a rooted graph $G$, let $H$ be an $\id$-rooted minor of $R_{A,B}$ with vertex set $A\cap B$,
let $L=L_{A,B}\cup H$ be the $\id$-rooted minor of $G$ obtained by contracting $R_{A,B}$ to $H$, and let $(C,D)$ be a proper exposed root separation of $L$ of order at most $|A\cap B|$ such that $C\cap D\neq X_G$.
If $(A,B)$ is right-linked and strongly linked to the roots, then the root separation $(C\cup B,D)$ of $G$ is exposed.
\end{lemma}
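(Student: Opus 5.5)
The plan is to check the definition of exposedness directly. First note that $(C\cup B,D)$ is a root separation of $G$. Indeed, $D\subseteq V(L)=A$, so $D$ avoids $B\setminus A$. Moreover, every edge of $G[A]$ is an edge of $L$, and every edge of $G$ leaving $B\setminus A$ ends in $B\subseteq C\cup B$. The cut set is
$$(C\cup B)\cap D=(C\cap D)\cup (A\cap B\cap D\setminus C).$$
For each vertex $u$ of this set, we must find a path $P$ in $G$ from $X_G$ to $u$ with $V(P)\cap D=\{u\}$. The single tool for turning paths of $L$ into paths of $G$ is right-linkedness. Every edge $ab$ of $H$ with $a\notin X_G$ can be replaced by an $a$--$b$ path in $R_{A,B}-(A\cap B\setminus\{a,b\})$. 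The interior of such a path lies in $B\setminus A$, hence outside $D$. Walks obtained this way can then be shortened to paths without adding vertices of $D$.

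Vertices $u\in C\cap D$ are handled using that $(C,D)$ is exposed in $L$. This gives a path $P$ in $L$ from $X_G$ to $u$ with $V(P)\cap D=\{u\}$. Replacing $P$ by its subpath starting at its last vertex in $X_G$, we may assume that only its first vertex is a root. Consequently, every edge of $P$ coming from $H$ has an end outside $X_G$, and we reroute these edges as described above.

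The main obstacle is the vertices $u\in (A\cap B)\cap (D\setminus C)$; such $u$ is not a root, since $X_G\subseteq C$. The plan is to find a vertex $w\in A\cap B\setminus D$ together with a path in $L$ from $X_G$ to $w$ avoiding $D$. Appending a right-linked $w$--$u$ path through $R_{A,B}$ then finishes, after rerouting $H$-edges as before.

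To find $w$, let $C'$ be the set of vertices reachable from $X_G$ in $L-D$, and suppose no such $w$ exists, i.e., $A\cap B\cap C'=\emptyset$. Then $(C'\cup (C\cap D),\,V(L)\setminus C')$ is a root separation of $L$ of order at most $|C\cap D|\le|A\cap B|$ whose right side contains $A\cap B$. Adding $B$ to its right side yields a root separation $(C'',D'')$ of $G$ with $C''\subseteq A$, $B\subseteq D''$, and order at most $|A\cap B|$. Hence $(C'',D'')$ is an isolator of $(A,B)$, so by strong linkedness it equals $(A,B)$ or (when $|A\cap B|=|X_G|$) $(X_G,V(G))$.
\begin{itemize}
\item If it equals $(A,B)$, then $C''=A$ contains $u\in D\setminus C$. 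However, $u$ is not in $C'$ (as $u\in D$), nor in $C\cap D$, so $u\notin C''$, a contradiction.
\item If it equals $(X_G,V(G))$, then its cut set, which is contained in $C\cap D$, equals $X_G$. Since this cut set has size $|A\cap B|\ge |C\cap D|$, we get $C\cap D=X_G$, contradicting the hypothesis $C\cap D\neq X_G$.
\end{itemize}
This is the step where properness of $(C,D)$, the hypothesis $C\cap D\neq X_G$, and strong linkedness are all used. I expect the bookkeeping of orders in it to be the delicate part, although the argument mirrors the isolator arguments in Lemmas~\ref{lemma-satulight} and~\ref{lemma-nosatur}.
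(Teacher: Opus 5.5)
Your proof is correct and follows essentially the same route as the paper. Vertices of $C\cap D$ are handled by lifting an exposing path of $L$ to $G$, rerouting its $H$-edges through $R_{A,B}$ via right-linkedness. For vertices of $A\cap B\cap D\setminus C$, strong linkedness together with $C\cap D\neq X_G$ shows that the part reachable from the roots while avoiding the cut must meet $A\cap B$, and a right-linked path is then appended. Using reachability in $L-D$ instead of components of $G-(C\cap D)$ is an equivalent variant. Your truncation at the last root also explicitly covers $H$-edges between two roots, which the paper's definition of $P_{uv}$ leaves implicit.
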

\begin{proof}
For distinct vertices $u,v\in A\cap B$ such that at least one of them is not a root, let $P_{uv}$ be a path from $u$ to $v$ in $R_{A,B}-(A\cap B\setminus\{u,v\})$,
which exists since $(A,B)$ is right-linked.
For each $uv\in E(G)$, let $P_{uv}$ be the path $uv$.   Note that these definitions are consistent in the overlapping case that $uv\in E(G[A\cap B])$.
Let us consider a vertex $z\in (C\cup B)\cap D$, and let us distinguish two cases.
\begin{itemize}
\item Suppose first that $z\in C$.  Since the root separation $(C,D)$ of $L$ is exposed, there exists a path $Q$
in $L[C]$ from the roots to $z$ disjoint from $(B\cup C)\cap D\setminus\{z\}$.
This path could contain edges of $L$ between vertices of $A\cap B$; however, the concatenation of the paths $P_{uv}$ for $uv\in E(Q)$ is a walk from the roots to $z$
in $G$ disjoint from $(B\cup C)\cap D\setminus\{z\}$.
\item Next, suppose that $z\in B\setminus C$.  Let $M$ be the set of vertices of all components of $G-(C\cap D)$ intersecting $X_G$.
If $M$ did not contain any vertex of $B$, then $(M',N')=(M\cup (C\cap D),V(G)\setminus M)$ would be a root separation of $G$ of order $|C\cap D|\le |A\cap B|$
such that $M'\subseteq C\subsetneq A$ and $B\subseteq N'$.  Moreover, $(M',N')\neq (X_G,V(G))$, since $C\cap D\neq X_G$. This would contradict the assumption that the root separation $(A,B)$ is strongly linked to the
roots.  Hence, $G-(C\cap D)$ contains a path $Q'$ from $X_G$ to $B$ such that the path $Q'$ intersects $B$ only in its last vertex $u$.  Then the concatenation of $Q'$
with $P_{uz}$ is a path from the roots to $z$ disjoint from $(B\cup C)\cap D\setminus\{z\}$.
\end{itemize}
In both cases, the vertex $z$ is $(B\cup C,D)$-exposed in $G$.  It follows that the root separation $(B\cup C,D)$ of $G$ is exposed.
\end{proof}

It turns out to be convenient to work with a notion which is less restrictive than $(K_4+K_1)$-universality.
For a rooted graph $G$, a set $S\subseteq V(G)$, and a vertex $v\in S\setminus X_G$, a
\emph{$(G,S,v)$-nova} is a graph obtained from $K^G_S$ by deleting all but at most one edge incident with $v$.
A graph $H$ is a \emph{$(G,S)$-nova} if there exists a vertex $v\in S\setminus X_G$ such that $H$ is a $(G,S,v)$-nova.
We say that a root separation $(A,B)$ of a rooted graph $G$ is \emph{nova-universal} if for every $v\in A\cap B\setminus X_G$,
the rooted graph $R_{A,B}$ contains a $(G,A\cap B,v)$-nova as an $\id$-rooted minor (since an edge between $v$ and $A\cap B\setminus \{v\}$
in a $(G,A\cap B,v)$-nova is optional, observe that this is equivalent to saying that the rooted graph $R_{A,B}-v$ contains $K^G_{A\cap B}-v$ as an $\id$-rooted minor).
Clearly, every $(K_4+K_1)$-universal separation of order five is nova-universal.
A $(G,A\cap B)$-nova $H$ is \emph{$(A,B)$-profitable} if $|E(H)\setminus E(G)|>\rho_4(R_{A,B})$.
As a first step towards excluding nova-universal separations, let us show that they give rise to profitable novas.
\begin{lemma}\label{lemma-profitable-nova}
Let $G$ be a minimal counterexample, and let $(A_0,B_0)$ be a proper $K_{1,\star}$-universal root separation in $G$ of order five.
If $(A_0,B_0)$ is nova-universal, then $G$ also has a proper $K_{1,\star}$-universal nova-universal root separation $(A,B)$ of order five
such that $(A,B)$ is strongly linked to the roots and $R_{A,B}$ contains an $(A,B)$-profitable $(G,A\cap B)$-nova as an $\id$-rooted minor.
\end{lemma}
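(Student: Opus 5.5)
We pass to an extremal separation of this type and then use the minimality of $G$ to show that contracting it to a nova must make a profit. Among all proper $K_{1,\star}$-universal nova-universal root separations of $G$ of order five (the family is nonempty, as it contains $(A_0,B_0)$), we choose $(A,B)$ with $A$ minimal.

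\textbf{Step 1: $(A,B)$ is linked to the roots.} Consider an isolator $(C,D)$ of $(A,B)$. Any isolator linkage transfers $K_{1,\star}$-universality by Observation~\ref{obs-starsat}. It should also transfer nova-universality by the same relabelling argument. Indeed, if $w\in C\cap D\setminus X_G$ is the origin of the path ending in $v\in A\cap B\setminus X_G$, contract $R_{A,B}-v$ to $K^G_{A\cap B}-v$ and then contract the paths; roots of $A\cap B$ are origins of trivial paths, so the non-edges are still only between roots.

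Suppose $|C\cap D|=4$. Then the terminator not used by the linkage is a non-root vertex $z$. Choose $v\ne z$ as a non-root terminator if possible; contracting to a $(G,A\cap B,v)$-nova keeps $z$ adjacent to everything except possibly $v$. Contracting $z$ into a suitable neighbour should then give $K^G_{C\cap D}$ up to at most a matching of one edge. That makes $(C,D)$ $1$-nearly-saturated or saturated, contradicting Lemma~\ref{lemma-nosatur} or Observation~\ref{obs-starsat}. Hence all isolators have order five. Each is again $K_{1,\star}$-universal and nova-universal, so by minimality of $A$ every isolator equals $(A,B)$ or $(X_G,V(G))$. Thus $(A,B)$ is strongly linked to the roots.

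\textbf{Step 2: produce a profitable nova.} Fix $v\in A\cap B\setminus X_G$ and a $(G,A\cap B,v)$-nova $H$ that is an $\id$-rooted minor of $R_{A,B}$, keeping one edge $vu$ if possible. Let $L=L_{A,B}\cup H$. If $L$ is $4$-light, then $n(L)<n(G)$ and Observation~\ref{obs-lightminor} give $\rho_4(L)<\rho_4(G)$, i.e.\ $|E(H)\setminus E(G)|<\rho_4(R_{A,B})$. This is the wrong direction, so the conclusion must come from choosing $H$ carefully, or from $L$ failing to be $4$-light. I expect the intended argument to be that the proof by contradiction later (in the paper) uses profitability. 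So for this lemma it probably suffices to compare counts directly. By Corollary~\ref{cor-neic2}, $k^G(A\cap B)\ge 4$, with larger bounds when $\rho_4(R_{A,B})$ is larger, and $\rho_4(R_{A,B})\le 4$ by Lemma~\ref{lemma-ubdel}. The nova loses only the at most four non-edges at $v$, possibly minus one that we keep.

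We choose $v$ to be a vertex of minimum degree in the non-edge graph $K^G_{A\cap B}-E(G)$. Then at most $\lfloor 2k/5\rfloor$ non-edges are lost, so the nova gains at least $k-\lfloor 2k/5\rfloor+[\text{kept edge}]$ new edges. This has to exceed $\rho_4(R_{A,B})$ in each case:
\begin{itemize}
\item if $\rho_4\le 1$, use $k\ge4$ with the $C_4+K_1$ structure, so the isolated vertex loses nothing;
\item if $\rho_4=2$, use $k\ge6$;
\item if $\rho_4=3$, use $k\ge7$;
\item if $\rho_4=4$, use $k\ge9$.
\end{itemize}
Corollary~\ref{cor-neic2} requires $(A,B)$ to be exposed, which holds since it is linked to the roots. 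Its hypothesis $\rho_4>0$ is replaced by $K_{1,\star}$-universality.

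\textbf{Main obstacle.} The delicate part is Step 1's transfer of nova-universality to order-four isolators and the resulting contradiction; the case analysis when $z$ is adjacent to the removed vertex needs care. Step 2 is a counting check that should go through with the stated bounds, though the borderline case $\rho_4=4$ needs the minimum-degree choice of $v$.
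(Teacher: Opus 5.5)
Your overall plan matches the paper's: pass to a separation that is strongly linked to the roots, then count. However, there are two genuine gaps, both repairable.

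\textbf{Step 1: the order-four isolator.} The argument does not work as you set it up. Suppose you contract $R_{A,B}$ to a $(G,A\cap B,v)$-nova with $v\neq z$ a terminator. Then $v$ has at most one neighbour in the nova. After relabelling, the origin of $v$'s path therefore misses at least two edges of $K^G_{C\cap D}$, and these form a star at that vertex, not a matching. Contracting $z$ into one of its neighbours does not restore them unless the single kept edge is $vz$, and nova-universality does not guarantee that. Moreover, even a $1$-nearly-saturated order-four separation would give no contradiction at this point. Observation~\ref{obs-starsat} only yields linkedness, and Corollary~\ref{cor-no4satur} is proved later using this very lemma.

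The fix is to take the nova at the unused terminator $z$ itself. Nova-universality says $R_{A,B}-z$ contains $K^G_{A\cap B}-z$, whose vertex set is exactly the set of terminators. Contracting the linkage paths then gives a supergraph of $K^G_{C\cap D}$. Since $(C,D)$ is a proper root separation, this contradicts Lemma~\ref{lemma-nosatur} directly. This is the paper's argument. Your choice of $(A,B)$ with $A$ minimal among all such separations, rather than among isolators of $(A_0,B_0)$, is fine.

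\textbf{Step 2: the count.} You take $v$ of minimum degree in the non-edge graph $K^G_{A\cap B}-E(G)$ over all of $A\cap B$. But the centre of a nova must be a non-root vertex, and the bound $\lfloor 2k/5\rfloor$ fails for the minimum over non-roots. Let $t=|A\cap B\setminus X_G|$, and note there are at most $\binom t2$ non-edges among non-roots. Averaging over the non-roots gives some non-root $v$ for which $K^G_{A\cap B}-v$ has $a_v\ge \frac{t-1}{t}k-\frac{t-1}{2}$ edges outside $G$.

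This bound suffices in every case except $\rho_4(R_{A,B})=3$, $t=2$, $k=7$. There all seven pairs involving a non-root are non-edges, and each non-root lies in four of them. So every nova $K^G_{A\cap B}-v$ gains exactly $3=\rho_4(R_{A,B})$ and is not profitable. Your ``keep one edge if possible'' cannot be used in general, since nova-universality only provides $K^G_{A\cap B}-v$. It does work in this specific case: $K^G_{A\cap B}-v+vv'$, with $v'$ the other non-root, is exactly the $v'$-star. That star is an $\id$-rooted minor by $K_{1,\star}$-universality, and it gains $4$. Your computation misses this boundary case entirely and must be redone with the non-root restriction.

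Similarly, in the $\rho_4(R_{A,B})\le 1$, $k=4$ case, the isolated vertex of $C_4+K_1$ may be a root. The nova at a non-root on the cycle still gains $2>1$, but this needs to be said.
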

\begin{proof}
Let $(A,B)$ be an isolator for $(A_0,B_0)$ such that $A\neq X_G$ and subject to that $A$ is minimal, let $\PP$ be an
isolator linkage for $(A,B)$ and $(A_0,B_0)$, and let $T$ be the set of terminators of this linkage.

If $|A\cap B|<5$, then choose a vertex $v\in A_0\cap B_0\setminus T$ arbitrarily.
Since $A_0\cap B_0\cap X_G\subseteq A\cap B\cap X_G$, we have $v\not\in X_G$.  Since $(A_0,B_0)$ is nova-universal,
$R_{A_0,B_0}-v$ contains $K^G_{A_0\cap B_0}-v$ as an $\id$-rooted minor.  By further contracting the paths of $\PP$,
we obtain a supergraph of $K^G_{A\cap B}$ as an $\id$-rooted minor of $R_{A,B}$.  Hence,
the root separation $(A,B)$ is saturated, contradicting Lemma~\ref{lemma-nosatur}.
Therefore, we have $|A\cap B|=5$.

By the minimality of $A$, the root separation $(A,B)$ cannot have an isolator other than itself and $(X_G,V(G))$,
and thus $(A,B)$ is strongly linked to the roots, and in particular exposed.
Moreover, the root separation $(A,B)$ is $K_{1,\star}$-universal by Observation~\ref{obs-starsat}, and nova-universal by an argument analogous to the proof
of Observation~\ref{obs-starsat}.

Let $H=K^G_{A\cap B}-E(G[A\cap B])$, so that $|E(H)|=k^G(A\cap B)$.  Let $s$ be the number of edges of $H$ with exactly one end in $X_G$,
and let $m=|E(H)|-s$ be the number of edges of $H$ not incident with the vertices of $X_G$.
Let $t=|A\cap B\setminus X_G|$; by Corollary~\ref{cor-noplus1}, we have $2\le t\le 5$.
For a vertex $u\in A\cap B\setminus X_G$, let $a_u=|E(H-u)|$.  Note that
$$\sum_{u\in A\cap B\setminus X_G} a_u=(t-1)s+(t-2)m=(t-1)k^G(A\cap B)-m\ge (t-1)k^G(A\cap B)-\binom{t}{2},$$
and thus there exists a vertex $v\in A\cap B\setminus X_G$ such that
$$a_v\ge \frac{t-1}{t}\cdot k^G(A\cap B) - \frac{t-1}{2}.$$
Relevant values of the expression $\tfrac{t-1}{t}\cdot k - \frac{t-1}{2}$ can be seen in the following table.
\begin{center}
\begin{tabular}{c|cccc}
$k$	&$t=2$	&$t=3$	&$t=4$	&$t=5$\\
\hline
9	&	&5	&5.25	&5.2\\
7	&3	&$3.\overline{6}$	&3.75	&3.6\\
6	&2.5	&3	&3	&2.8\\
4	&1.5	&$1.\overline{6}$	&1.5	&1.2
\end{tabular}
\end{center}
By Corollary~\ref{cor-neic2}, we obtain the following bounds.
\begin{itemize}
\item If $\rho_4(R_{A,B})=4$, then $k^G(A\cap B)\ge 9$ and $t\ge 3$, and thus $a_v\ge 5>\rho_4(R_{A,B})$.
\item If $\rho_4(R_{A,B})=3$, then $k^G(A\cap B)\ge 7$, and thus $a_v\ge 4>\rho_4(R_{A,B})$ unless $t=2$ and $k^G(A\cap B)=7$ (or equivalently, $E(K^G_{A\cap B})\cap E(G)=\emptyset$),
in which case $a_v=3=\rho_4(R_{A,B})$.
\item If $\rho_4(R_{A,B})=2$, then $k^G(A\cap B)\ge 6$, and thus $a_v\ge 3>\rho_4(R_{A,B})$.
\item If $\rho_4(R_{A,B})\le 1$, then $k^G(A\cap B)\ge 4$, and thus $a_v\ge 2>\rho_4(R_{A,B})$.
\end{itemize}
Since $(A,B)$ is nova-universal, $R_{A,B}$ contains the $(G,A\cap B,v)$-nova $Z$ with the edge set $E(K^G_{A\cap B}-v)$ as an $\id$-rooted minor.
Note that $|E(Z)\setminus E(G)|=|E(H-v)|=a_v$.  Hence, if $a_v>\rho_4(R_{A,B})$, then the $(G,A\cap B)$-nova $Z$ is $(A,B)$-profitable.
By the preceding analysis, this is the case unless $\rho_4(R_{A,B})=3$, $t=2$, and $E(K^G_{A\cap B})\cap E(G)=\emptyset$.
In this case, let $v'$ be the unique vertex in $A\cap B\setminus (X_G\cup \{v\})$.  Then the $(G,A\cap B,v)$-nova $Z+vv'$
is $(A,B)$-profitable, and $Z+vv'$ is an $\id$-rooted minor of $R_{A,B}$ since $Z+vv'$ is equal to the $v'$-star and $(A,B)$ is $K_{1,\star}$-universal.
\end{proof}

However, the separations postulated in the previous lemma actually cannot appear in a minimal counterexample.

\begin{lemma}\label{lemma-noK4plus}
Let $G$ be a minimal counterexample and let $(A,B)$ be a proper $K_{1,\star}$-universal root separation of $G$ of order five strongly linked to the roots.
Let $H$ be a $(G,A\cap B,v)$-nova for a vertex $v\in A\cap B\setminus X_G$.
If $H$ is an $\id$-rooted minor of $R_{A,B}$, then it is not $(A,B)$-profitable.
\end{lemma}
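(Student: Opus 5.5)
The plan is to argue by contradiction, following the pattern of Lemma~\ref{lemma-no2nearly}. Suppose $H$ is an $\id$-rooted minor of $R_{A,B}$ and $|E(H)\setminus E(G)|>\rho_4(R_{A,B})$. Contract $R_{A,B}$ to $H$ and call the result $L=L_{A,B}\cup H$. This is an $\id$-rooted minor of $G$ with
$$\rho_4(L)=\rho_4(G)-\rho_4(R_{A,B})+|E(H)\setminus E(G)|>\rho_4(G).$$
Moreover $n(L)<n(G)$, because $(A,B)$ is proper. So if $L$ is $4$-light, Observation~\ref{obs-lightminor} gives $\rho_4(L)<\rho_4(G)$, a contradiction. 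The whole task is therefore to handle the case where $L$ is not $4$-light.

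\textbf{Step 1: the offending separation is confined to $v$.} Let $(C,D)$ be a non-$4$-light root separation of $L$ of order at most four; choose one of smallest order, and subject to that with $D$ minimal, so that it is exposed. I would repeat the argument of Lemma~\ref{lemma-satulight}. Strong linkage rules out $A\cap B\subseteq D$. If $A\cap B\subseteq C$, then $(C\cup B,D)$ is a root separation of $G$ with the same strictly right-hand side, hence $4$-light, which is a contradiction. Otherwise some $u\in A\cap B\setminus D$ and $w\in A\cap B\setminus C$ satisfy $uw\notin E(H)$. Since every non-edge of $H$ between a non-root and another vertex is incident with $v$, we get $w=v$ or $u=v$. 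The roots lie in $C$, and all other vertices of $A\cap B\setminus X_G$ are pairwise adjacent and adjacent to everything except $v$. From this I expect to conclude that $A\cap B\setminus\{v\}\subseteq C$ and $v\in D\setminus C$, with $v$ having at most one neighbour in $C\cap D$ within $H$. In that case $(C\cup B, D')$ with $D'=(D\setminus\{v\})\cup(A\cap B)$, or equivalently $(C\cup (B\setminus\{v\}),D)$ viewed in $G$, is a root separation of $G$. Its order is at most $|C\cap D|+1\le5$ and it isolates part of the right side.

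\textbf{Step 2: use the order-five bounds.} If this separation of $G$ has order at most four, $4$-lightness of $G$ bounds $\rho_4$ of its right side. Comparing with $\rho_4(R^L_{C,D})$, which differs only through the at most one edge from $v$ into $A\cap B$ and the contracted part, should give $R^L_{C,D}$ $4$-light, a contradiction. If it has order five, I would use Lemma~\ref{lemma-showexposed} to see that it is exposed. Note that $(A,B)$ is right-linked because $R_{A,B}$ is $K_{1,\star}$-universal, so every pair involving a non-root is joined by a star minor. Then Corollary~\ref{cor-neic2} and Observation~\ref{obs-nearbc} apply to it, together with Lemma~\ref{lemma-ubdel}. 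These bound how dense its right side can be relative to $k^G$ of its cut, and combining them with the excess $|E(H)\setminus E(G)|-\rho_4(R_{A,B})\ge1$ should give the contradiction.

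\textbf{Main obstacle.} The hardest part is this density accounting in the order-five case. The profit from the nova must be shown not to be transferable into a non-light small cut around $v$. I would first try to choose $(C,D)$ minimal. I would then also apply Corollary~\ref{cor-noK5minus} or Corollary~\ref{cor-noK5minus2} directly: a small cut isolating $v$ means $R_{A,B}$ together with part of $L$ nearly saturates the new order-five separation, which those corollaries forbid.
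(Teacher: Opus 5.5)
Your Step~1 is sound and is essentially the paper's first claim. After normalizing, as one may, so that the possible edge of $H$ at $v$ is not an edge of $G$, and writing $\beta=\deg_H v\le 1$, it says the following. Every root separation $(M,N)$ of $L$ of order at most four has $A\cap B\setminus\{v\}\subseteq M$. If $(M,N)$ is not $4$-light, then $v\in N\setminus M$, the separation $(M\cup B,N)$ of $G$ has order exactly $|M\cap N|+1$, and $\rho_4(R^L_{M,N})=\rho_4(R^G_{M\cup B,N})+v(M,N)+\beta-4\le\rho_4(R^G_{M\cup B,N})$. So cuts of order at most three are harmless.

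The gap is the order-four case, where $(M\cup B,N)$ has order five. There you aim to prove that $L$ is $4$-light, but nothing available at this point forbids a proper exposed order-five separation of $G$ with positive $4$-density. Such separations are cut down only later, in Corollary~\ref{cor-nonear-bistar} and Lemma~\ref{lemma-kill5}, and both rely on the present lemma through Corollary~\ref{cor-noK4plus}. Corollary~\ref{cor-neic2}, Observation~\ref{obs-nearbc} and Lemma~\ref{lemma-ubdel} only give lower bounds on $k^G$ of the cut and the bound $\rho_4\le 4$. They do not make $R^L_{M,N}$ $4$-light, and the global excess $\rho_4(L)\ge\rho_4(G)+1$ says nothing about a local cut. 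Your fallback via Corollaries~\ref{cor-noK5minus} and~\ref{cor-noK5minus2} is not justified either. A $K_4$ or $K_4^-$ model on $M\cap N$ in $R^L_{M,N}$ may use $v$, which becomes a root of $R^G_{M\cup B,N}$, so it does not yield a nearly saturated separation of $G$.

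The missing idea is that such cuts are not excluded but reduced away at a controlled cost. A non-$4$-light cut of $L$ with minimal right side is, by Corollary~\ref{cor-k4}, either $K_\star$-universal, or of order four, $K^-_\star$-universal and of density one.

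\textbf{Torso step.} Replace a $K_\star$-universal cut $(C_1,D_1)$ with $v\in D_1\setminus C_1$, chosen with $D_1$ maximal and then $C_1$ minimal, by its torso $L_1$. This is where your ingredients actually enter. Right-linkedness of $(A,B)$, which comes from $K_{1,\star}$-universality, lets Lemma~\ref{lemma-showexposed} make $(C_1\cup B,D_1)$ exposed. Then Corollary~\ref{cor-neic2} (including its $C_4+K_1$ case) and Lemma~\ref{lemma-ubdel} give $\rho_4(L_1)\ge\rho_4(L)+1-\beta\ge\rho_4(L)$.

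\textbf{Twin step.} Afterwards, Corollary~\ref{cor-k4} and Lemma~\ref{lemma-diamonds} leave only $K^-_\star$-universal cuts of order four and density one, linked to the roots. Take an isolator strongly linked to the roots and apply the $2$-twin reduction. This loses exactly one unit of density, and that unit is what the strict inequality in profitability pays for. Note that if $L$ were $4$-light, a non-strict excess would already contradict Observation~\ref{obs-lightminor}, a hint that your plan leaves this slack unused. The census is preserved by Observation~\ref{obs-samecensus}, and Lemma~\ref{lemma-2twin} shows the result is $4$-light. This gives a smaller counterexample, contradicting minimality.
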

\begin{proof}
Suppose for a contradiction the $(G,A\cap B,v)$-nova $H$ is $(A,B)$-profitable and appears as an $\id$-rooted minor of $R_{A,B}$.
Without loss of generality, we can assume that if $v$ is incident with an edge $e_0$ in $H$, then $e_0\not\in E(G)$,
as otherwise we could consider the $(A,B)$-profitable $(G,A\cap B,v)$-nova $H-e_0$, instead.
Let $\beta=\deg_H v$.  Let $L=L_{A,B}\cup H$ and note that $L$ is an $\id$-rooted minor of $G$.
Moreover, since $H$ is $(A,B)$-profitable, we have
\begin{equation}\label{eq-novaprofit}
\rho_4(L)\ge \rho_4(G)+1.
\end{equation}
Let us now give a quick outline of the rest of the argument.  The 5-rooted graph $L$ does not necessarily
contradict Observation~\ref{obs-lightminor}, since it may not be 4-light.  However, in Claim~\ref{cl-separof},
we show that all non-4-light root separations of $L$ of order at most four separate $v$ from the roots.
By Corollary~\ref{cor-k4}, we then see that such a root separation with minimal right-hand side either is $K_\star$-universal,
or has order four and it is $K^-_\star$-universal (this does not contradict Lemma~\ref{lemma-noclicut}, since
these root separations do not directly correspond to root separations of $G$ of the same order).
We next contract a $K_\star$-universal root separation $(C_1,D_1)$ of $L$ with maximal right-hand side and minimal left-hand side to a clique,
forming an $\id$-rooted minor $L_1$ of $L$ with no proper $K_\star$-universal root separations (if $L$ does not have such
a root separation, we let $L_1=L$).  This does not decrease the $4$-density, as we argue in Claim~\ref{cl-L1heavy}.
Moreover, Theorem~\ref{thm-k4} together with Lemma~\ref{lemma-diamonds} now guarantees that $L_1$ can only be non-4-light because of $K^-_\star$-universal
root separations of order four and 4-density one.  We finish the argument by performing the $2$-twin reduction on such a root separation $(C_3,D_3)$
with minimal left-hand side, obtaining an $\id$-rooted minor $L_3$ of $G$ with the same census.  This decreases the 4-density by at most one (which still
implies that $\rho_4(L_3)\ge \rho_4(G)$ by (\ref{eq-novaprofit})), and using Lemma~\ref{lemma-2twin}, we can easily argue that $L_3$ is 4-light.
This implies that $L_3$ is a smaller counterexample than $G$, which is a contradiction.

Let us now perform the steps carefully.
For a root separation $(M,N)$ of $L$ of order at most four, let $v(M,N)$ be the number of neighbors
of $v$ in $M\cap N$ in $G$.  The 5-rooted graph $L$ is not necessarily 4-light,
but its non-4-light root separations of order at most four are quite constrained.

\begin{claim}\label{cl-separof}
Every root separation $(M,N)$ of $L$ of order at most four satisfies $A\cap B\setminus \{v\}\subseteq M$.
Let $M'=M\cup\{v\}\cup B$, so that $(M',N)$ is a root separation of $G$.
If $(M,N)$ is not $4$-light, then $v\in N\setminus M$, $|M'\cap N|=|M\cap N|+1$, and
$$\rho_4(R^L_{M,N})=\rho_4(R^G_{M',N})+v(M,N)+\beta-4\le \rho_4(R^G_{M',N}).$$
\end{claim}
\begin{subproof}
Suppose first for a contradiction that
there exists a vertex $u\in A\cap B\setminus \{v\}$ belonging to $N\setminus M$.
Since $X_G\subseteq M$, we have $u\not\in X_G$, and thus $u$ is adjacent in the $(G,A\cap B,v)$-nova $H$ to all vertices of $A\cap B\setminus\{u,v\}$.
Consequently all vertices of $A\cap B\setminus\{u,v\}$ are also adjacent to $u$ in $L$, and thus $A\cap B\setminus\{u,v\}\subset N$.
Let $N'=N\cup\{v\}\cup B$ and note that $(M,N')$ is a root separation of $G$.  Since $M\subseteq A$, $B\subseteq N'$,
and $(A,B)$ is linked to the roots, we have $|M\cap N'|\ge 5$, and thus $v\in M\setminus N$, $|M\cap N|=4$, and $|M\cap N'|=5$.
Note that $M\neq A$, since $u$ is a vertex of $A\setminus M$, and that $M\neq X_G$, since $v$ is a vertex of $M\setminus X_G$.
It follows that $(M,N')$ is an isolator of $(A,B)$ different from $(A,B)$ as well as $(X_G,V(G))$, contradicting the assumption that $(A,B)$
is strongly linked to the roots.  Therefore, we have $A\cap B\setminus \{v\}\subseteq M$.

Suppose now that the root separation $(M,N)$ of $L$ is not 4-light.  Since $G$ is 4-light, the root separations $(M,N)$ and $(M',N)$
have different strictly right-hand sides.  That is, we have $v\in N\setminus M$, the root separation $(M',N)$ of $G$ has order $|M\cap N|+1$,
and the rooted graph $R^L_{M,N}$ is obtained from $R^G_{M',N}+E(H[A\cap B\cap N])$ by making $v$ into a non-root vertex.
Since $A\cap B\setminus \{v\}\subseteq M$, we have $(A\cap B\cap N)\setminus (M\cap N)=\{v\}$,
and thus only the $v(M,N)+\beta$ edges of $L$ between $v$ and $M\cap N$ contribute to $\rho_4(R^L_{M,N})$ but not to $\rho_4(R^G_{M',N})$.
Therefore,
\begin{align*}
\rho_4(R^L_{M,N})&=\rho_4(R^G_{M',N})+v(M,N)+\beta-4\\
&\le \rho_4(R^G_{M',N})+|M\cap N|-4\le \rho_4(R^G_{M',N}).
\end{align*}
\end{subproof}

We now further modify the $\id$-rooted minor $L$ of $G$ to eliminate the root separations of order four with very dense right-hand sides.
Let $L_1$ be the $\id$-rooted minor of $L$ and let $K$ be a set of vertices of $L_1$ defined as follows:
\begin{itemize}
\item If $L$ has a $K_\star$-universal root separation $(C_1,D_1)$ such that $v\in D_1\setminus C_1$, then choose one with
$D_1$ maximal and subject to that with $C_1$ minimal, let $L_1$ be the torso of $(C_1,D_1)$, and let $K=C_1\cap D_1$.
\item If $L$ does not have any such root separation, then let $L_1=L$ and let $K$ consist of $v$ and all its neighbors in $L$.
\end{itemize}
The first case is illustrated in Figure~\ref{fig-noK4plus}.
Let us prove the key properties of $L_1$.

\begin{figure}
\begin{center}
\begin{asy}
v[0] = (0,0);
v[1] = (2,0);
v[2] = (3,0);
v[3] = (0,-2);
v[4] = (3,-2);
v[5] = (0,-3);
v[6] = (3,-3);
v[7] = (2,-5);
v[8] = (3,-5);
v[9] = (5,0);
v[10] = (5,-5);
v[11] = (2, -3);
v[12] = (0,-5);

fill(v[0]--v[5]--v[11]--v[7]--v[10]--v[9]--cycle, interp(blue, white, 0.9));

draw(L="$A$", g=(v[2]+(0,0.1)) -- (v[0]+(0,0.1)), arrow=Arrow, align=N);
draw(L="$B$", g=(v[7]+(0,-0.1)) -- (v[10]+(0,-0.1)), arrow=Arrow, align=S);
draw(L="$C_1$", g=(v[5]+(-0.3,0)) -- (v[0]+(-0.3,0)), arrow=Arrow, align=W);
draw(L="$D_1$", g=(v[3]+(-0.1,0)) -- (v[12]+(-0.1,0)), arrow=Arrow, align=W);

draw(v[1] -- v[7]);
draw(v[2] -- v[8]);
draw(v[3] -- v[4]);
draw(v[5] -- v[6]);

v[14] = (1.5,-1.5);
draw (circle (v[14],1), dashed+red);
label ("$X_G$", v[14], red);

v[13] = interp (v[8],v[11],0.5);
vertex (v[13]);
label ("$v$", v[13], SE);

v[15] = (0.5,-2.5);
vertex (v[15]);
label ("$u$", v[15], SW);
\end{asy}
\end{center}
\caption{The separations from the proof of Lemma~\ref{lemma-noK4plus}.  The set $C'_1$ is shown in blue.}\label{fig-noK4plus}
\end{figure}

\begin{claim}\label{cl-L1heavy}
The 5-rooted graph $L_1$ does not have any proper $K_\star$-universal root separation $(C,D)$ such that $K\subseteq D$,
and every non-4-light root separation $(C',D')$ of $L_1$ of order at most four satisfies $K\subseteq D'$ and $K\not\subseteq C'$.
Moreover, $\rho_4(L_1)\ge \rho_4(L)$.
\end{claim}
\begin{subproof}
Suppose first that $L_1$ has a proper $K_\star$-universal root separation $(C,D)$ such that $K\subseteq D$.
We have $L_1=L$, as otherwise $(C,D\cup D_1)$ would be a $K_\star$-universal root separation of $L$ contradicting the
maximality of $D_1$ or the minimality of $C_1$.  However then $K$ consists of $v$ and all its neighbors,
and thus $(C\setminus\{v\},D)$ is a $K_\star$-universal root separation of $L$ not containing $v$ in its left-hand side,
and we should have considered it as a candidate for the root separation $(C_1,D_1)$.
This is a contradiction.

Let us now consider a non-4-light root separation $(C',D')$ of $L_1$.
If $L_1=L$, then Claim~\ref{cl-separof} implies $v\in D'\setminus C'$, and since $K$ contains $v$ and its neighbors, we have $K\subseteq D'$.
If $L_1\neq L$, then $K$ is a clique in $L_1$, and thus $K\subseteq D'$ or $K\subseteq C'$.
However, the latter is not possible, since $(C'\cup D_1,D')$ would be a non-4-light root separation
of $L$ contradicting Claim~\ref{cl-separof} (recall that $v\in D_1\setminus C_1=D_1\setminus V(L_1)$).

Finally, let us argue that $\rho_4(L_1)\ge \rho_4(L)$.
This is trivial if $L_1=L$.  Hence, we can assume that $L_1$ is the torso of the $K_\star$-universal root separation $(C_1,D_1)$ of $L$.
Consequently,
\begin{equation}\label{eq-l1l}
\rho_4(L_1)=\rho_4(L)-\rho_4(R^L_{C_1,D_1})+k^L(C_1\cap D_1)\ge \rho_4(L)-\rho_4(R^L_{C_1,D_1}).
\end{equation}
If the root separation $(C_1,D_1)$ of $L$ is 4-light, this gives the inequality $\rho_4(L_1)\ge \rho_4(L)$.
Therefore, let us assume that $(C_1, D_1)$ is not 4-light.

By Observation~\ref{obs-cliques}, every isolator of $(C_1,D_1)$ in $L$ is $K_\star$-universal, and thus by the maximality of $D_1$ and the minimality of $C_1$,
the root separation $(C_1,D_1)$ of $L$ is strongly linked to the roots.  Since $K_5$ is not a rooted minor of $G$,
it follows that $|C_1\cap D_1|\le 4$.  Claim~\ref{cl-separof} then gives $A\cap B\setminus\{v\}\subseteq C_1$.
Let $C'_1=C_1\cup B$, so that $(C'_1,D_1)$ is a root separation of $G$ of order $|C_1\cap D_1|+1$.
Since the root separation $(C_1, D_1)$ of $L$ is not 4-light, Claim~\ref{cl-separof} gives
\begin{equation}\label{eq-c1d1heavy}
\rho_4(R^G_{C'_1,D_1})\ge \rho_4(R^L_{C_1,D_1})>0,
\end{equation}
and since $G$ is 4-light, it follows
that $|C_1\cap D_1|=4$ and $|C'_1\cap D_1|=5$.
The root separation $(C_1, D_1)$ of $L$ is exposed, since it is linked to the roots,
and the root separation $(A,B)$ of $G$ is right-linked, since it is $K_{1,\star}$-universal;
and thus Lemma~\ref{lemma-showexposed} implies that the root separation $(C'_1,D_1)$ of $G$ is exposed.

Note that there exists a vertex $u\in C_1\cap D_1\setminus B$.  Indeed, since the root separation $(A,B)$ is linked to the roots,
$G[(A\setminus B)\cup \{v\}]$ contains a path from $X_G$ to $v$, and since $(C_1,D_1)$ is a root separation of $L\supseteq G[A]$, this path must pass through $C_1\cap D_1\setminus B$.
Since $u\in A\setminus B$, the vertex $u$ has the same neighborhood in $L$ as in $G$, and thus
\begin{align*}
k^L(C_1\cap D_1)&=k^G(C_1\cap D_1)+k^L(C_1\cap D_1\setminus\{u\})-k^G(C_1\cap D_1\setminus\{u\})\\
&\ge k^G(C_1\cap D_1)-k^G(C_1\cap D_1\setminus\{u\}).
\end{align*}
Let us remark that $k^G(C_1\cap D_1\setminus\{u\})\le |E(K^G_{C_1\cap D_1\setminus\{u\}})|\le 3$.
Moreover, since $v\not\in X_G$, the vertex $v$ is incident with exactly four edges of $K^G_{C'_1\cap D_1}$, and thus
$$k^G(C'_1\cap D_1)=k^G(C_1\cap D_1)+4-v(C_1,D_1).$$
Using (\ref{eq-l1l}) and Claim~\ref{cl-separof}, we get
\begin{align*}
\rho_4(L_1)-\rho_4(L)&=k^L(C_1\cap D_1)-\rho_4(R^L_{C_1,D_1})\\
&=k^L(C_1\cap D_1)-\rho_4(R^G_{C'_1,D_1})-v(C_1,D_1)-\beta+4\\
&\ge k^G(C_1\cap D_1)-k^G(C_1\cap D_1\setminus\{u\})-\rho_4(R^G_{C'_1,D_1})-v(C_1,D_1)-\beta+4\\
&=k^G(C'_1\cap D_1)-k^G(C_1\cap D_1\setminus\{u\})-\rho_4(R^G_{C'_1,D_1})-\beta\\
&\ge k^G(C'_1\cap D_1)-3-\rho_4(R^G_{C'_1,D_1})-\beta.
\end{align*}

By (\ref{eq-c1d1heavy}) and Lemma~\ref{lemma-ubdel}, we have $1\le \rho_4(R^G_{C'_1,D_1})\le 4$.
By Corollary~\ref{cor-neic2}, we have either $k^G(C'_1\cap D_1)-\rho_4(R^G_{C'_1,D_1})\ge 4$,
or $k^G(C'_1\cap D_1)-\rho_4(R^G_{C'_1,D_1})=3$ and $K^G_{C'_1\cap D_1}-E(G[C'_1\cap D_1])$ is isomorphic to $C_4+K_1$.
In the latter case, the 3-vertex subgraph $K^G_{C_1\cap D_1\setminus\{u\}}-E(G[C_1\cap D_1]\setminus \{u\})$ of this graph has at most
two edges, and thus $k^G(C_1\cap D_1\setminus\{u\})\le 2$.  In either case, it follows that
$\rho_4(L_1)-\rho_4(L)\ge 1-\beta\ge 0$, and thus $\rho_4(L_1)\ge\rho_4(L)$.
\end{subproof}
By (\ref{eq-novaprofit}) and Claim~\ref{cl-L1heavy}, the $\id$-rooted minor $L_1$ of $G$ satisfies $\rho_4(L_1)>\rho_4(G)$.
Since $n(L_1)<n(G)$, the 5-rooted graph $L_1$ cannot be $4$-light, as otherwise this would contradict Observation~\ref{obs-lightminor}.
Let $(C_2,D_2)$ be a non-4-light root separation of $L_1$ of order at most four, chosen with $D_2$ minimal.
Claim~\ref{cl-L1heavy} implies that $K\subseteq D_2$ and that the root separation $(C_2,D_2)$ is not $K_\star$-universal.
By the minimality of $D_2$, observe that the rooted graph $R_{C_2,D_2}$ is $4$-light.
By Corollary~\ref{cor-k4}, it follows that $|C_2\cap D_2|=4$, $\rho_4(R^{L_1}_{C_2,D_2})=1$, and the root separation $(C_2,D_2)$ is $K^-_\star$-universal.
Moreover, Claim~\ref{cl-L1heavy} implies that no isolator of $(C_2,D_2)$ is $K_\star$-universal,
and thus by Lemma~\ref{lemma-diamonds}, the root separation $(C_2,D_2)$ is linked to the roots in $L_1$ and every isolator $(C,D)$
of $(C_2,D_2)$ in $L_1$ is $K^-_\star$-universal and satisfies $\rho_4(R^{L_1}_{C,D})\le 1$.

Let $(C_3,D_3)$ be an isolator of $(C_2,D_2)$ with $C_3$ minimal, so that $(C_3,D_3)$ is strongly linked to the roots.
Let $S=C_3\cap D_3$, let $L_3$ be the $2$-twin reduction of $(C_3,D_3)$, let $Y$ be the set of two vertices added in this
reduction, and let $S'=S\cup Y$.  Since the root separation $(C_3,D_3)$ of $L_1$ is strongly linked to the roots, 
the root separation $(C_3,S')$ of $L_3$ with the same left-hand side is also strongly linked to the roots.

We claim that the 5-rooted graph $L_3$ is 4-light.
Indeed, suppose for a contradiction that a root separation $(C,D)$ of $L_3$ of order at most four is not $4$-light.
By Lemma~\ref{lemma-2twin}, we can assume that $S'\subseteq C$ and $Y\cap C\cap D=\emptyset$.  Let $C'=(C\setminus Y)\cup D_3$;
then $(C',D)$ is a root separation of $L_1$ with the same strictly right-hand side as the root separation $(C,D)$ of $L_3$, and thus $(C',D)$ is not 4-light.
However, since $K\subseteq D_3\subseteq C'$, this contradicts Claim~\ref{cl-L1heavy}.

Let $F$ be a flaw of $G$.  By Claim~\ref{cl-L1heavy} and (\ref{eq-novaprofit}), we have
$$\rho_4(L_3)=\rho_4(L_1)-\rho_4(R^{L_1}_{C_3,D_3})\ge \rho_4(L_1)-1\ge \rho_4(L)-1\ge\rho_4(G),$$
and thus $F$ belongs to the target of $L_3$.
Since $L_1$ is an $\id$-rooted minor of $G$, the graph $F$ is not an $\id$-rooted minor of $L_1$,
and since $L_3$ has the same census as $L_1$ by Observation~\ref{obs-samecensus}, $F$ also is not an $\id$-rooted minor of $L_3$.
Consequently, $L_3$ is a counterexample.

Note that $n(R^{L_1}_{C_2,D_2})\ge 2$, since $(C_2,D_2)$ is a root separation of order at most four and $\rho_4(R^{L_1}_{C_2,D_2})=1$.
Since the root separation $(A,B)$ of $G$ is proper, we have
\begin{align*}
n(L_3)&=n(L_1)-n(R^{L_1}_{C_3,D_3})+2\le n(L_1)-n(R^{L_1}_{C_2,D_2})+2\\
&\le n(L_1)\le n(L)=n(L^G_{A,B})<n(G).
\end{align*}
This contradicts the minimality of the counterexample $G$.
\end{proof}

In particular, we get the following bound.

\begin{corollary}\label{cor-noK4plus}
If $G$ is a minimal counterexample, then every proper root separation $(A,B)$ of $G$ of order five satisfies
$\rho_4(R_{A,B})\le 3$, and moreover
\begin{itemize}
\item if $(A,B)$ is $K_{1,\star}$-universal (and in particular if $\rho_4(R_{A,B})\ge 1$), then $(A,B)$ is not nova-universal and $|A\cap B\cap X_G|\le 2$;
\item if $\rho_4(R_{A,B})=2$, then $\widetilde{R}_{A,B}$ is nearly $K_{2,3}^+$-universal, and if it is exposed, then $k^G(A\cap B)\ge 7$; and
\item if $\rho_4(R_{A,B})=3$, then $\widetilde{R}_{A,B}$ is $K_{2,3}^+$-universal, $G[A\cap B]$ together with the clique on $A\cap B\cap X_G$ forms a matching, and $k^G(A\cap B)\ge 8$.
\end{itemize}
\end{corollary}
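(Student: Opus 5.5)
The plan is to derive the corollary from Lemmas~\ref{lemma-profitable-nova} and~\ref{lemma-noK4plus}, combined with the earlier bounds in Corollary~\ref{cor-neic2}, Observation~\ref{obs-nearbc} and Corollary~\ref{cor-nok4}.

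\textbf{Step 1: no proper $K_{1,\star}$-universal separation of order five is nova-universal.} Let $(A,B)$ be such a separation. Suppose for a contradiction that $(A,B)$ were nova-universal. Lemma~\ref{lemma-profitable-nova} then yields a proper $K_{1,\star}$-universal separation $(A',B')$ of order five that is strongly linked to the roots and has an $(A',B')$-profitable $(G,A'\cap B')$-nova as an $\id$-rooted minor of $R_{A',B'}$. This directly contradicts Lemma~\ref{lemma-noK4plus}. By Corollary~\ref{cor-nok4}, $\rho_4(R_{A,B})\ge 1$ implies $K_{1,\star}$-universality, so the parenthetical remark in the statement is covered.

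\textbf{Step 2: the bound $|A\cap B\cap X_G|\le 2$.} Suppose $|A\cap B\cap X_G|\ge 3$; by Corollary~\ref{cor-noplus1} we may assume it equals $3$. Then $A\cap B\setminus X_G=\{v_1,v_2\}$. A $(G,A\cap B,v_i)$-nova only needs $K^G_{A\cap B}-v_i$, which consists of the star from $v_{3-i}$ to the three roots. This is a subgraph of the $v_{3-i}$-star, so by $K_{1,\star}$-universality $(A,B)$ is nova-universal, contradicting Step~1.

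\textbf{Step 3: the bound $\rho_4(R_{A,B})\le 3$.} If $\rho_4(R_{A,B})\ge 1$, then Corollary~\ref{cor-nok4} applies to $\widetilde{R}_{A,B}$, which is $4$-light by the argument of Observation~\ref{obs-univsep}. Outcome (x), $(K_4+K_1)$-universality, implies nova-universality, since deleting $v$ leaves four vertices which can be made a clique. Step~1 rules this out. Hence one of (i)--(iii) holds, so $\rho_4(R_{A,B})\le 3$. Moreover:
\begin{itemize}
\item if $\rho_4=2$, then (ii) gives that $\widetilde{R}_{A,B}$ is nearly $K_{2,3}^+$-universal;
\item if $\rho_4=3$, then (iii) gives $K_{2,3}^+$-universality.
\end{itemize}
When $\rho_4(R_{A,B})\le 0$ the bound is trivial.

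\textbf{Step 4: the $k^G$ bounds via Observation~\ref{obs-nearbc}.} For the $\rho_4=2$ item, the exposed hypothesis is given, and Observation~\ref{obs-nearbc} yields $k^G(A\cap B)\ge 7$. For the $\rho_4=3$ item exposedness is not assumed, so it must be derived. By Corollary~\ref{cor-litor}, $\rho_4(R_{A,B})\ge 3$ implies that $(A,B)$ is linked to the roots, and hence exposed. Observation~\ref{obs-nearbc} then gives the matching structure and $k^G(A\cap B)\ge 8$.

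The main obstacle is Step~3. Two points need care there: checking that $(K_4+K_1)$-universality of $\widetilde{R}_{A,B}$ really produces the required nova for every non-root $v$, including when $A\cap B$ contains roots (only edges with a non-root end are needed, so it does), and verifying that $\widetilde{R}_{A,B}$ is $4$-light so that Corollary~\ref{cor-nok4} applies.
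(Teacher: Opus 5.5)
Your proposal is correct and follows essentially the same route as the paper: nova-universality is excluded via Lemmas~\ref{lemma-profitable-nova} and~\ref{lemma-noK4plus}, and $|A\cap B\cap X_G|\le 2$ follows because the $v_{3-i}$-star is a $(G,A\cap B,v_i)$-nova. Outcome (x) of Corollary~\ref{cor-nok4} is ruled out because $(K_4+K_1)$-universality implies nova-universality, and the $k^G$ bounds come from Observation~\ref{obs-nearbc}, with Corollary~\ref{cor-litor} supplying exposedness when $\rho_4(R_{A,B})=3$; applying Corollary~\ref{cor-nok4} directly to the $4$-light graph $\widetilde{R}_{A,B}$ and treating $\rho_4(R_{A,B})\le 0$ separately, as you do, are small clarifications of points the paper leaves implicit.
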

\begin{proof}
If $\rho_4(R_{A,B})\ge 1$, then $(A,B)$ is $K_{1,\star}$-universal by Corollary~\ref{cor-nok4}.  Hence, we can in all cases assume that $(A,B)$ is $K_{1,\star}$-universal.
If $(A,B)$ were nova-universal, then by Lemma~\ref{lemma-profitable-nova}, $G$ would also have a proper $K_{1,\star}$-universal
root separation $(A',B')$ of order five strongly linked to the roots such that an $(A',B')$-profitable
$(G,A'\cap B')$-nova is an $\id$-rooted minor of $R_{A',B'}$.  Since this would contradict Lemma~\ref{lemma-noK4plus},
it follows that $(A,B)$ is not nova-universal.

We claim that $|A\cap B\cap X_G|\le 2$.  Indeed, suppose for a contradiction that $|A\cap B\cap X_G|\ge 3$.
Then Corollary~\ref{cor-noplus1} implies that $|A\cap B\cap X_G|=3$; let $\{v_1,v_2\}=A\cap B\setminus X_G$.
However, note that for each $i\in\{1,2\}$, the $v_{3-i}$-star is a $(G,A\cap B,v_i)$-nova.
Since the root separation $(A,B)$ is $K_{1,\star}$-universal, it follows that it is also nova-universal, which is a contradiction.

Similarly, the root separation $(A,B)$ cannot be $(K_4+K_1)$-universal, as otherwise it would be nova-universal.
By Corollary~\ref{cor-nok4}, it follows that $\rho_4(R_{A,B})\le 3$.
\begin{itemize}
\item If $\rho_4(R_{A,B})=2$, then Corollary~\ref{cor-nok4} implies that $\widetilde{R}_{A,B}$ is nearly $K_{2,3}^+$-universal.
If $(A,B)$ is also exposed, then we have $k^G(A\cap B)\ge 7$ by Observation~\ref{obs-nearbc}.
\item If $\rho_4(R_{A,B})=3$, then Corollary~\ref{cor-nok4} implies that $\widetilde{R}_{A,B}$ is $K_{2,3}^+$-universal.
By Corollary~\ref{cor-litor}, the root separation $(A,B)$ is linked to the roots, and thus also exposed.
Observation~\ref{obs-nearbc} then implies that $G[A\cap B]$ together with the clique on $A\cap B\cap X_G$ forms a matching and $k^G(A\cap B)\ge 8$.
\end{itemize}
\end{proof}

\section{Splits}

Next, we aim to exclude the root separations $(A,B)$ of order five such that $\rho_4(R_{A,B})\ge 2$, as well as many of those with $\rho_4(R_{A,B})=1$.
Given a rooted graph $G$ and sets $T\subseteq S\subseteq V(G)$, the \emph{$(G,S,T)$-split}
is the graph obtained from $K^G_S$ by removing all edges between the vertices of $T$.
The case that $S=A\cap B$ for a root separation $(A,B)$ of order five and that $|T|=3$ is the most interesting for us, but
we will also apply the notion when $(A,B)$ has order four and $|T|=2$.
For both these applications, we need to understand small root separations in the graph obtained by contracting $R_{A,B}$ to the $(G,A\cap B,T)$-split.
Let us start by an easy observation.

\begin{observation}\label{obs-splitcut}
Let $L$ be a 5-rooted graph and let $T\subset S$ be sets of vertices of $L$ such that $2\le |T|\le 3$,
$|S|=|T|+2$, $|S\cap X_L|\le 2$, the $(L,S,T)$-split is a subgraph of $L$, and the root separation $(V(L),S)$ of $L$ is strongly linked to the roots.
If $(C,D)$ is a root separation of $L$ of order at most four and $S\not\subseteq C$, 
then $|C\cap D|=4$, $S\setminus T\subset C\cap D$, $|T\setminus C|=1$ and $|T\setminus D|=|T|-1$, and the root separation $(C,D)$ is linked to the roots.
Moreover, each vertex of $C\cap D$ has a neighbor in the component of $L[D\setminus C]$ containing the unique vertex of $T\setminus C$.
\end{observation}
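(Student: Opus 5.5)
The plan is to apply Observation~\ref{obs-submod} to $(C,D)$ and the separation $(V(L),S)$, and to combine it with the split structure, which forces every vertex of $S$ outside $C$ to lie in $T$. All conclusions then follow from counting.

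\emph{Locating the vertices of $S$.} Fix $u\in S\setminus C$. Since $X_L\subseteq C$, the vertex $u$ is not a root. In $L$, every neighbour of $u$ lies in $D$. Suppose $u\in S\setminus T$. Then the split makes $u$ adjacent to all of $S\setminus\{u\}$, so $S\subseteq D$. Observation~\ref{obs-submod} shows that $(C,D\cup S)=(C,D)$ is then a root separation with $C\subseteq V(L)$ and $S\subseteq D$. Linkedness gives order at least $|S|\ge 4$, so $(C,D)$ is an isolator of $(V(L),S)$ of order $4=|S|<5$. Strong linkedness then forces $(C,D)=(V(L),S)$, which contradicts $u\notin C$. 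Hence $S\setminus C\subseteq T$.

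The same argument, applied to every vertex of $S\setminus C$, gives $S\setminus T\subseteq C$. Now take a vertex $w\in S\setminus D$. It lies in $C\setminus D$, so it is non-adjacent to $u$. In the split, $u$ is adjacent to everything in $S$ except the other vertices of $T$, so $w\in T$. In particular $S\setminus T\subseteq C\cap D$ and $S\setminus D\subseteq T\setminus (T\setminus C)$. Consequently $|S\setminus D|\le |T|-|T\setminus C|\le |T|-1$.

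\emph{The counting step.} By Observation~\ref{obs-submod}, $(C,D\cup S)$ is a root separation of order $|C\cap D|+|S\setminus D|$, and it has $C\subseteq V(L)$ and $S\subseteq D\cup S$. Since $(V(L),S)$ is linked to the roots, this order is at least $|S|$. I claim the order is in fact at least $|S|+1$. If equality held, then $(C,D\cup S)$ would be an isolator of $(V(L),S)$. It cannot be $(V(L),S)$ itself because $u\notin C$, so it would have to be $(X_L,V(L))$ with $|S|=5$. In that case the two vertices of $S\setminus T\subseteq C=X_L$ are roots and $T$ consists of non-roots. All roots then lie in $D\cup S$, and the roots in $S$ lie in $D$, so $|C\cap D|=5$, a contradiction.

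Therefore $|C\cap D|+|S\setminus D|\ge |T|+3$. Together with $|C\cap D|\le 4$ and $|S\setminus D|\le |T|-1$, every inequality must be tight. This gives $|C\cap D|=4$, $|T\setminus C|=1$, $|T\setminus D|=|T|-1$, and $T\cap C\cap D=\emptyset$, and we already have $S\setminus T\subset C\cap D$.

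\emph{Linkedness and the final claim.} Let $(C',D')$ be any isolator of $(C,D)$. Then $u\in D\setminus C\subseteq D'\setminus C'$, so $S\not\subseteq C'$, and the part already proved gives $|C'\cap D'|=4$; hence $(C,D)$ is linked to the roots. For the last claim, let $K$ be the component of $L[D\setminus C]$ containing $u$, and let $N$ be the set of vertices of $C\cap D$ with a neighbour in $K$. Then $(V(L)\setminus V(K),V(K)\cup N)$ is a root separation of order $|N|$ with $u$ outside its left side. The first part forces $|N|=4$, so $N=C\cap D$.

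The step I expect to need the most care is excluding the equality case via strong linkedness, in particular the sub-case where the isolator is $(X_L,V(L))$; the rest is bookkeeping.
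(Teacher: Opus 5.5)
Your proof is correct and follows essentially the paper's route: you combine $(C,D)$ with $(V(L),S)$ via Observation~\ref{obs-submod}, use the split adjacencies to force $S\setminus C$ and $S\setminus D$ into $T$, and use strong linkedness to push the order of $(C,D\cup S)$ (the same separation as the paper's $(C,D\cup T)$) to at least $|S|+1$. Tightness then gives the counts, and reapplying the first part yields linkedness and the neighbour claim. The differences are cosmetic: you get $S\setminus T\subseteq C$ by applying strong linkedness to $(C,D)$ itself rather than first producing a vertex of $S\setminus D$, and you exclude the isolator $(X_L,V(L))$ by showing it would force $|C\cap D|=5$, whereas the paper uses the count $|S\cap C|\ge 3>|S\cap X_L|$.
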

\begin{proof}
Since $S\not\subseteq C$, there exists a vertex $v\in S\setminus C$.
Since the root separation $(V(L),S)$ is strongly linked to the roots, there also exists a vertex $v'\in S\setminus D$.  Since $vv'\not\in E(L)$, $v\not\in X_L$,
and $L[S]$ is a supergraph of the $(L,S,T)$-split, it follows that $v,v'\in T$.  Moreover, all vertices in $S\setminus T$
are adjacent to $v$ and either belong to $X_L$ or are adjacent to $v'$, and thus $S\setminus T\subset C\cap D$.

Let $D_1=D\cup T$.  Then $(C,D_1)$ is a root separation of $L$ of order $|C\cap D|+|T\setminus D|$
and $S\subseteq D_1$.  Note that $(C,D_1)\neq(X_L,V(L))$, since
$|S\cap C|\ge |(S\setminus T)\cup \{v'\}|=3$ and $|S\cap X_L|\le 2$.
Moreover, $(C,D_1)\neq (V(L),S)$, since $v\in D_1\setminus C$.
Since the root separation $(V(L),S)$ is strongly linked to the roots,
it follows that $(C,D_1)$ has order at least $|S|+1$.
Since $|S|=|T|+2$ and $|T\setminus D|\le |T\setminus\{v\}|=|T|-1$, we have
$$|S|+1\le |C\cap D|+|T\setminus D|\le |C\cap D|+|T|-1=|C\cap D|+|S|-3.$$
It follows that $|C\cap D|=4$ and the inequality is tight,
implying that $|T\setminus D|=|T|-1$.

Since each isolator $(C',D')$ of $(C,D)$ satisfies $S\not\subseteq C'$,
this also shows that $|C'\cap D'|=4$, and thus $(C,D)$ is linked to the roots.
Moreover, each vertex $z\in C\cap D$ has a neighbor in the component $K$ of $L[D\setminus C]$ containing the unique vertex $v\in T\setminus C$,
as otherwise $(C'',D'')=(V(L)\setminus V(K), V(K)\cup (C\cap D\setminus\{z\}))$ would be a root separation
of $L$ of order three with $v\in D''\setminus C''$.
\end{proof}

\begin{figure}
\begin{center}
\begin{asy}
v[0] = (0,0);
v[1] = (2.5,2);
v[2] = v[1] + (0.5,0.5);

v[3] = (1.7,-0.55);
v[4] = v[3] + (0.5,-0.7);

v[5] = (0.6,-2.2);

filldraw (shift (v[1]) * rotate (30) * ellipse ((0,0),2,1), fillpen=interp(blue, white, 0.9));
filldraw (shift (v[3]) * rotate (-50) * ellipse ((0,0),2,1), fillpen=interp(red, white, 0.8) + opacity(0.5));
filldraw (circle (v[5], 0.25), fillpen=interp(green, white, 0.9));
vertex (v[5]);
label ("$M_c=\{c\}$", v[5], 1.4NW);

draw (ellipse (v[0],2,3));
label ("$M$", v[0] + (0, 2));

label ("$N_a$", v[1]);
label ("$N_b$", v[3]+(0.6,0));

vertex (v[2]);
label ("$a$", v[2],NE);
vertex (v[4]);
label ("$b$", v[4],SE);

v[14] = (0.2,0.5);
draw (circle (v[14],1), dashed+red);
label ("$X_L$", v[14], red);
\end{asy}
\end{center}
\caption{An $\{a,b,c\}$-partition of a rooted graph.}\label{fig-tpartition}
\end{figure}
We now aim to show that in the situation of Observation~\ref{obs-splitcut},
we can actually decompose $L$ into a main part and $|T|$ almost disjoint parts
around the vertices of $T$ cut off by root separations of order four.
More precisely (see Figure~\ref{fig-tpartition} for an illustration), let $M$ be a subset of $V(L)$,
and for each $v\in T$, let $N_v$ be a subset of $V(L)$.  We say that $(M,\{N_v:v\in T\})$ is a \emph{$T$-partition of $L$}
if
\begin{itemize}
\item $\bigl(M,\bigcup_{v\in T} N_v\bigr)$ is a root separation of $L$,
\item for each $v\in T$, we have $T\cap N_v=\{v\}$, and
\item the sets $N_v\setminus M$ for distinct $v\in T$ are pairwise disjoint and $L$ does not contain any edges between them.
\end{itemize}
We say that the $T$-partition \emph{$S$-captures root $(\le\!4)$-separations} of $L$ if for every vertex $v\in T$,
\begin{itemize}
\item if every root separation $(C,D)$ of $L$ of order at most four satisfies $v\in C$, then $N_v=\{v\}\subset M$;
\item otherwise,
\begin{itemize}
\item $v\in N_v\setminus M$, $|M\cap N_v|=4$, $S\setminus T\subset M\cap N_v$, 
the root separation $(V(L)\setminus (N_v\setminus M), N_v)$ is linked to the roots, and the graph $L[N_v\setminus M]$ is connected, and
\item every root separation $(C,D)$ of $L$ of order at most four such that $v\in D\setminus C$
satisfies $C\cap D\subseteq N_v$, and if the graph $L[D\setminus C]$ is connected, then $D\setminus C\subseteq N_v\setminus M$.
\end{itemize}
\end{itemize}

\begin{lemma}\label{lemma-tricut}
Let $L$ be a 5-rooted graph and let $T\subset S$ be sets of vertices of $L$ such that $2\le |T|\le 3$,
$|S|=|T|+2$, $|S\cap X_L|\le 2$, and the root separation $(V(L),S)$ of $L$ is strongly linked to the roots.
If the $(L,S,T)$-split is a subgraph of $L$, then there exists a $T$-partition $(M,\{N_v:v\in T\})$ which $S$-captures root $(\le\!4)$-separations of $L$.
\end{lemma}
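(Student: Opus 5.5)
For each $v\in T$ we build $N_v$ separately and then check that the pieces fit together into a $T$-partition. If every root separation of $L$ of order at most four has $v$ on the left side, set $N_v=\{v\}$. Otherwise, Observation~\ref{obs-splitcut} says that every root separation $(C,D)$ of order at most four with $S\not\subseteq C$ has the following properties:
\begin{itemize}
\item it has order exactly four;
\item $S\setminus T\subset C\cap D$ and $|T\setminus C|=1$;
\item it is linked to the roots;
\item every vertex of $C\cap D$ has a neighbour in the component of $L[D\setminus C]$ containing the unique vertex of $T\setminus C$.
\end{itemize}
Call such a separation \emph{$v$-relevant} if $T\setminus C=\{v\}$. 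By restricting the right-hand side to the component $K$ of $L[D\setminus C]$ containing $v$, we may assume $D\setminus C=V(K)$ is connected.

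Among the $v$-relevant separations with connected right part, choose $(C_v,D_v)$ with $D_v$ maximal, and set $N_v=D_v$. The core step is an uncrossing argument based on Observation~\ref{obs-submod}, giving maximal elements that dominate all others.

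Let $(C,D)$ be any $v$-relevant separation. Then $(C\cap C_v, D\cup D_v)$ and $(C\cup C_v, D\cap D_v)$ are root separations whose orders sum to $8$. The second of these still has $v$ on the right, so $S\not\subseteq C\cup C_v$, and by Observation~\ref{obs-splitcut} its order is at least $4$. The first has $v\in D\cup D_v$ and is likewise a separation with $S$ not on the left, so its order is at least four by Observation~\ref{obs-splitcut}, or by linkedness. Hence both orders equal four, and $(C\cap C_v, D\cup D_v)$ is $v$-relevant. Restricting it to the component containing $v$ and invoking the maximality of $D_v$ gives $D\setminus C\subseteq D_v\setminus C_v$ when $D\setminus C$ is connected. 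To get $C\cap D\subseteq N_v$, use that every vertex of $C\cap D$ has a neighbour in the component containing $v$, which lies inside $D_v\setminus C_v$. Such a vertex therefore lies in $D_v$, since $(C_v,D_v)$ is a separation. This gives the capture property. The remaining listed properties are immediate:
\begin{itemize}
\item $|M\cap N_v|=4$, $S\setminus T\subset M\cap N_v$, and $L[N_v\setminus M]$ is connected, all from the construction;
\item linkedness to the roots, from Observation~\ref{obs-splitcut}.
\end{itemize}

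Next, set $M=V(L)\setminus\bigcup_{v}(N_v\setminus C_v)$, and verify that $M\cap N_v=C_v\cap D_v$ and that $(M,\bigcup N_v)$ is a root separation.

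The step I expect to be the main obstacle is disjointness: that the sets $N_u\setminus C_u$ and $N_v\setminus C_v$ are disjoint and have no edges between them for $u\neq v$. The plan is to uncross again. Assume they meet or are joined by an edge. Consider $(C_u\cup C_v,\, D_u\cap D_v)$ and $(C_u\cap C_v,\, D_u\cup D_v)$. The latter has both $u$ and $v$ on the right side. Observation~\ref{obs-splitcut} requires $|T\setminus C|=1$ for any separation of order at most four with $S$ not on the left, so this separation must have order at least five. Consequently the former has order at most three. Since $v\notin C_u$ but $v\in C_v$, its right part $D_u\cap D_v$ avoids $T$, so it is a separation of order at most three with $S$ on the left. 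If $D_u\cap D_v$ contains a vertex outside $C_u\cup C_v$, it contradicts either the adjacency property of Observation~\ref{obs-splitcut} or the connectivity forced by strong linkage of $(V(L),S)$. Making this last contradiction precise, via the neighbour property for vertices of $C_u\cap D_u$ applied to a common vertex or edge, is where the bookkeeping is delicate. I would handle it by showing that such a vertex yields a root separation of order at most three cutting off part of the component of $u$, which contradicts the last clause of Observation~\ref{obs-splitcut}.
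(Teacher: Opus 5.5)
Your overall structure matches the paper's. You take per-vertex maximal $v$-relevant separations, uncross an arbitrary $(C,D)$ with $(C_v,D_v)$ to get the capture property, and note that $(C_u\cup C_v,D_u\cap D_v)$ has order at most three. The capture part works once you spell out that the restriction of $(C\cap C_v,D\cup D_v)$ to the component of $v$ has right side containing $D_v$, which follows from the adjacency clause. The gap is the disjointness step, which is the heart of the lemma: you do not prove it, and the route you sketch does not work.

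Nothing in the hypotheses excludes a root separation of order at most three with $S$ on the left and a non-empty right side. Observation~\ref{obs-splitcut} only concerns separations with $S\not\subseteq C$, strong linkedness of $(V(L),S)$ only concerns separations with $S$ on the right, and $L$ is not assumed internally $4$-connected. So a vertex of $(D_u\cap D_v)\setminus(C_u\cup C_v)$ is not contradictory by itself. Worse, $K_u=D_u\setminus C_u$ and $K_v=D_v\setminus C_v$ may be disjoint but joined by an edge $ab$ with $a\in K_u\cap C_v\cap D_v$ and $b\in K_v\cap C_u\cap D_u$. In that case $(D_u\cap D_v)\setminus(C_u\cup C_v)=K_u\cap K_v=\emptyset$, and your plan has nothing to start from. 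You also have the memberships reversed: in fact $v\in C_u\setminus D_u$ and $v\notin C_v$.

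The missing idea is to count the separator rather than look at the right side. It is the disjoint union
\[(C_u\cup C_v)\cap D_u\cap D_v=(C_u\cap D_u\cap C_v\cap D_v)\cup(K_u\cap C_v\cap D_v)\cup(K_v\cap C_u\cap D_u),\]
and the first part contains the two vertices of $S\setminus T$. Since the order is at most three, one of the other two parts is empty, say $K_u\cap C_v\cap D_v$. Then:
\begin{itemize}
\item $K_u$ is connected, contains $u\in C_v\setminus D_v$, and misses $C_v\cap D_v$, so $K_u\subseteq C_v\setminus D_v$;
\item because $(C_v,D_v)$ is a separation, $K_u$ and $K_v$ are disjoint with no edge between them, which covers the common-vertex and edge cases at once;
\item with the adjacency clause of Observation~\ref{obs-splitcut}, this also gives $K_v\cap C_u\cap D_u=\emptyset$, hence $M\cap N_v=C_v\cap D_v$.
\end{itemize}
This is exactly how the paper closes the argument.
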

\begin{proof}
For each vertex $v\in T$:
\begin{itemize}
\item If $v\in C$ holds for every root separation $(C,D)$ of $L$ of order at most four, then let $N_v=\{v\}$ and $M_v=V(L)$.
\item Otherwise, let $(M'_v,N'_v)$ be a root separation of $L$ of order at most four such that $v\in N'_v\setminus M'_v$,
chosen with $N'_v$ maximal and subject to that with $M'_v$ minimal.  By Observation~\ref{obs-splitcut}, we have $|M'_v\cap N'_v|=4$,
$(M'_v,N'_v)$ is linked to the roots, $S\setminus T\subset M'_v\cap N'_v$, and $T\setminus \{v\}\subset M'_v\setminus N'_v$.
Let $K_v$ be the component of $L[N'_v\setminus M'_v]$ containing $v$, and let $(M_v,N_v)=(V(L)\setminus V(K_v),V(K_v)\cup (M'_v\cap N'_v))$.
Then $(M_v,N_v)$ is a root separation of $L$ such that $M_v\cap N_v=M'_v\cap N'_v$, $M'_v\subseteq M_v$, and $N_v\subseteq N'_v$,
and thus the root separation $(M_v,N_v)$ has order four, is linked to the roots, $S\setminus T\subset M_v\cap N_v$, and $T\setminus \{v\}\subset M_v\setminus N_v$.
\end{itemize}

\begin{figure}
\begin{center}
\begin{asy}
v[0] = (0,0);
v[1] = (2,0);
v[2] = (3,0);
v[3] = (0,-2);
v[4] = (2,-2);
v[5] = (0,-3);
v[6] = (3,-3);
v[7] = (2,-5);
v[8] = (3,-5);
v[9] = (5,0);
v[10] = (5,-5);
v[11] = (2, -3);
v[12] = (0,-5);
v[13] = (5,-2);
v[14] = (5,-3);

fill(v[7]--v[4]--v[13]--v[14]--v[6]--v[8]--cycle, interp(blue, white, 0.9));
fill(v[6]--v[14]--v[10]--v[8]--cycle, interp(red, white, 0.9));

draw(L="$M_v$", g=(v[2]+(0,0.3)) -- (v[0]+(0,0.3)), arrow=Arrow, align=N);
draw(L="$N_v$", g=(v[1]+(0,0.1)) -- (v[9]+(0,0.1)), arrow=Arrow, align=N);
draw(L="$M_u$", g=(v[5]+(-0.3,0)) -- (v[0]+(-0.3,0)), arrow=Arrow, align=W);
draw(L="$N_u$", g=(v[3]+(-0.1,0)) -- (v[12]+(-0.1,0)), arrow=Arrow, align=W);

draw(v[1] -- v[7]);
draw(v[2] -- v[8]);
draw(v[3] -- v[13]);
draw(v[5] -- v[14]);

v[15] = (1.5,-1.5);
draw (circle (v[15],1), dashed+red);
label ("$X_L$", v[15], red);

label ("$\emptyset$", interp (v[6],v[7],0.5));
label ("$\emptyset$", interp (v[8],v[14],0.5));
\end{asy}
\end{center}
\caption{The separations from the proof of Lemma~\ref{lemma-tricut}.  The set $C_2\cap D_2$ is shown in blue, the set $D_2\setminus C_2$ in red.}\label{fig-splitcross}
\end{figure}

Let us now consider distinct vertices $u,v\in T$ such that $(M_u,N_u)\neq(V(L),\{u\})$ and $(M_v,N_v)\neq(V(L),\{v\})$,
see Figure~\ref{fig-splitcross}.
Then $(C_1,D_1)=(M_u\cap M_v,N_u\cup N_v)$ is a root separation of $L$ such that $u,v\in D_1\setminus C_1$,
and thus Observation~\ref{obs-splitcut} implies that the order of this separation is at least five.  By Observation~\ref{obs-submod},
the root separation $(C_2,D_2)=(M_u\cup M_v,N_u\cap N_v)$ has order at most three.  Since
$S\setminus T\subseteq M_u\cap N_u\cap M_v\cap N_v$ and $|S\setminus T|=2$,
we have
\begin{align*}
3&\ge |C_2\cap D_2|\\
&=|M_u\cap N_u\cap M_v\cap N_v|+|C_2\cap D_2\setminus M_u|+|C_2\cap D_2\setminus M_v|\\
&\ge 2+|C_2\cap D_2\setminus M_u|+|C_2\cap D_2\setminus M_v|.
\end{align*}
By symmetry, we can assume that $C_2\cap D_2\setminus M_u=\emptyset$.  Note that $C_2\cap D_2\setminus M_u=(M_v\cap N_v)\setminus M_u$,
and thus the graph $L[N_u\setminus M_u]$ does not have any edge with one end in $M_v\setminus N_v$ and the other end in $N_v$.
Since $u\in N_u\setminus (M_u\cup N_v)$ and the graph $L[N_u\setminus M_u]=K_u$ is connected, it follows
that $(N_u\setminus M_u)\cap N_v=\emptyset$.
Consequently, the sets $N_u\setminus M_u$ and $N_v\setminus M_v$ are disjoint and 
$L$ does not contain any edge between them.  This is also clearly the case if
$(M_u,N_u)=(V(L),\{u\})$ or $(M_v,N_v)=(V(L),\{v\})$.
This implies that $(M,\{N_v:v\in T\})$, where $M=\bigcap_{z\in T} M_z$,
is a $T$-partition of $L$.

Let us now consider any vertex $v\in T$ and any root separation $(C,D)$ of $L$ of
order at most four such that $v\in D\setminus C$.  Since such a root separation exists, we have $(M_v,N_v)\neq (V(L),\{v\})$.
Let $(C_3,D_3)=(M'_v\cap C,N'_v\cup D)$ and $(C_4,D_4)=(M'_v\cup C,N'_v\cap D)$.  Note that $(C_3,D_3)$ and $(C_4,D_4)$ are root separations of $L$
and that $v\in D_3\setminus C_3$ and $v\in D_4\setminus C_4$.  Hence, Observation~\ref{obs-splitcut} implies that $|C_3\cap D_3|\ge 4$ and $|C_4\cap D_4|\ge 4$.
On the other hand, by Observation~\ref{obs-submod}, we have $|C_3\cap D_3|+|C_4\cap D_4|=|M'_v\cap N'_v|+|C\cap D|\le 8$,
and thus $|C_3\cap D_3|=|C_4\cap D_4|=4$.

Since $C_3\subseteq M'_v$ and $N'_v\subseteq D_3$, the maximality of $N'_v$ and the minimality of $M'_v$ in the choice of the root separation $(M'_v,N'_v)$ implies that $D_3=N'_v$ and $C_3=M'_v$,
and thus $M'_v\subseteq C$ and $D\subseteq N'_v$.  Let $K$ be the component of $L[D\setminus C]$ containing $v$.
Then $K$ is a connected subgraph of $L[N'_v\setminus M'_v]$ containing $v$, and thus $K\subseteq K_v$.
By Observation~\ref{obs-splitcut}, each vertex of $C\cap D$ has a neighbor in $V(K)\subseteq V(K_v)=N_v\setminus M_v$,
and thus $C\cap D\subseteq N_v$.

Therefore, the $T$-partition $(M,\{N_v:v\in T\})$ $S$-captures root $(\le\!4)$-separations of $L$.
\end{proof}

For a root separation $(A,B)$ of a rooted graph $G$ and a set $T\subset A\cap B$ of size at most three, we say that a $(G,A\cap B,T)$-split $H$ is
\emph{$(A,B)$-profitable} when
$$|E(H)\setminus E(G)|\ge \begin{cases}
\rho_4(R_{A,B})+2&\text{if $|T\setminus X_G|=3$}\\
\rho_4(R_{A,B})&\text{if $|T\setminus X_G|\le 2$.}
\end{cases}$$
With Lemma~\ref{lemma-tricut}, it is relatively simple to exclude profitable splits.
\begin{lemma}\label{lemma-nocotri}
Let $G$ be a minimal counterexample, let $(A,B)$ be a proper right-linked root separation of $G$ of order four or five strongly linked
to the roots such that $|A\cap B\cap X_G|\le 2$, let $T\subset A\cap B$ be a set of size $|A\cap B|-2$,
and let $H$ be a $(G,A\cap B,T)$-split.  If $H$ is $(A,B)$-profitable, then $H$ is not an $\id$-rooted minor of $R_{A,B}$.
\end{lemma}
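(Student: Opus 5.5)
Suppose for a contradiction that the $(A,B)$-profitable split $H$ is an $\id$-rooted minor of $R_{A,B}$, and let $L=L_{A,B}\cup H$. We will show that, after local surgery around the vertices of $T$, $L$ yields a smaller counterexample. Let $S=A\cap B$. Since $(A,B)$ is strongly linked to the roots, the root separation $(V(L),S)$ of $L$ is strongly linked to the roots: any isolator in $L$ lifts to one in $G$ by adding $B$. Moreover $|S\cap X_L|\le 2$, $2\le |T|\le 3$, $|S|=|T|+2$, and $L$ contains the $(L,S,T)$-split. So Lemma~\ref{lemma-tricut} applies and gives a $T$-partition $(M,\{N_v:v\in T\})$ of $L$ that $S$-captures root $(\le\!4)$-separations. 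By definition of profitability,
$$\rho_4(L)=\rho_4(G)-\rho_4(R_{A,B})+|E(H)\setminus E(G)|\ge\rho_4(G)+2$$
if $|T\setminus X_G|=3$, and $\rho_4(L)\ge\rho_4(G)$ otherwise. Also $n(L)<n(G)$ because $(A,B)$ is proper.

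The plan is to replace each nontrivial part $N_v$ (those with $v\in N_v\setminus M$) by something small, and show the result is 4-light, has 4-density at least $\rho_4(G)$, and has the same census as (or is a minor of) $G$. For each such $v$, the root separation $(V(L)\setminus(N_v\setminus M),N_v)$ has order four and is linked to the roots. We first relate its right-hand side to a separation of $G$. Since $v\notin X_G$ is a vertex of $S$, we have $S\setminus\{v\}\subseteq M$. Hence, as in Claim~\ref{cl-separof}, setting $M'_v=(V(L)\setminus(N_v\setminus M))\cup B$ gives a root separation $(M'_v,N_v)$ of $G$ of order five. Its 4-density exceeds that of the $L$-side by the lost edges from $v$, roughly
$$\rho_4(R^L)=\rho_4(R^G_{M'_v,N_v})+(\text{number of neighbours of }v\text{ in }M\cap N_v)-4.$$
The separation $(M'_v,N_v)$ is exposed by Lemma~\ref{lemma-showexposed}, using that $(A,B)$ is right-linked. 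Therefore Corollaries~\ref{cor-noK4plus} and \ref{cor-neic2} bound $\rho_4(R^G_{M'_v,N_v})\le 3$ and give lower bounds on $k^G(M'_v\cap N_v)$.

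Next, if $R^L_{N_v}$ is not 4-light, apply Corollary~\ref{cor-k4} to a minimal offending separation (which lies inside $N_v$ by the capturing property). It is either $K_\star$-universal or $K^-_\star$-universal with 4-density one. We contract it to its torso, respectively apply the 2-twin reduction, exactly as in the proof of Lemma~\ref{lemma-noK4plus}. Lemma~\ref{lemma-2twin} and the capturing property ensure that no new non-4-light separations of order at most four arise, and Observation~\ref{obs-samecensus} preserves the census. Call the resulting graph $L'$. It is 4-light, since every root $(\le\!4)$-separation either has $S\subseteq C$ and lifts to $G$, or sits inside a processed $N_v$. Also $n(L')<n(G)$ and $L'$ has no more rooted minors than $G$, so any flaw $F$ of $G$ is not an $\id$-rooted minor of $L'$.

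The main obstacle is the accounting: showing $\rho_4(L')\ge\rho_4(G)$. Each part $N_v$ can cost up to about one unit of density, as a torso or twin reduction, offset by the $k^G$ gain from adding clique edges on $M\cap N_v$, as in the computation of Claim~\ref{cl-L1heavy}. With three parts, the possible loss is up to the extra $+2$ built into the profitability threshold for $|T\setminus X_G|=3$, so the bound must be checked per part. For this I would first try to prove that each nontrivial $N_v$ loses at most one unit of 4-density, and loses nothing when $|T\setminus X_G|\le 2$. The latter should follow because a root in $T$ gives a trivial part, and the remaining at most two parts must be handled via the bounds $k^G\ge 4$ (with the $C_4+K_1$ case) from Corollary~\ref{cor-neic2}. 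Granting $\rho_4(L')\ge\rho_4(G)$, $F$ lies in the target of $L'$, so $L'$ is a smaller counterexample, contradicting the minimality of $G$.
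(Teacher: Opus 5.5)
Your proposal has a genuine gap, and it sits at the step you grant, $\rho_4(L')\ge\rho_4(G)$. That inequality is essentially the whole content of the lemma, and the mechanism you propose cannot deliver it. A 2-twin reduction of a $K^-_\star$-universal separation of 4-density one costs a full unit of 4-density, with nothing to offset it. Profitability, however, gives no slack at all when $|T\setminus X_G|\le 2$. Examples are order four with both vertices of $T$ non-roots, or $|T|=3$ with one root and two nontrivial parts; so "a root in $T$ gives a trivial part" does not rescue this case. Even "at most one unit lost per part" is the wrong target: three parts could lose $3$ against a slack of $2$, and two parts could lose $2$ against a slack of $0$. What you need is a strict gain from at least one part, which twin reductions never provide. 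A secondary point: twin reductions are not minors of $G$, so you would also have to redo the linkedness setup of Lemma~\ref{lemma-2twin} for each part.

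The missing idea is that no twin reduction is ever needed here. The two vertices $x,y$ of $S\setminus T$ lie in $M_v\cap N_v$, and $xy\in E(H)\subseteq E(L)$ unless both are roots. Take a non-4-light separation $(C',D')$ of order at most four with $v\in D'\setminus C'$ and $D'\setminus C'$ connected. Then $D'\subseteq N_v$, the graph $R^L_{C',D'}$ is 4-light, and Corollary~\ref{cor-k4} makes $(C',D')$ $K^-_\star$-universal. Push this through the suffixes inside $N_v$ of a root linkage for $(C',D')$, choosing the missing pair to be the one linked to $x$ and $y$. This shows that $K^L_{M_v\cap N_v}$ is an $\id$-rooted minor of $R^L_{M_v,N_v}$.

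So you contract each offending part $R^L_{M_v,N_v}$ wholesale to the clique $K^L_{M_v\cap N_v}$. The result is an honest $\id$-rooted minor of $G$, so Observation~\ref{obs-lightminor} applies directly. Its 4-lightness follows by lifting each small separation back to $L$ with all contracted $v$ on the left.

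For the accounting, let $d$ be the number of neighbours of $v$ in $M_v\cap N_v\setminus(S\setminus T)$. You then have $\rho_4(R^L_{M_v,N_v})=\rho_4(R^G_{M'_v,N_v})+d-2$ and $k^L(M_v\cap N_v)\ge k^G(M'_v\cap N_v)+d-5$. Combined with Corollaries~\ref{cor-noK4plus} and~\ref{cor-neic2} (including the $C_4+K_1$ case), these give $\rho_4(R^L_{M_v,N_v})\le 2$ and $k^L(M_v\cap N_v)\ge 2\rho_4(R^L_{M_v,N_v})$.

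The cliques for different $v$ may share edges, so credit only the gain of the densest contracted part $u$. The net change is then at least $\rho_4(R^L_{M_u,N_u})-\sum_{v\neq u}\rho_4(R^L_{M_v,N_v})$. This is $\ge 0$ with at most two contracted parts and $\ge -2$ with three. Three contracted parts force $|T\setminus X_G|=3$, which is exactly when profitability supplies the extra $+2$. This matching of the two profitability thresholds is what closes the proof, and it is absent from your proposal.
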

\begin{proof}
Suppose for a contradiction that $H$ is an $\id$-rooted minor of $R_{A,B}$, and let $L=L_{A,B}\cup H$
be the corresponding $\id$-rooted minor of $G$.  Since $H$ is $(A,B)$-profitable, we have
\begin{equation}\label{eq-prof}
\rho_4(L)\ge \begin{cases}
\rho_4(G)+2&\text{if $|T\setminus X_G|=3$}\\
\rho_4(G)&\text{if $|T\setminus X_G|\le 2$.}
\end{cases}
\end{equation}
Note that the 5-rooted graph $L$ and the sets $S=A\cap B$ and $T$
satisfy the assumptions of Lemma~\ref{lemma-tricut}; let $(M,\{N_v:v\in T\})$ be the $T$-partition which $S$-captures $(\le\!4)$-separations of $L$.
For every $v\in T$, let $M_v=V(L)\setminus (N_v\setminus M)$, so that $(M_v,N_v)$ is
a root separation of $L$ linked to the roots.
Let $T'=\{v\in T: N_v\neq \{v\}\}$.  For every $v\in T'$, we have $v\in N_v\setminus M$, and thus $T'\cap X_G=\emptyset$.
Let us now constrain the non-4-light root separations of $L$.

\begin{claim}\label{cl-coodi}
For every non-4-light root separation $(C,D)$ of $L$ of order at most four, there exists a unique vertex $v\in T'$ such that $v\in D\setminus C$,
and moreover in that case $K^L_{M_v\cap N_v}$ is an $\id$-rooted minor of $R^L_{M_v,N_v}$.
\end{claim}
\begin{subproof}
We cannot have $S\subseteq C$, since otherwise the root separation $(C,D)$ of $L$ would have the same strictly right-hand side
as the root separation $(C\cup B,D)$ of $G$, and the counterexample $G$ is $4$-light.
By Observation~\ref{obs-splitcut}, it follows that $|C\cap D|=4$, the root separation $(C,D)$ is linked to the roots, and there exists a vertex $v\in T$ such that 
$S\setminus C=\{v\}$.

Let $K$ be the component of $G[D\setminus C]$ containing $v$ and let $(C',D')=(V(L)\setminus V(K), V(K)\cup (C\cap D))$ and $(C'',D'')=(C\cup V(K),D\setminus V(K))$.
Then both $(C',D')$ and $(C'',D'')$ are root separations of $L$ of order four linked to the roots, since $C'\cap D'=C''\cap D''=C\cap D$. 
Since $S\subseteq C''$, the argument from the previous paragraph shows that the root
separation $(C'',D'')$ of $L$ is $4$-light, and thus
$$\rho_4(R^L_{C',D'})=\rho_4(R^L_{C,D})-\rho_4(R^L_{C'',D''})>0.$$
Moreover, note that since $v\in D'\setminus C'$, the graph $L[D'\setminus C']=K$ is connected, and the $T$-partition $(M,\{N_u:u\in T\})$ $S$-captures $(\le\!4)$-separations of $L$,
we have $v\in T'$ and $D'\subseteq N_v$.

Consider any root separation $(I,J)$ of the 4-rooted graph $R^L_{C',D'}$ of order at most three.
Since $(C'\cup I,J)$ is a root separation of $L$ of order $|I\cap J|\le 3$, Observation~\ref{obs-splitcut} implies that $S\subseteq C'\cup I$,
and thus $(C'\cup I\cup B,J)$ is a root separation of $G$ with strictly right-hand side $\widetilde{R}^L_{I,J}$.
Since $G$ is 4-light, this implies that the root separation $(I,J)$ of $R^L_{C',D'}$ is 4-light.
Consequently the 4-rooted graph $R^L_{C',D'}$ is 4-light.  Since $\rho_4(R^L_{C',D'})>0$, Corollary~\ref{cor-k4} implies that
the root separation $(C',D')$ of $L$ is $K^-_\star$-universal.

Recall that $(C',D')$ is linked to the roots in $L$.  Let $\PP$ be a root linkage for $(C',D')$.
Moreover, recall that $D'\subseteq N_v$, and thus the terminators of $\PP$ are contained in $N_v$.
For each $z\in C'\cap D'$, let us consider the path of $\PP$ with terminator $z$, and let $P_z$ be the longest suffix
of this path contained in $N_v$; clearly $P_z$ starts in a vertex of $M_v\cap N_v$.
Moreover, since $|M_v\cap N_v|=|C'\cap D'|$, one of these suffixes starts in each vertex of $M_v\cap N_v$.
Since the $T$-partition $(M,\{N_v:v\in T\})$ $S$-captures $(\le\!4)$-separations of $L$, the two vertices $x$ and $y$ of $S\setminus T$ belong to $M_v\cap N_v$;
let $x',y'\in C'\cap D'$ be the vertices such that $P_{x'}$ starts in $x$ and $P_{y'}$ starts in $y$.  Since the root separation $(C',D')$ of $L$ is $K^-_\star$-universal,
we can contract $R^L_{C',D'}$ to its $\id$-rooted minor containing all edges between vertices of $C'\cap D'$ except for the edge $x'y'$.
By further contracting the paths $P_z$ for $z\in C'\cap D'$, we conclude that $R^L_{M_v,N_v}$ has an $\id$-rooted minor containing all edges between the vertices of $M_v\cap N_v$
except possibly for the edge $xy$.  However, if $\{x,y\}\not\subseteq X_G$, then $xy$ is an edge of the $(G,A\cap B,T)$-split $H$, and thus $xy\in E(L)$.
It follows that $K^L_{M_v\cap N_v}$ is an $\id$-rooted minor of $R^L_{M_v,N_v}$.
\end{subproof}

Let us now consider how contracting $R^L_{M_v,N_v}$ to $K^L_{M_v\cap N_v}$ affects the $4$-density.
\begin{claim}\label{cl-contrdens}
For every vertex $v\in T'$, we have $\rho_4(R^L_{M_v,N_v})\le 2$ and $k^L(M_v\cap N_v)\ge 2\rho_4(R^L_{M_v,N_v})$.
\end{claim}
\begin{subproof}
Note that $v$ is adjacent in $H\subset L$ to both vertices of $S\setminus T$, which are contained in $M_v\cap N_v$.
Let $d$ be the number of neighbors of $v$ among the two vertices of $M_v\cap N_v\setminus (S\setminus T)$,
which is the same in $G$ and in $L$. Thus, $d+2$ neighbors of $v$ in $L$ are contained in $M_v\cap N_v$.
Let $M'_v=M_v\cup B$, so that $(M'_v,N_v)$ is a root separation of $G$ of
order five, and observe that
\begin{equation}\label{eq-rmvnv}
\rho_4(R^L_{M_v,N_v})=\rho_4(R^G_{M'_v,N_v})-4+(d+2)=\rho_4(R^G_{M'_v,N_v})+d-2.
\end{equation}
Moreover, out of the $k^G(M'_v\cap N_v)$ edges of $E(K^G_{M'_v\cap N_v})\setminus E(G)$,
up to $4-d$ could be incident with $v$, and one of the remaining
ones (the one between the vertices of $S\setminus T$) could belong to $E(L)\setminus E(G)$; hence,
\begin{equation}\label{eq-kmvnv}
k^L(M_v\cap N_v)\ge k^G(M'_v\cap N_v)+d-5.
\end{equation}
Since the root separation $(M_v,N_v)$ of $L$ is linked to the roots and since the root separation $(A,B)$ of $G$ is right-linked by the assumptions,
Lemma~\ref{lemma-showexposed} implies that the root separation $(M'_v,N_v)$ of $G$ is exposed.
By Corollary~\ref{cor-noK4plus}, we have $\rho_4(R^G_{M'_v,N_v})\le 3$; let us now discuss several cases depending on the value of $\rho_4(R^G_{M'_v,N_v})$.
\begin{itemize}
\item If $\rho_4(R^G_{M'_v,N_v})=3$, then Corollary~\ref{cor-noK4plus} implies that $d\le 1$ and $k^G(M'_v\cap N_v)\ge 8$.
Hence, (\ref{eq-rmvnv}) gives
$$\rho_4(R^L_{M_v,N_v})=d+1\le 2$$
and by (\ref{eq-kmvnv}),
$$k^L(M_v\cap N_v)\ge 3+d\ge 2(1+d)=2\rho_4(R^L_{M_v,N_v}).$$
\item If $\rho_4(R^G_{M'_v,N_v})=2$, then note that $k^G(M'_v\cap N_v)\ge 7$ by Corollary~\ref{cor-noK4plus},
(\ref{eq-rmvnv}) gives
$$\rho_4(R^L_{M_v,N_v})=d\le 2,$$
and by (\ref{eq-kmvnv}),
$$k^L(M_v\cap N_v)\ge 2+d\ge 2d=2\rho_4(R^L_{M_v,N_v}).$$
\item Finally, let us consider the case that $\rho_4(R^G_{M'_v,N_v})\le 1$.
In this case, (\ref{eq-rmvnv}) gives
$$\rho_4(R^L_{M_v,N_v})=\rho_4(R^G_{M'_v,N_v})+d-2\le d-1\le 1.$$
If $\rho_4(R^L_{M_v,N_v})\le 0$, then $k^L(M_v\cap N_v)\ge 2\rho_4(R^L_{M_v,N_v})$ trivially holds.
Hence, we can assume that $\rho_4(R^L_{M_v,N_v})=1$, $d=2$, and $\rho_4(R^G_{M'_v,N_v})=1$.

To finish the proof of Claim~\ref{cl-contrdens},
suppose for a contradiction that $k^L(M_v\cap N_v)\le 2\rho_4(R^L_{M_v,N_v})-1=1$.
Then (\ref{eq-kmvnv}) gives $k^G(M'_v\cap N_v)\le 4$.
By Corollary~\ref{cor-neic2}, the graph $K^G_{M'_v\cap N_v}-E(G[M'_v\cap N_v])$
is isomorphic to $C_4+K_1$.  Since $d=2$, all edges of this graph incident with $v$
have the other end in $S\setminus T$, and thus $E(K^L_{M_v\cap N_v})\setminus E(L[M_v\cap N_v])$
can differ from $E(K^G_{M'_v\cap N_v})\setminus E(G[M'_v\cap N_v])$ only on edges between the
three vertices of $\{v\}\cup (S\setminus T)$.  Since the graph $C_4+K_1$ is triangle-free,
it follows that $k^L(M_v\cap N_v)\ge k^G(M'_v\cap N_v)-2=2$, which is a contradiction.
\end{itemize}
\end{subproof}

Let $T''$ consist of the vertices $v\in T'$ such that there exists a non-4-light root separation $(C,D)$ of $L$ of order
at most four with $v\in D\setminus C$, and for each such vertex $v$, let $K_v=K^L_{M_v\cap N_v}$ and
recall that by Claim~\ref{cl-coodi}, $K_v$ is an $\id$-rooted minor of $R^L_{M_v,N_v}$.
Let $L_1$ be the rooted graph obtained from $L$ by, for each $v\in T''$, contracting $R^L_{M_v,N_v}$ to $K_v$.
Note that Claim~\ref{cl-contrdens} implies that taken in isolation, none of these contractions decreases the 4-density;
however, the graphs $K_u$ and $K_v$ for distinct $u,v\in T''$ can share edges, and thus
simply applying Claim~\ref{cl-contrdens} to $u$ and $v$ separately would lead to overcounting.
Hence, we need a more careful analysis.

If $T''=\emptyset$, then $L_1=L$ and $\rho_4(L_1)=\rho_4(L)\ge \rho_4(G)$ by (\ref{eq-prof}).  Otherwise, let
us fix a vertex $u\in T''$ such that $\rho_4(R^L_{M_u, N_u})$ is largest among all vertices of $T''$.
Note that $T''\subseteq T'\subseteq T\setminus X_G$.
By Claim~\ref{cl-contrdens} and (\ref{eq-prof}), it follows that
\begin{align*}
\rho_4(L_1)&\ge \rho_4(L)+k^L(M_u\cap N_u)-\sum_{v\in T''} \rho_4(R^L_{M_v,N_v})\\
&\ge \rho_4(L)+\rho_4(R_{M_u,N_u})-\sum_{v\in T''\setminus \{u\}} \rho_4(R^L_{M_v,N_v})\\
&\ge\begin{cases}
\rho_4(L)&\text{ if $|T''|\le 2$}\\
\rho_4(L)-2&\text{ if $|T''|=3$}
\end{cases}\\
&\ge \rho_4(G).
\end{align*}
Let us now consider any root separation $(C,D)$ of $L_1$ of order at most four.
For any vertex $v\in T''$, since $K_v$ has non-edges only among the roots, we have $V(K_v)\subseteq C$ or $V(K_v)\subseteq D$.
Let
\begin{align*}
C'&=C\cup \bigcup_{v\in T'':V(K_v)\subseteq C} N_v\text{ and}\\
D'&=D\cup \bigcup_{v\in T'':V(K_v)\not\subseteq C} N_v;
\end{align*}
then $(C',D')$ is a root separation of $L$ satisfying $C'\cap D'=C\cap D$.
We claim that $T''\subseteq C'$.  Indeed, suppose for a contradiction that there exists a vertex $v\in T''\setminus C'$.
Then Observation~\ref{obs-splitcut} implies that $|C'\cap D'|=|C\cap D|=4$.
Moreover, since the $T$-partition $(M,\{N_u:u\in T\})$ $S$-captures root $(\le\!4)$-separations of $L$, we have $C\cap D=C'\cap D'\subseteq N_v$.
In the construction of $L_1$, we have contracted $R^L_{M_v, N_v}$ to its $\id$-rooted minor $K_v$, and thus $V(L_1)\cap N_v=V(K_v)$ has size four.
It follows that $C\cap D=V(K_v)$.  However, in that case $V(K_v)\subseteq C$ and $v\in C'$ by the choice of $C'$, which is a contradiction.

Since $T''\subseteq C'$, we have $D=D'$, and thus the root separations $(C,D)$ of $L_1$ and $(C',D')$ of $L$ have the same strictly right-hand side.
Moreover, Claim~\ref{cl-coodi} and the choice of $T''$ imply that the root separation $(C',D')$ is 4-light, and thus so is $(C,D)$.
Therefore, the $\id$-rooted minor $L_1$ of $G$ is $4$-light.
Since $\rho_4(L_1)\ge\rho_4(G)$, this contradicts Observation~\ref{obs-lightminor}.
Therefore, the $(G,A\cap B,T)$-split $H$ cannot be an $\id$-rooted minor of $R^G_{A,B}$.
\end{proof}

Let us now give several applications of this lemma.
First, let us further restrict root separations of order four, getting rid of $1$-nearly-saturated ones.
\begin{corollary}\label{cor-no4satur}
Let $G$ be a minimal counterexample and let $(A,B)$ be a proper root separation of $G$.
If $|A\cap B|=4$, then $(A,B)$ is not $1$-nearly-saturated.
\end{corollary}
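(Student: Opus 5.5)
The plan is to argue by contradiction and feed a suitable split into Lemma~\ref{lemma-nocotri}, applied in its order-four form with $|T|=2$. Suppose some proper root separation $(A,B)$ of order four is $1$-nearly-saturated, and choose one with $A$ minimal.

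\emph{Step 1: reduce to a strongly linked separation with exactly one missing edge.}
\begin{itemize}
\item Let $(C,D)$ be any isolator of $(A,B)$. By Observation~\ref{obs-starsat}, $(C,D)$ is $1$-nearly-saturated. If $|C\cap D|<4$, then $(C,D)$ is even saturated. It is proper, since $|C\cap D|<5=|X_G|$. This contradicts Lemma~\ref{lemma-nosatur}.
\item Hence every isolator has order four. Such an isolator is again a proper $1$-nearly-saturated separation; it cannot equal $(X_G,V(G))$, whose order is five. By the minimality of $A$, we get that $(A,B)$ is strongly linked to the roots.
\item By Corollary~\ref{cor-noplus1}, $|A\cap B\setminus X_G|\ge 2$, and so $|A\cap B\cap X_G|\le 2$.
\item By Lemma~\ref{lemma-nosatur}, $(A,B)$ is not saturated. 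So there is an edge $e=uv\in E(K^G_{A\cap B})$, with at least one of $u,v$ a non-root, such that $H=K^G_{A\cap B}-e$ is an $\id$-rooted minor of $R_{A,B}$.
\end{itemize}
Setting $T=\{u,v\}$, a subset of $A\cap B$ of size $|A\cap B|-2$, the graph $H$ is precisely the $(G,A\cap B,T)$-split.

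\emph{Step 2: profitability is automatic.} Since $|T\setminus X_G|\le 2$, $(A,B)$-profitability of $H$ only requires $|E(H)\setminus E(G)|\ge \rho_4(R_{A,B})$. Because $G$ is $4$-light and $(A,B)$ has order four, we have $\rho_4(R_{A,B})\le 0$, so this inequality holds trivially. Therefore, once $(A,B)$ is shown to be right-linked, Lemma~\ref{lemma-nocotri} says $H$ is not an $\id$-rooted minor of $R_{A,B}$. That contradicts Step 1.

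\emph{Step 3: right-linkedness, the main obstacle.} For every pair $\{a,b\}\ne\{u,v\}$ with a non-root end, $a$ and $b$ are adjacent in the minor $H$. So the required path in $R_{A,B}-(A\cap B\setminus\{a,b\})$ comes directly from the model of $H$. The only problematic pair is $\{u,v\}$; write $A\cap B=\{u,v,w,z\}$.
\begin{itemize}
\item If $R_{A,B}-\{w,z\}$ contains a $u$--$v$ path, we are done.
\item Otherwise, $B\setminus A$ splits into a part $B_u$ containing the branch sets for $u$ and a part $B_v$ containing those for $v$, with no edges between $B_u$ and $B_v$. This yields root separations of $G$ with cut $\{u,w,z\}$ (or a subset of it) cutting off $B_u$, and similarly with cut $\{v,w,z\}$ cutting off $B_v$.
\item At least one of $B_u,B_v$ is nonempty, since $(A,B)$ is proper.
\item If such a separation is proper, Corollary~\ref{cor-3conn} (essential $4$-connectivity) forces the cut to be independent. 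But $wz\in E(H)$ unless $w,z\in X_G$, and $uw$ or $uz$ is an edge of $H$ realized inside $B_u$. The intended conclusion is that this quickly contradicts either independence or the fact that $G$ has no proper root separations of order at most two (Lemma~\ref{lemma-noclicut}).
\end{itemize}
I expect this bookkeeping to be the delicate part. In particular one must handle the case where $B_u$ or $B_v$ is empty, so that the $u$--$v$ connection must pass through $A\cap B$; there I would use the model of $H$ together with the strong linkage of $(A,B)$ to route around. Once right-linkedness is established, Steps 1 and 2 finish the proof.
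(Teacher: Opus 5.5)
Your Steps 1 and 2 match the paper's proof: choosing $A$ minimal, getting strong linkage from Observation~\ref{obs-starsat} and Lemma~\ref{lemma-nosatur}, bounding $|A\cap B\cap X_G|\le 2$ by Corollary~\ref{cor-noplus1}, and noting that the split is profitable because $\rho_4(R_{A,B})\le 0$. The gap is Step 3. Right-linkedness is a hypothesis of Lemma~\ref{lemma-nocotri}, and your sketch does not establish it.

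The contradiction you aim for cannot come from independence of the $3$-cut or from Lemma~\ref{lemma-noclicut}. Essential $4$-connectivity gives independence in $G$, whereas $wz$, $uw$, $uz$ are edges of the minor $H$ and can be realized through $B_u$ or $B_v$. Concretely, take $w,z\in X_G$ and $u,v\notin X_G$, with $B_u=\{b_u\}$ adjacent exactly to $u,w,z$ and $B_v=\{b_v\}$ adjacent exactly to $v,w,z$. Then both $3$-cuts are independent in $G$ and there is no separation of order at most two. Yet contracting $b_uu$ and $b_vv$ gives $H=\{uw,uz,vw,vz\}$, while $R_{A,B}-\{w,z\}$ has no $u$--$v$ path. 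Your idea of handling an empty $B_u$ by ``routing around'' with strong linkage also cannot work. Right-linkedness asks for a path inside $R_{A,B}$ avoiding $w$ and $z$, and strong linkage only constrains $G[A]$.

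The missing ingredient is Lemma~\ref{lemma-nosatur}, or equivalently Corollary~\ref{cor-noplus1}, applied to the order-three separation $(A\cup B_u,\{v,w,z\}\cup B_v)$ and its mirror image.
\begin{enumerate}
\item Suppose $B_v\neq\emptyset$ but $B_u=\emptyset$. Then every neighbour of $u$ in $B$ lies in $\{w,z\}$, so any model of $H$ has $\mu(u)=\{u\}$. Hence $H-u=K^G_{\{v,w,z\}}$ is an $\id$-rooted minor of the right-hand side of this proper separation. That separation is then saturated, contradicting Lemma~\ref{lemma-nosatur}. So both $B_u$ and $B_v$ are nonempty.
\item Essential $4$-connectivity now forces $B_u=\{b_u\}$ and $B_v=\{b_v\}$, with independent neighbourhoods $\{u,w,z\}$ and $\{v,w,z\}$. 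Thus $A\cap B$ is independent, and every $\id$-rooted minor of $R_{A,B}$ on $A\cap B$ has at most four edges.
\item Since $|E(H)|=5-\binom{|A\cap B\cap X_G|}{2}$, this forces $|A\cap B\cap X_G|=2$.
\item Label so that $u\notin X_G$, which is possible since $T\not\subseteq X_G$. The cut $\{v,w,z\}$ of the proper separation cutting off $B_v$ then contains exactly one non-root, contradicting Corollary~\ref{cor-noplus1}.
\end{enumerate}
This shows $(A,B)$ is right-linked, and your Steps 1--2 then finish the argument as in the paper.
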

\begin{proof}
Suppose for a contradiction that there exists a $1$-nearly-saturated proper root separation of $G$ of order four,
and let $(A,B)$ be one with $A$ minimal.  Observation~\ref{obs-starsat} implies that $(A,B)$ is linked to the roots
and that all isolators of $(A,B)$ are $1$-nearly-saturated.  By the minimality of $A$, it follows that $(A,B)$ is strongly linked to the roots.
By Corollary~\ref{cor-noplus1}, we have $|A\cap B\cap X_G|\le 2$.
Since $(A,B)$ is $1$-nearly-saturated, there exists a set $T=\{u,v\}\subseteq A\cap B$ of size two
such that the $(G,A\cap B,T)$-split $H$ is an $\id$-rooted minor of $R_{A,B}$.
By Lemma~\ref{lemma-nosatur}, we have $T\not\subseteq X_G$.

We need to show that the root separation $(A,B)$ is right-linked.  Since $H=K^G_{A\cap B}-uv$ is an $\id$-rooted
minor of $R_{A,B}$, it suffices to show that $R_{A,B}-(A\cap B\setminus\{u,v\})$ contains a path from $u$ to $v$.
If not, then we can divide $B\setminus A$ into two disjoint (not necessarily non-empty) parts $B_u$ and $B_v$ such that $G$ does not have any edge between $B_u$ and $B_v$,
$u$ does not have any neighbors in $B_v$, and $v$ does not have any neighbors in $B_u$.  Since the root separation $(A,B)$ is proper, we can
by symmetry assume that $B_v\neq\emptyset$.  Then $(A'_v,B'_v)=(A\cup B_u, (A\cap B\setminus\{u\})\cup B_v)$ is a proper root separation of $G$ of order three,
and Lemma~\ref{lemma-nosatur} implies that $H-u=K^G_{A'_v\cap B'_v}$ is not an $\id$-rooted minor of $R_{A'_v,B'_v}$.  Since $H$ is an $\id$-rooted minor of $R_{A,B}$,
it follows that $B_u\neq\emptyset$.  By Corollary~\ref{cor-3conn}, the rooted graph $G$ is essentially $4$-connected, and thus $B_u$ and $B_v$
each consist of exactly one vertex of degree three whose neighborhood is an independent set.  Therefore, $A\cap B$ is an independent set
and every $\id$-rooted minor of $R_{A,B}$ with vertex set $A\cap B$ has at most four edges.  In particular $|E(H)|=4$, and thus $|A\cap B\cap X_G|=2$.
Since $T=\{u,v\}\not\subseteq X_G$, we can by symmetry assume that $u\not\in X_G$.  But then $|A'_v\cap B'_v\setminus X_G|=1$, which contradicts Corollary~\ref{cor-noplus1}.
It follows that the root separation $(A,B)$ is right-linked.

Finally, since $G$ is 4-light, we have $\rho_4(R_{A,B})\le 0$, and since $|T|=2$, it follows that the $(G,A\cap B,T)$-split $H$ is $(A,B)$-profitable.
However, this contradicts Lemma~\ref{lemma-nocotri}.
\end{proof}

This has the following consequence for linkedness of separations.
\begin{corollary}\label{cor-cotri-linked}
Let $G$ be a minimal counterexample and let $(A,B)$ be a proper root separation of $G$ of order five.
If there exists a set $T\subseteq A\cap B$ of size three such that the $(G,A\cap B, T)$-split $H$ is an $\id$-rooted minor of $R_{A,B}$,
then $(A,B)$ is linked to the roots.
\end{corollary}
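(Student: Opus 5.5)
Suppose for a contradiction that $(A,B)$ has an isolator $(C,D)$ of order at most four. By Corollary~\ref{cor-3conn}, $G$ is essentially $4$-connected. Since $|D|\ge 5$, the separation $(C,D)$ is proper and not of the degenerate order-three type, so $|C\cap D|=4$. The plan is to show that $(C,D)$ is $1$-nearly-saturated, which contradicts Corollary~\ref{cor-no4satur}.

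Let $\PP$ be an isolator linkage for $(C,D)$ and $(A,B)$, so $|\PP|=4$ and exactly one vertex $z\in A\cap B$ is not a terminator. Write $f(s)$ for the origin of the path of $\PP$ with terminator $s$. Since $A\cap B\cap X_G\subseteq C\cap D$ and such roots are their own (trivial) paths, every root of $A\cap B$ is a terminator. Hence $z\notin X_G$, and $z$ is adjacent in $H$ to every vertex of $A\cap B\setminus T$, and also to all other vertices if $z\notin T$. Contract $R_{A,B}$ to $H$ and then contract the paths of $\PP$. This gives, as an $\id$-rooted minor of $R_{C,D}$, the graph $H'$ on $(C\cap D)\cup\{z\}$ obtained from $H$ by relabelling terminators via $f$. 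Any edge of $K^G_{A\cap B}$ that is joined to a root in the relabelled graph stays an edge of $K^G$, since $f$ maps roots to roots. Therefore the edges of $K^G_{C\cap D}$ missing from $H'-z$ are contained in $f$ applied to the missing non-root pairs inside $T\setminus\{z\}$.

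The argument then splits into two cases. \emph{Case $z\notin T$.} Then $z$ is a non-root adjacent to all four terminators. The terminators of $T$ form at most three missing pairs forming a triangle on $f(T)$. Contracting $z$ onto one vertex $f(t)$ with $t\in T$ restores the two missing pairs at $f(t)$, so at most one edge $e$ of $K^G_{C\cap D}$ is missing. Thus $K^G_{C\cap D}-e$ is an $\id$-rooted minor of $R_{C,D}$, and $(C,D)$ is $1$-nearly-saturated. \emph{Case $z\in T$.} Then $T\setminus\{z\}$ has two vertices, so at most one pair $f(t_1)f(t_2)$ is missing in $H'-z$. Again $(C,D)$ is $1$-nearly-saturated, and deleting $z$ suffices.

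In both cases $(C,D)$ is a proper root separation of order four that is $1$-nearly-saturated, since $|D\setminus C|\ge|B\setminus A|+1\ge 1$. This contradicts Corollary~\ref{cor-no4satur}, so every isolator of $(A,B)$ has order five and $(A,B)$ is linked to the roots. The main point needing care is the bookkeeping in the first case. We must check that contracting $z$ into $f(t)$ is legitimate when $f(t)$ is a root: it is, since $z$ is a non-root, so contracting it into the branch set of a root is allowed. We must also check that the single remaining missing pair is the only discrepancy, including when some vertices of $T$ are roots, in which case those pairs are not required in $K^G$ anyway.
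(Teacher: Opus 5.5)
Your proof is correct and follows essentially the same route as the paper. Both arguments use essential $4$-connectivity to force $|C\cap D|=4$ and relabel $H$ along the isolator linkage. Both note that if the non-terminator $z$ lies in $T$, only one pair is missing; otherwise contracting $z$ into the origin of one vertex of $T$ leaves $K^G_{C\cap D}$ minus at most one edge, so in either case $(C,D)$ is $1$-nearly-saturated, contradicting Corollary~\ref{cor-no4satur}.
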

\begin{proof}
Let $(C,D)$ be an isolator for $(A,B)$, let $\PP$ be an isolator linkage for $(C,D)$ and $(A,B)$, and let $Y$ be the set of its terminators.
Let $H'$ be the graph obtained from $H$ by relabeling the terminator of each path of $\PP$ to its origin.  By contracting the paths of $\PP$,
we see that $H'$ is an $\id$-rooted minor of $R_{C,D}$.

Since $G$ is essentially 4-connected by Corollary~\ref{cor-3conn} and $A\cap B\subseteq D$ has size five, the root separation $(C,D)$
has order at least four.  Suppose for a contradiction that $|C\cap D|=4$, and let $v$ be the unique vertex of $V(H')\setminus (C\cap D)$.
By Corollary~\ref{cor-no4satur}, the root separation $(C,D)$ is not $1$-nearly-saturated, and thus $|E(K^G_{C\cap D})\setminus E(H'-v)|>1$.
It follows that $v\not\in T$.

Let $T=\{w_1,w_2,w_3\}$, and for $i\in\{1,2,3\}$, let $w'_i\in C\cap D$ be the vertex such that $P_{w'_i}$ ends in $w_i$.
Since $H$ is a $(G,A\cap B, T)$-split and $v\not\in X_G$, note that $w_i$ is adjacent to $v$ in $H$, and thus $w'_i$ is adjacent to $v$ in $H'$.
By contracting the edge $vw'_1$, we conclude that the graph $(H'-v)+\{w'_1w'_2,w'_1w'_3\}\supseteq K^G_{C\cap D}-w'_2w'_3$ is an $\id$-rooted minor of $R_{C,D}$.
It follows that the root separation $(C,D)$ is $1$-nearly-saturated, contradicting Corollary~\ref{cor-no4satur}.

Therefore, all isolators of $(A,B)$ have order five, and thus the root separation $(A,B)$ is linked to the roots.
\end{proof}

Next, we can get rid of nearly $K_{2,3}^+$-universal separations (and thus also of $K_{2,3}^+$-universal separations).

\begin{corollary}\label{cor-nonear-bistar}
Let $G$ be a minimal counterexample and let $(A,B)$ be a proper root separation of $G$ of order five.
Then $(A,B)$ is not nearly $K_{2,3}^+$-universal, and in particular $\rho_4(R_{A,B})\le 1$.
\end{corollary}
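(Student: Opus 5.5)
The plan is to argue by contradiction. We pick a nearly $K_{2,3}^+$-universal proper root separation $(A,B)$ of order five, choose a three-vertex set $T$ for which a suitable split is an $\id$-rooted minor of $R_{A,B}$, and contradict Lemma~\ref{lemma-nocotri}. The bound $\rho_4(R_{A,B})\le 1$ then follows at once: by Corollary~\ref{cor-noK4plus}, $\rho_4(R_{A,B})\le 3$, and if $\rho_4(R_{A,B})\in\{2,3\}$ then $\widetilde R_{A,B}$ is nearly $K_{2,3}^+$-universal (or $K_{2,3}^+$-universal, which is stronger), and hence so is $(A,B)$.

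\textbf{Step 1: the split is a minor.} Let $y$ be an exceptional root of $R_{A,B}$ and let $T\subset A\cap B$ be any three-element set with $y\notin T$. Place the two vertices of $A\cap B\setminus T$ as the degree-four vertices $V_4$ of $K_{2,3}^+$ and $T$ as $V_2$. Then $y$ lies in $V_4$ (or $y$ is not exceptional), so $K_{2,3}^+$ itself is an $\id$-rooted minor. Its edge set is all of $K_5$ except the triangle on $T$. Hence the $(G,A\cap B,T)$-split $H$ is an $\id$-rooted minor of $R_{A,B}$. Moreover, $(A,B)$ is $K_{1,\star}$-universal: a $z$-star is obtained by putting $z$ in $V_4$ together with $y$, or with an arbitrary partner when $z=y$. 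In particular $(A,B)$ is right-linked.

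\textbf{Step 2: reduce to a strongly linked separation.} By Corollary~\ref{cor-cotri-linked}, $(A,B)$ is linked to the roots, so all its isolators have order five. By Observation~\ref{obs-starsat}, every isolator is again nearly $K_{2,3}^+$-universal and $K_{1,\star}$-universal. Therefore, choosing among such separations one with $A$ minimal (excluding $(X_G,V(G))$, which is not proper), we may assume that $(A,B)$ is strongly linked to the roots, and in particular exposed. Observation~\ref{obs-nearbc} then gives $|A\cap B\cap X_G|\le 2$ and $k^G(A\cap B)\ge 7$, and $k^G(A\cap B)\ge 8$ if $(A,B)$ is $K_{2,3}^+$-universal. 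Corollary~\ref{cor-noK4plus} gives $\rho_4(R_{A,B})\le 3$, and $\rho_4(R_{A,B})=3$ forces $K_{2,3}^+$-universality, hence $k^G(A\cap B)\ge 8$.

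\textbf{Step 3: profitability.} The split $H$ omits from $K^G_{A\cap B}$ only the non-root-pair edges inside $T$. Thus
$$|E(H)\setminus E(G)|\ge k^G(A\cap B)-3,$$
with the subtracted term smaller when $T$ contains roots. We consider two cases.
\begin{itemize}
\item If $|T\setminus X_G|=3$, we need $|E(H)\setminus E(G)|\ge\rho_4(R_{A,B})+2$. When $\rho_4=3$ this holds since $k^G\ge 8$ gives $5$. When $\rho_4=2$, $k^G\ge 7$ gives $4$. When $\rho_4\le 1$, $4\ge 3$.
\item If $|T\setminus X_G|\le 2$, only $|E(H)\setminus E(G)|\ge \rho_4(R_{A,B})\le 3$ is required, which is again implied by the bounds above.
\end{itemize}
So $H$ is $(A,B)$-profitable. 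All hypotheses of Lemma~\ref{lemma-nocotri} now hold: $(A,B)$ is proper, right-linked, strongly linked to the roots, and $|A\cap B\cap X_G|\le 2$. The lemma says $H$ is not an $\id$-rooted minor of $R_{A,B}$, contradicting Step 1.

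\textbf{Main obstacle.} The delicate point is Step 2. We must check carefully that passing to a minimal isolator preserves near $K_{2,3}^+$-universality together with the existence of an exceptional root outside some three-element $T$. We must also check that the resulting separation is proper, which holds because the isolator is not $(X_G,V(G))$. The counting in Step 3 is routine but tight in the case $\rho_4=3$, so it relies on the stronger bound $k^G\ge 8$ from Observation~\ref{obs-nearbc}.
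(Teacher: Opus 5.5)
Your proposal is correct and matches the paper's proof. The paper fixes an $A$-minimal nearly $K_{2,3}^+$-universal separation, takes $T$ avoiding the exceptional root so that the split is an $\id$-rooted minor, and gets strong linkedness from Corollary~\ref{cor-cotri-linked} and Observation~\ref{obs-starsat}; Observation~\ref{obs-nearbc} and Corollary~\ref{cor-noK4plus} then give $k^G(A\cap B)\ge\rho_4(R_{A,B})+5$, so the split is profitable and contradicts Lemma~\ref{lemma-nocotri}. The only differences are cosmetic: fixing the $A$-minimal separation before choosing $T$ removes your ``main obstacle'', and the uniform bound $k^G(A\cap B)-3\ge\rho_4(R_{A,B})+2$ makes your case split on $|T\setminus X_G|$ unnecessary.
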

\begin{proof}
Suppose for a contradiction $G$ has such a nearly $K_{2,3}^+$-universal root separation,
and choose $(A,B)$ among them so that $A$ is minimal.  Let $y\in A\cap B$ be the exceptional root of $R_{A,B}$.
Let us choose a vertex $x\in A\cap B\setminus \{y\}$ arbitrarily, let $T=A\cap B\setminus \{x,y\}$,
and let $H$ be the $(G,A\cap B,T)$-split.  Since $H$ is a subgraph of the $\{x,y\}$-star and $y$ is the exceptional root of $R_{A,B}$,
it follows that $H$ is an $\id$-rooted minor of $R_{A,B}$.

By Corollary~\ref{cor-cotri-linked}, the root separation $(A,B)$ is linked to the roots.
Observation~\ref{obs-starsat} implies that every isolator of $(A,B)$ is nearly $K_{2,3}^+$-universal,
and by the minimality of $A$, it follows that $(A,B)$ is strongly linked to the roots.
Moreover, since the root separation $(A,B)$ is nearly $K_{2,3}^+$-universal, it is right-linked.

By Observation~\ref{obs-nearbc}, we have $|A\cap B\cap X_G|\le 2$ and $k^G(A\cap B)\ge 7$.
Moreover, by Corollary~\ref{cor-noK4plus}, we have $\rho_4(R_{A,B})\le 3$, and if $\rho_4(R_{A,B})=3$, then $k^G(A\cap B)\ge 8$.
It follows that $k^G(A\cap B)\ge\rho_4(R_{A,B})+5$.
Since $H$ contains all edges of $K^G_{A\cap B}$ except for those between the vertices of $T$, we have
$$|E(H)\setminus E(G)|\ge |E(K^G_{A\cap B})\setminus E(G)|-3=k^G(A\cap B)-3\ge \rho_4(R_{A,B})+2,$$
and thus the $(G,A\cap B,T)$-split $H$ is $(A,B)$-profitable.  This contradicts Lemma~\ref{lemma-nocotri}.

Therefore, $G$ does not have any nearly $K_{2,3}^+$-universal proper root separation;
and by Corollary~\ref{cor-noK4plus}, every proper root separation of $G$ of order five has 4-density at most one.
\end{proof}

Let us now turn our attention to proper root separations of order five whose right-hand side has 4-density one.
This right-hand side must satisfy one of the outcomes of Corollary~\ref{cor-nok4}(i),
Corollary~\ref{cor-nonear-bistar} excludes one of them.  Lemma~\ref{lemma-nocotri} also enables us to exclude
the one concerning vampire minors.

\begin{lemma}\label{lemma-vampire}
Let $G$ be a minimal counterexample and let $(A,B)$ be a proper root separation of $G$ of order five.  Then
$R_{A,B}$ does not contain any vampire $W$ with $X_W=A\cap B$ as an $\id$-rooted minor.
\end{lemma}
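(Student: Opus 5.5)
We argue by contradiction along the lines of Corollary~\ref{cor-nonear-bistar}: we find a suitable split that is an $\id$-rooted minor of $R_{A,B}$ and is $(A,B)$-profitable, and then apply Lemma~\ref{lemma-nocotri}.

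Suppose some proper root separation of order five has a vampire $W$ with $X_W=A\cap B$ as an $\id$-rooted minor of $R_{A,B}$. Among all such separations, choose $(A,B)$ with $A$ minimal. Let the fangs be $v_1x_1$ and $v_2x_2$, and write $A\cap B=\{x_1,x_2,z_1,z_2,z_3\}$.

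\emph{Step 1: the split.} Contract the edge $v_1v_2$ of $W$, and contract $v_1$ into $x_1$. The vertex $x_1$ then becomes adjacent to $x_2$ (via $v_2$) and to $z_1,z_2,z_3$. Next contract $v_2$ into $x_2$ instead; then $x_2$ is adjacent to $x_1$ and to all $z_i$. We cannot contract both $v_1$ and $v_2$ to separate roots, so we choose the targets more cleverly. Put $v_1$ into $x_1$ and $v_2$ into $x_2$. Since $v_1v_2\in E(W)$, this gives the edge $x_1x_2$. Also $v_1$ is adjacent to the $z_i$, giving edges $x_1z_i$, and $v_2$ is adjacent to the $z_i$, giving edges $x_2z_i$. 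The result contains the $\{x_1,x_2\}$-star, which is exactly the $(G,A\cap B,T)$-split for $T=\{z_1,z_2,z_3\}$. So this split $H$ is an $\id$-rooted minor of $R_{A,B}$.

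\emph{Step 2: linkage and right-linkedness.} By Corollary~\ref{cor-cotri-linked}, $(A,B)$ is linked to the roots. A vampire minor transfers to isolators in the same way as in Observation~\ref{obs-starsat}, by relabelling along an isolator linkage, since the vampire structure is preserved under relabelling. By the minimality of $A$, $(A,B)$ is therefore strongly linked to the roots. Right-linkedness follows because $H$ contains the $\{x_1,x_2\}$-star, and $W$ also gives paths between any two of the $z_i$ avoiding the rest of $A\cap B$: for example, through $v_1$.

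\emph{Step 3: bounding $|A\cap B\cap X_G|$ and counting non-edges.} Since $W$ contains a $K_{1,4}$-type structure, $(A,B)$ is $K_{1,\star}$-universal. Corollary~\ref{cor-noK4plus} then gives $|A\cap B\cap X_G|\le 2$, and Corollary~\ref{cor-nonear-bistar} gives $\rho_4(R_{A,B})\le 1$. We need
$$|E(H)\setminus E(G)|\ge \rho_4(R_{A,B})+2 \text{ if } T\cap X_G=\emptyset, \qquad |E(H)\setminus E(G)|\ge \rho_4(R_{A,B}) \text{ otherwise.}$$
Corollary~\ref{cor-neic2} gives $k^G(A\cap B)\ge 4$, and if equality holds the missing edges form a $C_4+K_1$. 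The edges of $K^G_{A\cap B}$ not in $H$ are only those inside $T$.

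The main obstacle is the case where many of the missing edges lie inside $T$, so that $|E(H)\setminus E(G)|$ is too small. Here I would use the freedom in choosing $T$. The vampire has symmetries: we may instead contract $v_1$ into $z_i$. Alternatively we may use the roles $x_1\leftrightarrow x_2$, or take $T$ containing $x_1$ and two of the $z_i$. The vampire gives the star at $\{x_2,z_j\}$ minus one edge, by contracting $v_2$ into $x_2$ and $v_1$ into $z_j$; this misses $x_1x_2$ only if $v_1$ is not adjacent to $x_2$. That is a non-edge of $T$ as defined, which is fine.

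With several choices of $T$, averaging over $T$ shows that some choice leaves at most about $3k/10$ of the missing edges inside $T$. Combined with $k\ge 4$ and the $C_4+K_1$ structure at equality, this should leave $|E(H)\setminus E(G)|\ge 3$ when $T\cap X_G=\emptyset$, or at least $1$ otherwise. If $\rho_4(R_{A,B})\le 0$, the requirements are easier still. Any remaining tight configurations would be excluded by showing the separation is $2$-nearly-saturated, contradicting Corollary~\ref{cor-noK5minus2}. Once profitability holds, Lemma~\ref{lemma-nocotri} yields the contradiction.
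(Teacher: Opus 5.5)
Steps 1--3 are sound as far as they go. Contracting the fangs gives the $(G,A\cap B,T)$-split for $T=\{z_1,z_2,z_3\}$. You also correctly obtain that $(A,B)$ is linked to the roots, strongly linked (via the minimality of $A$), right-linked and $K_{1,\star}$-universal, and that $|A\cap B\cap X_G|\le 2$ and $\rho_4(R_{A,B})\le 1$.

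The gap is the ``freedom in choosing $T$''. Suppose you contract $v_1$ into $a\in N_W(v_1)$ and $v_2$ into $b\in N_W(v_2)$. You get the $\{a,b\}$-star only when $\{a,b\}=\{x_1,x_2\}$. Otherwise the edge $ax_2$ (when $b\neq x_2$) and the edge $bx_1$ (when $a\neq x_1$) are missing. Both join $T=A\cap B\setminus\{a,b\}$ to its complement, so the split for that $T$ requires them. In your own example ($v_1$ into $z_j$, $v_2$ into $x_2$), the missing edge $x_1x_2$ is not inside $T=\{x_1,z_k,z_l\}$, so it is not ``fine''. Contracting both $v_1,v_2$ into one root yields only a single star. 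Hence in general the only split you can certify as an $\id$-rooted minor is the one with $T=\{z_1,z_2,z_3\}$. Your averaging bound of about $3k/10$ missing edges inside $T$ therefore has no basis, and profitability does not follow.

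The missing idea is to use the non-profitability of this single split as information, rather than trying to contradict it directly. Lemma~\ref{lemma-nocotri} gives $|E(H)\setminus E(G)|\le\rho_4(R_{A,B})+1\le 2$ if $T\cap X_G=\emptyset$, and $|E(H)\setminus E(G)|\le 0$ otherwise. On the other hand, $|E(H)\setminus E(G)|\ge k^G(A\cap B)-|E(K^G_T)|$, and $k^G(A\cap B)\ge 4$ by Corollary~\ref{cor-neic2}. Together these force $T\cap X_G=\emptyset$, $k^G(A\cap B)\le 5$, and at least $k^G(A\cap B)-2$ of the missing edges lying inside $T$. Two cases remain.

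If $k^G(A\cap B)=4$, the missing edges form $C_4+K_1$, and up to symmetry the $4$-cycle is $z_1z_2z_3x_1$. Contracting $v_1$ into $x_1$ and $v_2$ into $z_2$ shows $(A,B)$ is saturated, contradicting Lemma~\ref{lemma-nosatur}.

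If $k^G(A\cap B)=5$, the missing edges are the triangle on $T$ plus two more. Using the symmetry of the vampire, one of these may be taken to be $z_1x_1$. Contracting $v_1$ into $z_1$ and $v_2$ into $z_2$ leaves a single missing edge, so $(A,B)$ is $1$-nearly-saturated, contradicting Corollary~\ref{cor-noK5minus}.

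Your closing remark about excluding ``tight configurations'' by near-saturation points in this direction. However, identifying these two configurations and the specific re-contractions of $W$ is the substance of the proof, and the proposal does not supply it.
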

\begin{proof}
Suppose for a contradiction that $R_{A,B}$ contains such a vampire $W$ as an $\id$-rooted minor;
among the proper root separations $(A,B)$ with this property, select one with $A$ minimal.
Let $v_1x_1$ and $v_2x_2$ with $x_1,x_2\in A\cap B$ be the fangs of the vampire $W$,
let $T=A\cap B\setminus \{x_1,x_2\}=\{z_1,z_2,z_3\}$, and let $H$ be the $(G,A\cap B,T)$-split.
By contracting the edges $v_1x_1$ and $v_2x_2$ of $W$, we obtain a supergraph of $H$,
and thus $H$ is an $\id$-rooted minor of $R_{A,B}$.

By Corollary~\ref{cor-cotri-linked}, the root separation $(A,B)$ is linked to the roots.
Moreover, observe that if $(C,D)$ is an isolator of $(A,B)$, then by contracting
$R_{A,B}$ to $W$ and then contracting the paths of an isolator linkage for $(C,D)$ and $(A,B)$,
we obtain a vampire with roots $C\cap D$ as an $\id$-rooted minor
of $R_{C,D}$.  By the minimality of $A$, it follows that the root separation $(A,B)$ is strongly linked to the roots.
Let us also note that the presence of $W$ as an $\id$-rooted minor in $R_{A,B}$ implies that
the root separation $(A,B)$ is $K_{1,\star}$-universal and right-linked.
By Corollary~\ref{cor-noK4plus}, we have $|A\cap B\cap X_G|\le 2$.

Let $R=K^G_T$, let $t=|E(R)|=3-\binom{|T\cap X_G|}{2}$, and let $H'=K^G_{A\cap B}-E(G[A\cap B])$.  The $(G,A\cap B,T)$-split $H$ is equal to $K^G_{A\cap B}-E(R)$,
and thus
\begin{align}
|E(H)\setminus E(G)|&=|(E(H')\setminus E(G))\setminus E(R)|\nonumber\\
&=k^G(A\cap B)-|E(H')\cap E(R)|\label{eq-profit}\\
&\ge k^G(A\cap B)-t.\nonumber
\end{align}
By Corollary~\ref{cor-nonear-bistar}, we have $\rho_4(R_{A,B})\le 1$.
By Lemma~\ref{lemma-nocotri}, the $(G,A\cap B,T)$-split $H$ is not profitable, and thus $k^G(A\cap B)\le t+2\le 5$.

By Corollary~\ref{cor-neic2}, we have $k^G(A\cap B)\ge 4$, and if $k^G(A\cap B)=4$ then $H'$ is isomorphic to $C_4+K_1$.
Let us consider the case that $k^G(A\cap B)=4$, and let $K$ be the 4-cycle in $H'$.
Since $H$ is not profitable, (\ref{eq-profit}) gives $$2\ge |E(H)\setminus E(G)|=4-|E(K)\cap E(R)|.$$
Since $|E(K)\cap E(R)|\ge 2$, we have $T\subset V(K)$, and by symmetry, we can assume that $K=z_1z_2z_3x_1$.
However, we can then contract the edges $v_1x_1$ and $v_2z_2$ of $W$ to obtain $K^G_{A\cap B}$ as an $\id$-rooted minor of $R_{A,B}$,
contradicting Lemma~\ref{lemma-nosatur}.

Therefore, we have $k^G(A\cap B)=5$ and (\ref{eq-profit}) gives
$$2\ge |E(H)\setminus E(G)|=5-|E(H')\cap E(R)|,$$ and thus $|E(H')\cap E(R)|=3$.
That is, the graph $H'$ consists of the triangle $R$ and either two edges between
$T$ and $\{x_1,x_2\}$, or one such edge and the edge $x_1x_2$.
By symmetry, we can assume that $z_1x_1\in E(H')$; let $e$ be the edge of $E(H')\setminus E(R)$ distinct from $z_1x_1$.
By contracting the edges $v_1z_1$ and $v_2z_2$ in $W$, we see that $R_{A,B}$ contains $K^G_{A\cap B}-e$ as an $\id$-rooted minor.
However, this contradicts Corollary~\ref{cor-noK5minus}.
\end{proof}

We can now essentially eliminate root separations of order five and positive 4-density, with a single easy to handle exception.
\begin{lemma}\label{lemma-kill5}
Let $G$ be a minimal counterexample and let $(A,B)$ be a proper root separation of $G$ of order five.
If $\rho_4(R_{A,B})=1$, then there exists a vertex in $B\setminus A$ adjacent to all vertices of $A\cap B$.
\end{lemma}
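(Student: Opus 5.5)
We apply Corollary~\ref{cor-nok4} to the 5-rooted graph $R_{A,B}$ with root set $A\cap B$. The main point is to check that it is 4-light. If $(C,D)$ is a root separation of $R_{A,B}$ of order at most four, then $(A\cup C,D)$ is a root separation of $G$ of the same order with the same strictly right-hand side. Since $G$ is 4-light, $(C,D)$ is 4-light as well, as in the proof of Observation~\ref{obs-univsep}.

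Since $\rho_4(R_{A,B})=1>0$, Corollary~\ref{cor-nok4} leaves outcomes (x) and (i). Outcome (x) says that $R_{A,B}$ is $(K_4+K_1)$-universal. Such a separation is nova-universal, and it is $K_{1,\star}$-universal by the first part of Corollary~\ref{cor-nok4}. This contradicts Corollary~\ref{cor-noK4plus}, so outcome (i) holds.

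Outcome (i) has three alternatives, and we exclude two of them.
\begin{itemize}
\item $R_{A,B}$ cannot be nearly $K_{2,3}^+$-universal, by Corollary~\ref{cor-nonear-bistar}.
\item $R_{A,B}$ cannot contain a vampire $W$ with $X_W=A\cap B$ as an $\id$-rooted minor, by Lemma~\ref{lemma-vampire}.
\end{itemize}
Hence the remaining alternative holds: some non-root vertex of $R_{A,B}$ is adjacent to all of its roots. The non-root vertices of $R_{A,B}$ are exactly those in $B\setminus A$, and its roots are $A\cap B$. So we obtain a vertex of $B\setminus A$ adjacent to all vertices of $A\cap B$, as required.

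One subtlety is whether Corollary~\ref{cor-nonear-bistar} and Corollary~\ref{cor-noK4plus} refer to $R_{A,B}$ or to $\widetilde{R}_{A,B}$. Edges between roots do not affect $\rho_4$, and universality statements about the separation are phrased in terms of $R_{A,B}$. So we may apply Corollary~\ref{cor-nok4} to $\widetilde{R}_{A,B}$ and transfer the conclusions, which costs nothing.

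The 4-lightness check of $R_{A,B}$ is the only step needing care, and it is routine. The proof is essentially a short assembly of the earlier exclusions, so I expect no serious obstacle.
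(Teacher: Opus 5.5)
Your proposal is correct and follows the paper's proof essentially verbatim. Both apply Corollary~\ref{cor-nok4} to $R_{A,B}$, rule out outcome (x) because a $(K_4+K_1)$-universal separation is nova-universal, contradicting Corollary~\ref{cor-noK4plus}, and eliminate the nearly $K_{2,3}^+$-universal and vampire alternatives of (i) via Corollary~\ref{cor-nonear-bistar} and Lemma~\ref{lemma-vampire}. Your explicit check that $R_{A,B}$ is 4-light, via the separation $(A\cup C,D)$, is also correct; the paper leaves this implicit, as in the proof of Observation~\ref{obs-univsep}.
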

\begin{proof}
The root separation $(A,B)$ is $K_{1,\star}$-universal by Corollary~\ref{cor-nok4}, and thus it is not nova-universal
by Corollary~\ref{cor-noK4plus}.  Hence, $R_{A,B}$ is not $(K_4+K_1)$-universal.
The root separation $(A,B)$ is not nearly $K_{2,3}^+$-universal by Corollary~\ref{cor-nonear-bistar},
and $R_{A,B}$ does not contain any vampire $W$ with $X_W=A\cap B$ as an $\id$-rooted minor by Lemma~\ref{lemma-vampire}.
By Corollary~\ref{cor-nok4}, it follows that there exists a vertex in $B\setminus A$ adjacent to all vertices of $A\cap B$.
\end{proof}

\section{Degrees of vertices}

The results of the previous section have the following useful consequence.
\begin{corollary}\label{cor-cancontr}
For every edge $e$ of a minimal counterexample $G$, the rooted graphs $G-e$ and $G/e$ are $4$-light.
\end{corollary}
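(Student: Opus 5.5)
We treat $G-e$ and $G/e$ separately. Throughout, fix an edge $e=uv$ of the minimal counterexample $G$. Recall that $X_G$ is independent, so $e$ has at least one non-root end.

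\textbf{Deletion.} Let $(C,D)$ be a root separation of $G-e$ of order at most four. If both ends of $e$ lie in $C$ or both in $D$, then $(C,D)$ is also a root separation of $G$. Its strictly right-hand side in $G-e$ is a subgraph of the one in $G$ on the same vertex set, so $\rho_4$ does not increase and $4$-lightness transfers.

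Otherwise $u\in C\setminus D$ and $v\in D\setminus C$ (say). Then $(C,D\cup\{u\})$ is a root separation of $G$ of order at most five, and $\rho_4(R^{G-e}_{C,D})$ equals $\rho_4$ of the right side of $(C,D\cup\{u\})$ in $G$ with $u$ treated as a root.
\begin{itemize}
\item If the order of $(C,D\cup\{u\})$ is at most four, $4$-lightness of $G$ gives the claim directly.
\item If the order is five and the separation is proper, Corollary~\ref{cor-nonear-bistar} gives $\rho_4\le 1$. Moreover, $\rho_4(R^{G-e}_{C,D})\le\rho_4(R^G_{C,D\cup\{u\}})-1$, since the edge $uv$ is lost. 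This is at most $0$, as required.
\item If the separation is not proper, then $C\subseteq D\cup\{u\}$, and the right side is small enough to check by hand.
\end{itemize}

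\textbf{Contraction.} Let $w$ denote the vertex formed from $u,v$; it is a non-root unless one end is a root. Let $(C,D)$ be a root separation of $G/e$ of order at most four.
\begin{itemize}
\item If $w\notin C\cap D$, uncontracting gives a separation of $G$ of the same order. The strictly right-hand side in $G/e$ is $R^G$ with $e$ contracted, which has one fewer non-root and at least one fewer edge. Its $4$-density differs by at least $-1+4\ge 0$ in the favourable direction, since $e$ together with the edges merged by common neighbours drops. So $4$-lightness transfers.
\item If $w\in C\cap D$, splitting $w$ into $\{u,v\}$ gives a root separation $(C',D')$ of $G$ of order at most five. The right-hand sides compare as follows: $R^{G/e}_{C,D}$ is $R^G_{C',D'}$ with $u,v$ merged into a single root. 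Edges from $u,v$ into $D\setminus C$ are all preserved, and the count of non-roots is unchanged. Hence $\rho_4(R^{G/e}_{C,D})=\rho_4(R^G_{C',D'})$ minus the number of common neighbours of $u,v$ in $D'\setminus C'$, so it is at most $\rho_4(R^G_{C',D'})$.
\end{itemize}

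The main obstacle is the case where $(C',D')$ has order five with $\rho_4(R^G_{C',D'})=1$; the case $\rho_4\ge 2$ is already excluded by Corollary~\ref{cor-nonear-bistar}. The plan is as follows.
\begin{itemize}
\item By Lemma~\ref{lemma-kill5}, there is a vertex $z\in D'\setminus C'$ adjacent to all of $C'\cap D'$, in particular to both $u$ and $v$.
\item Then $z$ is a common neighbour, so the merge loses at least one unit and $\rho_4(R^{G/e}_{C,D})\le 0$.
\item If instead $(C',D')$ is not proper, or $D'\setminus C'=\emptyset$, the contracted separation is trivially light.
\end{itemize}
One must also handle the subcase where $u$ or $v$ is a root, so that $w$ becomes a root; the same accounting applies, with the merged vertex counted among the roots. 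Corollary~\ref{cor-noplus1} guards degenerate cases in which the order-five separation has too many roots in its separator.
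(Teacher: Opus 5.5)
\textbf{Verdict: the proposal has a genuine gap.} Your treatment of the contraction case $w\in C\cap D$ is correct and matches the paper: you split $w$, then use Corollary~\ref{cor-nonear-bistar} and Lemma~\ref{lemma-kill5} to get a common neighbour of $u$ and $v$ in $D'\setminus C'$. The crossing case for $G-e$ when $(C,D\cup\{u\})$ is proper of order five is also fine.

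\textbf{Main gap: the contraction case $w\in D\setminus C$.} Here your density accounting has the wrong sign. Removing a non-root \emph{increases} $\rho_4$ by $4$, while the contraction removes only $1+t$ edges, where $t$ is the number of common neighbours of $u$ and $v$. Writing $D_2=(D\setminus\{w\})\cup\{u,v\}$, this gives $\rho_4(R^{G/e}_{C,D})=\rho_4(R^G_{C,D_2})+3-t$. This is the same identity $\rho_4(G')=\rho_4(G)+3-t_G(e)$ that drives Corollary~\ref{cor-fourtri}. So contraction can turn a $4$-light separation into a non-$4$-light one, and $4$-lightness does not simply transfer. This case is the heart of the statement.

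The missing idea is a structural argument:
\begin{itemize}
\item Take the non-$4$-light separation $(C,D)$ with $D$ minimal. Then $R^{G/e}_{C,D}$ is itself $4$-light with positive $4$-density.
\item By Corollary~\ref{cor-k4}, $R^{G/e}_{C,D}$ is $K_\star$-universal, or has order four and is $K^-_\star$-universal.
\item It is an $\id$-rooted minor of $R^G_{C,D_2}$ and $C\cap D_2=C\cap D$, so the same holds for $(C,D_2)$ in $G$.
\item Lemma~\ref{lemma-noclicut} (no reducible separations) then forces the second alternative with $|D_2\setminus C|\le 2$, i.e.\ $D\setminus C=\{w\}$.
\item Then $\rho_4(R^{G/e}_{C,D})=\deg_{G/e} w-4\le 0$, a contradiction.
\end{itemize}

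\textbf{Second gap: the non-proper subcase of deletion.} Suppose $(C,D\cup\{u\})$ has order five and is not proper. Then $C=X_G$ and $D=V(G)\setminus\{u\}$. So $R^{G-e}_{C,D}$ is all of $G-u$ with roots $X_G\setminus\{u\}$. By Observation~\ref{obs-42dense} its $4$-density is $\rho_4(G)-1\ge 1$. It is neither small nor light, so it cannot be ``checked by hand''; this configuration must be shown impossible. The reason is Corollary~\ref{cor-twononroot}: $u$ would be a root whose only non-root neighbour in $G$ is $v$, recalling that $X_G$ is independent.
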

\begin{proof}
Let $e=uv$, where $v\not\in X_G$, let $G_1=G-e$ and $G_2=G/e$, and suppose for a contradiction that
$G_1$ or $G_2$ has a non-4-light root separation of order at most four.  Let $(C,D)$ be such a root separation with $D$ minimal.

Let us first consider the case that $(C,D)$ is a root separation of $G_1$.
Note that $\{u,v\}\not\subseteq C$ and $\{u,v\}\not\subseteq D$, since otherwise $(C,D)$ would also be a non-4-light root separation of $G$.
Hence, we can assume that $u\in C\setminus D$ and $v\in D\setminus C$.  Let $D_1=D\cup \{u\}$ and consider the root separation $(C,D_1)$ of $G$
of order at most five.  Note that $R^G_{C,D_1}$ is obtained from $R^{G_1}_{C,D}$ by adding the root vertex $u$, possibly some edges between $u$ and other roots,
and the edge $e$.  Consequently $\rho_4(R^G_{C,D_1})=\rho_4(R^{G_1}_{C,D})+1\ge 2$.

Since the root separation $(C,D_1)$ of $G$ is not 4-light, it has order
exactly five.  Moreover, since $\rho_4(R^G_{C,D_1})\ge 2$, Corollary~\ref{cor-nonear-bistar} implies that the root separation $(C,D_1)$ is not proper.
Since $v\in D_1\setminus C$, this is only possible if $C=X_G$.
However then $u\in C$ is a root of $G$ and $v$ is its only non-root neighbor, contradicting Corollary~\ref{cor-twononroot}.

Therefore, we can assume that $(C,D)$ is a root separation of $G_2$.  We use the label $u$ for the vertex of $G_2$ created by the contraction of the edge $e$.
Note that $u\in D$, as otherwise $(C\cup \{v\},D)$ would be a root separation of $G$ with the same strictly right-hand side as the root separation $(C,D)$ of $G_2$,
a contradiction since $G$ is 4-light.  Let $C_2=C\cup \{v\}$ and $D_2=D\cup \{v\}$.

If $u\in C\cap D$, then $(C_2,D_2)$ is a root separation of $G$ of order at most five.  Let $m$ be the number of common neighbors of $u$ and $v$ in $G$
that belong to $D\setminus C$.  We have $\rho_4(R^G_{C_2,D_2})=\rho_4(R^{G_2}_{C,D})+m$, and Corollary~\ref{cor-nonear-bistar} gives $\rho_4(R^G_{C_2,D_2})\le 1$.  Since $\rho_4(R^{G_2}_{C,D})>0$,
it follows that $\rho_4(R^G_{C_2,D_2})=1$ and $m=0$.  However, this contradicts Lemma~\ref{lemma-kill5}.

Therefore, we have $u\in D\setminus C$.  By the minimality of $D$, the graph $R^{G_2}_{C,D}$ is 4-light.
Since $\rho_4(R^{G_2}_{C,D})>0$, Corollary~\ref{cor-k4} implies that 
either $R^{G_2}_{C,D}$ is $K_\star$-universal or $|C\cap D|=4$ and $R^{G_2}_{C,D}$ is $K^-_\star$-universal.
Note that $(C,D_2)$ is a root separation of $G$ with $C\cap D_2=C\cap D$ and that
$R^{G_2}_{C,D}$ is an $\id$-rooted minor of $R^G_{C,D_2}$.
Therefore, either $R^G_{C,D_2}$ is $K_\star$-universal or $|C\cap D_2|=4$ and $R^G_{C,D_2}$ is $K^-_\star$-universal.
Lemma~\ref{lemma-noclicut} implies that the latter is the case and $|D_2\setminus C|\le 2$.
It follows that $D_2\setminus C=\{u,v\}$ and $D\setminus C=\{u\}$.
However, then $\rho_4(R^{G_2}_{C,D})\le 0$, which is a contradiction.
\end{proof}

It follows that minimal counterexamples have the smallest possible number of edges.
\begin{corollary}\label{cor-nume}
If $G$ is a minimal counterexample and $F$ is a flaw of $G$, then $F$ does not belong to the $(\rho_4(G)-1)$-target,
and in particular $\rho_4(G)\le 7$.
\end{corollary}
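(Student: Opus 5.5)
The plan is to delete a single edge and apply minimality. Since $G$ is a minimal counterexample, $X_G$ is independent, so every edge $e$ of $G$ has a non-root end. Corollary~\ref{cor-cancontr} tells us that $G-e$ is $4$-light. We have $n(G-e)=n(G)$ and $|E(G-e)|<|E(G)|$, so Observation~\ref{obs-lightminor} applies: $G-e$ is universal and $\rho_4(G-e)<\rho_4(G)$. Deleting an edge with a non-root end lowers $\rho$ by exactly one, hence
\[
\rho_4(G-e)=\rho_4(G)-1 .
\]
Here we need $E(G)\neq\emptyset$. This holds because $\rho_4(G)\ge 2$ by Observation~\ref{obs-42dense}, which forces $\rho(G)>0$.

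The flaw $F$ is not an $\id$-rooted minor of $G$, so it is not an $\id$-rooted minor of $G-e$ either. Since $G-e$ is universal, $F$ cannot lie in the target of $G-e$, and that target is exactly the $(\rho_4(G)-1)$-target. This proves the first claim.

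For the bound $\rho_4(G)\le 7$, suppose instead that $\rho_4(G)\ge 8$. Then $\rho_4(G)-1\ge 7$, so the $(\rho_4(G)-1)$-target is $\SS_{5,10}$. This class contains every graph on $X_G$, in particular $F$, which contradicts the first part.

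The only point needing care is the choice of edge. We need an edge whose deletion is covered by Corollary~\ref{cor-cancontr} and which has a non-root end, so that both the $4$-lightness and the exact drop in $\rho_4$ are guaranteed. Independence of $X_G$ makes any edge suitable, so I expect no genuine obstacle.
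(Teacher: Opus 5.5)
Your proof is correct and follows the paper's argument. You delete an arbitrary edge, use Corollary~\ref{cor-cancontr} for $4$-lightness and edge-minimality for the universality of $G-e$, and conclude that $F$ lies outside the target of $G-e$, which is the $(\rho_4(G)-1)$-target. Your extra remarks make explicit what the paper leaves implicit: that $e$ has a non-root end so $\rho_4$ drops by exactly one, that $G$ has an edge at all, and that $\rho_4(G)\le 7$ follows because $\SS_{5,10}$ contains every graph on $X_G$.
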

\begin{proof}
Let $e=uv$ be any edge of $G$.  The 5-rooted graph $G-e$ is 4-light by Corollary~\ref{cor-cancontr}.
Since $n(G-e)=n(G)$ and $|E(G-e)|<|E(G)|$, $G-e$ is not a counterexample, and thus it is universal.
Since $F$ is not an $\id$-rooted minor of $G$, it is not an $\id$-rooted minor of $G-e$, either.
Therefore, $F$ does not belong to the target of $G-e$, i.e., to the $(\rho_4(G)-1)$-target.
\end{proof}

On the other hand, a similar argument using contraction shows that all non-root vertices of a minimal counterexample
must have dense (and somewhat large) neighborhoods.
Let $G$ be a rooted graph and let $G'$ be the graph obtained from $G$ by adding the edges of the clique on $X_G$.
For an edge $e=uv$ of $G$ with $v\not\in X_G$, let $t_G(e)$ be the number of triangles in $G'$ containing $e$.  In other words,
if $u\not\in X_G$, then $t_G(e)$ is the number of triangles in $G$ containing $e$, and if $u\in X_G$, then $t_G(e)$
is the number of such triangles plus $\deg^X_G v-1$.  When the rooted graph $G$ is clear from the context, we drop the subscript.

\begin{corollary}\label{cor-fourtri}
Every minimal counterexample $G$ is internally 4-connected, all non-root vertices of $G$ have degree at least five, and
every edge $e\in E(G)$ satisfies $t_G(e)\ge 4$.
\end{corollary}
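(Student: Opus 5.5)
The core is the contraction step for $t_G(e)\ge 4$; the other two claims then follow from it and from earlier results.

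\textbf{Triangle count.} Fix an edge $e=uv$ with $v\notin X_G$, and let $G_2=G/e$ with the new vertex labelled $u$ (a root if $u\in X_G$). By Corollary~\ref{cor-cancontr}, $G_2$ is 4-light. It is an $\id$-rooted minor of $G$ with $n(G_2)=n(G)-1$, so Observation~\ref{obs-lightminor} gives $\rho_4(G_2)<\rho_4(G)$. Next I compute $\rho(G_2)$. Contracting $e$ deletes $e$ itself and merges the two edges $uw,vw$ for every common neighbor $w$. If $u\notin X_G$, this gives $\rho(G_2)=\rho(G)-1-t_G(e)$, while $n$ drops by one. If $u\in X_G$, then edges from $v$ to other roots become root--root edges and disappear from $\rho$. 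These are exactly the $\deg^X_G v-1$ extra edges counted in $t_G(e)$, so the formula $\rho(G_2)=\rho(G)-1-t_G(e)$ still holds. Hence
\[\rho_4(G_2)=\rho_4(G)+4-1-t_G(e)=\rho_4(G)+3-t_G(e),\]
and $\rho_4(G_2)<\rho_4(G)$ yields $t_G(e)\ge 4$. The main care needed is this bookkeeping when $u$ is a root.

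\textbf{Degrees.} Let $v$ be a non-root vertex. Corollary~\ref{cor-3conn} gives $\deg v\ge 3$, and Lemma~\ref{lemma-connwiro} shows $v$ has a non-root neighbor $u$ unless $G-X_G$ is the single vertex $v$. If $u$ is a non-root neighbor, the triangles on $uv$ use at least four further neighbors of $v$, so $\deg v\ge 5$. Otherwise $n(G)=1$, all neighbors of $v$ are roots, and $\deg v\le 5$. In that case $\rho_4(G)=\deg v-4\le 1$, which contradicts Observation~\ref{obs-42dense} ($\rho_4(G)\ge 2$).

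\textbf{Internal 4-connectivity.} By Corollary~\ref{cor-3conn}, every proper root separation of order at most three has order exactly $3$ and satisfies $|B\setminus A|=1$. The single vertex in $B\setminus A$ is then a non-root vertex of degree at most $3$, contradicting $\deg\ge 5$. So no such separation exists.
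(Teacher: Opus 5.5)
Your proposal is correct and follows the paper's proof essentially step for step: contract $e$, combine Corollary~\ref{cor-cancontr} and Observation~\ref{obs-lightminor} with $\rho_4(G/e)=\rho_4(G)+3-t_G(e)$, rule out $n(G)=1$ via Observation~\ref{obs-42dense} to obtain a non-root neighbor and hence degree at least five, and deduce internal 4-connectivity from essential 4-connectivity. Your explicit bookkeeping for the case where $u$ is a root is a useful detail that the paper leaves implicit.
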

\begin{proof}
Let $G'$ be the $\id$-rooted minor of $G$ obtained by contracting the edge $e$,
and observe that $\rho(G')=\rho(G)-t_G(e)-1$ and $n(G')=n(G)-1$, and thus
$$\rho_4(G')=\rho_4(G)+3-t_G(e).$$
Since $G'$ is $4$-light by Corollary~\ref{cor-cancontr} and $n(G')<n(G)$, Observation~\ref{obs-lightminor}
implies that $\rho_4(G')<\rho_4(G)$, and thus $t_G(e)\ge 4$.

The graph $G-X_G$ is connected by Corollary~\ref{cor-3conn}, and
Observation~\ref{obs-42dense} implies that $n(G)\ge 2$.  It follows that every vertex $v\in V(G)\setminus X_G$ has a non-root neighbor $u$,
and thus $v$ has degree at least $t_G(uv)+1\ge 5$.  Since $G$ is essentially $4$-connected by Corollary~\ref{cor-3conn},
it follows that the minimal counterexample $G$ is internally 4-connected.
\end{proof}

We are now going to focus on neighborhoods of low degree vertices of a minimal counterexample in more detail; let us set up notation for that.
For a fixed rooted graph $G$ and a vertex $v\in V(G)\setminus X_G$, let $N(v)$ be the set of neighbors of $v$ in $G$, let $N[v]=N(v)\cup\{v\}$,
and let $N^+_v$ be the graph obtained from $G[N(v)]$ by adding the edges of the clique on $X_G\cap N(v)$.
Note that each vertex $u\in N(v)$ satisfies $\deg_{N^+_v} u=t(uv)$.  Thus, by Corollary~\ref{cor-fourtri},
we have the following.
\begin{observation}\label{obs-deg4}
If $G$ is a minimal counterexample, then for every vertex $v\in V(G)\setminus X_G$, the graph $N^+_v$ has minimum degree at least four.
\end{observation}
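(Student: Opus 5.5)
This follows directly from Corollary~\ref{cor-fourtri} and the definitions; no new structural input is needed. Fix a minimal counterexample $G$, a non-root vertex $v$, and a vertex $u\in N(v)$. The plan is to establish the identity $\deg_{N^+_v} u=t_G(uv)$ noted just before the statement. Then Corollary~\ref{cor-fourtri}, which gives $t_G(e)\ge 4$ for every edge $e$, yields $\deg_{N^+_v}u\ge 4$ for every $u\in N(v)$, which is the claim.

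\emph{Case $u\notin X_G$.} Here $t_G(uv)$ counts triangles of $G$ through $uv$. These are in bijection with common neighbors $w$ of $u$ and $v$, that is, with neighbors of $u$ in $G[N(v)]$. Since $u$ is not a root, no added clique edge of $N^+_v$ is incident with $u$. Hence $\deg_{N^+_v}u=\deg_{G[N(v)]}u=t_G(uv)$.

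\emph{Case $u\in X_G$.} Now $t_G(uv)$ equals the number of triangles of $G$ through $uv$ plus $\deg^X_G v-1$. The first term is again $\deg_{G[N(v)]}u$. The second term is exactly the number of other roots in $N(v)$, and each of these is joined to $u$ in $N^+_v$ via the clique on $X_G\cap N(v)$. The only subtlety is double counting: a root $w\in N(v)$ already adjacent to $u$ in $G$ would be counted twice. This cannot occur, because minimality (fewest edges) forces $X_G$ to be independent in $G$. With that, the two contributions are disjoint, and $\deg_{N^+_v}u=t_G(uv)$.

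Equivalently, in the notation before Corollary~\ref{cor-fourtri}: $N^+_v$ is the neighborhood of $v$ in $G'$ (the graph $G$ with the root clique added), and triangles of $G'$ through $uv$ correspond to edges of this neighborhood at $u$. The only point needing any care is the independence of $X_G$, which guarantees that $G'[N(v)]$ agrees with $N^+_v$ and that the count is not double.
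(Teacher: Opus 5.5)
Your proof is correct and matches the paper's argument: the paper also just notes that $\deg_{N^+_v}u=t(uv)$ for every $u\in N(v)$ and then invokes Corollary~\ref{cor-fourtri}. One small remark: $N^+_v=G'[N(v)]$ holds for any rooted graph, so the identity $\deg_{N^+_v}u=t_G(uv)$ under the $G'$-definition of $t_G$ does not need $X_G$ to be independent. Independence is only needed to match the ``triangles plus $\deg^X_G v-1$'' reformulation, and it holds here in any case.
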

The lower bound on the degrees of vertices from Corollary~\ref{cor-fourtri} is easy to improve.
\begin{lemma}\label{lemma-no5}
If $G$ is a minimal counterexample, then every vertex $v\in V(G)\setminus X_G$ has degree at least six.
\end{lemma}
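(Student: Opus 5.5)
By Corollary~\ref{cor-fourtri}, every non-root vertex has degree at least five. So suppose for a contradiction that $v\in V(G)\setminus X_G$ has degree exactly five.

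\textbf{Step 1: the neighbourhood is nearly complete.} By Observation~\ref{obs-deg4}, the graph $N^+_v$ has five vertices and minimum degree at least four, so $N^+_v=K_5$. Hence every pair of neighbours of $v$ not both roots is adjacent in $G$; that is, $G[N(v)]\supseteq K^G_{N(v)}$.

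\textbf{Step 2: exclude $N(v)=X_G$.} If $N(v)=X_G$, then $(V(G)\setminus\{v\},N[v])$ is a root separation of order five whose right-hand side is a single vertex adjacent to all roots. To rule this out I would use Corollary~\ref{cor-noplus1} together with the connectivity of $G-X_G$ from Lemma~\ref{lemma-connwiro}. Since $n(G)\ge 2$ and $G-X_G$ is connected, $v$ must have a non-root neighbour, which is impossible when $N(v)=X_G$.

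\textbf{Step 3: a saturated separation.} Otherwise consider the root separation $(A,B)=(V(G)\setminus\{v\},N[v])$ of order five.
\begin{itemize}
\item If it is proper (that is, $A\ne X_G$), then contracting $v$ into any neighbour shows that $R_{A,B}$ already contains $K^G_{A\cap B}$ as an $\id$-rooted minor, since $G[N(v)]\supseteq K^G_{N(v)}$ holds without any contraction. So $(A,B)$ is saturated, contradicting Lemma~\ref{lemma-nosatur}.
\item The non-proper case is $A=X_G$, which forces $V(G)=X_G\cup\{v\}$. Then $N(v)=X_G$, which Step 2 already excludes (equivalently, $n(G)=1$ contradicts Observation~\ref{obs-42dense}).
\end{itemize}

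\textbf{Main obstacle.} The delicate point is checking that $(A,B)$ is genuinely proper: we need $B\not\subseteq A$, which holds because $v\in B\setminus A$, and $A\not\subseteq B$, which needs some vertex outside $N[v]$. If $V(G)=N[v]$, then $n(G)\le 6$; this small case I would handle directly. Here $G-X_G$ is a clique on at most $6$ vertices with $v$ adjacent to everything, and the $4$-density computation from \eqref{eq-rhot} should force a $K_5$ rooted minor (via the universality conclusions). Alternatively, if $A\subseteq B$ then $A=N(v)\cup X_G\subseteq N[v]$, which again gives the tiny case. Given Lemma~\ref{lemma-nosatur}, I expect the rest to be routine.
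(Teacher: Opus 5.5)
Your proof is correct and is the paper's argument: by Observation~\ref{obs-deg4}, $N^+_v=K_5$, so the separation $(V(G)\setminus\{v\},N[v])$ is saturated, contradicting Lemma~\ref{lemma-nosatur}. Your ``main obstacle'' paragraph miscounts, though harmlessly, because $V(G)=N[v]$ forces $N(v)=X_G$ and $n(G)=1$ (not merely $n(G)\le 6$), which your Step~3 already rules out via Observation~\ref{obs-42dense}; Step~2 is also unnecessary, since Lemma~\ref{lemma-nosatur} itself covers the case $A\cap B=X_G$.
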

\begin{proof}
By Corollary~\ref{cor-fourtri}, we have $\deg v\ge 5$.
Suppose that $\deg v=5$.  By Observation~\ref{obs-deg4}, the graph $N^+_v$ is isomorphic to $K_5$.
However, this implies that the root separation $(V(G)\setminus\{v\},N[v])$ is saturated, contradicting Lemma~\ref{lemma-nosatur}.
Therefore, we have $\deg v\ge 6$.
\end{proof}

To constrain vertices $v$ of larger degree, we use the following observation (applied to the root separation $(V(G)\setminus\{v\},N[v])$).
For a rooted graph $G$, a set $Z\subseteq V(G)$, and a graph $F$ with $V(F)=X_G$,
we let $K^{G,F}_Z=K^G_Z\cup F[Z\cap X_G]$.

\begin{lemma}\label{lemma-cantcreateclique}
Let $G$ be a minimal counterexample and let $F$ be a flaw of $G$.
For every root separation $(A,B)$ of $G$ of order at least five,
there exists a set $Z\subseteq A\cap B$ of size five such that $A\cap B\cap X_G\subseteq Z$ and the rooted graph $\roots{G[B]}{Z}$
does not contain $K^{G,F}_Z$ as an $\id$-rooted minor.  Moreover, for every $(A,B)$-exposed vertex $z\in A\cap B$, there exists such a set $Z$ containing $z$.
\end{lemma}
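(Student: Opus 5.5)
We argue by contradiction, via linkage from the roots. Suppose that for every five-element set $Z$ with $A\cap B\cap X_G\subseteq Z\subseteq A\cap B$ (and, for the second part, with $z\in Z$), the rooted graph $\roots{G[B]}{Z}$ contains $K^{G,F}_Z$ as an $\id$-rooted minor. We will find five vertex-disjoint paths in $G[A]$ from $X_G$ to such a $Z$, and then contract everything to obtain $F$ as an $\id$-rooted minor of $G$. This contradicts $F$ being a flaw.

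The first step is to show that the root connectivity of $(A,B)$ is five. By Observation~\ref{obs-isol} this gives a root linkage $\QQ$ of five paths from $X_G$ to some set $Z\subseteq A\cap B$, and every $x\in X_G\cap B$ appears as a single-vertex path. Hence $A\cap B\cap X_G\subseteq Z$, since vertices of $X_G\cap A\cap B$ are roots lying in $B$. To see that the root connectivity is five, note that an isolator of $(A,B)$ has order at most $|X_G|=5$. If some isolator $(C,D)$ had order at most four, then $B\subseteq D$ would make it a proper root separation of order at most four with a large right-hand side. Corollary~\ref{cor-fourtri} states that $G$ is internally $4$-connected, and Corollary~\ref{cor-3conn} gives essential $4$-connectivity. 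So the only small separations are trivial ones cutting off a single vertex with $|D\setminus C|\le 1$, and since $|A\cap B|\ge 5$ with $B\subseteq D$, this is impossible. Thus $\QQ$ exists with $|\QQ|=5$. For the second claim, when $z$ is $(A,B)$-exposed, Observation~\ref{obs-exposed} lets us choose $\QQ$ with a path terminating at $z$, so $z\in Z$.

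Next we contract. Take the model of $K^{G,F}_Z$ in $\roots{G[B]}{Z}$. For each path of $\QQ$, contract it together with the branch set of its terminator onto its origin root $x\in X_G$; the paths meet $B$ only at their terminators, so these sets stay disjoint and connected. Each non-root terminator $w$ then lies in the branch set of the root $x$ whose path ends at $w$. The resulting $\id$-rooted minor of $G$ on $X_G$ contains an edge $xx'$ in the following two cases:
\begin{itemize}
\item at least one of the terminators of $x$ and $x'$ is a non-root. Here $K^G_Z$ contains the edge between the terminators, because $K^G_Z$ contains every edge with a non-root end.
\item both terminators are roots, so they equal $x$ and $x'$. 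Here the edge is present whenever $xx'\in E(F)$, because $K^{G,F}_Z$ contains $F[Z\cap X_G]$.
\end{itemize}
Thus we obtain a supergraph of $F$ as an $\id$-rooted minor of $G$, which is a contradiction.

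The main obstacle is the first step: excluding isolators of order at most four when $|A\cap B|>5$, and arranging that $Z$ genuinely contains all of $A\cap B\cap X_G$. If $|A\cap B|\ge 6$, we will instead pass to the separation $(A',B)$ with $A'=A\cup(\text{everything not reachable})$, applying Menger's theorem between $X_G$ and $A\cap B$ in $G[A]$. The 5-fan version of Menger's theorem gives the paths as long as no set of at most four vertices separates $X_G$ from $A\cap B$ in $G[A]$. Such a separator would yield a root separation $(C,D)$ of order at most four with $A\cap B\subseteq D$. By internal $4$-connectivity, this forces $D\setminus C$ to consist of a single vertex of degree at most $4$, contradicting $|A\cap B|\ge 5$ and Lemma~\ref{lemma-no5}. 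I would check this degenerate case carefully.
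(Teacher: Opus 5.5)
Your contraction step, and your use of Observation~\ref{obs-exposed} for the second claim, are correct and agree with the paper. The gap is in the first step, showing that $(A,B)$ has root connectivity five; the justification you give for it fails.

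Internal $4$-connectivity (Corollary~\ref{cor-fourtri}) only forbids proper root separations of order at most three. Essential $4$-connectivity (Corollary~\ref{cor-3conn}) only restricts those of order exactly three. At this point in the paper, a proper root separation $(C,D)$ of order four is only known to be $4$-light, and a four-vertex separator between $X_G$ and $A\cap B$ may cut off many vertices. So your claim that such a separator forces $D\setminus C$ to be a single vertex of degree at most four is unjustified, and Lemma~\ref{lemma-no5} does not help. The internal $6$-connectivity of Corollary~\ref{cor-size} would give what you need, but it rests on Lemmas~\ref{lemma-bo6}--\ref{lemma-bo8}. Those use Corollary~\ref{cor-cantcreateclique}, which is the present lemma, so invoking it would be circular. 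The way out is not to prove that the root connectivity is five, but to derive a contradiction from the assumed minor when it is smaller.

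The missing ingredient is Lemma~\ref{lemma-nosatur}. Proceed as follows.
\begin{itemize}
\item Take a root linkage $\QQ$ of whatever size $k\le 5$, routed through $z$ if $z$ is exposed. Let $Z'$ be its set of terminators, and pad it to any $5$-set $Z$ with $Z'\subseteq Z\subseteq A\cap B$.
\item If $k=5$, your argument finishes the proof.
\item If $k\le 4$, suppose $\roots{G[B]}{Z}$ contained $K^{G,F}_Z$, and let $(C,D)$ be an isolator of order $k$. Each path of $\QQ$ meets $C\cap D$ in exactly one vertex. Hence the vertices of $Z\setminus Z'$, which are non-roots, lie in $D\setminus C$, so $(C,D)$ is proper.
\item Restrict the model to $Z'$, then contract the subpaths of $\QQ$ from $C\cap D$ to $Z'$. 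This yields $K^G_{C\cap D}$ as an $\id$-rooted minor of $R_{C,D}$. This works because a non-root vertex of $C\cap D$ lies on a path with a non-root terminator, as roots in $B$ form trivial paths.
\item So $(C,D)$ is a saturated proper root separation of order at most four, contradicting Lemma~\ref{lemma-nosatur}.
\end{itemize}
This is exactly how the paper handles the case you could not exclude.
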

\begin{proof}
Let $\PP$ be a root linkage for $(A,B)$ and let $Z'$ be the set of its terminators; note that $A\cap B\cap X_G\subseteq Z'$.
Let $Z\subseteq A\cap B$ be any superset of $Z'$ of size five.
We claim that the rooted graph $\roots{G[B]}{Z}$ does not contain $K^{G,F}_Z$ as an $\id$-rooted minor.  Suppose for a contradiction
that it does.  We cannot have $Z=Z'$, as otherwise we could obtain a supergraph of $F$ as an $\id$-rooted minor of $G$ by contracting $G[B]$ to $K^{G,F}_Z$
and then contracting the paths of $\PP$.  Hence, an isolator $(C,D)$ of $(A,B)$ in $G$ has order $|Z'|<|Z|=5$.  However, by contracting the subpaths of $\PP$ between $C\cap D$ and $Z'$,
we see that $R_{C,D}$ contains $K^G_{C\cap D}$ as an $\id$-rooted minor, and thus the proper root separation $(C,D)$ of $G$ is saturated.
This contradicts Lemma~\ref{lemma-nosatur}.

Moreover, Observation~\ref{obs-exposed} implies that for each $(A,B)$-exposed vertex $z\in A\cap B$, we can choose
the root linkage $\PP$ so that $z\in Z'\subseteq Z$.
\end{proof}

As a quick aside, this has the following consequence.

\begin{corollary}\label{cor-omega4}
Every clique in a minimal counterexample has size at most four.
\end{corollary}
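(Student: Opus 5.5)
Suppose for a contradiction that a minimal counterexample $G$ contains a clique $Q$ of size five, and let $F$ be a flaw of $G$. The plan is to apply Lemma~\ref{lemma-cantcreateclique} to a root separation whose separator is exactly $V(Q)$. Then the only admissible choice is $Z=V(Q)$, and we obtain a contradiction because $Q$ itself already provides $K^{G,F}_Z$.

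First I need $Q$ to contain at least one non-root vertex. The roots form an independent set, since a minimal counterexample has $X_G$ independent. Hence $|V(Q)\cap X_G|\le 1$, and so $Q$ has at least four non-root vertices.

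Now consider the separation $(A,B)=(V(G),V(Q))$. It is a root separation, since $X_G\subseteq A$, and its order is $|A\cap B|=5$. It is not necessarily proper, but Lemma~\ref{lemma-cantcreateclique} only requires a root separation of order at least five. Since $|A\cap B|=5$ and $Z\subseteq A\cap B$ has size five, the lemma forces $Z=V(Q)$.

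The lemma then says that $\roots{G[V(Q)]}{V(Q)}$ does not contain $K^{G,F}_{V(Q)}$ as an $\id$-rooted minor. However, $K^{G,F}_{V(Q)}=K^G_{V(Q)}\cup F[V(Q)\cap X_G]$. The second term has at most one vertex and therefore no edges, so $K^{G,F}_{V(Q)}$ is a subgraph of the clique $G[V(Q)]$. It is thus trivially an $\id$-rooted minor of $\roots{G[V(Q)]}{V(Q)}$, which is the desired contradiction.

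The only points needing care are that Lemma~\ref{lemma-cantcreateclique} really permits the trivial separation $(V(G),B)$ with $B=V(Q)$, and that $X_G$ is independent. Both are immediate from the statement of the lemma and the definition of a minimal counterexample, so I expect no real obstacle here.
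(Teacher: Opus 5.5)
Your proof is correct and is essentially the paper's argument: apply Lemma~\ref{lemma-cantcreateclique} to the root separation $(V(G),K)$ with $K$ the vertex set of the clique, and observe that the clique $G[K]$ trivially contains $K^{G,F}_Z$ as an $\id$-rooted minor. Your detour through the independence of $X_G$ is harmless but unnecessary, since a clique on $Z$ contains every graph on $Z$ as a subgraph.
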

\begin{proof}
Suppose for a contradiction that a minimal counterexample $G$ contains a clique of size at least five with vertex set $K$,
and consider the root separation $(V(G),K)$ of $G$.  Since $G[K]$ is a clique,
the rooted graph $\roots{G[K]}{Z}$ contains $K^{G,F}_Z$ as an $\id$-rooted minor for every graph $F$ with vertex set $X_G$ and for every set $Z\subseteq K$.
This contradicts Lemma~\ref{lemma-cantcreateclique}.
\end{proof}

We cannot quite exclude the existence of a vertex $v$ of degree six, seven, or eight from a minimal counterexample,
but we can show that many of its neighbors must be roots.  To do so, we apply Lemma~\ref{lemma-cantcreateclique}.
It will be convenient to work in the following setting.  For a minimal counterexample $G$, a flaw $F$, and a vertex $v\in V(G)\setminus X_G$,
let $N^{\overline{F}}_v$ denote the graph $G[N(v)]\cup \overline{F}[X_G\cap N(v)]$, that is, the graph obtained from $N^+_v$ by deleting the edges of
the flaw $F$ between the vertices of $X_G\cap N(v)$.  We say that a set $Z\subseteq N(v)$ is \emph{$(F,v)$-poor} if $|Z|=5$, $X_G\cap N(v)\subseteq Z$,
and for every $u\in Z$, the rooted graph $\rootsrem{N^{\overline{F}}_v}{Z}{u}$ does not contain $K_4$ as a rooted minor.
We say that a vertex $z\in N(v)$ is \emph{$v$-exposed} if it is $(V(G)\setminus\{v\},N[v])$-exposed.
\begin{corollary}\label{cor-cantcreateclique}
Let $G$ be a minimal counterexample and let $F$ be a flaw of $G$.
For every vertex $v\in V(G)\setminus X_G$, there exists an $(F,v)$-poor set $Z\subseteq N(v)$.
Moreover, for every $v$-exposed vertex $z\in N(v)$, there exists such a set $Z$ containing $z$.
\end{corollary}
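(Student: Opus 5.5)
We apply Lemma~\ref{lemma-cantcreateclique} to the root separation $(A,B)=(V(G)\setminus\{v\},N[v])$. Its order is $|N(v)|=\deg v\ge 6\ge 5$ by Lemma~\ref{lemma-no5}, and $A\cap B=N(v)$ with $A\cap B\cap X_G=X_G\cap N(v)$. The lemma yields a set $Z\subseteq N(v)$ with $|Z|=5$ and $X_G\cap N(v)\subseteq Z$ such that $\roots{G[N[v]]}{Z}$ does not contain $K^{G,F}_Z$ as an $\id$-rooted minor. For a $v$-exposed $z$, the "moreover" part of the lemma gives such a $Z$ with $z\in Z$. It therefore suffices to show that any such $Z$ is $(F,v)$-poor.

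We argue by contraposition. Suppose that for some $u\in Z$, the 4-rooted graph $\rootsrem{N^{\overline{F}}_v}{Z}{u}$ contains $K_4$ as a rooted minor. Let $\mu$ be a model of it with roots $Z\setminus\{u\}$; the model lives in $N^{\overline{F}}_v-u$. We then build a model of $K^{G,F}_Z$ in $\roots{G[N[v]]}{Z}$ as follows.

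\begin{itemize}
\item For $z\in Z\setminus\{u\}$, take $\mu(z)$.
\item For $u$, take $\{u,v\}$ together with all vertices of $N(v)\setminus Z$ not used by $\mu$. This set is connected through $v$, since $v$ is adjacent to everything in $N(v)$.
\item The edge between $u$ and each $z\in Z\setminus\{u\}$ is realized via $v$, since $v$ is adjacent to $z\in\mu(z)$.
\item Each pair $z,z'\in Z\setminus\{u\}$ is joined by an edge of $\mu$.
\end{itemize}

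This edge between $z$ and $z'$ is an edge of $N^{\overline{F}}_v$. It is a genuine edge of $G$ unless it joins two roots in $X_G\cap N(v)$ and comes from $\overline{F}$, i.e., it is a non-edge of $F$.

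The key point is to check that the edges we lose this way are exactly those not required in $K^{G,F}_Z$. Edges of $\overline{F}$ between two roots are not required, since $K^{G,F}_Z$ contains only $K^G_Z$ (no root–root edges) together with $F[Z\cap X_G]$. Conversely, every required root–root edge is in $F$, so it is not an edge of $\overline{F}$, and the edge supplied by $\mu$ for that pair is a real edge of $G$. Every other pair involves a non-root, so its edge is again a real edge of $G[N(v)]$. Deleting the spurious $\overline{F}$-edges therefore leaves a supergraph of $K^{G,F}_Z$ as an $\id$-rooted minor of $\roots{G[N[v]]}{Z}$, contradicting the choice of $Z$. Hence $Z$ is $(F,v)$-poor.

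The main subtlety is this bookkeeping of root–root edges. Adding the $\overline{F}$ edges in $N^{\overline{F}}_v$ is harmless precisely because they are the complement of the required edges among roots in $Z$. One must also confirm that vertices of $N(v)$ outside $Z$ are non-roots, which holds since $X_G\cap N(v)\subseteq Z$. Finally, the roots of $Z\setminus\{u\}$ in $\mu$ lie in distinct branch sets, so the roots of the combined model are placed correctly.
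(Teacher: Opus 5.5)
Your proof is correct and is essentially the paper's argument: you apply Lemma~\ref{lemma-cantcreateclique} to the root separation $(V(G)\setminus\{v\},N[v])$, whose order is at least five by Lemma~\ref{lemma-no5} (a point the paper leaves implicit), and then extend a rooted $K_4$-model in $N^{\overline{F}}_v-u$ by a branch set for $u$ that absorbs $v$, which is the paper's ``contract $G[N(v)]-u$, then contract $uv$.'' One step deserves an explicit sentence: since $X_G\cap N(v)\subseteq Z$, each branch set of $\mu$ contains at most one vertex of $X_G$, so the branch sets are connected in $G$ itself and the only possible root--root edge between $\mu(z)$ and $\mu(z')$ is $zz'$; this is what justifies your edge bookkeeping.
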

\begin{proof}
By Lemma~\ref{lemma-cantcreateclique} applied with the root separation $(V(G)\setminus\{v\},N[v])$, there exists a set
$Z\subseteq N(v)$ of size five such that $N(v)\cap X_G\subseteq Z$ and the rooted graph $\roots{G[N[v]]}{Z}$
does not contain $K^{G,F}_Z$ as an $\id$-rooted minor; and moreover, for a $v$-exposed vertex $z\in N(v)$,
there exists such a set $Z$ containing $z$.  We claim that $Z$ is $(F,v)$-poor.

Indeed, consider any vertex $u\in Z$, and suppose for a contradiction
that the rooted graph $\rootsrem{N^{\overline{F}}_v}{Z}{u}$ contains $K_4$ as a rooted minor.
Equivalently, the rooted graph $\rootsrem{G[N(v)]}{Z}{u}$ contains $K^{G,F}_{Z\setminus\{u\}}$
as an $\id$-rooted minor.  By contracting $G[N(v)]-u$ to $K^{G,F}_{Z\setminus\{u\}}$ and then contracting the edge $uv$,
we see that the rooted graph $\roots{G[N[v]]}{Z}$ contains a supergraph of $K^{G,F}_Z$ as an $\id$-rooted minor, which is a contradiction.
\end{proof}

Recall that $\deg^X v$ denotes the number of roots adjacent to $v$ and that $\deg^+ v=\deg v+\deg^X v$.

\begin{lemma}\label{lemma-bo6}
Let $G$ be a minimal counterexample.  If $V(G)\setminus X_G$ contains a vertex of degree six, then $\rho_4(G)\le 5$.
Moreover, every vertex $v\in V(G)\setminus X_G$ of degree six satisfies $\deg^+ v\ge 10$.
\end{lemma}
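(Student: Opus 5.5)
Fix a vertex $v\in V(G)\setminus X_G$ with $\deg v=6$ and a flaw $F$ of $G$. By Corollary~\ref{cor-cantcreateclique} there is an $(F,v)$-poor set $Z\subseteq N(v)$ with $|Z|=5$ and $X_G\cap N(v)\subseteq Z$. Let $w$ be the unique vertex of $N(v)\setminus Z$. Observation~\ref{obs-deg4} says that $N^+_v$ has minimum degree at least four on six vertices. Corollary~\ref{cor-omega4} forbids a $K_5$ in $G$; in particular $v$ together with four pairwise adjacent neighbours would be a $K_5$, so $G[N(v)]$ is $K_4$-free. The plan is to show that $N^+_v$ is very restricted, and from that to read off the lower bound on $\deg^X v$ and the upper bound on $\rho_4(G)$.

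\textbf{Step 1 (shape of the neighbourhood).} For each $u\in Z$, the $4$-rooted graph $N^{\overline F}_v-u$ has root set $Z\setminus\{u\}$ and exactly one non-root vertex $w$. It contains no rooted $K_4$. I will turn this into a pairwise condition on $Z\setminus\{u\}$: the non-edges among $Z\setminus\{u\}$ cannot all be covered by contracting $w$ into a single root. That is, no vertex $z\in Z\setminus\{u\}$ with $zw$ an edge is incident to all missing pairs. Here an edge between two roots counts as present only if it is an edge of $G$, or the two ends are roots and the pair is a non-edge of $F$.

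Combining this with $\delta(N^+_v)\ge 4$ should force many non-edges of $N^{\overline F}_v[Z]$. Since $N^+_v$ adds the whole clique on the roots, those missing edges must come from pairs of roots that are edges of $F$. Concretely, if at most one neighbour of $v$ were a root, then $N^{\overline F}_v=G[N(v)]$ would have minimum degree at least four on six vertices. That is a graph containing the complement of a perfect matching, hence the octahedron, and contracting $w$ to a suitable vertex gives $K_4$ on $Z\setminus\{u\}$ for some $u$, a contradiction. With two or three roots, a similar small case check (at most a few complements of sparse graphs on six vertices) shows that the root pairs must carry too many $F$-edges, or that a rooted $K_4$ appears. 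This yields $\deg^X v\ge 4$, i.e.\ $\deg^+v\ge 10$.

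\textbf{Step 2 (bound on $\rho_4(G)$).} With $\deg^X v\ge 4$, Step 1 also tells us that $F$ restricted to $X_G\cap N(v)$ contains many edges: essentially a structure forcing, for each $u$, a non-edge pattern in $Z\setminus\{u\}$ that cannot be fixed by $w$. I will then use that $F$ is a flaw and invoke Corollary~\ref{cor-nume}: $F$ lies in the $\rho_4(G)$-target but not the $(\rho_4(G)-1)$-target. The aim is to show that $F$ has some required non-edge, so $|E(F)|\le 8$, which gives $\rho_4(G)\le 5$.

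For this, contract the edge $vw$, or $v$ into a root neighbour, and argue that if $F$ had $9$ or $10$ edges, the poor-set condition would fail for some $u$ (an almost complete $F$ leaves too few non-edges of $F$ to build $N^{\overline F}_v$ sparse enough). If that is not enough, I would fall back on choosing $Z$ to contain a $v$-exposed vertex, which Corollary~\ref{cor-cantcreateclique} allows, to gain the extra flexibility.

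\textbf{Main obstacle.} The main obstacle is the case analysis in Step 1. There I need to enumerate the possible graphs $N^+_v$ (six vertices, minimum degree four, no $K_4$ among the non-root pairs) against all placements of the roots and of $w$. I expect to organize it by $\deg^X v\in\{0,\dots,3\}$ and by whether $w$ is a root.
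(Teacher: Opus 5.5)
\paragraph{Step 1.} Your Step~1 has a genuine gap when $\deg^X v=3$. No case check confined to $N(v)$, using only $\delta(N^+_v)\ge 4$, $K_4$-freeness and the existence of an $(F,v)$-poor set, can reach a contradiction there, because the local data you list admit configurations with a poor set.

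Here is one. Let $N(v)\cap X_G=\{x_1,x_2,x_3\}$ span a triangle of $F$. Let the non-root neighbours $a,b,c$ form a triangle of $G$, and let $x_1,x_2,x_3$ be adjacent to $\{b,c\}$, $\{a,c\}$, $\{a,b\}$, respectively. Then $\delta(N^+_v)=4$ and $G[N(v)]$ has no $K_4$. In $H=N^{\overline{F}}_v$ the set $Z=\{x_1,x_2,x_3,a,b\}$, with $w=c$, is $(F,v)$-poor:
\begin{itemize}
\item deleting $x_3$ leaves the non-edges $x_1x_2,x_1a,x_2b$, which do not form a star;
\item deleting $x_1$ or $x_2$ leaves two non-edges sharing an end, but $c$ is not adjacent to $x_3$;
\item deleting $a$ or $b$ leaves a triangle of non-edges.
\end{itemize}
By the symmetry $x_1\mapsto x_2\mapsto x_3\mapsto x_1$, $a\mapsto b\mapsto c\mapsto a$, every admissible $Z$ is poor. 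So no choice of $Z$ helps, and a triangle of $F$ on the roots is not contradictory by itself.

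The paper rules this configuration out globally. A degree-two vertex $u$ of $H$ is a root with $t(uv)=4$. Contracting $uv$ gives a $4$-light minor (Corollary~\ref{cor-cancontr}) whose $\rho_4$ is exactly one smaller. Observations~\ref{obs-nomg} and~\ref{obs-trade} then force $F$ minus the two $F$-edges from $u$ to $N(v)\cap X_G$ to be $K_2+K_3$, which determines $u$ from $F$. Hence $H$ has at most one vertex of degree two.

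Even this is not enough locally. If $\overline{H}$ consists of the root triangle plus $x_1a$ and $bc$, then $\{x_1,x_2,x_3,b,c\}$ is poor: each deletion leaves two disjoint non-edges or a triangle of non-edges. One must therefore force $a$ into $Z$. The paper does this by proving $a$ is $v$-exposed: otherwise $N(v)\setminus\{a\}$ is the cut of a proper $K_{1,\star}$-universal $1$-nearly-saturated root separation of order five (each non-root of $N(v)$ misses at most one vertex of $N(v)$), contradicting Corollary~\ref{cor-noK5minus}. It then applies the second part of Corollary~\ref{cor-cantcreateclique}. You mention exposed vertices only as a fallback, with no way to certify that a specific vertex is exposed.

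\paragraph{Step 2.} Step~2 relies on the wrong mechanism, because poorness cannot detect $|E(F)|\ge 9$. For example, if $\deg^X v=5$ then $Z=N(v)\cap X_G$ and $H[Z]$ has at most one edge, so $Z$ is trivially poor. The missing idea is a density trade, which you only gesture at.
\begin{itemize}
\item If $\rho_4(G)\ge 6$, Corollary~\ref{cor-nume} leaves $F$ with at most one non-edge. Since $|N(v)\cap X_G|\ge 4$ by the first part, some root $u\in N(v)$ is $F$-adjacent to all other roots in $N(v)$.
\item Contracting $uv$ realizes at least three edges of $F$.
\item Since $t(uv)\le \deg v-1=5$, the contraction $G'$ satisfies $\rho_4(G')\ge\rho_4(G)-2$, and $\rho_4(G')<\rho_4(G)$ by Observation~\ref{obs-lightminor}.
\item By Observation~\ref{obs-nomg}, the rest of $F$ lies outside the target of $G'$.
\end{itemize}
Observations~\ref{obs-trade} and~\ref{obs-trade2} forbid this when $\rho_4(G)\ge 6$.
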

\begin{proof}
Suppose for a contradiction that there exists a vertex $v\in V(G)\setminus X_G$ of degree six
such that $\deg^+ v\le 9$, or equivalently, $|X_G\cap N(v)|\le 3$.
Let $F$ be a flaw of $G$ and let $H=N^{\overline{F}}_v$.

\begin{claim}\label{cl-onlyone}
The graph $H$ has minimum degree at least two, and every vertex $u'$ of $H$ of degree at most three belongs to $X_G\cap N(v)$
and has a non-neighbor belonging to $X_G\cap N(v)$ (adjacent to $u'$ in $F$).  Moreover, $H$ has at most one vertex of degree two, and if it has such a vertex $u$, then
$|X_G\cap N(v)|=3$, $u\in X_G$, and $X_G\cap N(v)$ is an independent set in $H$ (a clique in $F$).
\end{claim}
\begin{subproof}
By Observation~\ref{obs-deg4}, the graph $N^+_v$ has minimum degree at least four, and thus each vertex of $H$ of degree at most three belongs to $X_G\cap N(v)$
and has a non-neighbor in $X_G\cap N(v)$.

Let us consider any vertex $u$ of $H$ of degree at most two.
Since $u$ has degree at least four in $N^+_v$, it follows that $u\in X_G$, $|X_G\cap N(v)|=3$, and $u$ has degree exactly four in $N^+_v$ and exactly two in $H$.
Hence, we have $t(uv)=4$ and $u$ is adjacent in $F$ to the two vertices $w_1$ and $w_2$ of $X_G\cap N(v)\setminus \{u\}$.

Let us now consider the $\id$-rooted minor $G'$ of $G$ obtained by contracting the edge $uv$, and let $F'=F-\{uw_1,uw_2\}$.
The rooted graph $G'$ is 4-light by Corollary~\ref{cor-cancontr}, and thus Observation~\ref{obs-nomg} implies that $F'$ does not belong to the target of $G'$.
Note that $\rho_4(G')=\rho_4(G)+3-t(uv)=\rho_4(G)-1$, and thus by Observation~\ref{obs-trade}, we see that $F'$ is isomorphic to $K_2+K_3$.  Consequently either
\begin{itemize}
\item $u$ belongs to the triangle of $F'$, $w_1$ and $w_2$ are the two vertices of the non-triangle component of $F'$, and $u$ is the unique vertex of $F$ of degree four, or
\item $u$ belongs to the non-triangle component of $F'$, $w_1$ and $w_2$ both belong to the triangle component of $F'$, $\Delta(F)=3$, and $u$ is the unique vertex of $F$
adjacent to a pendant vertex.
\end{itemize}
In either case, $X_G\cap N(v)$ induces the triangle $uw_1w_2$ in $F$.  Moreover, the vertex $u$ is uniquely determined
by the graph $F$, and consequently, only one vertex of $H$ can have degree two.
\end{subproof}

Claim~\ref{cl-onlyone} implies that $\Delta(\overline{H})\le 3$, $\overline{H}$ has at most one vertex of degree three (and if it has one, this vertex and two of its neighbors belong to $X_G\cap V(H)$),
and each vertex of $\overline{H}$ of degree two belongs to $X_G\cap V(H)$ and has a neighbor in $X_G\cap V(H)$.
Let us choose a vertex $z\in V(H)\setminus X_G$ as follows:
\begin{itemize}
\item If $\overline{H}$ has a (unique) vertex $u$ of degree three, then let $z$ be the neighbor of $u$ in $\overline{H}$ that does not belong to $X_G$.
\item If $\Delta(\overline{H})\le 2$ and $|X_G\cap V(H)|=3$, then let $z$ be an isolated vertex of $\overline{H}-X_G$ (there exists one, since $\Delta(\overline{H}-X_G)\le 1$).
\item Otherwise, we let $z$ be an arbitrary vertex in $V(H)\setminus X_G$.
\end{itemize}
We claim that the vertex $z$ is $v$-exposed.  Indeed, otherwise let $K$ be the vertex set of the component of $G-(N(v)\setminus\{z\})$
containing $z$ (and $v$) and consider the root separation $(C,D)=(V(G)\setminus K,N(v)\cup K)$ of order five, where $C\cap D=N(v)\setminus\{z\}$.
Since each vertex $w\in N(v)\setminus X_G$ has at most one non-neighbor in $N(v)$, we have $k^G(C\cap D)\le 2$.
Observe that since the vertex $v\in D\setminus C$ is adjacent to all vertices of $C\cap D$, it follows that the root separation $(C,D)$ is $1$-nearly-saturated
and $K_{1,\star}$-universal, contradicting Corollary~\ref{cor-noK5minus}.

By Corollary~\ref{cor-cantcreateclique}, there exists an $(F,v)$-poor set $Z\subseteq V(H)$ containing $z$;
that is, there is no vertex $u\in Z$ such that $K_4$ is a rooted minor of the rooted graph $\rootsrem{H}{Z}{u}$.
Let $w$ be the vertex in $V(H)\setminus Z$ and let us consider the possibilities for the graph $\overline{H}[Z]$.
\begin{itemize}
\item Suppose first that $\overline{H}[Z]$ contains a triangle, necessarily on the vertices of $X_G\cap V(H)$.
Let $y$ be the vertex in $Z\setminus (X_G\cup \{z\})$.  Since $|X_G\cap V(H)|=3$, we have chosen $z$ so that
all its neighbors in $\overline{H}$ belong to $X_G\cap V(H)$.
Moreover, if $\overline{H}$ has a vertex of degree three, it is the neighbor $u$ of $z$ in $X_G\cap V(H)$,
and $uz$ is the only edge of $\overline{H}$ between $X_G\cap V(H)$ and $\{w,y,z\}$.
If $\Delta(\overline{H})=2$, then the triangle $\overline{H}[X_G\cap V(H)]$ is a component of $\overline{H}$;
in this case, let $u$ be an arbitrary vertex in $X_G\cap V(H)$.
Let $u_1$ and $u_2$ be the two vertices of $X_G\cap V(H)\setminus\{u\}$.

Thus, $E(\overline{H})$ consists of the edges of the triangle on $X_G\cap V(H)$ and possibly of the edges $uz$ and $yw$.
However, then the only non-edges of the graph $H-u$ are $u_1u_2$ and possibly $yw$,
and thus by contracting the edge $wu_1$, we obtain $K_4$ as a rooted minor of $\rootsrem{H}{Z}{u}$.
This is a contradiction, since $Z$ is $(F,v)$-poor.

Therefore, we can assume that $\overline{H}$ does not contain a triangle.  By Claim~\ref{cl-onlyone}, this implies
that $\Delta(\overline{H})\le 2$.  Moreover, since all vertices in $V(H)\setminus X_G$ have degree at most one in $\overline{H}$,
it follows that $\overline{H}$ is a forest.
\item Suppose now that $\overline{H}[X_G\cap V(H)]$ is a path $u_1u_2u_3$.
Recall that $z$ was chosen as an isolated vertex of $\overline{H}-X_G$.
Let $y$ be the vertex in $Z\setminus (X_G\cup \{z\})$.  Since $y$ and $z$ have degree at most one in $\overline{H}$,
and since $u_1$ and $u_3$ have degree at most two in $\overline{H}$ and are adjacent to $u_2$,
we can choose the labels of $u_1$ and $u_3$ so that $yu_3, zu_1\not\in E(\overline{H})$.
Hence, the non-edges of the graph $H[Z]$ are $u_1u_2$, $u_2u_3$, and possibly $yu_1$ and $zu_3$.
\begin{itemize}
\item If $yu_1\not\in E(H)$, then since $\Delta(\overline{H})\le 2$ and $y$ has degree at most one in $\overline{H}$,
it follows that $w$ is adjacent in $H$ to $y$, $u_1$, and $u_2$.  In this case, we can contract the edge $u_1w$
and obtain $K_4$ as a rooted minor of $\rootsrem{H}{Z}{u_3}$.
\item If $zu_3\not\in E(H)$, then similarly $K_4$ is a rooted minor of $\rootsrem{H}{Z}{u_1}$.
\item If $yu_1,zu_3\in E(H)$, then $Z\setminus \{u_2\}$ is a clique of size four in $H[Z]-u_2$.
\end{itemize}
In either case, there exists a vertex $u\in Z$ such that $K_4$ is a rooted minor of $\rootsrem{H}{Z}{u}$,
which is a contradiction.

Therefore, $\overline{H}[X_G\cap V(H)]$ has at most one edge, and by Claim~\ref{cl-onlyone},
$\overline{H}$ has at most two vertices of degree two.
\item Let us next consider the possibility that $\overline{H}[Z]$ contains an isolated vertex $x$,
and thus $\overline{H}[Z]-x$ is a subgraph of a path $u_1u_2u_3u_4$.
Since $w$ has degree at most one in $\overline{H}$, we can assume that the labels of $u_1$, \ldots, $u_4$
are chosen so that $wu_1,wu_2\not\in E(\overline{H})$.  But then we can contract the edge $wu_1$ in $H$ and
obtain $K_4$ as a rooted minor of $\rootsrem{H}{Z}{u_3}$, which is a contradiction.

Therefore, $\delta(\overline{H}[Z])\ge 1$; moreover, since $|Z|=5$, it follows that $\overline{H}[Z]$ has a vertex of degree two,
and since $\overline{H}[X_G\cap V(H)]$ has at most one edge, Claim~\ref{cl-onlyone} implies that one of its neighbors belongs to $X_G$ and the other one does not.
\item Let $u_1u_2y$ be a path in $\overline{H}[Z]$, where $u_1,u_2\in X_G$ and $y\not\in X_G$, and let $Z\setminus\{u_1,u_2,y\}=\{y_1,y_2\}$.
Since $\delta(\overline{H}[Z])\ge 1$, $\Delta(\overline{H}[Z])=2$, and the vertices of $V(H)\setminus X_G$ have degree at most one in $\overline{H}$,
it follows that $\overline{H}[Z]$ is the disjoint union of the paths $u_1u_2y$ and $y_1y_2$.
Moreover, Claim~\ref{cl-onlyone} implies that $y_1$ and $y_2$ have degree one in $\overline{H}$ (even if one of them belongs to $X_G$, since
they are non-adjacent to $u_1,u_2\in X_G$).  Therefore, $y_1w,y_2w\not\in E(\overline{H})$.
By contracting the edge $wy_1$ of $H$, we obtain $K_4$ as a rooted minor of $\rootsrem{H}{Z}{u_2}$, which is a contradiction.
\end{itemize}
In all cases, we have obtained a contradiction.  Therefore, we have $\deg^+ v\ge 10$, and thus $|N(v)\cap X_G|\ge 4$.

Suppose now for a contradiction that there exists a vertex $v\in V(G)\setminus X_G$ of degree six
and that $\rho_4(G)\ge 6$.  Let $F$ be a flaw of $G$; by Corollary~\ref{cor-nume}, $F$ does not belong to the $5$-target $\SS_{5,8}$,
and thus $F$ has at most one non-edge.  Since $|N(v)\cap X_G|\ge 4$,
there exists a vertex $u\in N(v)\cap X_G$ adjacent in $F$ to all vertices of $N(v)\cap X_G\setminus\{u\}$.
Let $G'$ be the $\id$-rooted minor of $G$ obtained by contracting the edge $uv$ and let $F'$ be the subgraph of $F$
obtained by deleting all (at least three) edges from $u$ to $N(v)\cap X_G\setminus\{u\}$.
The rooted graph $G'$ is $4$-light by Corollary~\ref{cor-cancontr}, and thus Observation~\ref{obs-nomg} implies that $F'$ does not belong to the target of $G'$.
Moreover, $\rho_4(G')=\rho_4(G)+3-t(uv)\ge \rho_4(G)-2$.  Since $\rho_4(G)\ge 6$, this contradicts Observation~\ref{obs-trade} or \ref{obs-trade2}.
\end{proof}

Next, we consider the vertices of degree seven.

\begin{lemma}\label{lemma-bo7}
If $G$ is a minimal counterexample, then every vertex $v\in V(G)\setminus X_G$ of degree seven satisfies $\deg^+ v\ge 10$.
Moreover, if there exists a vertex $v\in V(G)\setminus X_G$ of degree seven such that $\deg^+ v=10$, then $\rho_4(G)\le 5$.
\end{lemma}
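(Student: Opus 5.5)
I will follow the template of the proof of Lemma~\ref{lemma-bo6}. Suppose for a contradiction that $v\in V(G)\setminus X_G$ has degree seven and $\deg^+ v\le 9$, i.e.\ $|X_G\cap N(v)|\le 2$. Fix a flaw $F$ and set $H=N^{\overline{F}}_v$, a graph on seven vertices.

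By Observation~\ref{obs-deg4}, $N^+_v$ has minimum degree at least four. Since at most one edge of $N^+_v$ (the one between the two roots, if $F$ contains it) is missing from $H$, the graph $H$ has minimum degree at least three, and only the (at most two) roots can have degree three. Thus $\overline{H}$ has maximum degree at most three, and degree three occurs only at roots joined in $\overline{H}$ by the flaw edge. As in Claim~\ref{cl-onlyone}, I would refine this by contracting $uv$ for a root $u$ with $t(uv)=4$. This uses Corollary~\ref{cor-cancontr}, Observation~\ref{obs-nomg} and Observation~\ref{obs-trade}, and forces $F$ minus the deleted edges to be $K_2+K_3$, which pins down that root.

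Next I would pick a non-root vertex $z$ chosen to be isolated or of low degree in $\overline{H}-X_G$, and show that $z$ is $v$-exposed. If it were not, the separation $(V(G)\setminus K,N(v)\cup K)$ cutting off the component $K$ of $G-(N(v)\setminus\{z\})$ containing $z$ would have order six, not five. So I would instead cut along a $5$-subset of $N(v)$ whose complement consists of $z$ and one other vertex, and argue through Corollary~\ref{cor-noK5minus2} (with $k^G\le 3$) or Corollary~\ref{cor-noK4plus} that $z$ must be exposed. Corollary~\ref{cor-cantcreateclique} then gives an $(F,v)$-poor set $Z\ni z$ of size five, with the two vertices $w_1,w_2\in N(v)\setminus Z$ available for contraction. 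The core of the proof is a case analysis on $\overline{H}[Z]$. This graph has maximum degree at most three and contains at most one root–root edge. Using $w_1,w_2$, which have few non-neighbours, I would exhibit in each case a vertex $u\in Z$ such that $\rootsrem{H}{Z}{u}$ contains $K_4$ as a rooted minor, by contracting an edge $w_iy$ that supplies a missing edge. This contradicts poorness. Having two extra vertices should make this easier than the degree-six case, because $\overline{H}$ is now very sparse.

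For the second statement, suppose $\deg v=7$, $\deg^+ v=10$ (so exactly three roots are in $N(v)$), and $\rho_4(G)\ge 6$. By Corollary~\ref{cor-nume}, $F$ has at most one non-edge, so some root $u\in N(v)$ is adjacent in $F$ to the other two roots of $N(v)$. If $t(uv)\le 5$, contracting $uv$ gives $\rho_4(G')\ge\rho_4(G)-2$ while deleting at least two edges of $F$, which contradicts Observation~\ref{obs-trade} or Observation~\ref{obs-trade2}, as in Lemma~\ref{lemma-bo6}. The remaining subcase, where every such $u$ has $t(uv)\ge 6$, needs the structure of $H$: with all roots of $N(v)$ highly adjacent I expect $N^{\overline F}_v$ to be dense enough that the poor-set argument above again produces $K_4$, giving a contradiction. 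I expect the main obstacle to be the exposedness of $z$ together with the case analysis for $\overline{H}[Z]$, since the order-six separation does not fit directly into the earlier corollaries.
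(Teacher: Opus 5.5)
\textbf{Second statement: the contraction step fails for $t(uv)=5$.} Contracting $uv$ for a root $u\in N(v)$ creates new root--root edges only from $u$ to the other two roots of $N(v)$. So $\delta\le 2$, whereas Observation~\ref{obs-trade2} needs $\delta\ge 3$. That is why the analogous step worked in Lemma~\ref{lemma-bo6}, where $N(v)$ has at least four roots.

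The case $t(uv)=5$ is in fact consistent with the numbers. For $\rho_4(G)=6$ we have $|E(F)|=9$, and then $\rho_4(G')=4$ with target $\SS_{5,6}$, while $|E(F')|=7$.

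What the contraction really gives is weaker. Delete a single $F$-edge at $u$ and apply Observation~\ref{obs-trade}; this yields only $t(uv)\ge 5$ for \emph{every} root $u\in N(v)$, i.e.\ each root has at most one non-root neighbour in $\overline{H}$. It does not give $t(uv)\ge 6$.

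From there the paper needs real work, which your ``I expect $N^{\overline F}_v$ to be dense enough'' does not provide:
\begin{itemize}
\item It proves that at most one vertex of $N(v)$ is not $v$-exposed. Two non-exposed vertices $q_1,q_2$ would yield a proper order-five root separation with cut $N(v)\setminus\{q_1,q_2\}$. This separation is $K_{1,\star}$-universal via $v$ and $1$-nearly-saturated, contradicting Corollary~\ref{cor-noK5minus}.
\item It chooses the poor set $Z$ to contain an exposed non-root vertex with a root non-neighbour whenever one exists.
\item It runs a four-case analysis whose final case depends on that choice.
\end{itemize}

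\textbf{First statement: the added refinements are wrong, and the core is not carried out.} Your degree facts about $\overline{H}$ are correct. The two refinements you add are wrong and in any case unnecessary.
\begin{itemize}
\item With at most two roots in $N(v)$, contracting $uv$ for a root with $t(uv)=4$ deletes only one edge of $F$. The $K_2+K_3$ outcome of Observation~\ref{obs-trade} needs $\delta=2$, so it is not forced.
\item Cutting along $N(v)\setminus\{z,q\}$ gives a root separation isolating $v$ only if $q$ is also non-exposed. So it cannot show that a single vertex $z$ is exposed. This two-vertex trick is exactly what the paper uses in the second part.
\end{itemize}

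The paper's first part needs no exposedness. It takes an arbitrary $(F,v)$-poor set $Z$ and uses that $s_1,s_2\notin X_G$ have degree at most two in $\overline{H}$. It then carries out a lengthy case analysis on $\overline{H}[Z]$: two degree-three vertices, one degree-three vertex, a cycle, a vertex of $Z$ adjacent in $\overline{H}$ to both $s_1$ and $s_2$, and the path configurations. That analysis is the whole content of this part, and you only assert it will go through.

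As written, neither half of the lemma is actually established.
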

\begin{proof}
Let $v\in V(G)\setminus X_G$ be a vertex of degree seven, and suppose for a contradiction that $\deg^+ v\le 9$, or equivalently $|X_G\cap N(v)|\le 2$.
Let $F$ be a flaw of $G$ and let $H=N^{\overline{F}}_v$.  Since $\delta(N^+_v)\ge 4$, we have $\delta(H)\ge 3$, and if
$H$ has a vertex $x$ of degree three, then $|X_G\cap N(v)|=2$, $X_G\cap N(v)$ is an independent set in $H$, and $x\in X_G\cap N(v)$.
In particular, $H$ has at most two vertices of degree three.  Let us restate this observation from the perspective of the complement $\overline{H}$ of $H$.
\begin{claim}\label{cl-two3}
The graph $\overline{H}$ has maximum degree at most three and at most two of its vertices have degree exactly three.
Each vertex of $\overline{H}$ of degree three belongs to $X_G$ and one of its neighbors in $\overline{H}$ belongs to $X_G$ as well.
Moreover, if $\overline{H}$ has two vertices of degree three, then they are adjacent in $\overline{H}$.
\end{claim}

By Corollary~\ref{cor-cantcreateclique}, there exists an $(F,v)$-poor set $Z\subseteq V(H)$; let $s_1$ and $s_2$ be the two vertices in $V(H)\setminus Z$.
Note that $s_1,s_2\not\in X_G$, and thus $s_1$ and $s_2$ have degree at most two in $\overline{H}$.
Let us now consider the possibilities with regards to the graph $\overline{H}[Z]$.
\begin{itemize}
\item Suppose first that $\overline{H}[Z]$ has two vertices $u_1$ and $u_2$ of degree three.  
Let $z$ be a common neighbor of $u_1$ and $u_2$ in $\overline{H}[Z]$, and for $i\in\{1,2\}$, let $z_i$ be the neighbor of $u_i$ in $\overline{H}[Z]$ different from
$z$ and $u_{3-i}$ (it is possible that $z_1=z_2$).  By Claim~\ref{cl-two3},
there are no edges of $\overline{H}$ between $\{s_1,s_2\}$ and $\{u_1,u_2,z\}$.

If $z_1=z_2$, then the vertex in $Z\setminus \{u_1,u_2,z,z_1\}$ is isolated in $\overline{H}[Z]$ and $z_1z,s_1z_1,s_2z_1\not\in E(\overline{H})$.
Hence, by contracting the edges $s_1u_1$ and $s_2u_2$, we see that $K_5$ is a rooted minor of $\roots{H}{Z}$;
this is a contradiction, since $Z$ is $(F,v)$-poor.

Hence, suppose that $z_1\neq z_2$.  Since $z_1$ has degree at most two in $\overline{H}$, we can by symmetry between $s_1$ and $s_2$
assume that $s_1z_1\not\in E(\overline{H})$.  Hence, by contracting the edges $s_1u_1$ and $s_2u_2$, we see that
$K_4$ is a rooted minor of $\rootsrem{H}{Z}{z_2}$.  This is again a contradiction.

Therefore, $\overline{H}[Z]$ has at most one vertex of degree three.

\item Suppose next that $\overline{H}[Z]$ has exactly one vertex $u_1$ of degree three, and let $u_2$ be its neighbor in $X_G$
and $z_1$ and $z_2$ its neighbors not in $X_G$. Let $z$ be the vertex in $Z\setminus \{u_1,u_2,z_1,z_2\}$.

\begin{itemize}
\item If $z_1z_2\in E(\overline{H})$, then $z$ can only be adjacent to $u_2$ in $\overline{H}[Z]$, and $\overline{H}$ has no edges between $\{u_1,z_1,z_2\}$
and $\{s_1,s_2\}$.  By contracting the edges $s_1z_1$ and $s_2z_2$ in $H$, we obtain $K_4$ as a rooted minor of $\rootsrem{H}{Z}{u_2}$.
This is a contradiction, and thus $z_1z_2\not\in E(\overline{H})$.
\item If $z_1u_2\in E(\overline{H})$, then $z$ can only be adjacent to $z_2$ in $\overline{H}[Z]$.  Moreover, since $u_2$ has degree at most three in $\overline{H}$,
we can by symmetry between $s_1$ and $s_2$ assume that $s_2u_2\not\in E(\overline{H})$.
By contracting the edges $s_1u_1$ and $s_2u_2$ in $H$, we obtain $K_4$ as a rooted minor of $\rootsrem{H}{Z}{z_2}$.

This is a contradiction, and thus $z_1u_2\not\in E(\overline{H})$.  By symmetry between $z_1$ and $z_2$, we also have $z_2u_2\not\in E(\overline{H})$.
Moreover, note that neither $z_1$ nor $z_2$ can be adjacent to both $s_1$ and $s_2$ in $\overline{H}$.
\item If $u_2$ is not adjacent to both $s_1$ and $s_2$ in $\overline{H}$,
then we can contract the edges $s_1u_1$ and $s_2u_1$ in $H$ and obtain $K_4$ as a rooted minor of $\rootsrem{H}{Z}{z}$.
This is a contradiction, and thus $s_1u_2,s_2u_2\in E(\overline{H})$.  This implies that $z$ is not adjacent to $u_1$ and $u_2$ in $\overline{H}$.
\item If $s_1z,s_2z\not\in E(\overline{H})$, then by contracting the edges $s_1z$ and $s_2z$ in $H$, we obtain $K_4$ as a rooted minor of $\rootsrem{H}{Z}{u_1}$.
This is a contradiction, and thus by symmetry between $s_1$ and $s_2$, we can assume that $s_1z\in E(\overline{H})$.  Since $s_1$ has degree at most two in $\overline{H}$,
it follows that $s_1z_1,s_1z_2\not\in E(\overline{H})$.  Moreover, since $z$ has degree at most two in $\overline{H}$,
we can by symmetry between $z_1$ and $z_2$ assume that $z_1z\not\in E(\overline{H})$.
\item If $z_2z\not\in E(\overline{H})$, then $H[Z-u_1]$ is isomorphic to $K_4$.  If $z_2z\in E(\overline{H})$, then since $z$ and $z_2$ have degree at most two in $\overline{H}$,
it follows that $s_2$ is non-adjacent to $z$ and $z_2$ in $\overline{H}$, and thus by contracting the edge $s_2z$, we obtain $K_4$
as a rooted minor of $\rootsrem{H}{Z}{u_1}$.  In either case, this contradicts the $(F,v)$-poorness of $Z$.
\end{itemize}

In each of the subcases, we obtained a contradiction, and thus $\Delta(\overline{H}[Z])\le 2$.

\item Suppose next that $\overline{H}[Z]$ contains a cycle $C=u_1\ldots u_k$ for some $k\in \{3,4,5\}$, and let $Z\setminus V(C)=\{z_1,\ldots,z_{5-k}\}$.
Since $\Delta(\overline{H}[Z])\le 2$, the graph $\overline{H}$ does not have any edges between $V(C)$ and $Z\setminus V(C)$;
hence, $E(\overline{H}[Z])$ consists of $E(C)$ and possibly the edge $z_1z_2$ when $k=3$.

By Claim~\ref{cl-two3}, we can choose the labels of the vertices of $C$ so that $\overline{H}$ does not have any edges between $\{s_1,s_2\}$ and $V(C)\setminus\{u_1,u_2\}$.
Moreover, each of $u_1$ and $u_2$ has at most one neighbor in $\{s_1,s_2\}$, and thus by symmetry between $s_1$ and $s_2$, we can furthermore assume that the set $R$ of edges of $\overline{H}$ between $\{u_1,u_2\}$ and $\{s_1,s_2\}$
is either equal to $R_1=\{u_1s_1,u_2s_1\}$ or a subset of $R_2=\{u_1s_2,u_2s_1\}$.
If $k\ge 4$, then by contracting the edges $s_1u_k$ and $s_2u_3$, we see that $\rootsrem{H}{Z}{u_1}$ contains $K_4$ as a rooted minor.
This is a contradiction, and thus $k=3$.

If $s_1z_1,s_1z_2\not\in E(\overline{H})$, then we can contract the edges $s_1z_1$ and $s_2u_2$ in $H$
and obtain $K_4$ as a rooted minor of $\rootsrem{H}{Z}{u_1}$.  This is a contradiction,
and thus $s_1$ has a neighbor in $\{z_1,z_2\}$ in $\overline{H}$.
Since $s_1$ has degree at most two in $\overline{H}$, it follows that $R\neq R_1$,
and thus $R\subseteq R_2$; and moreover, $s_1$ has at most one neighbor in $\{u_2,s_2\}$ in $\overline{H}$.
By contracting the edges $s_1u_1$ and $s_2u_2$ in $H$, we see that $K_4$ is a rooted minor of $\rootsrem{H}{Z}{z_1}$.
This is a contradiction, since $Z$ is $(F,v)$-poor.

Therefore, the graph $\overline{H}[Z]$ is a forest.

\item Suppose now that there exists a vertex $u\in Z$ which is adjacent to both $s_1$ and $s_2$ in $\overline{H}$ and is not isolated in $\overline{H}[Z]$.
By Claim~\ref{cl-two3}, $u$ has degree exactly three in $\overline{H}$, belongs to $X_G$, and its unique neighbor $u'$ in $\overline{H}[Z]$
also belongs to $X_G$.  Since $\overline{H}[Z]$ is a forest of maximum degree at most two, $\overline{H}[Z\setminus \{u,u'\}]$ is a subgraph
of a path $z_1z_2z_3$.  If $z_1z_2,z_2z_3\not\in E(\overline{H})$, then $H[Z\setminus \{u'\}]$ is a clique,
in contradiction to $Z$ being $(F,v)$-poor.  Hence, we can choose the labels of the vertices of $\overline{H}[Z\setminus \{u,u'\}]$
so that $z_1z_2\in E(\overline{H})$.

Since $|X_G\cap N(v)|\le 2$, we have $z_1,z_2,z_3\not\in X_G$. Hence, 
Claim~\ref{cl-two3} implies that neither $z_1$ nor $z_2$ can be adjacent to both $s_1$ and $s_2$ in $\overline{H}$,
and if $z_3$ is adjacent to both $s_1$ and $s_2$ in $\overline{H}$, then $z_2z_3\not\in E(\overline{H})$.
If neither $s_1$ nor $s_2$ is adjacent to $z_2$ in $\overline{H}$, it follows that contracting the edges $s_1z_2$ and $s_2z_2$ of $H$
ensures that both $z_1$ and $z_3$ are adjacent to $z_2$ in the resulting minor.  This would imply that $K_4$ is a rooted minor of $\rootsrem{H}{Z}{u'}$,
a contradiction.  Therefore, we can assume that $s_1z_2\in E(\overline{H})$ and $s_2z_2\not\in E(\overline{H})$.

Since $s_1$ and $z_2$ have degree at most two in $\overline{H}$, this implies that $s_1z_1,z_2z_3\not\in E(\overline{H})$.
Moreover, since $s_2$ has degree at most two in $\overline{H}$, it cannot be adjacent to both $z_1$ and $s_1$ in $\overline{H}$.
Hence, by contracting the edges $s_1z_1$ and $s_2z_2$ of $H$, we see that $K_4$ is a rooted minor of $\rootsrem{H}{Z}{u'}$,
which is a contradiction.

Therefore, we can assume that every vertex adjacent to both $s_1$ and $s_2$ in $\overline{H}$ is isolated in $\overline{H}[Z]$.

\item Not all edges of $\overline{H}[Z]$ are incident with the same vertex, as otherwise $H[Z]$ would contain a clique of size
four, contradicting the $(F,v)$-poorness of $Z$.  Hence, the forest $\overline{H}[Z]$ has two non-adjacent leaves
without a common neighbor in $\overline{H}[Z]$.  Since $\overline{H}[Z]$ is a forest of maximum degree at most two and $|Z|=5$,
it follows that $\overline{H}[Z]$ is isomorphic to $P_5$ or $P_4+K_1$ or $P_3+K_2$ or $2K_2+K_1$.  Hence, we can label the vertices
of $Z$ as $z_1$, \ldots, $z_5$ so that for some $k\in\{4,5\}$, we have $E(\overline{H}[Z])\subseteq \{z_1z_2,z_2z_3,\ldots,z_{k-1}z_k\}$
and $z_1z_2,z_{k-1}z_k\in E(\overline{H}[Z])$.

Suppose first that $k=4$.  Since the vertices $z_1$, \ldots, $z_4$ are not isolated in $\overline{H}[Z]$, each of them is adjacent
to at most one of $s_1$ and $s_2$ in $\overline{H}$.  By symmetry, we can assume that $z_1s_1\not\in E(\overline{H})$.
If $z_2s_1\not\in E(\overline{H})$, then by contracting the edge $z_1s_1$ of $H$, we see that $K_4$ is a rooted minor of $\rootsrem{H}{Z}{z_3}$,
which is a contradiction.  Therefore, $z_2s_1\in E(\overline{H})$, and thus $z_2s_2\not\in E(\overline{H})$.

If $s_2$ is not adjacent to both $z_1$ and $s_1$ in $\overline{H}$, then by contracting the edges $z_1s_1$ and $z_2s_2$ of $H$,
we see that $K_4$ is a rooted minor of $\rootsrem{H}{Z}{z_3}$.  On the other hand, if $s_1s_2,z_1s_2\in E(\overline{H})$,
then since $s_2$ has degree at most two in $\overline{H}$, it is not adjacent to $z_3$ and $z_4$ in $\overline{H}$.
In this case, we can contract the edge $s_2z_3$ of $H$ and obtain $K_4$ as a rooted minor of $\rootsrem{H}{Z}{z_2}$.
This is again a contradiction.

\item It follows that it is not possible to relabel the vertices of $Z$ so that $k=4$, and thus $\overline{H}[Z]$ is isomorphic either to $P_5$ or to $P_3+K_2$.
Hence, we can assume that $k=5$ and $z_2z_3\in E(\overline{H})$.  In particular, no vertex of $\overline{H}[Z]$ is isolated, and thus each vertex of $Z$ has at most one neighbor in $\{s_1,s_2\}$
in $\overline{H}$.

If there exists $i\in\{2,4\}$ such that neither $s_1$ nor $s_2$ is adjacent to $z_i$ in $\overline{H}$, then contracting the edges $s_1z_i$ and $s_2z_i$ of $H$
makes $z_i$ adjacent to both $z_{i-1}$ and $z_{i+1}$, and thus $K_4$ is a rooted minor of $\rootsrem{H}{Z}{z_{6-i}}$.  This is a contradiction,
and thus both $z_2$ and $z_4$ have a neighbor in $\{s_1,s_2\}$ in $\overline{H}$.  In particular, $z_2$ has degree three in $\overline{H}$,
and by Claim~\ref{cl-two3} it belongs to $X_G$ and has a neighbor in $X_G$.  Since $z_2z_4\not\in E(\overline{H})$ and $|X_G\cap N(v)|\le 2$,
the vertex $z_4$ does not belong to $X_G$, and thus it has degree at most two in $\overline{H}$.  Therefore, $z_3z_4\not\in E(\overline{H})$ and $\overline{H}[Z]$ is isomorphic to $P_3+K_2$.

By symmetry between $s_1$ and $s_2$, we can assume that $z_2$ is adjacent to $s_1$ in $\overline{H}$.  If $s_2$ did not have a neighbor in $\{z_1,z_3\}$ in $\overline{H}$,
then by contracting the edge $s_2z_2$ in $H$, we would obtain $K_4$ as a rooted minor of $\rootsrem{H}{Z}{z_4}$, a contradiction.
Since $s_1$ and $s_2$ have degree at most two in $\overline{H}$, it follows that each of them has at most one neighbor in $\{z_4,z_5\}$ in $\overline{H}$,
and if at least one of them has one, then $s_1s_2\not\in E(\overline{H})$.  By symmetry between $z_4$ and $z_5$, we can assume that $s_1z_4,s_2z_5\not\in E(\overline{H})$.
However, then contracting the edges $s_1z_4$ and $s_2z_5$ in $H$ gives us $K_4$ as a rooted minor of $\rootsrem{H}{Z}{z_2}$, which is a contradiction.
\end{itemize}
In all cases, we obtained a contradiction, and thus $\deg^+ v\ge 10$.

Suppose now for a contradiction that there exists a vertex $v\in V(G)\setminus X_G$ of degree seven such that
$\deg^+v = 10$ (or equivalently, $|N(v)\cap X_G|=3$) and that $\rho_4(G)\ge 6$.
By Corollary~\ref{cor-nume}, a flaw $F$ of $G$ does not belong to the $5$-target $\SS_{5,8}$,
and thus $F$ has at most one non-edge.   Consider any vertex $u\in N(v)\cap X_G$;
then $u$ has at least one neighbor in $F$ belonging to $N(v)\cap X_G$.
Let $G'$ be the $\id$-rooted minor of $G$ obtained by contracting the edge $uv$ and let $F'$ be the subgraph of $F$
obtained by deleting an edge from $u$ to a vertex in $N(v)\cap X_G\setminus\{u\}$.
The rooted graph $G'$ is $4$-light by Corollary~\ref{cor-cancontr}, and thus Observation~\ref{obs-nomg} implies that $F'$ does not belong to the target of $G'$.
Note that $\rho_4(G')=\rho_4(G)+3-t(uv)\le \rho_4(G)-1$.
Since $\rho_4(G)\ge 6$, Observation~\ref{obs-trade} implies that $\rho_4(G')\neq \rho_4(G)-1$,
and thus $t(uv)\ge 5$.

As before, let $H=N^{\overline{F}}_v$, and note that for each vertex $u\in N(v)\cap X_G$, the inequality $t(uv)\ge 5$ means that $u$ has at least three neighbors not in $X_G$.
Equivalently, in $\overline{H}$, the vertex $u$ has at most one neighbor in $N(v)\setminus X_G$.
Moreover, since $\delta(N^+_v)\ge 4$, each vertex in $N(v)\setminus X_G$ has degree at least four in $H$ and at most two in $\overline{H}$.
Let $N(v)\cap X_G=\{u_1,u_2,u_3\}$.  

Note that at most one vertex of $N(v)$ is not $v$-exposed.  Indeed, suppose for a contradiction
that $q_1,q_2\in N(v)$ are distinct non-$v$-exposed vertices (clearly not belonging to $X_G$).  Let $K$ be the vertex set of the component of $G-(N(v)\setminus \{q_1,q_2\})$
containing $q_1$, $q_2$, and $v$.  Since neither $q_1$ nor $q_2$ is $v$-exposed, we have $K\cap X_G=\emptyset$, and thus
$(C,D)=(V(G)\setminus K,K\cup (N(v)\setminus \{q_1,q_2\}))$ is a proper root separation of $G$ of order five
with $C\cap D=N(v)\setminus \{q_1,q_2\}$.  Since $v$ is adjacent to all vertices of $C\cap D$, this separation is $K_{1,\star}$-universal.
Moreover, since $|C\cap D\cap X_G|=|N(v)\cap X_G|=3$ and each vertex of $N(v)\cap X_G$ has at most one non-neighbor in $N(v)\setminus X_G$,
the root separation $(C,D)$ is $1$-nearly-saturated.  This contradicts Corollary~\ref{cor-noK5minus}.

By Corollary~\ref{cor-cantcreateclique}, there exists an $(F,v)$-poor set $Z\subseteq V(H)$;
and moreover, if there exists a $v$-exposed vertex in $N(v)\setminus X_G$ with a non-neighbor in $N(v)\cap X_G$,
then we can assume that $Z$ contains one.  Let $N(v)\setminus X_G=\{s_1,s_2,z_1,z_2\}$, where $z_1,z_2\in Z$.
Let us now consider the possibilities for the graph $\overline{H}[Z]$.
\begin{itemize}
\item Suppose first that $z_1$ or $z_2$ has two neighbors in $\{u_1,u_2,u_3\}$ in $\overline{H}$.
By symmetry, we can assume that $z_1u_1,z_1u_2\in E(\overline{H})$.  Note that $\overline{H}$ has no edges between $\{s_1,s_2\}$
and $\{u_1, u_2, z_1\}$.

If $z_2u_3\in E(\overline{H})$, then we also have $s_1u_3,s_2u_3\not\in E(\overline{H})$, and by contracting the edges $s_1u_1$ and $s_2u_2$ in $H$, we obtain $K_4$ as a rooted minor of $\rootsrem{H}{Z}{z_2}$.
This is a contradiction, since $Z$ is $(F,v)$-poor.  Hence, $z_2$ has no neighbor in $\{u_1,u_2,u_3\}$ in $\overline{H}$.
By symmetry between $s_1$ and $s_2$, we can assume that $s_2u_3\not\in E(\overline{H})$.  However, then we can contract the edges $s_1u_1$ and $s_2u_3$ and obtain $K_4$ as a rooted minor
of $\rootsrem{H}{Z}{z_1}$, a contradiction.

Therefore, each of $z_1$ and $z_2$ has at most one neighbor in $\{u_1,u_2,u_3\}$ in $\overline{H}$.

\item Suppose next that $z_1$ and $z_2$ each have a neighbor in $\{u_1,u_2,u_3\}$ in $\overline{H}$.
By symmetry, we can assume that $z_1u_1,z_2u_2\in E(\overline{H})$.  Since $z_1$ and $z_2$
have degree at most two in $\overline{H}$ and $u_3$ has at most one neighbor in $\overline{H}$ not in $X_G$,
each of $z_1$, $z_2$, and $u_3$ has at most one neighbor in $\{s_1,s_2\}$ in $\overline{H}$.
By symmetry between $s_1$ and $s_2$, we can assume that $s_1$ is adjacent to at most one of $z_1$ and $z_2$ in $\overline{H}$
and that $s_2u_3\not\in E(\overline{H})$.  By symmetry between $z_1$ and $z_2$, we can furthermore assume that $s_1z_1\not\in E(\overline{H})$.
By contracting the edges $s_1u_1$ and $s_2u_3$ in $H$, we obtain $K_4$ as a rooted minor of $\rootsrem{H}{Z}{z_2}$.
This is a contradiction, since $Z$ is $(F,v)$-poor.

Therefore, $\overline{H}$ has at most one edge between $\{z_1,z_2\}$ and $\{u_1,u_2,u_3\}$.

\item Next, let us consider the case that $\overline{H}$ has exactly one edge between $\{z_1,z_2\}$ and $\{u_1,u_2,u_3\}$;
by symmetry, we can assume that $u_1z_1\in E(\overline{H})$.

Suppose first that $s_1$ has no neighbor in $\{u_2,u_3\}$ in $\overline{H}$.  If $s_2u_i\not\in E(\overline{H})$ for some $i\in \{2,3\}$,
then we can contract the edges $s_1u_{5-i}$ and $s_2u_i$ in $H$ and obtain $K_4$ as a rooted minor of $\rootsrem{H}{Z}{z_1}$, a contradiction.
Hence, $s_2u_2,s_2u_3\in E(\overline{H})$, and since $s_2$ has degree at most two in $\overline{H}$, it follows that $s_2$
is adjacent neither to $z_1$ nor to $z_2$ in $\overline{H}$.  Hence, by contracting the edges $s_1u_2$ and $s_2z_1$ of $H$,
we obtain $K_4$ as a rooted minor of $\rootsrem{H}{Z}{u_1}$, which is again a contradiction.

Therefore, $s_1$ has a neighbor in $\{u_2,u_3\}$ in $\overline{H}$, and by a symmetric argument, so does $s_2$.
Since each of $u_2$ and $u_3$ has at most one neighbor not in $X_G$ in $\overline{H}$, we can
assume that $s_1u_2, s_2u_3\in E(\overline{H})$.  Since $z_1$ has degree at most two in $\overline{H}$,
it is adjacent in $\overline{H}$ to at most one of $s_1$ and $s_2$; by symmetry, we can assume that $s_1z_1\not\in E(\overline{H})$.

If $z_1z_2\not\in E(\overline{H})$, then we can contract the edge $s_1u_1$ of $H$ and obtain $K_4$ as a rooted minor of
$\rootsrem{H}{Z}{u_2}$, a contradiction.  Therefore, we have $z_1z_2\in E(\overline{H})$.  Then we also have $s_2z_1\not\in E(\overline{H})$,
restoring the symmetry between $s_1$ and $s_2$.  Moreover, $z_2$ has at most one neighbor in $\{s_1,s_2\}$ in $\overline{H}$,
and by this restored symmetry, we can assume that $s_1z_2\not\in E(\overline{H})$.  But then we can contract in $H$
the edges $s_1z_1$ and $s_2u_1$ and obtain $K_4$ as a rooted minor of $\rootsrem{H}{Z}{u_3}$.  This is a contradiction, since $Z$ is $(F,v)$-poor.

\item Therefore, $\overline{H}$ has no edges between $\{u_1,u_2,u_3\}$ and $\{z_1,z_2\}$.
Recall that $Z$ was chosen so that if there exists a $v$-exposed vertex in $N(v)\setminus X_G$ with a non-neighbor in $N(v)\cap X_G$ in $G$ (or equivalently, in $H$),
then such a vertex is contained in $Z$.  Since both $z_1$ and $z_2$ are adjacent to all vertices of $\{u_1,u_2,u_3\}$ in $H$,
it follows that every $v$-exposed vertex in $N(v)\setminus X_G$ is also adjacent to all vertices of $\{u_1,u_2,u_3\}$ in $H$.
Moreover, recall that at most one vertex in $N(v)$ is not $v$-exposed, and thus by symmetry between $s_1$ and $s_2$,
we can assume that $s_1$ is $v$-exposed.  Consequently, $\overline{H}$ has no edges between $s_1$ and $\{u_1,u_2,u_3\}$.

If $z_1z_2\not\in E(\overline{H})$, then by contracting the edge $s_1u_1$ in $H$, we would obtain $K_4$ as a rooted minor of $\rootsrem{H}{Z}{u_3}$, a contradiction.
Hence, we have $z_1z_2\in E(\overline{H})$.  Similarly, $s_2$ is adjacent to at least one of $z_1$ and $z_2$ in $\overline{H}$, as otherwise we could
contract the edges $s_1u_1$ and $s_2z_1$ in $H$ and obtain $K_4$ as a rooted minor of $\rootsrem{H}{Z}{u_3}$.  Since $s_2$ has degree at most two in $\overline{H}$,
it has at most one neighbor in $\{u_1,u_2,u_3\}$ in $\overline{H}$.  By symmetry, we can assume that $s_2u_2,s_2u_3\not\in E(\overline{H})$.
However, then we can contract the edges $s_1u_1$ and $s_2u_2$ in $H$ and obtain $K_4$ as a rooted minor of $\rootsrem{H}{Z}{z_2}$.
This is a contradiction, since $Z$ is $(F,v)$-poor.
\end{itemize}
In all cases, we obtained a contradiction, and thus if $G$ has a non-root vertex $v$ of degree seven such that $\deg^+ v=10$, then $\rho_4(G)\le 5$.
\end{proof}

Finally, let us perform a similar case analysis at vertices of degree eight.

\begin{lemma}\label{lemma-bo8}
If $G$ is a minimal counterexample, then every vertex $v\in V(G)\setminus X_G$ of degree eight satisfies $\deg^+ v\ge 10$.
\end{lemma}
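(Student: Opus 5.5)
We follow the same template as Lemmas~\ref{lemma-bo6} and~\ref{lemma-bo7}. Suppose for a contradiction that $v\in V(G)\setminus X_G$ has degree eight and $\deg^+ v\le 9$, i.e., $|X_G\cap N(v)|\le 1$. Let $F$ be a flaw of $G$ and let $H=N^{\overline{F}}_v$. Since at most one vertex of $N(v)$ is a root, there are no pairs of roots in $N(v)$, so $H=N^+_v=G[N(v)]$. Observation~\ref{obs-deg4} then gives $\delta(H)\ge 4$, and hence $\Delta(\overline{H})\le 3$ on eight vertices.

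By Corollary~\ref{cor-cantcreateclique} there is an $(F,v)$-poor set $Z\subseteq N(v)$ of size five containing the root in $N(v)$ (if there is one). Let $s_1,s_2,s_3$ be the three vertices of $N(v)\setminus Z$; none of them is a root. Poorness says that for every $u\in Z$ the rooted graph $\rootsrem{H}{Z}{u}$ has no rooted $K_4$ minor. The main tool is contracting edges $s_iz$ for $z\in Z$ in order to repair non-edges of $H[Z-u]$. Each contraction of $s_iz$ adds to $z$ all $H$-neighbours of $s_i$, and $s_i$ misses at most three vertices. A first target is to show that $H[Z-u]$ has at most two or three non-edges for a suitable $u$. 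Since $\overline{H}[Z]$ has maximum degree at most $3$, choosing $u$ of maximum degree in $\overline{H}[Z]$ removes many non-edges. The remaining non-edges among the four vertices of $Z-u$ form a graph $R$ with at most three or so edges, and we must repair them using three contractions, with $s_1,s_2,s_3$ attached to distinct vertices.

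The case analysis I intend is by the structure of $\overline{H}[Z]$: whether it contains a vertex of degree three, a cycle, or is a linear forest (types $P_5$, $P_4+K_1$, $P_3+K_2$, $2K_2+K_1$, and sparser). In each case I pick $u$ and then argue by degree counting in $\overline{H}$ that some matching-like assignment of $s_1,s_2,s_3$ to endpoints of the edges of $R$ has each $s_i$ adjacent in $H$ to both ends of its assigned non-edge. This is a Hall-type argument: each $s_i$ has at most three $\overline{H}$-neighbours, and each vertex of $Z$ has at most three in total, so having three helper vertices gives more slack than in the degree-seven case. If the direct case analysis gets long, an alternative is to bound the number of $\overline{H}$-edges between $\{s_1,s_2,s_3\}$ and $Z$ (at most $9$) against the number of bad configurations.

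I expect the main obstacle to be the dense complement case, where $\overline{H}$ is close to $3$-regular. There every vertex of $Z$ may already have $\overline{H}$-neighbours among the $s_i$, and a single contraction may not suffice to repair a non-edge. In that case I would first try contracting two helpers onto the same vertex, as in Lemma~\ref{lemma-bo7}, and use the third helper for the remaining edge. If that still fails, I would fall back on a structural argument. A cubic-ish complement on eight vertices forces $H$ to be $4$-regular with a specific structure, and I would then use exposedness, choosing $Z$ via Corollary~\ref{cor-cantcreateclique} to contain a chosen $v$-exposed vertex. I would also use Corollary~\ref{cor-noK5minus}, as done for degree seven, to exclude non-exposed vertices. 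Finally, Corollary~\ref{cor-omega4}, which bounds cliques by $4$, can rule out a few leftover configurations.
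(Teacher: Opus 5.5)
There is a genuine gap. Your setup matches the paper's:
\begin{itemize}
\item $|X_G\cap N(v)|\le 1$ gives $H=N^{\overline{F}}_v=G[N(v)]$;
\item Observation~\ref{obs-deg4} gives $\Delta(\overline{H})\le 3$;
\item Corollary~\ref{cor-cantcreateclique} supplies an $(F,v)$-poor set $Z$ and three non-root helpers $s_1,s_2,s_3$.
\end{itemize}
But the lemma \emph{is} the case analysis showing that such a $Z$ cannot be poor, and you only announce it.

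Moreover, the one concrete mechanism you commit to is not sufficient. That mechanism is: pick $u$, then assign helpers so that each $s_i$ is $H$-adjacent to both ends of the non-edge it repairs, justified by a Hall-type count. For a counterexample, let $\overline{H}$ have exactly the edges $z_1z_2$, $z_3z_4$, $s_1z_1$, $s_1z_3$, $s_1z_5$, $s_2z_2$, $s_2z_4$, $s_2z_5$, $s_3z_1$, $s_3z_4$, $s_3z_5$.
\begin{itemize}
\item We have $\Delta(\overline{H})=3$ and $\alpha(\overline{H})=3$. So $H$ is even $K_4$-free, and Corollary~\ref{cor-omega4} does not exclude this configuration.
\item In $H$, the only helper adjacent to $z_1$ is $s_2$, while $z_2$ is adjacent only to $s_1,s_3$.
\item Likewise, the only helper adjacent to $z_4$ is $s_1$, while $z_3$ is adjacent only to $s_2,s_3$.
\item Every $H[Z\setminus\{u\}]$ contains the non-edge $z_1z_2$ or $z_3z_4$. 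Hence no choice of $u$ admits your assignment.
\item In fact any repair of either non-edge must use an edge between two helpers. So contracting several helpers into one vertex along edges $s_iz$ does not help either.
\end{itemize}
Nevertheless $Z$ is not poor: contracting $s_2z_1$ and $s_1z_2$, and using $s_1s_2\in E(H)$, yields $K_4$ as a rooted minor of $\rootsrem{H}{Z}{z_3}$. So the analysis must allow branch sets containing connected subgraphs of $H[\{s_1,s_2,s_3\}]$, which is exactly what the paper's final case does.

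Your picture of where the difficulty lies is also off. When $H[Z]\cong P_3+K_2$ (which is compatible with $\Delta(\overline{H})\le 3$), every choice of $u$ leaves at least four non-edges in $H[Z\setminus\{u\}]$. So the bound ``$R$ has at most three or so edges'' fails. However, dense configurations are actually the easy ones, because a vertex of degree three in $\overline{H}[Z]$ has no $\overline{H}$-neighbour among the helpers. The paper proceeds as follows:
\begin{enumerate}
\item A $4$-set $Q\subset Z$ with $|E(\overline{H}[Q])|\ge 5$ is handled by deleting the fifth vertex of $Z$ and contracting $s_1q_1$, $s_2q_2$, $s_3q_3$ for a suitable labelling.
\item It then excludes, in turn, a $4$-cycle, a $5$-cycle (by an odd-cycle colouring argument) and a triangle in $\overline{H}[Z]$.
\item In the resulting forest, a vertex of degree three is excluded by contracting all three helpers into it.
\item Next, a path component on at least four vertices is excluded.
\item Finally, $\overline{H}[Z]\subseteq z_1z_2+z_3z_4z_5$ is treated by contracting the component of $H[\{s_1,s_2,s_3\}]$ that contains $H$-neighbours of both $z_1$ and $z_2$.
\end{enumerate}
Only $\Delta(\overline{H})\le 3$ and poorness are used. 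Exposedness and Corollary~\ref{cor-noK5minus} play no role. The degree-seven exposedness trick does not transfer directly, since cutting off non-exposed vertices of $N(v)$ gives a separation of order five only if there are three of them.
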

\begin{proof}
Let us for a contradiction consider such a vertex $v$ for which $\deg^+ v\le 9$, or equivalently $|X_G\cap N(v)|\le 1$.
Thus, $N^+_v=G[N(v)]$, and $N^{\overline{F}}_v=N^+_v$ for every flaw $F$ of $G$.  Let $H=N^+_v$.
By Observation~\ref{obs-deg4}, the graph $H=N^+_v$ has minimum degree at least four, and thus its complement $\overline{H}$ has maximum degree at most three.
By Corollary~\ref{cor-cantcreateclique}, there exists an $(F,v)$-poor set $Z\subseteq V(H)$; let $N(v)\setminus Z=\{s_1,s_2,s_3\}$.
Let us now consider the possibilities for the graph $H[Z]$.

\begin{itemize}
\item Suppose first that there exists a set $Q\subset Z$ of size $4$ such that $\overline{H}[Q]$ has at least five edges.  If $\overline{H}[Q]$ is
not a clique, then let $q_1q_2$ be the unique edge of $H[Q]$, otherwise let $q_1$ and $q_2$ be arbitrary distinct vertices of $Q$.  Let $\{q_3,q_4\}=Q\setminus\{q_1,q_2\}$ and let $z$ be
the vertex in $Z\setminus Q$.
Since $\overline{H}$ has maximum degree at most three, $\overline{H}$ has no edges between $\{q_3,q_4\}$ and $N(v)\setminus Q$,
and each of $q_1$ and $q_2$ has at most one neighbor in $N(v)\setminus Q$ in $\overline{H}$.  In particular, we can choose the labels of $s_1$, $s_2$, and $s_3$
so that $s_1q_1,s_1q_2, s_2q_2\not\in E(\overline{H})$.  However, then we can contract the edges $s_1q_1$, $s_2q_2$, and $s_3q_3$ of $H$ and obtain $K_4$ as a rooted minor of $\rootsrem{H}{Z}{z}$.
This is a contradiction, since $Z$ is $(F,v)$-poor.

Therefore, $\overline{H}[Z]$ has no proper induced subgraph with more than four edges.
\item Suppose next that $\overline{H}[Z]$ contains a 4-cycle $C=z_1z_2z_3z_4$.  Since $\overline{H}[Z]$ has no proper induced subgraph with more than four edges,
this cycle is induced and the vertex $z\in Z\setminus V(C)$ has at most two neighbors in $V(C)$ in $\overline{H}$.

Since $\overline{H}$ has maximum degree at most three, each vertex of $C$ has at most one neighbor in $\{s_1,s_2,s_3\}$ in $\overline{H}$.
Thus, there exist distinct $i,j\in\{1,2,3\}$ such that $s_iz_2,s_jz_2\not\in E(\overline{H})$, and each of $z_1$ and $z_3$ is adjacent to at most one of $s_i$ and $s_j$ in $\overline{H}$.
If $z$ did not have any neighbor in $\{z_1,z_2,z_3\}$ in $\overline{H}$, then we could contract the edges $s_iz_2$ and $s_jz_2$ in $H$ and obtain $K_4$ as a rooted minor of $\rootsrem{H}{Z}{z_4}$, a contradiction.
Hence, there exists $a\in\{1,2,3\}$ such that $z_az\in E(\overline{H})$.  By a symmetric argument, there also exists $b\in\{1,2,3,4\}\setminus\{a\}$ such that $z_bz\in E(\overline{H})$.

Since $\overline{H}$ has maximum degree at most three, the vertices $z_a$ and $z_b$ do not have any neighbors in $\{s_1,s_2,s_3\}$ in $\overline{H}$.
Let $\{c,d\}=\{1,2,3,4\}\setminus\{a,b\}$.  Since $z_c$ and $z_d$ each have at most one neighbor in $\{s_1,s_2,s_3\}$ in $\overline{H}$,
we can assume that $s_1z_c,s_1z_d,s_2z_d\not\in E(\overline{H})$.  But then by contracting the edges $s_1z_c$, $s_2z_d$, and $s_3z_a$ of $H$,
we obtain $K_4$ as a rooted minor of $\rootsrem{H}{Z}{z}$, which is a contradiction.

Therefore, the graph $\overline{H}[Z]$ does not have any 4-cycle.
\item Suppose that $\overline{H}[Z]$ contains a 5-cycle $C=z_1\ldots z_5$, necessarily an induced one.
Since $\overline{H}$ has maximum degree at most three, each vertex of $Z$ has at most one neighbor in $\{s_1,s_2,s_3\}$ in $\overline{H}$,
and in particular $\overline{H}$ has at most five edges between $Z$ and $\{s_1,s_2,s_3\}$.  By symmetry, we can assume that $s_1$ has at most one neighbor in $Z$,
and we can choose the labels of the vertices of $C$ so that $s_1$ does not have any neighbors in the path $P=z_5z_1z_2$ in $\overline{H}$.

If there existed $i\in\{2,3\}$ and $j\in\{2,4\}$ such that $s_i$ does not have any neighbor in $\{z_j,z_{j+1}\}$,
then we could contract the edges $s_1z_1$ and $s_iz_j$ of $H$ and obtain $K_4$ as a rooted minor of $\rootsrem{H}{Z}{z}$ for the unique vertex
$z\in Z\setminus (\{z_1,z_2,z_5\}\cup\{z_j,z_{j+1}\})$, which is a contradiction.  Therefore, $s_2$ and $s_3$ each have a neighbor both in $\{z_2,z_3\}$
and in $\{z_4,z_5\}$.  Since each vertex of $Z$ has at most one neighbor in $\{s_1,s_2,s_3\}$ in $\overline{H}$, it follows that
$s_1$ cannot have a neighbor in $\{z_2,\ldots,z_5\}$, and since $s_1z_1\not\in E(\overline{H})$, it follows that $s_1$ does not have any neighbor in $Z$ in $\overline{H}$.

However, then an analogous argument for other 3-vertex subpaths of $C$ shows that for every edge $xy\in E(C)$, both $s_2$ and $s_3$ have a neighbor in $\{x,y\}$ in $\overline{H}$.
This is a contradiction, since $C$ is not $2$-colorable.  Therefore, $\overline{H}[Z]$ does not contain any 5-cycle.

\item Suppose now that $\overline{H}[Z]$ contains a triangle $C=z_1z_2z_3$.  Since $\overline{H}[Z]$ does not contain $4$-cycles, each of the vertices $z_4,z_5\in Z\setminus V(C)$
has at most one neighbor in $V(C)$ in $\overline{H}$.  By symmetry, we can assume that $z_3z_4,z_3z_5\not\in E(\overline{H})$.

Let us first consider the case that there exists $i\in\{4,5\}$ such that $z_i$ has a neighbor in $V(C)$ in $\overline{H}$ and that $z_4z_5\not\in E(\overline{H})$.  By symmetry, we can assume that
$z_1z_i\in E(\overline{H})$.
Since $\overline{H}$ has maximum degree at most three, $z_i$ has at most two neighbors in $\{s_1,s_2,s_3\}$ in $\overline{H}$ and $z_3$ has at most one.
By symmetry, we can assume that $s_1z_i,s_2z_3\not\in E(\overline{H})$.  By contracting the edges $s_1z_1$ and $s_2z_1$ of $H$,
we obtain $K_4$ as a rooted minor of $\rootsrem{H}{Z}{z_2}$, which is a contradiction.

Therefore, if $z_4$ or $z_5$ has a neighbor in $V(C)$ in $\overline{H}$, then $z_4z_5\in E(\overline{H})$.
Since $\overline{H}[Z]$ does not contain a 4-cycle, it follows that $z_4$ and $z_5$ cannot both have a neighbor in $V(C)$ in $\overline{H}$.
By symmetry, we can assume that $z_4$ does not have any neighbor in $V(C)$.

Since $\overline{H}$ has maximum degree at most three, each vertex of $C$ has at most one neighbor in $\{s_1,s_2,s_3\}$ in $\overline{H}$.
If at least one of $s_1$, $s_2$, and $s_3$ has more than one neighbor in $C$ in $\overline{H}$, then we can by symmetry assume
that $\overline{H}$ does not have any edge except possibly for $s_2z_1$ between $\{s_1,s_2\}$ and $V(C)$.
Then we can contract the edges $s_1z_1$ and $s_2z_2$ of $H$ and obtain $K_4$ as a rooted minor of $\rootsrem{H}{Z}{z_5}$,
which is a contradiction.

Therefore, each of $s_1$, \ldots, $s_3$ has at most one neighbor in $C$, and by symmetry, we can assume that
$\overline{H}$ does not have any edges between $V(C)$ and $\{s_1,s_2,s_3\}$ other than (possibly) $s_1z_1$, $s_2z_2$, and $s_3z_3$.
In this case, we can contract the edges $s_1z_2$, $s_2z_3$, and $s_3z_1$ of $H$ and obtain $K_4$ as a rooted minor of $\rootsrem{H}{Z}{z_5}$.
This is a contradiction, and thus $\overline{H}[Z]$ is a forest.

\item Suppose that $\overline{H[Z]}$ contains a vertex $z$ of degree three, with neighbors $z_1$, $z_2$, and $z_3$, and let $z'$ be the vertex in $Z\setminus\{z,z_1,z_2,z_3\}$.
Since $\overline{H}$ has maximum degree at most three, $z$ has no neighbors in $\{s_1,s_2,s_3\}$ and each of $z_1$, $z_2$, and $z_3$ has a non-neighbor in
$\{s_1,s_2,s_3\}$ in $\overline{H}$.  Therefore, by contracting the edges $s_1z$, $s_2z$, and $s_3z$ of $H$, we obtain $K_4$ as a rooted minor of $\rootsrem{H}{Z}{z'}$.
This is a contradiction, and thus $\overline{H[Z]}$ has maximum degree at most two.

\item Let $P=z_1\ldots z_k$ be a component of $\overline{H}[Z]$ with the most vertices, and suppose that $k\ge 4$.
Since $\overline{H}$ has maximum degree at most three, we can assume that $z_2$ has no neighbor in $\{s_1,s_2,s_3\}$
other than possibly $s_3$.  Moreover, $z_3$ has a non-neighbor in $\{s_1,s_2\}$ in $\overline{H}$.
If $z_1$ had a non-neighbor in $\{s_1,s_2\}$, then by contracting the edges $s_1z_2$ and $s_2z_2$ in $H$,
we would obtain $K_4$ as a rooted minor of $\rootsrem{H}{Z}{z_4}$, a contradiction.
Therefore, $z_1$ is adjacent to both $s_1$ and $s_2$ in $\overline{H}$, and non-adjacent to $s_3$ since $\overline{H}$ has maximum degree at most three.
Moreover, $s_3z_2\in E(\overline{H})$, as otherwise we could obtain $K_4$ as a rooted minor of $\rootsrem{H}{Z}{z_4}$ by contracting the edges $s_1z_2$, $s_2z_2$, and $s_3z_2$ of $H$;
and $s_1s_3,s_2s_3\in E(\overline{H})$, as otherwise we could obtain $K_4$ as a rooted minor of $\rootsrem{H}{Z}{z_4}$ by contracting the edges $s_1z_2$, $s_2z_2$, and $s_3z_1$ of $H$.

Since $\overline{H}$ has maximum degree at most three, we have $s_3z_{k-1},s_3z_k\not\in E(\overline{H})$,
and if $k=5$, then $s_3z_{k-2}\not\in E(\overline{H})$.  Therefore, we obtain $K_4$ as a rooted minor of $\rootsrem{H}{Z}{z_2}$ by contracting the edge $s_3z_4$ of $H$.
This is a contradiction, and thus each component of $\overline{H}[Z]$ has at most three vertices.

\item Therefore, $\overline{H}[Z]$ is a subgraph of the disjoint union of paths $z_1z_2$ and $z_3z_4z_5$.
Note that $z_1z_2\in E(\overline{H}[Z])$, as otherwise $H[Z]-z_4$ is a clique, a contradiction since $Z$ is $(F,v)$-poor.
Since $\overline{H}$ has maximum degree at most three, each of $z_1$ and $z_2$ has a non-neighbor in $\{s_1,s_2,s_3\}$ in $\overline{H}$,
i.e., a neighbor in $\{s_1,s_2,s_3\}$ in $H$.  If $z_1$ and $z_2$ have neighbors in $H$ in the same component $K$ of $H[\{s_1,s_2,s_3\}]$, then we can contract this component $K$
to a single vertex $s$, then contract the edge $sz_1$ and obtain $K_4$ as a rooted minor of $\rootsrem{H}{Z}{z_4}$.
This is a contradiction, and thus neighbors of $z_1$ and $z_2$ in $H$ belong to different components of $H[\{s_1,s_2,s_3\}]$.

By symmetry, we can assume that $s_1z_1,s_2z_2\in E(H)$, $s_1z_2,s_2z_1\in E(\overline{H})$, and since $s_1$ and $s_2$ belong to different components of $H[\{s_1,s_2,s_3\}]$,
that $s_1s_2,s_2s_3\in E(\overline{H})$.  But since $\overline{H}$ has maximum degree at most three, it follows that $s_2$ has no neighbor in $\{z_3,z_4,z_5\}$ in $\overline{H}$.
Then we can contract the edge $s_2z_4$ of $H$ and obtain $K_4$ as a rooted minor of $\rootsrem{H}{Z}{z_2}$.
This is a contradiction, since $Z$ is $(F,v)$-poor.
\end{itemize}

In all cases, we have obtained a contradiction, and thus $\deg^+ v\ge 10$.
\end{proof}

For a rooted graph $G$, a vertex $v\in V(G)\setminus X_G$, and a positive integer $t$, let us define $\rho^G_t(v)=\tfrac{1}{2}\deg^+ v-t$,
so that
\begin{equation}\label{eq-sumwt}
\rho_t(G)=\sum_{v\in V(G)\setminus X_G} \rho^G_t(v)
\end{equation}
by (\ref{eq-rhot}).  When $G$ is clear from the context, then we write just $\rho_t(v)$ instead of $\rho^G_t(v)$.
We can summarize Lemmas~\ref{lemma-bo6}, \ref{lemma-bo7}, and \ref{lemma-bo8} as follows.
\begin{corollary}\label{cor-bo}
Every non-root vertex $v$ of a minimal counterexample $G$ satisfies $\rho_4(v)\ge \tfrac{1}{2}$.
Moreover, $\rho_4(v)\ge 1$ unless $\deg v=9$ and $v$ has no neighbors in $X_G$.
\end{corollary}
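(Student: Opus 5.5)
This is a case check on $\deg v$, using $\rho_4(v)=\tfrac12\deg^+ v-4=\tfrac12(\deg v+\deg^X v)-4$. So $\rho_4(v)\ge \tfrac12$ is equivalent to $\deg^+v\ge 9$, and $\rho_4(v)\ge 1$ is equivalent to $\deg^+ v\ge 10$. I will bound $\deg^+ v$ from below for each possible degree of the non-root vertex $v$.

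By Lemma~\ref{lemma-no5}, every non-root vertex has $\deg v\ge 6$.
\begin{itemize}
\item If $\deg v\in\{6,7,8\}$, then Lemmas~\ref{lemma-bo6}, \ref{lemma-bo7}, and \ref{lemma-bo8} give $\deg^+ v\ge 10$ directly. Hence $\rho_4(v)\ge 1$.
\item If $\deg v\ge 10$, then $\deg^+ v\ge \deg v\ge 10$, so again $\rho_4(v)\ge 1$.
\item If $\deg v=9$, then $\deg^+ v=9+\deg^X v$. This gives $\rho_4(v)=\tfrac12+\tfrac12\deg^X v\ge\tfrac12$. Moreover, $\rho_4(v)\ge 1$ holds exactly when $\deg^X v\ge 1$, that is, when $v$ has a neighbor in $X_G$.
\end{itemize}

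Together the three cases give both claims. The bound $\rho_4(v)\ge\tfrac12$ holds for every non-root vertex. The stronger bound $\rho_4(v)\ge 1$ fails only when $\deg v=9$ and $v$ has no root neighbors, which is precisely the exception in the statement.

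There is no real obstacle here, since the heavy lifting was done in Lemmas~\ref{lemma-bo6}--\ref{lemma-bo8}. The only points to verify are the arithmetic identity $\rho_4(v)=\tfrac12\deg^+ v-4$ and that the degree cases are exhaustive. Exhaustiveness follows from Lemma~\ref{lemma-no5}, which rules out degree at most five.
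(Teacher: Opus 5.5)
Your proof is correct and is essentially the paper's argument. The paper presents this corollary directly as a summary of Lemmas~\ref{lemma-bo6}, \ref{lemma-bo7}, and~\ref{lemma-bo8}, with Lemma~\ref{lemma-no5} excluding degree at most five and the trivial bound $\deg^+ v\ge \deg v$ covering degree nine and above, exactly as in your case split using $\rho_4(v)=\tfrac12\deg^+ v-4$.
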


We can now bound the size of a minimal counterexample.
\begin{corollary}\label{cor-size}
If $G$ is a minimal counterexample, then $G$ is internally $6$-connected, $n(G)\le \rho_4(G)\le 7$ and every vertex $v\in V(G)\setminus X_G$
has a neighbor in $X_G$ and satisfies $\rho_4(v)\ge 1$.
\end{corollary}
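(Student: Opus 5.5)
The plan is to deduce all four claims from Corollary~\ref{cor-bo} together with the identity (\ref{eq-sumwt}) and the bound $\rho_4(G)\le 7$ of Corollary~\ref{cor-nume}. The key intermediate statement is that no non-root vertex of degree $9$ with $\deg^X v=0$ exists. Once that is established, Corollary~\ref{cor-bo} gives $\rho_4(v)\ge 1$ for every non-root vertex $v$. Summing via (\ref{eq-sumwt}) then yields $n(G)\le \rho_4(G)\le 7$.

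\textbf{Step 1: excluding degree-$9$ vertices with no root neighbour.} Suppose $v$ is such a vertex. Since $\rho_4(v)=\tfrac12$ while every other non-root vertex has $\rho_4\ge\tfrac12$, the sum gives $n(G)\le 2\rho_4(G)\le 14$. But $v$ has nine non-root neighbours, so $n(G)\ge 10$. I would sharpen the counting: each non-root vertex $u\ne v$ with $\rho_4(u)=\tfrac12$ is itself of the same type, and has nine non-root neighbours. The remaining vertices contribute at least $1$ each. Writing $a$ for the number of vertices with $\rho_4=\tfrac12$ and $b$ for the rest, we need $a+b\ge 10$ and $\tfrac a2+b\le 7$, so $a\ge 6$. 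Thus there are at least six non-root vertices of degree $9$ with no root neighbours, and $n(G)\le 14-b$.

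At this point the graph is a small dense graph on at most $14$ non-root vertices. I then intend to use that $G-X_G$ is connected (Lemma~\ref{lemma-connwiro}) and that each root has at least two non-root neighbours (Corollary~\ref{cor-twononroot}). The vertices adjacent to roots have $\rho_4\ge 1$, and counting them against the budget should force a contradiction. Alternatively, $N(v)$ is a $9$-vertex set with $\delta(N^+_v)\ge 4$, which should contain a $K_5$ minor extending to a saturated separation, contradicting Lemma~\ref{lemma-nosatur} or Corollary~\ref{cor-cantcreateclique}. I expect this step to be the main obstacle, and I would try the poor-set argument of Corollary~\ref{cor-cantcreateclique} first. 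In $H=G[N(v)]$ the complement has maximum degree at most $4$, and among nine vertices one can pick $Z$ and contract the remaining four vertices to complete $K_4$ on $Z\setminus\{u\}$. That would mirror the degree-eight case analysis, with more room.

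\textbf{Step 2: every non-root vertex has a root neighbour.} With $\rho_4(v)\ge 1$ for all $v$, if some $v$ had $\deg^X v=0$ then $\rho_4(v)=\tfrac12\deg v-4\ge 1$ would need $\deg v\ge 10$. That forces $n(G)\ge 11>7$, a contradiction.

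\textbf{Step 3: internal $6$-connectivity.} Take a proper root separation $(A,B)$ of order at most five, and suppose it cuts off a non-root vertex $v\in B\setminus A$. Internal $4$-connectivity (Corollary~\ref{cor-fourtri}) handles order at most three, and order four or five is handled as follows. By Step 2, $v$ has a root neighbour, which must lie in $A\cap B$. I would instead argue by density. Any such separation satisfies $\rho_4(R_{A,B})\le 1$ (Corollary~\ref{cor-nonear-bistar} for order five; $4$-lightness for order four). On the other hand, each vertex of $B\setminus A$ contributes $\rho_4\ge 1$, and only the edges to $A\setminus B$ are lost, of which there are none. Thus $\rho_4(R_{A,B})\ge |B\setminus A|\ge 1$, with equality forcing $|B\setminus A|=1$. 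A single vertex $w$ then has degree at most $5$, contradicting Lemma~\ref{lemma-no5}. The contributions need care, since $\rho_4(R_{A,B})$ counts edges to root-cut vertices once; I would recompute it as $\sum_{w\in B\setminus A}\rho_4(w)$ minus corrections for edges inside $B\setminus A$, and verify the bound directly.
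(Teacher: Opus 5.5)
There is a genuine gap. Step~1 is the heart of the statement, and you do not prove it. Everything after it is fine once $\rho_4(v)\ge 1$ holds for all non-root $v$; this covers Step~2 and the density argument of Step~3, which is essentially the paper's. Incidentally, Step~3 needs no ``corrections'': (\ref{eq-rhot}) applied to $R_{A,B}$ gives $\rho_4(R_{A,B})=\sum_{w\in B\setminus A}\rho^{R_{A,B}}_4(w)$, and $\rho^{R_{A,B}}_4(w)\ge\rho^G_4(w)$ because $\deg_{R_{A,B}}w=\deg_G w$ and every root neighbour of $w$ lies in $A\cap B$.

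Neither route you sketch for Step~1 closes it. In the counting route, Lemma~\ref{lemma-connwiro} and Corollary~\ref{cor-twononroot} only give $|S|\ge 2$ for the set $S$ of non-root neighbours of roots. With that, the constraints $|S|+\tfrac12|R|\le 7$ and $|S|+|R|\ge 10$ are satisfiable (e.g.\ $|S|=2$, $|R|=10$), so no contradiction follows. The poor-set route is a new case analysis at a degree-$9$ vertex, with $\overline{H}$ of maximum degree $4$ and four vertices outside $Z$. You have not carried it out, and it is unclear whether it works. The paper deliberately keeps this case as the exception in Corollary~\ref{cor-bo} and never handles it locally.

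The missing idea is a connectivity lower bound on $|S|$. Let $R$ be the set of non-root vertices with no root neighbour. If $R\neq\emptyset$, then $(X_G\cup S,S\cup R)$ is a proper root separation of order $|S|$.

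\textbf{The paper's route.} It proves internal $6$-connectivity first, using only $\rho_4\ge\tfrac12$. A proper root separation $(C,D)$ of order at most five has $\rho_4(R_{C,D})\le 1$, so $|D\setminus C|\le 2$. Hence the vertices of $D\setminus C$ have degree at most six, so $\rho_4\ge 1$ by Corollary~\ref{cor-bo}. Then $|D\setminus C|=1$, and that vertex has degree at most $5$, contradicting Lemma~\ref{lemma-no5}. Internal $6$-connectivity gives $|S|\ge 6$, which leaves $|R|\le 2$. Vertices of $R$ then have degree at most $7$, which is incompatible with being root-free and having $\rho_4\ge\tfrac12$.

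\textbf{A fix within your ordering.} Internal $4$-connectivity from Corollary~\ref{cor-fourtri} already suffices. Together with the constraints above it forces $|S|=4$ and $|R|=6$. Every vertex of $R$ then has degree exactly $9$ and is adjacent to all other non-root vertices. So $R$ induces $K_6$, contradicting Corollary~\ref{cor-omega4}.
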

\begin{proof}
Suppose first that $G$ has a proper root separation $(C,D)$ of order at most five, and let $R=R^G_{C,D}$.
Corollary~\ref{cor-nonear-bistar} and the fact that $G$ is $4$-light imply that $\rho_4(R)\le 1$.
Note that for each vertex $v\in D\setminus C$, we have $\rho^R_4(v)\ge \rho^G_4(v)\ge \tfrac{1}{2}$ by Corollary~\ref{cor-bo}, since all neighbors of $v$ in $X_G$ belong to $C\cap D$.
By (\ref{eq-sumwt}), it follows that $|D\setminus C|\le 2$, and in particular the vertices in $D\setminus C$ have degree at most six.
Corollary~\ref{cor-bo} then implies that $\rho^R_4(v)\ge \rho^G_4(v)\ge 1$ for every $v\in D\setminus C$.
By (\ref{eq-sumwt}), it follows that $D\setminus C$ consists of a single vertex $v$.  However, then $v$ has degree at most five,
contradicting Lemma~\ref{lemma-no5}.  Therefore the rooted graph $G$ is internally $6$-connected.

Let $S$ be the set of neighbors of vertices in $X_G$, and let $R=V(G)\setminus (X_G\cup S)$.
By Corollary~\ref{cor-bo}, we have $\rho_4(u)\ge 1$ for every $u\in S$ and $\rho_4(v)\ge\tfrac{1}{2}$ for every $v\in R$.
In particular, by Corollary~\ref{cor-nume} and (\ref{eq-sumwt}) we have
\begin{equation}\label{eq-rs}
|S|+\tfrac{1}{2}|R|\le \rho_4(G)\le 7.
\end{equation}
If $R\neq\emptyset$, then since $G$ is internally $6$-connected, we have $|S|\ge 6$,
and (\ref{eq-rs}) gives $|S|=6$ and $|R|\le 2$.  However, then each vertex $v\in R$ has degree at most $|S|+|R|-1\le 7$,
and by Corollary~\ref{cor-bo}, we have $\rho_4(v)\ge 1$.  By (\ref{eq-sumwt}), we then get $|R|=1$ and $\rho_4(G)=7$.
However, then the vertex $v\in R$ has degree six and $\rho_4(G)>5$, which contradicts Lemma~\ref{lemma-bo6}.

Therefore, we have $R=\emptyset$, and thus each vertex in $V(G)\setminus X_G$ has a neighbor in $X_G$.
Moreover, (\ref{eq-rs}) gives $n(G)=|S|\le \rho_4(G)\le 7$.
\end{proof}

At this point, proving Theorem~\ref{thm-mainplus} is in principle just a matter of inspecting the finitely many 5-rooted
graphs $G$ with $n(G)\le 7$.  However, there are quite many such graphs, and thus it will be convenient to further
constrain the counterexamples.  The bound on $n(G)$ gives rise to a bound on $\rho_4(v)$ for non-root vertices $v$,
by the following observation.
\begin{observation}\label{obs-fewtoheavy}
For every rooted graph $G$ and every positive integer $t$, every vertex $v\in V(G)\setminus X_G$ satisfies
$$\rho_t(v)\ge \deg v - \tfrac{1}{2}(n(G) - 1)-t,$$
and the equality holds if and only if $v$ is adjacent to all other vertices in $V(G)\setminus X_G$.
\end{observation}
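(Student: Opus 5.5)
The plan is a direct count. We only need the definition $\rho_t(v)=\tfrac12\deg^+ v-t$ with $\deg^+ v=\deg v+\deg^X v$. So it suffices to bound $\deg^X v$ from below in terms of $\deg v$ and $n(G)$.

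First, split the neighbourhood of $v$ into root neighbours and non-root neighbours. Writing $d_N$ for the number of non-root neighbours of $v$, we have $\deg v=\deg^X v+d_N$, and hence $\deg^+ v=2\deg v-d_N$. The non-root neighbours of $v$ are among the $n(G)-1$ non-root vertices other than $v$, so $d_N\le n(G)-1$. Equality holds exactly when $v$ is adjacent to every other vertex of $V(G)\setminus X_G$.

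Next, substitute this bound. We get
$$\rho_t(v)=\tfrac12(2\deg v-d_N)-t=\deg v-\tfrac12 d_N-t\ge \deg v-\tfrac12(n(G)-1)-t.$$
The only inequality used is $d_N\le n(G)-1$, so equality in the displayed bound is equivalent to equality there, which gives the characterisation of the equality case.

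There is no real obstacle; the only point needing care is the bookkeeping identity $\deg^+ v=2\deg v-d_N$. It relies on the fact that $G$ is simple, so neighbours of $v$ are counted without multiplicity.
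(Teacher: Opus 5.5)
Your proposal is correct and is essentially the paper's argument: the paper notes $\deg^X v\ge \deg v-(n(G)-1)$, with equality exactly when $v$ is adjacent to all other non-root vertices, and substitutes this into $\rho_t(v)=\tfrac12(\deg v+\deg^X v)-t$. Your bound $d_N\le n(G)-1$ on the number of non-root neighbours is the same inequality written in complementary form.
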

\begin{proof}
Note that $\deg^X v\ge \deg v - (n(G)-1)$, with equality if and only if $v$ is adjacent to all other vertices in $V(G)\setminus X_G$.
Consequently
$$\rho_t(v)=\tfrac{1}{2}\deg^+v-t=\tfrac{1}{2}(\deg v+\deg^X v)-t\ge \deg v - \tfrac{1}{2}(n(G) - 1)-t,$$
with equality if and only if $v$ is adjacent to all other vertices in $V(G)\setminus X_G$.
\end{proof}

Let us now improve the bound from Corollary~\ref{cor-size} a bit.
\begin{lemma}\label{lemma-s5}
If $G$ is a minimal counterexample, then $n(G)\le 5$.
\end{lemma}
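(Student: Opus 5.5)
We argue by contradiction: suppose $n=n(G)\in\{6,7\}$. By Corollary~\ref{cor-size}, $n\le\rho_4(G)\le 7$, every non-root vertex has a neighbour in $X_G$ and satisfies $\rho_4(v)\ge 1$, and $G$ is internally $6$-connected. By Lemma~\ref{lemma-no5}, every non-root vertex has degree at least six. The plan is to play these density constraints against Observation~\ref{obs-fewtoheavy} and (\ref{eq-sumwt}).

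\textbf{Step 1: pin down the density of each vertex.} By (\ref{eq-sumwt}),
$$\rho_4(G)=\sum_v \rho_4(v)\ge n .$$
Combined with $\rho_4(G)\le 7$, this leaves very little slack. If $n=7$, then every vertex has $\rho_4(v)=1$ and $\rho_4(G)=7$. If $n=6$, then the total excess $\sum_v(\rho_4(v)-1)$ is at most $1$.

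\textbf{Step 2: convert this into degree information.} By Observation~\ref{obs-fewtoheavy},
$$\rho_4(v)\ge \deg v-\tfrac12(n-1)-4 .$$
For $n=7$ this gives $\deg v\le 8$, with equality only if $v$ is adjacent to all other non-roots. Also $\deg^+ v=10$, so $\deg v+\deg^X v=10$. Combined with $\deg^X v\le 5$, $\deg v\ge 6$, and Lemmas~\ref{lemma-bo6}, \ref{lemma-bo7} and~\ref{lemma-bo8}, this yields three possibilities: $\deg v=6$ with four root neighbours, $\deg v=7$ with three, or $\deg v=8$ with two.

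In the cases $\deg v\in\{6,7\}$ with $\deg^+ v=10$, the same lemmas give $\rho_4(G)\le 5$, contradicting $\rho_4(G)=7$. Hence every vertex has degree $8$, two root neighbours, and is adjacent to all $6$ other non-roots. So $G-X_G=K_7$, contradicting Corollary~\ref{cor-omega4}. This settles $n=7$.

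For $n=6$ we have $\rho_4(G)\in\{6,7\}$, so the vertices of degree $6$ or $7$ with $\deg^+=10$ are again excluded by Lemmas~\ref{lemma-bo6} and~\ref{lemma-bo7}. Each vertex therefore has $\deg^+ v\in\{10,11\}$, with at most one vertex at $11$. Using Lemma~\ref{lemma-bo8} (degree $8$ forces $\deg^X\ge 2$) and $\deg v\le 5+5$, I expect the only survivors to be:
\begin{itemize}
\item $\deg v=8$ with $\deg^X v=2$ (or $3$ for the single exceptional vertex);
\item $\deg v=9$ with $\deg^X v=1$ or $2$; and
\item $\deg v=10$ with $\deg^X v=5$, which is impossible since degree $10$ would require more than five non-root neighbours when $n=6$.
\end{itemize}

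\textbf{Step 3: the residual case $n=6$ (main obstacle).} Here almost every vertex is adjacent to nearly all other non-roots, so $G-X_G$ is $K_6$ minus a few edges. Corollary~\ref{cor-omega4} requires many missing edges: $K_6$ minus a matching still contains $K_4$, but we need no $K_5$. On the other hand, each vertex has $\deg v\ge 8$ with only two or three root neighbours, so $\deg_{G-X_G} v\ge 5$, i.e.\ $v$ is adjacent to all other non-roots. That gives $K_6$, a contradiction, unless some vertex has $\deg^X v\ge 3$ or $\deg v=9$. I would count precisely:
$$\deg_{G-X_G} v=\deg v-\deg^X v\ge 8-3=5$$
for all but possibly one vertex, which forces a $K_5$ inside $G-X_G$ and contradicts Corollary~\ref{cor-omega4}.

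The step I expect to be delicate is verifying that the single exceptional vertex with $\deg^+ v=11$ cannot break this count. I would handle it by removing that vertex and observing that the remaining five vertices are pairwise adjacent. If the arithmetic leaves a gap, the fallback is to use $t_G(e)\ge 4$ from Corollary~\ref{cor-fourtri}, or Corollary~\ref{cor-cantcreateclique} at a vertex with few root neighbours, to directly produce $K^{G,F}_Z$.
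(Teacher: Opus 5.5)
Your treatment of $n(G)=7$ is correct and is essentially the paper's argument. Every non-root vertex has $\rho_4(v)=1$, and Observation~\ref{obs-fewtoheavy} caps its degree at $8$. Lemmas~\ref{lemma-bo6} and~\ref{lemma-bo7} rule out degrees $6$ and $7$ because $\rho_4(G)=7>5$, so $G-X_G=K_7$, contradicting Corollary~\ref{cor-omega4}.

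The case $n(G)=6$, however, is not proved. There are two problems.
\begin{enumerate}
\item The excess bound $\sum_v(\rho_4(v)-1)\le 1$ does not give ``at most one vertex with $\deg^+ v=11$''. It equally allows two vertices with $\deg^+ v=11$, or one with $\deg^+ v=12$.
\item More seriously, your list of survivors ignores the constraint $\deg v-\deg^X v\le n(G)-1=5$: a non-root vertex has at most five non-root neighbours. So degree $8$ with two root neighbours, or degree $9$ with one or two, is impossible. Conversely, degree $7$ with four root neighbours ($\deg^+ v=11$) is not excluded by Lemma~\ref{lemma-bo7} and is missing from your list. Such a vertex has only three non-root neighbours.
\end{enumerate}
Consequently the Step~3 claim that $\deg_{G-X_G}v\ge 5$ for all but one vertex has no justification, and the $K_5$ you want to extract is not established (as you suspect yourself).

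The missing step is to apply Observation~\ref{obs-fewtoheavy} for $n(G)=6$ just as you did for $n(G)=7$. It gives $\rho_4(v)\ge \deg v-\tfrac{13}{2}$. Hence any vertex with $\rho_4(v)=1$, i.e.\ $\deg^+ v=10$, has $\deg v\in\{6,7\}$. Both are excluded by Lemmas~\ref{lemma-bo6} and~\ref{lemma-bo7}, since $\rho_4(G)\ge 6>5$. So all six non-root vertices have $\rho_4(v)\ge\tfrac32$, and (\ref{eq-sumwt}) gives $\rho_4(G)\ge 9>7$, a contradiction.

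For comparison, the paper handles both values of $n(G)$ at once. Since each $\rho_4(v)\in\{1\}\cup[\tfrac32,\infty)$ and $\rho_4(G)\le 7$, at least four vertices have $\rho_4(v)=1$. Each of them must have degree exactly $8$, which by Observation~\ref{obs-fewtoheavy} forces $n(G)=7$ and adjacency to all other non-roots. Together with one further non-root vertex they form a clique of size five, contradicting Corollary~\ref{cor-omega4}.
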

\begin{proof}
Suppose for a contradiction that $n(G)\in\{6,7\}$.  By Corollary~\ref{cor-size}, we have $\rho_4(G)\in\{6,7\}$.
By Corollary~\ref{cor-size}, we have $\rho_4(v)\ge 1$ for every $v\in V(G)\setminus X_G$.
Moreover, note that if $\rho_4(v)>1$, then $\rho_4(v)\ge \tfrac{3}{2}$.
Since $n(G)\ge 6$ and $\rho_4(G)\le 7$, (\ref{eq-sumwt}) implies that the set $K=\{u\in V(G)\setminus X_G:\rho_4(u)=1\}$
has size at least four.  

Consider any vertex $u\in K$; note that $\rho_4(u)=1$ is equivalent to $\deg^+ u=10$.
Since $\rho_4(G)>5$, by Lemmas~\ref{lemma-bo6} and \ref{lemma-bo7} this is only possible if $\deg u\ge 8$.
On the other hand, Observation~\ref{obs-fewtoheavy} gives
$$1\ge \deg u-\tfrac{1}{2}(n(G) - 1)-4\ge \deg u-7.$$
This is only possible if $\deg u=8$ and $n(G)=7$, in which case $\rho_4(u)=\deg u - \tfrac{1}{2}(n(G) - 1)-4$,
and by Observation~\ref{obs-fewtoheavy}, $u$ is adjacent to all other vertices in $V(G)\setminus X_G$.

Therefore, $K$ forms a clique in $G$ and the vertices of $K$ are adjacent to all vertices in $V(G)\setminus (K\cup X_G)$.
Consequently, we have $\omega(G)\ge 5$, which contradicts Corollary~\ref{cor-omega4}.
\end{proof}

For a rooted graph $G$ and a vertex $v\in V(G)\setminus X_G$, let $X^G_v$ denote the set of neighbors of $v$ in $X_G$.
When the rooted graph $G$ is clear from the context, we use just $X_v$ instead of $X^G_v$.
We are going to need the following observation on the neighborhoods of vertices of small degree
in a minimal counterexample.

\begin{observation}\label{obs-deg67}
Let $G$ be a minimal counterexample, let $u\in V(G)\setminus X_G$ be a vertex of degree six such that $\rho_4(u)=1$ (or equivalently, $\deg^X u=4$),
and let $U$ be the set of the two neighbors of $u$ in $V(G)\setminus X_G$.
Each vertex $v\in U$ satisfies one of the following conditions:
\begin{itemize}
\item[(i)] $\rho_4(v)\ge \tfrac{3}{2}$; or
\item[(ii)] $\rho_4(v)=1$, $\deg v=6$, and either $v$ is adjacent to the vertex in $U\setminus\{v\}$ or $X_v=X_u$; or
\item[(iii)] $\rho_4(v)=1$, $\deg v=7$, $v$ is adjacent to the vertex in $U\setminus\{v\}$, and $X_v\subsetneq X_u$.
\end{itemize}
\end{observation}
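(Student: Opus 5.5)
We use the constraints on a minimal counterexample $G$ obtained so far: the degree lower bounds, the bound $n(G)\le 5$, and the triangle count $t_G(e)\ge 4$ from Corollary~\ref{cor-fourtri}. Let $U=\{v,w\}$. By Corollary~\ref{cor-size}, $\rho_4(v)\ge 1$. Since $\deg^+ v$ is an integer, $\rho_4(v)=\tfrac12\deg^+ v-4$ is a half-integer. Hence either $\rho_4(v)\ge\tfrac32$, giving outcome (i), or $\rho_4(v)=1$, that is, $\deg^+ v=10$. We assume the latter from now on.

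First we pin down $\deg v$. Lemma~\ref{lemma-no5} gives $\deg v\ge 6$. By Lemma~\ref{lemma-s5} we have $n(G)\le 5$, so $v$ has at most four non-root neighbors. Since $\deg^X v=10-\deg v$, the number of non-root neighbors of $v$ is $\deg v-\deg^X v=2\deg v-10\le 4$, which gives $\deg v\le 7$. This leaves two cases.
\begin{itemize}
\item If $\deg v=6$, then $|X_v|=4$.
\item If $\deg v=7$, then $|X_v|=3$, and $v$ has exactly four non-root neighbors. As $n(G)\le 5$, these are all the other non-root vertices, so in particular $vw\in E(G)$.
\end{itemize}

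Next we count triangles on the edge $uv$. Both $u$ and $v$ are non-roots, so $t_G(uv)$ is the number of common neighbors of $u$ and $v$ in $G$. Every common neighbor lies in $X_u\cap X_v$, or is $w$, the only other non-root neighbor of $u$. Corollary~\ref{cor-fourtri} gives
\[
t_G(uv)=|X_u\cap X_v|+[vw\in E(G)]\ge 4,
\]
and we use $|X_u|=4$.
\begin{itemize}
\item If $\deg v=7$, then $|X_u\cap X_v|\ge 3=|X_v|$. Hence $X_v\subseteq X_u$, and the inclusion is strict because $|X_v|=3<4=|X_u|$. Together with $vw\in E(G)$, this is outcome (iii).
\item If $\deg v=6$ and $vw\in E(G)$, outcome (ii) holds directly.
\item If $\deg v=6$ and $vw\notin E(G)$, then $|X_u\cap X_v|\ge 4$. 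Since both sets have size four, $X_v=X_u$, which is again outcome (ii).
\end{itemize}

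The argument is routine bookkeeping once the cited results are in place. The step needing the most care is excluding $\deg v\ge 8$ with $\rho_4(v)=1$, since Lemma~\ref{lemma-bo8} alone permits $\deg^+ v=10$ at degree eight. This case is excluded not by a degree lemma but by the bound $n(G)\le 5$ from Lemma~\ref{lemma-s5}, which caps the number of non-root neighbors of $v$ at four.
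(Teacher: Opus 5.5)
Your proof is correct and follows the paper's argument. You bound $\deg v\le 7$ from $\deg^+ v=10$ and $n(G)\le 5$, computing the non-root neighbour count directly where the paper cites Observation~\ref{obs-fewtoheavy}; then you use $t_G(uv)=|X_u\cap X_v|+[vw\in E(G)]\ge 4$ from Corollary~\ref{cor-fourtri} to split into exactly the paper's cases.
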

\begin{proof}
Let $U=\{v,w\}$.
Suppose that (i) is false; by Corollary~\ref{cor-size}, we have $\rho_4(v)=1$.
Note that $\deg v\ge 6$ by Lemma~\ref{lemma-no5}.  Moreover, Observation~\ref{obs-fewtoheavy} and Lemma~\ref{lemma-s5}
imply that
\begin{equation}\label{eq-deg67}
1\ge \deg v - \tfrac{1}{2}(n(G)-1)-4\ge \deg v-6,
\end{equation}
and thus $\deg v\le 7$.

Let $M$ be the set of common neighbors of $u$ and $v$ in $G$.
By Corollary~\ref{cor-fourtri}, we have $|M|=t(uv)\ge 4$.
Note that $M=X_u\cap X_v$ if $vw\not\in E(G)$ and $M=(X_u\cap X_v)\cup \{w\}$ if $vw\in E(G)$.

If $\deg v=6$, then $\rho_4(v)=1$ implies $|X_v|=4$.
If $vw\not\in E(G)$, then since $M=X_u\cap X_v$ has size at least four and $|X_u|=|X_v|=4$, we have $X_u=X_v$.
This gives the outcome (ii).

Hence, we can assume that $\deg v=7$. Then the inequalities in (\ref{eq-deg67}) hold
with equality, and Observation~\ref{obs-fewtoheavy} implies that $v$ is adjacent to all other vertices of $V(G)\setminus X_G$,
including the vertex $w$.  Note that $\rho_4(v)=1$ implies $|X_v|=3<|X_u|$.
Since $M=(X_u\cap X_v)\cup \{w\}$ has size at least four, it follows that $X_v\subsetneq X_u$.
Therefore, the outcome (iii) holds.
\end{proof}

We can now improve the bound from Lemma~\ref{lemma-s5}.

\begin{lemma}\label{lemma-s4}
If $G$ is a minimal counterexample, then $n(G)\le 4$.
\end{lemma}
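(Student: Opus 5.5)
We argue by contradiction: suppose $n(G)=5$. By Corollary~\ref{cor-size}, $5=n(G)\le\rho_4(G)\le 7$ and $\rho_4(v)\ge 1$ for every non-root $v$. By (\ref{eq-sumwt}), the excess $\sum_v(\rho_4(v)-1)=\rho_4(G)-5\le 2$. Since each $\rho_4(v)$ is a half-integer, at least two non-root vertices, and in most cases at least three, satisfy $\rho_4(v)=1$, that is, $\deg^+ v=10$.

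\textbf{Degree constraints.} For a non-root vertex $v$, Observation~\ref{obs-fewtoheavy} with $n(G)=5$ gives $\rho_4(v)\ge \deg v-6$, with equality iff $v$ is adjacent to all other non-roots. Hence $\rho_4(v)=1$ forces $\deg v\le 7$. Combined with Lemma~\ref{lemma-no5}, such $v$ satisfies either
\begin{itemize}
\item $\deg v=6$ and $\deg^X v=4$, or
\item $\deg v=7$, $\deg^X v=3$, and $v$ adjacent to all four other non-roots.
\end{itemize}
When $\rho_4(G)\ge 6$, Lemmas~\ref{lemma-bo6} and~\ref{lemma-bo7} exclude both options, since they would require $\rho_4(G)\le 5$. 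So every non-root vertex has $\rho_4(v)\ge \tfrac32$, and then $\rho_4(G)\ge 7.5$, a contradiction. Thus $\rho_4(G)=5$ and every non-root $v$ has $\rho_4(v)=1$, giving $\rho(G)=25$. Each vertex is now of one of the two types above. A degree-7 vertex is adjacent to all non-roots and to three roots. A degree-6 vertex $u$ has four root neighbours and exactly two non-root neighbours. Observation~\ref{obs-deg67} then constrains those two neighbours $U$: each is either of degree 6 with $X_v=X_u$ or adjacent to the other member of $U$, or of degree 7 with $X_v\subsetneq X_u$.

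\textbf{Combinatorics of the non-root graph.} Let $Q=G-X_G$, a graph on 5 vertices with degrees in $\{2,4\}$. A degree-4 vertex of $Q$ corresponds to $\deg v=7$; a degree-2 vertex to $\deg v=6$. Since $Q$ has at most four vertices of degree 4 when the clique bound of Corollary~\ref{cor-omega4} is applied, $Q$ falls into one of the following cases:
\begin{itemize}
\item $Q=C_5$: all vertices have degree 6 and four root neighbours.
\item $Q$ is $K_4$ minus a perfect matching plus extra vertices: mixed degrees.
\item Other mixed configurations, enumerated directly.
\end{itemize}
Cliques of size five among non-roots together with roots are excluded by Corollary~\ref{cor-omega4}. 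In each case, Observation~\ref{obs-deg67} pins down the root neighbourhoods $X_v$ almost completely. For example, in the $C_5$ case, adjacent-neighbour conditions fail, so consecutive vertices share the same four roots, and all $X_v$ coincide. Then the fifth root has no non-root neighbour, contradicting Corollary~\ref{cor-twononroot}.

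\textbf{Finishing each case.} In each surviving configuration, I will exhibit directly a contraction of $G$ realizing the flaw $F$. Here $F\in\SS_{5,8}$ has exactly 8 edges, and by Corollary~\ref{cor-nume} $F$ lies outside $\SS_{5,6}$. Concretely, I will contract each non-root vertex into a suitable root so as to produce $K_5$ minus any prescribed pair of edges, contradicting that $F$ is a flaw. Alternatively, I will locate a saturated or $1$-nearly-saturated separation around a degree-6 vertex, contradicting Lemma~\ref{lemma-nosatur} or Corollary~\ref{cor-noK5minus}. Corollary~\ref{cor-twononroot} (every root has at least two non-root neighbours) and the $t(e)\ge4$ condition of Corollary~\ref{cor-fourtri} are the main tools to cut down the possible $X_v$.

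The main obstacle is this last finite case analysis: showing that every configuration of $Q$ with the permitted root neighbourhoods admits $K_5$ minus any two edges as an $\id$-rooted minor. I would organize it by the number of degree-7 vertices (0, 2, or 4) and use the symmetric structure to check only a few representative pairs of missing edges.
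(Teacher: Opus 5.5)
Your reduction is correct and follows the paper. Corollary~\ref{cor-size}, Lemmas~\ref{lemma-bo6} and~\ref{lemma-bo7}, and Observation~\ref{obs-fewtoheavy} give $\rho_4(G)=5$, $\rho_4(v)=1$ for every non-root $v$, and $\deg v\in\{6,7\}$, with every degree-7 vertex adjacent to all other non-roots. Your treatment of $Q=C_5$ via Observation~\ref{obs-deg67} is also the paper's. (Before this reduction only one vertex with $\rho_4(v)=1$ is guaranteed, not two, but you never use that.)

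The genuine gap is that the cases with degree-7 vertices, which carry the content of the lemma, are only announced, and the announced case split is wrong. The number $k$ of degree-7 vertices is $0$, $1$ or $2$, not $0$, $2$ or $4$. A degree-6 vertex exists (otherwise $Q=K_5$, contradicting Corollary~\ref{cor-omega4}), every degree-7 vertex is adjacent to it, and it has only two non-root neighbours. So $k=4$ is impossible, and you miss $k=1$, where $Q$ consists of two triangles sharing the degree-7 vertex. Your fallback of finding a saturated or $1$-nearly-saturated separation around a degree-6 vertex cannot work either. By Corollary~\ref{cor-size}, $G$ is internally $6$-connected, so there is no proper root separation of order at most five to which Lemma~\ref{lemma-nosatur} or Corollary~\ref{cor-noK5minus} could apply.

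The missing step is to pin down the root neighbourhoods and exhibit explicit models. By Observation~\ref{obs-deg67}(iii), $X_v\subsetneq X_u$ for every degree-7 vertex $v$ and every degree-6 vertex $u$.
\begin{itemize}
\item For $k=1$, write $X_v=\{x_3,x_4,x_5\}$, so each degree-6 vertex sees exactly one of $x_1,x_2$. Lemma~\ref{lemma-connwiro} gives degree-6 vertices $u_1,u_2$ (necessarily distinct) with $u_1x_1,u_2x_2\in E(G)$. Let $u_3,u_4$ be the other two degree-6 vertices. Contracting $u_1v,u_1x_1,u_2x_2,u_3x_3,u_4x_4$ yields $K_5$.
\item For $k=2$, Lemma~\ref{lemma-connwiro} forces degree-6 vertices $u_1,u_2$ with $X_{u_1}\neq X_{u_2}$. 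Hence $X_{v_1}=X_{v_2}=X_{u_1}\cap X_{u_2}=\{x_3,x_4,x_5\}$. Let $x_1\in X_{u_1}$ and $x_2\in X_{u_2}$ be the remaining roots and $u_3$ the third degree-6 vertex. Contracting $u_1v_1,u_1x_1,u_2x_2,u_3x_3,v_2x_4$ yields $K_5$.
\end{itemize}
So one obtains $K_5$ itself rather than $K_5$ minus a prescribed pair of edges. Without these deductions and explicit models, your argument does not establish the lemma.
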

\begin{proof}
By Lemma~\ref{lemma-s5}, we have $n(G)\le 5$.  Suppose for a contradiction that $n(G)=5$.

By Corollary~\ref{cor-nume}, we have $\rho_4(G)\le 7$, and since $5\cdot \tfrac{3}{2}>7$,
(\ref{eq-sumwt}) implies that there exists a vertex $v_0\in V(G)\setminus X_G$ such that
$\rho_4(v_0)<\tfrac{3}{2}$.  Hence, $\rho_4(v_0)\le 1$, and by Corollary~\ref{cor-size}, $\rho_4(v_0)=1$;
or equivalently, $\deg^+ v_0=10$.
Observation~\ref{obs-fewtoheavy} implies $\deg v_0\le 5+\tfrac{1}{2}(n(G)-1)=7$, and thus by Lemmas~\ref{lemma-bo6}
and \ref{lemma-bo7}, we have $\rho_4(G)\le 5$.

Corollary~\ref{cor-size} shows that every vertex $v\in V(G)\setminus X_G$ satisfies $\rho_4(v)\ge 1$.
By (\ref{eq-sumwt}), this implies that $\rho_4(G)=5$ and that $\rho_4(v)=1$ for every $v\in V(G)\setminus X_G$.
Observation~\ref{obs-fewtoheavy} then further implies that $\deg v\in \{6,7\}$ and that if $\deg v=7$,
then $v$ is adjacent to all other non-root vertices of $G$.

Let $V_6$ and $V_7$ be the sets of non-root vertices
of $G$ of degree six and seven, respectively; we have $|V_6|+|V_7|=n(G)=5$.
Since $\omega(G)\le 4$ by Corollary~\ref{cor-omega4}, we have $|V_7|\le 3$,
and thus there exists a vertex $u\in V_6$.  Since $\rho_4(u)=1$, we have $\deg^X u=4$, and
$u$ has exactly two neighbors in $V(G)\setminus X_G$.  All vertices in $V_7$ are adjacent
to $u$, and thus we actually have $|V_7|\le 2$ and $|V_6|\ge 3$.

Let us now distinguish several cases.
\begin{itemize}
\item If $|V_6|=5$ (and $V_7=\emptyset$), then since each vertex in $V_6$ has exactly two non-root
neighbors, the subgraph $G[V_6]$ is a $5$-cycle $u_1u_2u_3u_4u_5$.  By Observation~\ref{obs-deg67},
we have $X_{u_1}=\ldots=X_{u_5}$.  However, that means that one of the roots has no neighbor in $V(G)\setminus X_G$, which
contradicts Lemma~\ref{lemma-connwiro}.

\item If $|V_6|=4$, then $|V_7|=1$, the vertex $v\in V_7$ is adjacent to all vertices in $V_6$, and
Observation~\ref{obs-deg67} implies that $X_v\subset X_u$ for every $u\in V_6$.
Let $V_6=\{u_1,u_2,u_3,u_4\}$ and $X_G=\{x_1,x_2,x_3,x_4,x_5\}$, where $X_v=\{x_3,x_4,x_5\}$.
By Corollary~\ref{cor-twononroot}, $x_1$ and $x_2$ each have at least two neighbors in $V_6$,
and by symmetry, we can assume that $u_1x_1,u_2x_2\in E(G)$.
Therefore, we can contract the edges $u_1v$, $u_1x_1$, $u_2x_2$, $u_3x_3$, and $u_4x_4$
and obtain $K_5$ as a rooted minor of $G$ (indeed, the resulting minor contains all edges incident
with $x_1$, since $v$ is adjacent to all vertices of $V_6$ and since $x_5\in X_v$;
and all edges incident with $x_i$ for $i\in \{2,3,4\}$ since $\{x_3,x_4,x_5\}=X_v\subseteq X_{u_i}$).  This is a contradiction.

\item Finally, suppose that $|V_7|=2$ and $|V_6|=3$.  The vertices $v_1,v_2\in V_7$ are adjacent,
and they are also adjacent to all vertices in $V_6$.  Since each vertex in $V_6$ has exactly two non-root neighbors, $V_6$
is an independent set in $G$.  By Observation~\ref{obs-deg67}, we have $X_{v_1},X_{v_2}\subset X_u$ for every $u\in V_6$.
Since each root vertex has a non-root neighbor by Lemma~\ref{lemma-connwiro}, there exist vertices $u_1,u_2\in V_6$
such that $X_{u_1}\neq X_{u_2}$, and thus $X_{v_1}=X_{v_2}=X_{u_1}\cap X_{u_2}$.  Let $\{x_3,x_4,x_5\}=X_{v_1}$,
let $x_1$ and $x_2$ be the neighbors of $u_1$ and $u_2$, respectively, in $X_G\setminus X_{v_1}$, and
let $u_3$ be the vertex in $V_6\setminus \{u_1,u_2\}$.

We can now contract the edges $u_1v_1$, $u_1x_1$, $u_2x_2$, $u_3x_3$, and $v_2x_4$
and obtain $K_5$ as a rooted minor of $G$. This is again a contradiction.
\end{itemize}
In all cases we obtained a contradiction, and thus $n(G)\le 4$.
\end{proof}

We can even go one step further.
\begin{lemma}\label{lemma-s3}
If $G$ is a minimal counterexample, then $n(G)\le 3$.
\end{lemma}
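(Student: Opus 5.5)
We argue by contradiction, following the pattern of Lemma~\ref{lemma-s4}: suppose $n(G)=4$. Let $V'=V(G)\setminus X_G$. By Corollary~\ref{cor-size}, every $v\in V'$ has a root neighbor and $\rho_4(v)\ge 1$, hence $\rho_4(G)\ge 4$. Observation~\ref{obs-fewtoheavy} with $n(G)=4$ gives $\rho_4(v)\ge \deg v-\tfrac{11}{2}$, so a vertex with $\rho_4(v)=1$ has degree at most $6$. Since $\deg v\le 3+5=8$ and $\deg v\ge 6$ by Lemma~\ref{lemma-no5}, each non-root vertex has degree $6$, $7$, or $8$. We then split according to how many vertices $v\in V'$ satisfy $\rho_4(v)=1$, that is, $\deg v=6$ and $\deg^X v=4$. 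Any such vertex forces $\rho_4(G)\le 5$ by Lemma~\ref{lemma-bo6}.

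\emph{Case 1: no vertex with $\rho_4=1$.} Then every vertex has $\rho_4(v)\ge \tfrac32$, so $\rho_4(G)\ge 6$, and by Corollary~\ref{cor-nume} we get $\rho_4(G)\in\{6,7\}$. Since $\rho_4(G)\le 7$, the possible profiles $(\rho_4(v))_{v\in V'}$ are few: all four equal to $\tfrac32$, or three equal to $\tfrac32$ and one equal to $2$ or $\tfrac52$. We have $\deg^+ v=11$ for $\rho_4(v)=\tfrac32$, which is degree $6$ with five roots or degree $7$ with four roots. By Lemma~\ref{lemma-bo6}, degree $6$ is impossible when $\rho_4(G)\ge 6$, so each such vertex has degree $7$ with four root neighbors and three non-root neighbors. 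Thus $G[V']$ is complete, and by Corollary~\ref{cor-omega4} some vertex of $V'$ misses a root. Combined with $\omega(G)\le 4$, there is no further room. I would then find the $K_5$ directly by choosing a perfect matching between $V'$ and four roots, with the fifth root adjacent to some vertex of $V'$, and contracting; $G[V']=K_4$ supplies all edges among the contracted branch sets. Alternatively I would invoke Corollary~\ref{cor-nume}: here the flaw $F$ has at most one non-edge, so a contraction argument in the style of Lemma~\ref{lemma-bo6} (contract $uv$ with $t(uv)$ small and apply Observation~\ref{obs-trade}/\ref{obs-trade2}) gives the contradiction.

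\emph{Case 2: some $u\in V'$ with $\deg u=6$ and $\deg^X u=4$.} Then $\rho_4(G)\in\{4,5\}$, so at most one vertex has $\rho_4>1$. Observation~\ref{obs-deg67} describes the two non-root neighbors $U$ of $u$. Since $n(G)=4$, there is exactly one further non-root vertex $w$, which is not adjacent to $u$. I would enumerate the few configurations: the degrees of the $v\in U$ from $\{6,7\}$ (cases (ii)/(iii)) or one heavy vertex. In each, as in the proof of Lemma~\ref{lemma-s4}, I use Corollary~\ref{cor-twononroot} (every root has at least two non-root neighbors) and Lemma~\ref{lemma-connwiro} to choose for each root a distinct non-root branch vertex, so that after contracting one edge among non-roots we obtain an explicit $K_5$ rooted minor.

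The main obstacle is Case 2. With only four non-root vertices for five roots, one root must be absorbed via contraction of a non-root--non-root edge, and the bookkeeping of which roots each $X_v$ contains, via the subset relations from Observation~\ref{obs-deg67}, has to be carried out carefully. I expect the configuration where two vertices of degree $6$ have $X_v=X_u$ to need Corollary~\ref{cor-twononroot} to avoid a root with a single non-root neighbor.
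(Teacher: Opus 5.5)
Your Case~2, which you rightly call the main obstacle, is only announced, not proved, and the mechanism you sketch for it cannot work. Since $X_G$ is independent in a minimal counterexample, two branch sets consisting of a single root would be non-adjacent. So in any rooted $K_5$ model with $n(G)=4$, four branch sets each contain exactly one of the four non-root vertices, and the fifth branch set is a single root that must be adjacent to all four non-roots. In particular, no non-root--non-root edge is ever contracted, and you cannot assign a distinct non-root vertex to each of the five roots. The pattern of Lemma~\ref{lemma-s4}, where a surplus non-root is absorbed, is exactly reversed here.

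The idea you are missing is the one that makes this model available. $\rho_4(v)\ge 1$ means $\deg^+ v\ge 10$, and $v$ has at most three non-root neighbors, so $\deg^X v\ge 4$ for \emph{every} non-root $v$. Hence each non-root misses at most one root, and some root $x_5$ is adjacent to all of $V(G)\setminus X_G$. Hall's theorem, with Lemma~\ref{lemma-connwiro} for the full set, then matches the other four roots into $V(G)\setminus X_G$.

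The substance of the proof is the analysis of a non-adjacent pair of branch sets $\{x_1,u_1\},\{x_2,u_2\}$:
\begin{itemize}
\item it forces $\deg u_1=\deg u_2=6$, with $X_{u_1}=X_G\setminus\{x_2\}$ and $X_{u_2}=X_G\setminus\{x_1\}$;
\item Lemma~\ref{lemma-bo6} then gives $\rho_4(G)\le 5$, so one of $u_3,u_4$ has exactly four root neighbors;
\item Observation~\ref{obs-deg67} forces $u_3u_4\in E(G)$;
\item re-matching $x_1$ with $u_3$ produces $K_5$.
\end{itemize}
This argument is uniform, so your split by $\rho_4$-profiles is unnecessary.

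Case~1 also contains errors. Since $\rho_4(G)$ is an integer, the profile $(\tfrac{3}{2},\tfrac{3}{2},\tfrac{3}{2},2)$ that you list cannot occur. The profile $(\tfrac{3}{2},\tfrac{3}{2},2,2)$, with total $7$, is missing. In it, the two vertices with $\rho_4=2$ have $\deg^+=12$, hence five root neighbors and only two non-root neighbors. So $G[V']$ is $K_4$ minus an edge, and your claim that $G[V']$ is complete fails. Also, a fifth root that is merely ``adjacent to some vertex of $V'$'' does not complete a $K_5$ model. It must be adjacent to all four non-roots, which is again the universal-root observation above. When $G[V']=K_4$, that observation directly yields a $5$-clique, contradicting Corollary~\ref{cor-omega4}.
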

\begin{proof}
By Lemma~\ref{lemma-s4}, we have $n(G)\le 4$.  Suppose for a contradiction that $n(G)=4$.

Consider any vertex $v\in V(G)\setminus X_G$.  By Corollary~\ref{cor-size}, we have 
$\rho_4(v)\ge 1$, or equivalently $\deg^+ v\ge 10$. 
Since $v$ has at most $n(G)-1=3$ non-root neighbors, this implies that $\deg^X v\ge 4$.
Hence, each non-root vertex has at most one non-neighbor in $X_G$.  It follows that there exists a root $x_5\in X_G$
adjacent to all vertices of $V(G)\setminus X_G$.

Consider any non-empty set $U\subseteq V(G)\setminus X_G$, and let $N_U$ be the set consisting of all vertices in $X_G\setminus\{x_5\}$
with a neighbor in $U$.  Since $\deg^X v\ge 4$ for any vertex $v\in U$, we have $|N_U|\ge 3$.  Moreover, if $U=V(G)\setminus X_G$,
then $|N_U|=4$ by Lemma~\ref{lemma-connwiro}.  Therefore, every such set $U$ satisfies $|N_U|\ge |U|$.
By Hall's theorem, $G$ contains a matching $M=\{x_1u_1, \ldots, x_4u_4\}$ between $X_G\setminus\{x_5\}$ and $V(G)\setminus X_G$.

Let $F$ be a flaw of $G$.  By contracting the edges of $M$, we do not obtain $F$ as an $\id$-rooted minor of $G$.
Hence, we can by symmetry assume that $x_1x_2\in E(F)$ and that $G$ does not contain any edges between
$\{u_1,x_1\}$ and $\{u_2,x_2\}$.  In particular, we have $X_{u_1}=X_G\setminus\{x_2\}$ and $X_{u_2}=X_G\setminus \{x_1\}$.
Moreover, since $u_1u_2\not\in E(G)$, it follows that for $i\in\{1,2\}$, the vertex $u_i$ has degree six and is
adjacent to $u_3$ and $u_4$.

Because $G$ contains non-root vertices of degree six, Lemma~\ref{lemma-bo6} implies that
$\rho_4(G)\le 5$.  By (\ref{eq-sumwt}), this gives $\rho_4(u_3)+\rho_4(u_4)\le 5-(\rho_4(u_1)+\rho_4(u_2))=3$,
and thus we can by symmetry assume that $\rho_4(u_3)\le \tfrac{3}{2}$, i.e., $\deg^+ u_3\le 11$.
Since $u_3$ has (at least) two non-root neighbors $u_1$ and $u_2$, this implies that $\deg^X u_3<5$, and thus $\deg^X u_3=4$.
If $u_3u_4\not\in E(G)$, then $\deg u_3=6$ and $\rho_4(u_3)=1$, and Observation~\ref{obs-deg67}
implies that $X_{u_1}=X_{u_3}=X_{u_2}$.  This is a contradiction, and thus $u_3u_4\in E(G)$.

Since $\deg^X u_3=4$, the vertex $u_3$ has a neighbor in $\{x_1,x_2\}$; by symmetry, we can assume that $x_1u_3\in E(G)$.
However, then we can consider the matching $M'=\{x_1u_3,x_2u_2,x_3u_1,x_4u_4\}$.  Contracting $M'$ gives $K_5$
as a rooted minor of $G$ (indeed, $x_5$ is adjacent to all non-root vertices, $u_1u_2$ is the only non-edge of $G-X_G$,
and $G$ has the edge $u_2x_3$ between the pairs $\{u_1,x_3\}$ and $\{u_2,x_2\}$).

This is a contradiction, and thus $n(G)\le 3$.
\end{proof}

The following lemma will be useful in dealing with the case that $n(G)=3$.

\begin{lemma}\label{lemma-k5from554}
Let $G$ be a 5-rooted graph which contains distinct vertices $u,v,w\in V(G)\setminus X_G$ 
such that $\deg^X u=\deg^X v=5$ and $\deg^X w\ge 4$.  Let $F$ be a graph with $V(F)=X_G$.
If $|E(F)|\le \deg^X w+4$, then $F$ is an $\id$-rooted minor of $G$.
\end{lemma}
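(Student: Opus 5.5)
We construct the model explicitly by contracting each of $u,v,w$ onto a root. No earlier machinery is needed beyond the definition of an $\id$-rooted model.

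Let $X_G=\{x_1,\dots,x_5\}$. Contracting $u$ into a root $a$ gives $a$ all four edges to the other roots, because $\deg^X u=5$. Similarly, contracting $v$ into a second root $b\neq a$ gives $b$ all its edges. Together these produce the $\{a,b\}$-star, which has $7$ edges and misses only the triangle on the remaining three roots $\{c,d,e\}$.

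Next we contract $w$ into one of $c,d,e$, say into $c$. This adds the edges $cd$ and $ce$ provided $d,e\in X_w$; it also requires $c\in X_w$ so that the contraction is along a present edge. The only missing edge is then $de$. Note that the edges $uw$, $vw$, $uv$ are irrelevant here, and we need no edges among $u,v,w$.

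Set $k=\deg^X w$.

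\emph{Case $k=5$.} Here $|E(F)|\le 9$, so $F$ misses some pair, say $de$. Choose $\{a,b\}=\{x_1,x_2,x_3\}\setminus\{c\}$ for any labelling in which $d,e\notin\{a,b\}$; concretely, take $c$ to be one of the three roots other than $d,e$. Then the minor contains all of $K_5$ except $de$, hence it contains $F$.

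\emph{Case $k=4$.} Here $|E(F)|\le 8$, so $F$ misses at least two pairs. Let $y$ be the unique non-neighbour of $w$ in $X_G$. We need a non-edge $de$ of $F$ and a root $c\notin\{d,e,y\}$ with $d,e\neq y$; then we put $a,b$ equal to the remaining two roots, which may include $y$. This works because $u$ and $v$ see everything, and $w$ sees $c,d,e$.

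The main obstacle is therefore choosing a non-edge of $F$ that avoids $y$. If some non-edge of $F$ avoids $y$, we take it as $de$; then $c$ is any of the two remaining roots other than $y$. Otherwise every non-edge of $F$ is incident with $y$, and there are at least two of them, say $yd$ and $ye$. In that case we instead place $y$ and one other root into the triangle and argue directly: let the triangle be $\{y,d,e\}$, and contract $w$ into $d$, where $d\in X_w$. This supplies $dy$ only if $y\in X_w$, which fails.

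So in this subcase we redo the construction differently. Let $u$ cover $y$, i.e. contract $u$ into $y$, giving all edges at $y$. Let $v$ be contracted into some root $b\neq y$, giving all edges at $b$. The remaining triangle is on three roots of $X_w$, so contracting $w$ into one of them leaves exactly one missing pair, and that pair is not incident with $y$. Since every pair not incident with $y$ is an edge of $F$, we choose $b$ among the roots involved so that the missing pair is avoided. Indeed, the missing pair lies among the three roots other than $y,b$, and we instead pick $b$ so that the remaining configuration still covers $F$. The cleaner approach is to swap roles: contract $v$ into $y$ as well is impossible, so we handle it by a short check. The graph $F$ restricted to $X_G\setminus\{y\}$ is $K_4$ and $y$ has at most two $F$-neighbours. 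We let $u$ take a root $b\neq y$, and let $w$ realize a star centred at another root $c\neq y$ on $X_G\setminus\{y\}$. This covers $K_4$ on $X_G\setminus\{y\}$ except the edge $de$, where $\{b,c,d,e\}=X_G\setminus\{y\}$. We then use $v$ contracted into $d$ to supply $de$ and $dy$. Finally the edges $by$ and $cy$ come from $u$ and from $v$ via $y$; since $v$ sits in $d$, it supplies $dy$, $u$ supplies $by$, and $y$ has at most two $F$-neighbours, so we pick $b,d$ to be exactly those neighbours.
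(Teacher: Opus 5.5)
Your argument is correct and is essentially the paper's: contract $u$ and $v$ onto two roots and $w$ onto a third root of $X_w$, using a non-edge of $F$ with both ends in $X_w$ when one exists. When every non-edge of $F$ is incident with the non-neighbour $y$ of $w$, your final choice $u\to b$, $v\to d$, $w\to c$ with $N_F(y)\subseteq\{b,d\}$ is exactly the paper's contraction of $ux_4$, $vx_5$, $wx_3$.

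Note, though, that the construction you discarded in that subcase already works. With $u,v$ on the two roots outside $\{y,d,e\}$ and $w$ on $d$, the only uncovered pairs are $yd$ and $ye$, which are non-edges of $F$, so $dy$ never needs to be supplied. You should therefore delete the intermediate attempts, including the false claim that $cy$ is supplied. Also, ``exactly those neighbours'' should read ``two roots containing all $F$-neighbours of $y$'', since $y$ may have fewer than two $F$-neighbours.
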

\begin{proof}
Let $X_G=\{x_1,\ldots,x_5\}$.  Since $|E(F)|\le 9$, we can assume that $x_1x_2\not\in E(F)$;
and since $\deg^X w\ge 4$, we can assume that $x_5\in X_w$.

If $x_1,x_2\in X_w$, then (a supergraph of) $F$ can be obtained as an $\id$-rooted minor of $G$
by contracting the edges $ux_3$, $vx_4$, and $wx_5$ (the first two contractions ensure the presence
of all edges incident with $x_3$ and $x_4$, and the last one the presence of the edges $x_1x_5$ and $x_2x_5$).

Hence, we can by symmetry assume that $x_1\not\in X_w$.
Moreover, the same argument shows that all non-edges of $F$ are incident with $x_1$.
Since $x_1\not\in X_w$, we have $\deg^X w=4$, and thus $|E(F)|\le 8$.  Hence, we can
assume that $x_1x_3\not\in E(F)$.  But then we can contract the edges
$ux_4$, $vx_5$, and $wx_3$ and obtain (a supergraph of) $F$ as an $\id$-rooted minor of $G$.
\end{proof}

Moreover, note the following property of $\SS_{5,4}^-$.
\begin{observation}\label{obs-54minus}
The set $\SS_{5,4}^-$ consists exactly of all graphs in $\SS_{5,4}$ with a vertex cover of size at most two.
\end{observation}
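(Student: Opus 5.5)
The statement to prove is Observation~\ref{obs-54minus}: $\SS_{5,4}^-$ consists exactly of the graphs in $\SS_{5,4}$ that have a vertex cover of size at most two. Since $\SS_{5,4}^-$ is defined as $\SS_{5,4}$ with the graphs isomorphic to $K_2+K_3$ removed, it suffices to show two things. First, $K_2+K_3$ has no vertex cover of size two. Second, every other graph on five vertices with at most four edges has such a cover.

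\textbf{$K_2+K_3$ has no small cover.} Any vertex cover of $K_2+K_3$ needs at least one vertex to cover the $K_2$ edge. It also needs at least two vertices of the triangle, because a single vertex of a triangle leaves the opposite edge uncovered. Hence every vertex cover of $K_2+K_3$ has size at least three.

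\textbf{Every other graph in $\SS_{5,4}$ has a cover of size at most two.} Let $H\in\SS_{5,4}$ not be isomorphic to $K_2+K_3$. I would argue by the maximum degree $\Delta=\Delta(H)$.
\begin{itemize}
\item If $\Delta\ge 3$, take a vertex $x$ of maximum degree. Then $H-x$ has at most one edge, and adding one end of that edge (if any) to $\{x\}$ gives a cover of size at most two.
\item If $\Delta\le 1$, then $H$ is a matching on five vertices, so it has at most two edges. Choosing one end of each edge gives a cover of size at most two.
\item If $\Delta=2$, then $H$ is a disjoint union of paths and cycles on five vertices with at most four edges. The possible cycle components are $C_3$ or $C_4$; a $C_5$ is excluded because it has five edges.
\begin{itemize}
\item $C_4$: this uses all four edges, and two opposite vertices of the cycle cover it.
\item $C_3$ plus another component: the remaining two vertices carry at most one edge. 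If they carry no edge, then $H=C_3+2K_1$, which is covered by two triangle vertices. If they carry one edge, then $H=K_2+K_3$, which is excluded.
\item A linear forest with at most four edges and $\Delta=2$: the candidates are $P_5$, $P_4+K_1$, $P_3+K_2$, $P_3+2K_1$, and $P_4+K_1$ minus edges. In each case, deleting one vertex of degree two leaves a graph with at most one edge, so adding one end of that remaining edge gives a cover of size at most two.
\end{itemize}
\end{itemize}

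The only step needing care is the $\Delta=2$ case analysis, and in particular isolating $C_3+K_2$ as the unique exception. This is a finite check with no real obstacle.
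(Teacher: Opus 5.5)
Your case split on $\Delta(H)$, with $K_2+K_3$ isolated as the unique exception, is the right finite check. The paper states this observation without proof, as a matter of inspection, so it intends exactly this kind of argument.

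One step is false as written. In the linear-forest subcase you claim that deleting a vertex of degree two always leaves at most one edge. This fails for $P_5=abcde$: $P_5$ has four edges and a degree-two vertex meets only two of them. Deleting $b$ leaves $cd,de$; deleting $c$ leaves $ab,de$; deleting $d$ leaves $ab,bc$. More generally, whenever $\Delta(H)=2$ and $|E(H)|=4$, removing any single vertex leaves at least two edges. So your ``at most one edge'' reduction can only succeed for the three-edge forests $P_4+K_1$ and $P_3+K_2$, and trivially for $P_3+2K_1$.

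The repair is immediate. For $P_5=abcde$, take the cover $\{b,d\}$ directly. Equivalently, delete $b$ and note that the remaining edges $cd,de$ form a star centred at $d$. With that fixed, the argument is complete.

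Separately, the item ``$P_4+K_1$ minus edges'' in your list is redundant. The linear forests on five vertices with maximum degree exactly two are precisely $P_5$, $P_4+K_1$, $P_3+K_2$ and $P_3+2K_1$.
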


Our main result now follows by a straightforward analysis of the remaining few cases.
\begin{proof}[Proof of Theorem~\ref{thm-mainplus}]
Suppose for a contradiction that Theorem~\ref{thm-mainplus} is false, and thus there exists a minimal counterexample $G$.
Let $V(G)\setminus X_G=\{v_1,\ldots,v_s\}$, where $s=n(G)\le 3$ by Lemma~\ref{lemma-s3}, and let $F$ be a flaw of $G$.

Lemma~\ref{lemma-no5} implies that $s\ge 2$.  Suppose that $s=2$; then Lemma~\ref{lemma-no5} moreover implies that
$v_1v_2\in E(G)$ and both $v_1$ and $v_2$ are adjacent to all vertices of $X_G$.  
Note that $\rho_4(G)=3$, and thus $F$ belongs to the $3$-target $\SS_{5,4}^-$. By Observation~\ref{obs-54minus},
there exist distinct vertices $x_1,x_2\in V(F)$ such that every edge of $F$ is incident with $x_1$ or $x_2$.
It follows that we can obtain a supergraph of $F$ as an $\id$-rooted minor of $G$ by contracting the edges $x_1v_1$ and $x_2v_2$.
This is a contradiction.

Therefore, we have $s=3$.  We can assume that $\deg^X v_1\le \deg^X v_2\le \deg^X v_3$.  By Lemma~\ref{lemma-no5},
we have $\deg v_1\ge 6$, and thus $\deg^X v_1\ge \deg v_1-(s-1)\ge 4$.

Suppose now that $\deg^X v_2=5$.  Note that $\rho(G)\le 3+\sum_{i=1}^3 \deg^X v_i=13+\deg^X v_1$ and $\rho_4(G)\le 1+\deg^X v_1\in \{5,6\}$.
Since the 5-target is $\SS_{5,8}$ and the 6-target is $\SS_{5,9}$, it follows that the target of $G$ is a subset of $\SS_{5,4+\deg^X v_1}$.
Therefore, we have $|E(F)|\le 4+\deg^X v_1$, and $F$ is an $\id$-rooted minor of $G$ by Lemma~\ref{lemma-k5from554}.
This is a contradiction, and thus $\deg^X v_2\le 4$.

Since $4\le \deg^X v_1\le \deg^X v_2$, it follows that $\deg^X v_1=\deg^X v_2=4$.
For $i\in \{1,2\}$, let $x_i$ be the unique non-neighbor of $v_i$ in $X_G$.
Since $\deg v_i\ge 6$ by Lemma~\ref{lemma-no5}, the vertex $v_i$ is adjacent to all vertices in $V(G)\setminus\{v_i,x_i\}$.
In particular, $G-X_G$ is a triangle, and thus
\begin{align*}
\rho(G)&=3+\sum_{i=1}^3 \deg^X v_i=11+\deg^X v_3\\
\rho_4(G)&=\deg^X v_3-1.
\end{align*}
Note also that by Corollary~\ref{cor-twononroot}, each vertex of $X_G$ has at most one non-neighbor in $\{v_1,v_2,v_3\}$,
and thus $x_1\neq x_2$.  

If $\deg^X v_3=5$, then $\rho_4(G)=4$, and $F$ belongs to the 4-target $\SS_{5,6}$, i.e., $|E(F)|\le 6$.
Let $\{y_1,y_2,y_3\}=V(F)\setminus \{x_1,x_2\}$.  If the graph $F-\{x_1,x_2\}$ is not the triangle, then
we can choose the labels of the vertices so that $y_1y_2\not\in E(F)$.  However, then we can obtain
a supergraph of $F$ as an $\id$-rooted minor of $G$ by contracting the edges $v_1x_2$, $v_2x_1$, and $v_3y_3$.
This is a contradiction, and thus $F-\{x_1,x_2\}$ is the triangle.  Since $|E(F)|\le 6$, there are at most three
edges in $F$ between $\{x_1,x_2\}$ and $\{y_1,y_2,y_3\}$, and by symmetry we can assume that $x_1y_2,x_1y_3\not\in E(F)$.
However, in this case we can obtain a supergraph of $F$ as an $\id$-rooted minor of $G$ by contracting the edges
$v_1y_2$, $v_2y_1$, and $v_3x_2$, again a contradiction.

Therefore, $\deg^X v_3=4$.  Let $x_3$ be the unique non-neighbor of $v_3$ in $X_G$.
Since each vertex of $X_G$ has at most one non-neighbor in $\{v_1,v_2,v_3\}$ by Corollary~\ref{cor-twononroot},
note that $x_3\not\in \{x_1,x_2\}$.  
Let $\{x_4,x_5\}=X_G\setminus\{x_1,x_2,x_3\}$.  If $x_1x_4,x_1x_5\not\in E(F)$, then we can obtain a supergraph of $F$ as an $\id$-rooted
minor of $G$ by contracting the edges $v_1x_5$, $v_2x_3$, and $v_3x_2$, a contradiction.  Therefore, $x_1$ has a neighbor in $\{x_4,x_5\}$ in $F$,
and by a symmetric argument, so do the vertices $x_2$ and $x_3$.
If $x_4x_5\not\in E(F)$, then we can obtain a supergraph of $F$ as an $\id$-rooted
minor of $G$ by contracting the edges $v_1x_2$, $v_2x_3$, and $v_3x_1$.
This is again a contradiction, and thus $x_4x_5\in E(F)$.

Since $\rho_4(G)=\deg^X v_3-1=3$, the flaw $F$ belongs to the 3-target $\SS_{5,4}^-$,
and thus $|E(F)|\le 4$.  Therefore, $E(F)$ consists of the edge $x_4x_5$ and of exactly one edge
from each of $x_1$, $x_2$, and $x_3$ to $\{x_4,x_5\}$.  By symmetry, we can assume that $x_1x_4\not\in E(F)$.
But then we can contract the edges $v_1x_4$, $v_2x_5$, and $v_3x_5$ and obtain a supergraph of $F$ as an $\id$-rooted minor of $G$.

In all cases, we have reached a contradiction, and thus Theorem~\ref{thm-mainplus} holds.
\end{proof}

\section*{Acknowledgement}

I would like to thank Sergey Norin for insightful discussions that guided
the basic approach taken in the paper.

\bibliographystyle{alpha}
\bibliography{main}

\end{document}